\newcommand{\A}{\mathbf{A}}
\newcommand{\G}{\mathbf{G}}
\newcommand{\Q}{\mathbf{Q}}
\newcommand{\R}{\mathbf{R}}
\newcommand{\Z}{\mathbf{Z}}
\newcommand{\N}{\mathbf{N}}
\renewcommand{\P}{\mathbf{P}}
\newcommand{\fa}{\mathfrak{a}}
\newcommand{\fb}{\mathfrak{b}}
\newcommand{\fm}{\mathfrak{m}}
\newcommand{\tP}{\widetilde{P}}
\newcommand{\tcX}{\widetilde{\mathcal{X}}}
\newcommand{\cA}{\mathcal{A}}
\newcommand{\cB}{\mathcal{B}}
\newcommand{\cC}{\mathcal{C}}
\newcommand{\cD}{\mathcal{D}}
\newcommand{\cE}{\mathcal{E}}
\newcommand{\cF}{\mathcal{F}}
\newcommand{\cJ}{\mathcal{J}}
\newcommand{\cL}{\mathcal{L}}
\newcommand{\cM}{\mathcal{M}}
\newcommand{\cO}{\mathcal{O}}
\newcommand{\cT}{\mathcal{T}}
\newcommand{\cU}{\mathcal{U}}
\newcommand{\cX}{\mathcal{X}}
\newcommand{\cY}{\mathcal{Y}}
\newcommand{\cZ}{\mathcal{Z}}
\newcommand{\hcO}{\widehat{\mathcal{O}}}
\newcommand{\hcX}{\widehat{\mathcal{X}}}
\newcommand{\Xa}{X}
\newcommand{\Xan}{X}
\newcommand{\Xdiv}{X^{\mathrm{div}}}
\newcommand{\Xqm}{X^{\mathrm{qm}}}
\newcommand{\Lan}{L}
\newcommand{\Uan}{U^{\mathrm{an}}}
\newcommand{\unipar}{\varpi}
\newcommand{\an}{\mathrm{an}}
\newcommand{\cent}{c}
\newcommand{\retr}{p}
\renewcommand{\a}{\alpha}
\renewcommand{\b}{\beta}
\renewcommand{\d}{\delta}
\newcommand{\e}{\varepsilon}
\newcommand{\f}{\varphi}
\newcommand{\om}{\omega}
\newcommand{\p}{\psi}
\newcommand{\D}{\Delta}
\newcommand{\red}{\mathrm{red}}
\newcommand{\cf}{\textrm{cf.}\ }
\newcommand{\ie}{\textrm{i.e.}\ }
\renewcommand{\=}{:=}
\newcommand{\cro}[1]{[\![#1]\!]}
\DeclareMathOperator{\Spec}{Spec}
\DeclareMathOperator{\supp}{Supp}
\DeclareMathOperator{\Div}{Div}
\renewcommand{\div}{\mathrm{div}}
\DeclareMathOperator{\PA}{PA}
\DeclareMathOperator{\diam}{diam}
\DeclareMathOperator{\lip}{Lip}
\DeclareMathOperator{\sta}{Star}
\DeclareMathOperator{\rel}{ri}
\DeclareMathOperator{\Pic}{Pic}
\DeclareMathOperator{\CH}{CH}
\DeclareMathOperator{\Conv}{Conv}
\DeclareMathOperator{\ord}{ord}
\DeclareMathOperator{\emb}{emb}
\DeclareMathOperator{\ev}{ev}
\DeclareMathOperator{\id}{id}
\DeclareMathOperator{\Nef}{Nef}
\DeclareMathOperator{\PSH}{PSH}
\DeclareMathOperator{\tr}{Tr}
\DeclareMathOperator{\val}{val}
\numberwithin{equation}{section}       
\newtheorem{prop} {Proposition} [section]
\newtheorem{thm}[prop] {Theorem}
\newtheorem{defi}[prop] {Definition}
\newtheorem{lem}[prop] {Lemma}
\newtheorem{cor}[prop]{Corollary}
\newtheorem{prop-def}[prop]{Proposition-Definition}
\newtheorem*{thmA}{Theorem A}
\newtheorem*{thmB}{Theorem B}
\newtheorem*{thmC}{Theorem C}
\newtheorem*{defi*} {Definition}
\newtheorem{rmk}[prop]{Remark}
\theoremstyle{remark}
\newtheorem*{ackn}{Acknowledgment}
\title{Singular semipositive metrics in non-Archimedean geometry}
\date{\today}
\author{S{\'e}bastien Boucksom
  \and
  Charles Favre
  \and
  Mattias Jonsson}
\address{CNRS-Universit{\'e} Paris 6\\
  Institut de Math{\'e}matiques\\
  F-75251 Paris Cedex 05\\
  France}
\email{boucksom@math.jussieu.fr}
\address{CNRS- CMLS\\ \'Ecole Polytechnique\\91128 Palaiseau Cedex France}
\email{favre@math.polytechnique.fr}
\address{Dept of Mathematics\\
  University of Michigan\\
  Ann Arbor, MI 48109-1043\\
  USA}
\email{mattiasj@umich.edu}
\thanks{The first author was partially supported by the ANR grants MACK and POSITIVE.
  The second author was partially supported by the ANR-grant BERKO and the ERC-Starting grant "Nonarcomp" no. 307856.
  The third author was partially supported by the CNRS and the NSF}
\begin{document}

\begin{abstract}
  Let $X$ be a smooth projective Berkovich space over a complete discrete valuation field $K$ of residue characteristic zero, endowed with an ample line bundle $L$.
We introduce a general notion of (possibly singular) semipositive (or plurisubharmonic) metrics on $L$, and prove the analogue of the following two basic results in the complex case: the set of semipositive metrics is compact modulo scaling, and each semipositive metric is a decreasing limit of smooth semipositive ones. In particular, for continuous metrics, our definition agrees with the one by S.-W. Zhang. The proofs use multiplier ideals and the construction of suitable models of $X$ over the valuation ring of $K$, using toroidal techniques. 
\end{abstract}

\maketitle

\setcounter{tocdepth}{1}
\tableofcontents

%
%
%
%
%
%


\section*{Introduction}
The notions of plurisubharmonic (psh) functions and positive currents lie at the heart of complex analysis. The study of these objects is usually referred to as \emph{pluripotential theory}, and it has become apparent in recent years that pluripotential theory should admit an analogue in the context of non-Archimedean analytic spaces in the sense of Berkovich. 

Potential theory on non-Archimedean curves is by now well-established, thanks to the work of Thuillier~\cite{thuphd} (see also~\cite{valtree,BR}). 
In higher dimensions, it should in principle be possible to mimic the complex case and define a plurisubharmonic function as an upper semicontinuous function whose restriction to any curve is subharmonic. 
While this approach is yet to be developed, a general notion of 
\emph{continuous} plurisubharmonic functions was very 
recently\footnote{In fact,~\cite{CLD} was posted after the first version of the present paper.}
introduced by Chambert-Loir and Ducros in~\cite{CLD}, based on their notion of positive currents. Their definition both localizes and generalizes the previously introduced notion of a \emph{semipositive metric} on a line bundle~\cite{Zha,Gub98,CL1}. 
In this paper we propose a general (global) definition of singular (\ie not necessarily continuous) semipositive metrics on ample line bundles, and prove basic compactness and regularization results for such metrics. 

\medskip
In a sequel to this paper~\cite{nama} we rely on the results obtained here to adapt to the non-Archimedean case the variational approach to complex Monge-Amp\`ere equations developed in~\cite{BBGZ} and prove a version of the celebrated Calabi-Yau theorem. 

\medskip
 In order to better explain our construction, let us briefly recall some facts from the complex case~\cite{bayreuth}.  Let $X$ be (the analytification of) a smooth projective complex variety and let $L$ be an \emph{ample} line bundle on $X$. A smooth metric $\|\cdot\|$ on $L$ is given in every local trivialization of $L$ by $|\cdot|\, e^{-\f}$ for some local smooth function $\f$, called the local weight of the metric. 
The metric is said to be semipositive if its curvature, which locally is given by $dd^c\f$, 
is a semipositive $(1,1)$-form, that is, $\f$ is psh.
More generally, one defines the notion of singular semipositive metrics by allowing $\f$ to be a general psh function, in which case the curvature is a positive closed $(1,1)$-current. 
 
It is a basic fact that every psh function is locally the decreasing limit of a sequence of smooth psh functions. The global analogue of this result for singular semipositive metrics fails for general line bundles, but a deep result of Demailly shows that every singular semipositive metric on an \emph{ample} line bundle $L$ is indeed a \emph{monotone} limit of smooth semipositive metrics (cf.~\cite{regularization} or~\cite[Theorem 8.1]{GZ},~\cite{BK07} for more recent accounts). In particular, every continuous semipositive metric on $L$ is a \emph{uniform}  limit on $X$ of smooth semipositive metrics, thanks to Dini's lemma.

A fundamental aspect of singular semipositive metrics is that they form
a compact space modulo scaling. This fact can be conveniently understood in terms of global weights as follows. Fixing a smooth metric on $L$ with curvature $\theta$ allows one to identify the set of singular  semipositive metrics  with the set $\PSH(\Xan,\theta)$ of $\theta$-psh functions. The latter are upper semicontinuous (usc) functions $\f: X \to [-\infty, +\infty)$ such that $\theta + dd^c \f $ is a positive closed $(1,1)$-current. Modulo scaling, $\PSH(\Xan,\theta)$, endowed with the $L^1$-topology, is homeomorphic to the space of closed positive $(1,1)$-currents lying in the cohomology class $c_1(L)$ (with its weak topology), and hence is compact.

\medskip
Let us now turn to the non-Archimedean case. Fix a complete, discrete
valuation field $K$ with valuation ring $R$ and residue field $k$, and
set $S:=\Spec R$.  We assume that $k$ (and hence $K$) has
characteristic zero, which means, concretely, that $R$ is (non-uniquely)
isomorphic to $k\cro{t}$. Let $\Xan$ be a smooth projective $K$-analytic
space in the sense of Berkovich, so that $\Xan$ is the analytification
of a smooth projective $K$-variety by the GAGA principle. Recall that
the underlying topological space of $\Xan$ is compact Hausdorff. A
\emph{model} $\cX$ of $X$ is a normal flat projective $S$-scheme
together with an isomorphism of the analytification of its generic
fiber $\cX_K$ with $\Xan$. Each line bundle $L$ on $\Xan$ is (again,
by GAGA) the analytification of a line bundle on $\cX_K$ (that we also denote by $L$), and a \emph{model metric} is a metric $\|\cdot\|_\cL$ on $\Lan$ that is naturally induced by the choice of a $\Q$-line bundle $\cL\in\Pic(\cX)_\Q$ such that $\cL|_{\cX_K}=L$ in $\Pic(\cX_K)_\Q$. A \emph{model function} $\f$ on $\Xan$ is a function such that $e^{-\f}$ is a model metric on the trivial line bundle. The set $\cD(\Xa)$ of model functions is then dense in $C^0(\Xa)$, a well-known consequence of the Stone-Weierstrass theorem. 

Following S.-W.Zhang~\cite[3.1]{Zha} (see also~\cite[6.2.1]{KT},~\cite[7.13]{Gub98},~\cite[2.2]{CL1}), we shall say that a model metric $\|\cdot\|_\cL$ on $\Lan$ is \emph{semipositive} if $\cL\in\Pic(\cX)_\Q$ is nef on the special fiber $\cX_0$ of $\cX$, \ie $\cL\cdot C\ge 0$ for all projective curves $C$ in $\cX_0$. A continuous metric on $\Lan$ is then \emph{semipositive in the sense of Zhang} if it can be written as a uniform limit over $\Xan$ of a sequence of semipositive model metrics. The reader may consult~\cite{CL2} for a nice survey on these notions. 

\medskip
In order to define a general notion of singular semipositive metrics,
we use the finer description of $\Xan$ as an inverse limit of dual
complexes (called \emph{skeletons} in Berkovich's terminology).  Since
the residue field $k$ of $R$ has characteristic zero, it follows
from~\cite{Tem} that each model of $X$ is dominated by a \emph{SNC
  model} $\cX$, by which we understand a regular model whose special
fiber $\cX_0$ has simple normal crossing support (plus a harmless
irreducibility condition that we impose for convenience). To each SNC
model $\cX$ corresponds its \emph{dual complex} $\D_\cX$, a compact
simplicial complex which encodes the incidence properties of the
irreducible components of $\cX_0$. The dual complex $\D_\cX$ embeds
canonically  in $\Xan$ and for the purposes of this
introduction we shall view $\D_\cX$ as a compact subset of $\Xan$. There is furthermore a 
retraction $\retr_\cX:\Xan \to \D_\cX$. These maps are compatible with respect to domination of models, and we thus get a map
$$
\Xan \to\varprojlim_\cX\D_\cX
$$
which is known to be a homeomorphism (see, for instance,~\cite[p.77, Theorem 10]{KS}).

\medskip
Following the philosophy of~\cite{BGS}, we define the space of \emph{closed $(1,1)$-forms} on $\Xan$ as the direct limit over all models $\cX$ of $X$ of the spaces $N^1(\cX/S)$ of codimension one numerical equivalence classes. Any model metric gives rise to a \emph{curvature form} lying in this space. The main reason for working with numerical equivalence (instead of rational equivalence as in~\cite{BGS}) is that we can then adapt a result of~\cite{Kun} to show that a line bundle $L\in\Pic(X)$ has vanishing first Chern class $c_1(L)\in N^1(X)$ iff it admits a model metric with zero curvature (cf.~Corollary~\ref{cor:flat}).

Fix a reference model metric $\|\cdot\|$ on $\Lan$ with curvature form $\theta$. Any other metric can be written $\|\cdot\|e^{-\f}$ for some function $\f$ on $\Xan$. When $\|\cdot\|e^{-\f}$ is a semipositive model metric, we say that the model function $\f$ is \emph{$\theta$-psh}. 

\begin{defi*}\label{defi:singular} Let $L$ be an ample line bundle on a smooth projective $K$-analytic variety $X$. Fix a model metric $\|\cdot\|$ on $\Lan$ with curvature form $\theta$. A \emph{$\theta$-plurisubharmonic function} on $\Xan$ is then a function $\f: \Xan \to [-\infty,+\infty)$
such that:
\begin{itemize}
\item $\f$ is upper semicontinuous (usc). 
\item $\f\le \f \circ \retr_\cX$ for each SNC model $\cX$.
\item $\f$ is a uniform limit, on each dual complex $\D_\cX$, of $\theta$-psh model functions. 
\end{itemize}
A \emph{singular semipositive metric} is a metric $\|\cdot\|e^{-\f}$ with $\f$ a $\theta$-psh function.
\end{defi*}
The consistency of the definition for model functions will be guaranteed by Theorem~\ref{thm:closed} below. Let $\f$ be a $\theta$-psh function. Since $\f$ is usc and each $\f \circ \retr_\cX$ is continuous, it follows immediately that $\f=\inf_\cX \f \circ \retr_\cX$, so that $\f$ is uniquely determined by its restriction to the dense subset $\bigcup_\cX\D_\cX$ of $\Xan$. We may therefore endow the set $\PSH(\Xa, \theta)$ of all
$\theta$-psh functions (or, equivalently, of all singular semipositive metrics on $L$) with the topology of uniform convergence on dual complexes. We view this topology as an analogue of the $L^1$-topology in the complex case. Our first main theorem shows that this space is indeed compact modulo additive constants, something that was also announced in the unpublished manuscript by Kontsevich and Tschinkel~\cite{KT}.

\begin{thmA} Let $L$ be an ample line bundle on a smooth projective $K$-analytic variety $X$ endowed with a model metric with curvature form $\theta$. Then $\PSH(\Xa,\theta)/\R$ is compact. 
\end{thmA}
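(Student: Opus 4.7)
The plan is to realize $\PSH(\Xa,\theta)/\R$ as a closed subspace of a product of compact function spaces and conclude by Tychonoff's theorem. Normalize each class by the unique representative $\f$ with $\sup_\Xa\f=0$; since $\f\le\f\circ\retr_\cX$, this is equivalent to $\sup_{\D_\cX}\f=0$ for every SNC model $\cX$. The restriction maps $\f\mapsto\f|_{\D_\cX}$ then embed the normalized space into $\prod_\cX C^0(\D_\cX)$, each factor equipped with the uniform norm, so it suffices to prove (i) the image in each $C^0(\D_\cX)$ is relatively compact, and (ii) the total image is closed in the product topology.

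The heart of the argument is a uniform Lipschitz bound on each dual complex: for every SNC model $\cX$ there should exist a constant $L=L(\cX,\theta)$ such that the restriction to $\D_\cX$ of every normalized $\theta$-psh function is $L$-Lipschitz with respect to a fixed PL metric on $\D_\cX$. Since any $\theta$-psh function is by definition a uniform limit on $\D_\cX$ of $\theta$-psh model functions, it is enough to establish the bound for $\theta$-psh model functions $\f$, which are piecewise affine on a suitable refinement. The values of $\f$ at the vertices of $\D_\cX$ are, up to rescaling by multiplicities in $\cX_0$, coefficients of a $\Q$-line bundle $\cL$ that is nef on the special fiber, and the inequalities $\cL\cdot C\ge 0$ for curves $C\subset\cX_0$ translate into linear constraints on these coefficients. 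Combined with the ampleness of $L$ -- which, after subtracting a small multiple of $\theta$, provides enough strict positivity to force a lower bound on the vertex values in terms of their supremum -- these constraints yield control of the slopes of $\f$ along the edges of $\D_\cX$. Arzel\`a--Ascoli then delivers relative compactness of the normalized family in $C^0(\D_\cX)$.

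For closedness, suppose a net $(\f_\a)$ of normalized $\theta$-psh functions satisfies $\f_\a|_{\D_\cX}\to\p_\cX$ uniformly for every $\cX$. Passing to the limit in the inequality $\f_\a|_{\D_{\cX'}}\le(\f_\a|_{\D_\cX})\circ\retr_\cX$, valid for $\cX'$ dominating $\cX$, shows the family $(\p_\cX)$ is compatible under the retractions; then $\f:=\inf_\cX \p_\cX\circ\retr_\cX$ is upper semicontinuous as an infimum of continuous functions, satisfies $\f\le\f\circ\retr_\cX$ for every $\cX$ by idempotence of the retractions, and restricts to $\p_\cX$ on each $\D_\cX$. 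A diagonal selection of $\theta$-psh model approximants of the $\f_\a$ exhibits $\f$ itself as a uniform limit of $\theta$-psh model functions on each dual complex, so $\f\in\PSH(\Xa,\theta)$, and the image is closed.

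The principal obstacle is the uniform Lipschitz estimate: it is a quantitative positivity statement that genuinely uses the ampleness of $L$, not merely the nefness of $\cL$ on $\cX_0$, and must be uniform over all $\theta$-psh functions while depending only on the combinatorial geometry of $\cX$ and on the fixed curvature form $\theta$. The closedness step is more of a bookkeeping exercise, but still requires a careful diagonalization to pass from approximants of the individual $\f_\a$ to approximants of the limit $\f$.
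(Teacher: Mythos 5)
Your overall architecture is exactly the paper's: normalize by $\sup_{\Xan}\f=0$, embed $\PSH(\Xa,\theta)$ into $\prod_\cX C^0(\D_\cX)$ with the uniform norm on each factor, get relative compactness in each factor from uniform boundedness plus equicontinuity (a uniform Lipschitz bound on each dual complex) via Ascoli, and conclude by Tychonoff. Your closedness argument, via $\f:=\inf_\cX\p_\cX\circ\retr_\cX$ and a diagonal choice of $\theta$-psh model approximants, is sound and in fact spells out a point the paper's proof of Theorem~\ref{thm:proper} leaves implicit.

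The gap is in your account of the uniform Lipschitz bound, which is indeed the crux (it is Theorem C, i.e.\ Theorem~\ref{thm:main}). Bounding the values at the \emph{vertices} of $\D_\cX$ does work roughly as you describe: writing $\f=\f_G$ on a model $\cY\to\cX$, pushing forward and intersecting with $E_i\cdot\cA^{n-1}$ gives $\sum_j b_j\f(e_j)\,(E_i\cdot E_j\cdot\cA^{n-1})\ge -C$, and connectedness of $\cX_0$ propagates the normalization $\max_i\f(e_i)=0$ into a lower bound on all vertex values. But these finitely many linear inequalities at the vertices do \emph{not} ``yield control of the slopes along the edges,'' let alone a Lipschitz constant on higher-dimensional faces: one must bound the directional derivative $D_v\f(e)$ for $v$ an arbitrary rational point in the relative interior of the face opposite a vertex $e$, uniformly in $\f$, and the vertex inequalities simply do not see the point $v$. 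The paper manufactures intersection numbers that do see $v$: it constructs a projective simplicial subdivision $\D'$ of $\D_\cX$ with new vertices $e'_j=\e e_j+(1-\e)v$, realizes $\D'$ by a vertical blow-up $\cX'\to\cX$ via the toroidal machinery of Theorem~\ref{T201}, and evaluates $E'_J\cdot(\rho^*\theta_\cX+\mu_*G)\cdot(\rho^*\cA)^{n-p-1}\ge0$ on the stratum $E'_J$ whose generic point is the center of $v$; the Negativity Lemma~\ref{lem:neg} is needed to compare $\mu^*\mu_*G$ with $G$, and the whole thing is run as an induction on the dimension of the faces. This subdivision-and-blow-up mechanism is the missing idea; without it the Lipschitz estimate, and hence the equicontinuity on which your Ascoli step rests, does not follow.
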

In other words, the set of singular semipositive metrics on $L$ modulo scaling is compact. For curves, this result is a consequence of the work of Thuillier~\cite{thuphd}, and follows from basic properties of subharmonic functions on metrized graphs.

\smallskip

Our second main result is the following analogue of Demailly's global regularization theorem.

\begin{thmB} Let $L$ be an ample line bundle on a smooth projective $K$-analytic variety $X$ endowed with a model metric with curvature form $\theta$. Then every $\theta$-psh function $\f$ is the pointwise limit on $X$ of a decreasing net of $\theta$-psh model functions. 
\end{thmB}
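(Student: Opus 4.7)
The plan is to produce, for each SNC model $\cX$ and $\varepsilon > 0$, a $\theta$-psh model function dominating $\varphi$ and close to $\varphi$ on $\Delta_\cX$, and then to upgrade the resulting family into a decreasing net. Fix $\cX$ and $\varepsilon > 0$. By the third defining clause, $\varphi|_{\Delta_\cX}$ is a uniform limit of $\theta$-psh model functions restricted to $\Delta_\cX$; pick such a model function $\psi$ with $|\psi-\varphi| \leq \varepsilon/2$ on $\Delta_\cX$. Pass to a common SNC refinement of $\cX$ and the model defining $\psi$, and then replace $\psi$ by $\psi \circ \retr_\cX$, which factors through $\retr_\cX$, so defines a model function on $\cX$, and which should remain $\theta$-psh because the retraction operation corresponds to a pushforward of nef classes on the special fibre. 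The replacement does not change the values on $\Delta_\cX$. Setting $\psi_{\cX,\varepsilon} := \psi + \varepsilon$ and invoking the second defining clause $\varphi \leq \varphi \circ \retr_\cX$ then yields
\[
\psi_{\cX,\varepsilon}(x) \;=\; \psi(\retr_\cX(x)) + \varepsilon \;\geq\; \varphi(\retr_\cX(x)) + \varepsilon/2 \;\geq\; \varphi(x) + \varepsilon/2
\]
for every $x \in X$. Hence $\psi_{\cX,\varepsilon}$ is a $\theta$-psh model function dominating $\varphi$ globally with $|\psi_{\cX,\varepsilon} - \varphi| \leq 3\varepsilon/2$ on $\Delta_\cX$.

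Let $\mathcal{F}$ denote the set of all $\theta$-psh model functions that dominate $\varphi$. Its pointwise infimum equals $\varphi$: given $x \in X$ and $\delta > 0$, the identity $\varphi(x) = \inf_\cX \varphi(\retr_\cX(x))$, which follows from $\varphi$ being usc and $\bigcup_\cX \Delta_\cX$ being dense in $X$, produces a model $\cX$ with $\varphi(\retr_\cX(x)) < \varphi(x) + \delta/2$, and then $\psi_{\cX,\varepsilon}(x) < \varphi(x) + \delta$ for $\varepsilon$ small enough.

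The remaining step, upgrading $\mathcal{F}$ to a genuinely \emph{decreasing} net, is the principal obstacle. With $\mathcal{F}$ ordered by reverse pointwise dominance, directedness demands that for any $\psi_1,\psi_2 \in \mathcal{F}$ there exist $\psi_3 \in \mathcal{F}$ with $\psi_3 \leq \min(\psi_1,\psi_2)$. Nef classes on $\cX_0$ are not closed under pointwise minima, so $\min(\psi_1,\psi_2)$ is in general not a $\theta$-psh model function, and the naive choice fails. As signalled by the abstract's emphasis on multiplier ideals, the remedy should be a Demailly-type Bergman construction: for $m \gg 1$, consider the multiplier ideal $\mathcal{I}(m\min(\psi_1,\psi_2))$ on a suitable model; by ampleness of $L$ the sheaf $\cO(mL) \otimes \mathcal{I}(m\min(\psi_1,\psi_2))$ is globally generated, and the function $\frac{1}{m}\log$ of a sum of norms of generating sections should yield a $\theta$-psh model function $\psi_3$ with $\varphi \leq \psi_3 \leq \min(\psi_1,\psi_2) + O(1/m)$. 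Combined with Theorem~A to pass to limits where necessary, this yields the required directedness of $\mathcal{F}$, and hence the desired decreasing net of $\theta$-psh model functions converging pointwise to $\varphi$.
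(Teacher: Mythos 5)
Your overall architecture — produce $\theta$-psh model functions lying above $\f$ and arbitrarily close to it at each point, then establish directedness of the family of such functions via multiplier ideals — is the same as the paper's, but both pillars have gaps. The first is an outright error: replacing $\psi$ by $\psi\circ\retr_\cX$ does not produce a $\theta$-psh model function. Pushforwards of nef classes are not nef (this failure is precisely what makes the equicontinuity arguments of Section~\ref{S209} delicate), and composition with $\retr_\cX$ does not preserve $\theta$-plurisubharmonicity. Concretely, if $\cX_0$ is irreducible then $\retr_\cX$ is the constant map onto the divisorial point $x_{E_0}$, so $\psi\circ\retr_\cX$ is a constant, and constants are $\theta$-psh only when $\theta_\cX$ is nef; but one can have $\theta$ determined on such an $\cX$ with $\{\theta\}$ ample and $\theta_\cX$ not nef (e.g.\ $\theta_\cX=\cL+t\cD$ with $\cD$ horizontal and negative on a curve of $\cX_0$), while $\theta$-psh model functions $\psi$ still exist on higher models by Proposition~\ref{prop:positiverep}. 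Moreover $\psi\circ\retr_\cX$ need not even be a model function: model functions determined on $\cX$ are affine on the faces of $\D_\cX$, whereas $\psi|_{\D_\cX}$ is only piecewise affine. The paper gets model functions above $\f$ by a different route: since $\f$ is usc, choose $u\in C^0(\Xa)$ with $u\ge\f$ and $u(x)\le\f(x)+\e$; then $\f\le P_\theta(u)$, and the regularity of envelopes (Theorem~\ref{P202}) yields a $\theta$-psh model function within $\e$ of $P_\theta(u)$, hence $\ge\f$ and close to $\f(x)$ at $x$.

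The directedness step, which you correctly identify as the principal obstacle, is left essentially unproved, and the sketch as written does not go through. First, $\min(\psi_1,\psi_2)$ is not a model function (maxima of model functions are model functions via sums of ideals, but minima are not), so $\cJ(m\min(\psi_1,\psi_2))$ is undefined in the paper's framework. Second, even granting a definition, the two inequalities you need, $\f\le\psi_3$ and $\psi_3\le\min(\psi_1,\psi_2)+O(1/m)$, do not follow from global generation alone: multiplier ideals always satisfy $\tfrac1m\log|\cJ(m\phi)|\ge\phi$, so the associated section algebra gives a function bounded \emph{below} by $\phi$, and the matching upper bound is the hard analytic content, requiring the uniform global generation theorem (Nadel vanishing plus Castelnuovo--Mumford, i.e.\ Appendix~B) together with subadditivity, exactly as in Theorem~\ref{thm:reg}. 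The efficient organization, which the paper adopts, is to prove once and for all that $P_\theta(u)$ is a uniform limit of $\theta$-psh model functions for every $u\in C^0(\Xa)$; both the pointwise approximation from above and the directedness of $\{\psi\in\cD(\Xa)\cap\PSH(\Xa,\theta)\mid\psi>\f\}$ are then formal consequences (Lemma~\ref{lem:contenv}), the latter obtained by applying envelope regularity to the continuous function $u=\min\{\psi_1,\psi_2\}$ and subtracting a small constant. In short, your plan points at the right tools, but the actual proofs of the two steps it relies on are missing, and the retraction shortcut in the first step is false.
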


When $\dim X=1$, Theorem B is a special case of~\cite[Th\'eor\`eme
3.4.19]{thuphd}. Thanks to Dini's lemma, Theorem B implies a non-Archimedean version of the Demailly-Richberg theorem, stating that every continuous $\theta$-psh function is a \emph{uniform} limit over $X$ of $\theta$-psh model functions. In other words, for continuous metrics our definition of semipositivity agrees with Zhang's. 

\smallskip

Let us briefly explain how Theorem~A above is proved. The first important fact is that any dual complex $\D_\cX$ comes equipped with a natural affine structure~\cite{KS} such that any $\theta$-psh function  is convex on the faces of $\D_\cX$. On this complex we put any euclidean metric compatible with the affine structure. The statement that we actually prove, and which implies Theorem A, is

\begin{thmC}
For each dual complex $\D_\cX$ there exists a constant $C>0$, such that 
$\f|_{\D_\cX}$ is \emph{Lipschitz continuous} with Lipschitz constant at most $C$
for any $\theta$-psh model function $\f$.
\end{thmC}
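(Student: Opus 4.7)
The plan is to reduce the Lipschitz estimate on $\D_\cX$ to a uniform one-sided lower bound on the outgoing slopes of $\f$ at vertices of $\D_\cX$, and then to obtain this lower bound via intersection theory on models.

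Since $\f$ is convex on each face of $\D_\cX$ (as recalled just before the theorem), a standard reduction for convex PL functions on a simplex bounds the Lipschitz constant on each face, up to a multiplicative constant depending only on the shape of the face (hence absorbed into $C$), by the maximum absolute value of the outgoing slopes $\sigma_v(v\to v')$ of $\f$ at the vertices $v$ of $\D_\cX$ along each incident edge $[v,v']$. Here $\sigma_v(v\to v')$ denotes the right-derivative of $\f|_{[v,v']}$ at $v$ parameterized by arc length; for a model function coming from a higher model, it equals the limit of the discrete slopes $(\f(w)-\f(v))/d(v,w)$ as $w$ runs over nearby divisorial vertices along $[v,v']$. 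The key observation is that only a uniform \emph{lower} bound on these outgoing slopes is needed: convexity of $\f$ on $[v,v']$ gives monotonicity of the one-variable derivative, which translates into
\[
\sigma_v(v\to v') + \sigma_{v'}(v'\to v) \;\leq\; 0.
\]
Consequently, a uniform estimate $\sigma_v(v\to v')\geq -C$ for every vertex $v$ and every edge-direction at $v$ automatically produces the matching upper bound $\sigma_v(v\to v')\leq C$, by applying the lower bound to the opposite outgoing slope. This controls $|\sigma_v|$ and $|\sigma_{v'}|$ on each edge, and hence the Lipschitz constant.

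To obtain the lower bound, write $\f$ as a model function on an SNC model $\cX'$ dominating $\cX$, associated to a vertical $\Q$-divisor $\cM=\sum_\alpha a_\alpha E'_\alpha$ on $\cX'$ with $\cL+\cM$ nef on $\cX'_0$. Analysing the piecewise-affine structure of $\f|_{[v,v']}$ in the refined complex $\D_{\cX'}$, one expresses $\sigma_v(v\to v')$ as a product $c\cdot\cM\cdot Z$, for a positive combinatorial factor $c$ depending only on the multiplicities appearing in $\cX$ and $\cX'$ and a $1$-cycle $Z$ on $\cX'_0$ naturally attached to the edge-direction at $v$ (concretely, one may take $Z$ in the stratum of $\cX'_0$ corresponding to $v$ that arises as the proper transform of the stratum corresponding to $v$ in $\cX_0$). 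The $\theta$-psh condition $(\cL+\cM)\cdot Z\geq 0$ then yields
\[
\sigma_v(v\to v') \;=\; c\cdot \cM\cdot Z \;\geq\; -c\cdot \cL\cdot Z,
\]
so it remains to bound $\cL\cdot Z$ uniformly as $\cX'$ varies. This is achieved using the projection formula: since $Z$ is a proper transform of a cycle already present in $\cX$, the intersection $\cL\cdot Z$ on $\cX'$ equals a fixed intersection number on $\cX$ computed from the incidence data of $\D_\cX$ alone, with ampleness of $L$ ensuring that the finite list of relevant reference cycles on $\cX$ all have finite $\cL$-degree.

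The main obstacle lies in the intersection-theoretic description of the outgoing slopes: identifying the correct test cycle $Z$, pinning down the combinatorial factor $c$ in terms of multiplicities, and verifying the translation invariance of the resulting estimate (which reflects the fact that shifting $\f$ by a constant amounts to modifying $\cM$ by a rational multiple of $\cX'_0$, a principal divisor). Once this is in place, the passage from the nef inequality $\cM\cdot Z\geq -\cL\cdot Z$ to a model-independent lower bound on $\sigma_v(v\to v')$ follows from the projection formula, and the convexity-based symmetry of the first two steps packages this into the desired uniform Lipschitz bound.
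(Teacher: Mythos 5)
Your reduction in the first paragraph is incorrect, and this is a genuine gap. You claim that for a convex PL function on a simplex the Lipschitz constant is controlled, up to a shape constant, by the maximum absolute value of the outgoing slopes at the \emph{vertices} along incident \emph{edges}. This fails even in dimension two. Take $\tau$ to be the standard triangle with vertices $A=(0,0)$, $B=(1,0)$, $C=(0,1)$, and let $\f$ be the convex PL function that vanishes at $A,B,C$ and equals $-M$ at the barycenter. Then $\f \equiv 0$ on all three edges (it equals $-3My$, $-3Mx$, and $3M(x+y-1)$ on the three pieces), so every outgoing slope at every vertex along every edge is $0$, while the Lipschitz constant of $\f$ on $\tau$ is of order $M$. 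Thus no uniform bound on vertex-to-vertex edge slopes controls the Lipschitz constant; one needs control of directional derivatives at \emph{arbitrary} boundary points. This is exactly why the paper's argument is more elaborate than what you propose: Proposition~\ref{prop:keyprop} proves $D_v\f(e) \ge -C$ for every vertex $e$ of a face $\tau$ and every \emph{rational point $v$ in the relative interior of the opposite face} $\sigma$, and the passage to a Lipschitz bound goes through Proposition~\ref{prop:lip}, which uses derivatives at the radial projections $\pi_e(v)$ of interior points, together with an induction on $\dim\tau$ to control $\|\f\|_{C^0(\partial\tau)}$. The symmetry observation you use to turn a one-sided bound into a two-sided one is correct, but it must be run at these general boundary points, not only at vertices.

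The second half of your sketch — expressing directional derivatives as intersection numbers of the nef class with a $1$-cycle, and reducing to a fixed intersection on $\cX$ via the projection formula — is the right spirit and close to Lemmas~\ref{lem:pos1} and~\ref{lem:pos2}. But the ``main obstacle'' you flag is precisely where the bulk of the work in Section~6 lies, and it is driven by the correction above: because the base point $v$ is a general interior rational point of a face (not a vertex), there is no stratum of any given model $\cX'$ whose generic point is the center of $v$. The paper resolves this with the construction in Theorem~\ref{T201}: a vertical blow-up $\cX'\to\cX$ built from a projective simplicial subdivision $\D'(\e,v)$ of $\D_\cX$ that introduces $v$ (and nearby points $e_j'=\e e_j+(1-\e)v$) as vertices, thereby creating a stratum $E'_J$ whose generic point is the center of $v$ and over which the slope identity of Lemma~\ref{lem:pos1} can be computed exactly. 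The projection formula is then applied to that $E'_J$, with Lemma~\ref{lem:neg} (Negativity Lemma) supplying the effectivity needed to handle the discrepancy $\mu^*\mu_*G - G$ on a yet-higher model where $\f$ is actually determined. Your proposal does not confront the fact that the relevant ``test cycle $Z$'' does not exist until one has performed this auxiliary blow-up, nor the fact that even the vertex bound on its own is established separately in~\S\ref{secbounding} by a different (and simpler) argument using the connectedness of $\cX_0$ and a Perron--Frobenius-type estimate, before the inductive slope estimate starts.
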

This result is proved in two steps. 
Assuming, as we may,  that $\sup_{\D_\cX}\f=0$, we first bound $\f$ from below on the vertices of $\D_\cX$. This is done by exploiting the non-negativity of certain intersection numbers, a direct consequence of the model metric $\|\cdot\|e^{-\f}$ being determined by a nef line bundle on some model. 

The next step is to prove the uniform Lipschitz bound. Here again, the general idea is to exploit the non-negativity of certain intersection numbers, but the argument is more subtle than in the first step. This time, the intersection numbers are computed on (possibly singular) blowups of $\cX$ corresponding to carefully chosen combinatorial decompositions of $\D_\cX$, in the spirit of the toroidal constructions of~\cite{KKMS}.
In~\cite{izumi} we adapt these techniques to prove a uniform version of Izumi's theorem~\cite{Izu85}.  

\medskip
The proof of Theorem~B is of a different nature. Using Theorem A, we first show that the usc upper envelope of any family of $\theta$-psh functions remains $\theta$-psh, a basic property of $\theta$-psh functions in the complex case. As a consequence, given any continuous function $u\in C^0(\Xa)$ the set of all $\theta$-psh functions $\psi$ such that $\psi\le u$ on $\Xan$ admits a largest element, called the \emph{$\theta$-psh envelope} of $u$ and denoted by $P_\theta(u)$. On the other hand, by density of $\cD(\Xa)$ in $C^0(\Xa)$, we may write any given function $\f\in\PSH(\Xa,\theta)$ as the pointwise limit of a decreasing family of (\emph{a priori} not $\theta$-psh)
model functions $u_j$, using only the upper semicontinuity of $\f$. It is not difficult to see that $P_\theta(u_j)$ decreases to $\f$, and we are thus reduced to showing that the $\theta$-psh envelope of any model function is the uniform limit of a sequence of $\theta$-psh model functions. This is proved using multiplier ideals, in the spirit of~\cite{DEL,ELS,hiro}. The required properties of multiplier ideals are shown to hold on regular models in Appendix~B, the key point being to show that the expected version of the Kodaira vanishing theorem holds in this context.\footnote{A simpler proof of the relevant vanishing theorem appeared very recently in \cite{mustata-nicaise}, which was posted after the first version of the present paper.}

\medskip
Let us comment on our assumptions on the field $K$.
It is expected that pluripotential theory can be developed on
Berkovich spaces over arbitrary non-Archimedean fields, and as we
mentioned before the first steps in that direction have been taken
in~\cite{CLD} (see also~\cite{gub13} for a nice account on these developments), where the notions of positive forms and  currents are
introduced, partially building upon ideas by Lagerberg~\cite{Lag12}. 

Our approach is geometric, and we restrict our attention to the discretely valued case in order to avoid
the use of formal models over non-noetherian rings. 
We refer to~\cite{Gub98,Gub03} for related works on semipositive metrics in the general setting of a complete
non-Archimedean field. More importantly, Appendix~B relies on the discreteness assumption. 

Furthermore, we use the assumption that $K$ has residue characteristic zero in two ways, through the existence of SNC models and through the cohomology vanishing properties of multiplier ideals. In positive residue characteristic, the existence of SNC models is not known and it is much harder to construct retractions of $\Xan$ onto suitable  complexes (embedded or not). We refer to Berkovich~\cite{Ber2}, Hrushovski-Loeser~\cite{HL}, and Thuillier~\cite{Thu11} for important contributions to the understanding of this problem.

\medskip
The paper is organized as follows. The first two sections present the necessary background on Berkovich spaces and models. The exposition is largely self-contained (hence perhaps a bit lengthy), as we feel that the easy arguments that our setting allows are worth being explained. Section 3 is devoted to dual complexes. The main technical result is Theorem~\ref{T201}, on the existence of blowups attached to decompositions of a dual complex. Section 4 deals with closed $(1,1)$-forms, defined using a numerical equivalence variant of the approach of~\cite{BGS}. In Section 5 we prove some basic properties of $\theta$-psh model functions. Section 6 contains the proof of Theorem A. Section 7 is devoted to the first properties of general $\theta$-psh functions. Theorem B is proved in Section 8. Finally, Appendix A contains a technical result on Lipschitz constants of convex functions, while Appendix B establishes the expected cohomology vanishing properties of multiplier ideals in our setting.

\begin{ackn}
  We would like to thank Jean-Beno\^it Bost, Antoine Chambert-Loir,
  Antoine Ducros, Christophe Soul\'e, and Amaury Thuillier for
  interesting discussions related to the contents of the paper. We are
  particularly grateful to J\'anos Koll\'ar, Osamu Fujino and Mircea
  Musta\c{t}\v{a} for their help regarding Appendix B. 
  Finally, we would like to thank the anonymous referee for a careful reading
  and many useful remarks.

  Our work was carried out at several institutions including the IHES, IMJ,
  the \'Ecole Polytechnique, and the University of Michigan. 
  We gratefully acknowledge their support. 
\end{ackn}



\section{Models of varieties over discrete valuation fields}

\subsection{$S$-varieties}\label{sec:mod}
All schemes considered in this paper are separated and Noetherian, and all ideal sheaves are coherent. Let $R$ be a complete discrete valuation ring with fraction field $K$ and residue field $k$. We shall assume that $k$ has characteristic zero (but we don't require it to be algebraically closed). Let $\unipar\in R$ be a uniformizing parameter and normalize the corresponding 
absolute value on $K$ by $\log|\unipar|^{-1}=1$. Each choice of a field of representatives of $k$ in $R$ then induces an isomorphism $R\simeq k\cro{\unipar}$ by Cohen's structure theorem. 

Write $S:=\Spec R$.  We will use the following terminology. An \emph{$S$-variety} is a flat integral $S$-scheme $\cX$ of finite type. We denote by $\cX_0$ its special fiber and by $\cX_K$ its generic fiber, and we write $\kappa(\xi)$ for the residue field of a point $\xi\in\cX$.  An ideal sheaf $\fa$ on $\cX$ is \emph{vertical} if it is 
co-supported on the special fiber, and a fractional ideal sheaf $\fa$
is vertical if $\unipar^m\fa$ is a vertical ideal sheaf for some
positive integer $m$. 
A \emph{vertical blowup} $\cX'\to\cX$ is the
normalized blowup along a vertical ideal sheaf $\fa$; 
this is the same as the blowup along the integral closure of $\fa$.
We will occasionally consider a blowup along a fractional ideal sheaf $\fa$, which simply means the blowup along $\varpi^m\fa$ for any $m\in\N$ such that $\varpi^m\fa$ is an actual ideal sheaf. 

Except for Appendix B, we will use additive notation for Picard groups, and we write $\cL+\cM:=\cL\otimes\cM$, and $m\cL:=\cL^{\otimes m}$ for $\cL,\cM\in\Pic(\cX)$. We denote by $\Div_0(\cX)$ the group of \emph{vertical Cartier divisors} of $\cX$, \ie those Cartier divisors on $\cX$ that are supported on the special fiber. When $\cX$ is normal, it is easy to see that $\Div_0(\cX)$ is a free $\Z$-module of finite rank and that the natural sequence
$$
0\to\Z\cX_0\to\Div_0(\cX)\to\Pic(\cX)\to\Pic(\cX_K)
$$ 
is exact. The last arrow to the right is (by definition) surjective if $\cX$ is semi-factorial. 
This happens, for instance, when $\cX$ is regular. 
The existence of semi-factorial models is proved over any DVR by P\'epin in~\cite{pepin}. 
In our setting of residue characteristic zero, the existence of
regular models is guaranteed by a result of Temkin, see below. 

\smallskip
Given an $S$-variety $\cX$ let $(E_i)_{i\in I}$ be the (finite) set of
irreducible components of its special fiber $\cX_0$. Endow
each $E_i$ with the reduced scheme structure.
For each subset $J\subset I$, set $E_J:=\bigcap_{j\in J} E_j$. 
\begin{defi}\label{defi:snc}
  Let $\cX$ be an $S$-variety. 
  We say that $\cX$ is \emph{vertically $\Q$-factorial} if each component $E_i$ is $\Q$-Cartier. 
  We say that $\cX$ is \emph{SNC} if:
  \begin{itemize}
  \item[(i)] the special fiber $\cX_0$ has \emph{simple normal crossing support};
  \item[(ii)] $E_J$ is irreducible (or empty) for each $J\subset I$.
  \end{itemize} 
\end{defi}
Condition (i) is equivalent to the following two conditions. 
First $\cX$ is regular. Given a point $\xi\in\cX_0$, let $I_\xi\subset I$ be the set indices $i\in I$ for which
$\xi\in E_i$, and pick a local equation
$z_i\in\cO_{\cX,\xi}$ of $E_i$ at $\xi$ for each $i\in I_\xi$. 
We then also impose that  $\{z_i,\,i\in I_\xi\}$ can be completed
to a regular system of parameters of $\cO_{\cX,\xi}$. 

Condition (ii) is not imposed in the usual definition of a simple normal crossing divisor, but can always be achieved from~(i) by further blowing-up along components of the possibly non-connected $E_J$'s. Since $k$ has characteristic zero, each $S$-variety is a $\Q$-scheme, which is furthermore excellent since it has finite type over $S$. It therefore follows from~\cite{Tem} that for any $S$-variety $\cX$ with smooth generic fiber, there exists a vertical blowup $\cX'\to\cX$ such that $\cX'$ is SNC.

\subsection{Numerical classes and positivity}\label{S205}
Let $\cX$ be a normal projective $S$-variety.

\begin{lem}\label{lem:def-c1}
Assume that $\cL\in\Pic(\cX)$ is \emph{nef} on $\cX_0$, \ie $\cL\cdot C\ge 0$ for all $k$-proper curves $C$ in $\cX_0$. Then $\cL$ is also nef on $\cX_K$, \ie $\cL\cdot C\ge 0$ for all $K$-proper curves $C$ in $\cX_K$ as well.
\end{lem}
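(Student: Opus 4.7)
The plan is to reduce the statement on $\cX_K$ to intersection numbers on $\cX_0$ by spreading out a $K$-proper curve to a flat $S$-curve and invoking constancy of degrees in flat families.

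First, let $C\subset\cX_K$ be a $K$-proper irreducible curve. Since $\cX_K$ is projective over $K$ (as $\cX$ is projective over $S$), $C$ is automatically projective. I would let $\cC$ denote the scheme-theoretic closure of $C$ in $\cX$, equipped with its reduced induced subscheme structure (or equivalently, the unique integral closed subscheme of $\cX$ whose generic fiber equals $C$). Then $\cC$ is an integral closed subscheme of $\cX$, projective over $S$ and of relative dimension $1$. Because $\cC$ is integral and $C\neq\emptyset$, the structure morphism $\cC\to S$ dominates $S$; since $R$ is a DVR, torsion-free equals flat, so $\cC\to S$ is flat. Hence $\cC$ is a flat projective $S$-curve with generic fiber $\cC_K=C$.

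Next I would apply the standard fact that in a flat projective family of curves the degree of a line bundle is locally constant on $S$. Concretely, the Euler characteristic $\chi(\cC_s,\cL^{\otimes n}|_{\cC_s})$ is constant in $s\in S$ as a polynomial in $n$, so the degree of $\cL|_{\cC_K}$ on $C$ equals the degree of $\cL|_{\cC_0}$ computed on the special fiber. Writing the Weil cycle $[\cC_0]=\sum_i m_i [C_i]$, with $C_i$ the irreducible components of $\cC_0$ (each a $k$-proper curve in $\cX_0$) and $m_i\ge 0$ the multiplicities, this gives
\[
\cL\cdot C \;=\; \deg_C\bigl(\cL|_C\bigr) \;=\; \deg_{\cC_0}\bigl(\cL|_{\cC_0}\bigr) \;=\; \sum_i m_i\,\bigl(\cL\cdot C_i\bigr)_{\cX_0}.
\]
Each summand is non-negative by the assumption that $\cL$ is nef on $\cX_0$, so $\cL\cdot C\ge 0$.

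I do not foresee a serious obstacle; the only mild point to justify carefully is the flatness of $\cC\to S$ (which is automatic from $\cC$ being integral and dominating $S$) and the identification of the degree of $\cL|_{\cC_K}$ with $\sum m_i(\cL\cdot C_i)$, which is the standard consequence of constancy of Hilbert polynomials in a flat family combined with additivity of the Euler characteristic along the components of $\cC_0$. No appeal to resolution of singularities or to the SNC structure of $\cX$ is needed, so the lemma holds for an arbitrary normal projective $S$-variety.
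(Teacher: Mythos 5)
Your proposal is correct and follows exactly the same route as the paper: take the schematic closure $\cC$ of $C$ in $\cX$, observe it is flat over $S$, use constancy of degree in a flat family to get $\cL\cdot C=\cL\cdot\cC_0$, and conclude since $\cC_0$ is an effective combination of vertical curves. You merely supply more of the routine justifications (flatness from integrality over a DVR, Hilbert polynomial argument) than the paper does.
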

We will then simply say that $\cL$ is nef. A \emph{curve} is by definition reduced and irreducible. 
\begin{proof} Let $C$ be $K$-proper curve in $\cX_K$ and let $\cC$ be its closure in $\cX$ equipped with its reduced structure. By \cite[Proposition III.9.7]{Har}, $\cC$ is flat over $S$. The degrees of $\cL|_\cC$ on the generic fiber and on the special fiber therefore coincide, which reads $\cL\cdot C=\cL\cdot\cC_0$. Now $\cC_0$ is an effective linear combination of vertical curves, and the result follows.
\end{proof}

We recall the following standard notions.
\begin{defi}\label{D201}
  Let $\cX$ be a normal projective $S$-variety. 
  \begin{itemize}
  \item[(i)] 
    The space $N^1(\cX/S)$ of \emph{codimension 1 numerical classes} 
    is defined as the quotient of $\Pic(\cX)_\R$ by the subspace spanned by 
    \emph{numerically trivial line bundles}, \ie those $\cL\in\Pic(\cX)$ 
    such that $\cL\cdot C=0$ for all projective curves contained in a 
    fiber of $\cX\to S$.
  \item[(ii)] The \emph{nef cone} $\Nef(\cX/S)\subset N^1(\cX/S)$ 
    is defined as the set of numerical classes $\a\in N^1(\cX/S)$ 
    such that $\a\cdot C\ge 0$ for all projective curves contained in a fiber of $\cX\to S$. 
  \end{itemize}
\end{defi}
Note that the $\R$-vector space $N^1(\cX/S)$ is finite dimensional. Indeed Lemma~\ref{lem:def-c1} shows that the restriction map $N^1(\cX/S)\to N^1(\cX_0/k)$ is
injective, and the latter space is finite dimensional 
since $\cX_0$ is projective over $k$. 
Observe also that $\Nef(\cX/S)$ is a closed convex cone of $N^1(\cX/S)$. Lemma~\ref{lem:def-c1} implies that $\Nef(\cX/S)=\Nef(\cX_0/k)\cap N^1(\cX/S)$ under the injection $N^1(\cX/S)\to N^1(\cX_0/k)$. 

We have the following standard fact:
\begin{lem}\label{lem:ample} 
  Let $\pi:\cX'\to\cX$ be a vertical blowup. 
   \begin{itemize}
  \item[(i)] There exists a $\pi$-ample divisor $A\in\Div_0(\cX')$. 
  \item[(ii)] If $\cL\in\Pic(\cX)$ is ample then there exists $m\in\N$ such that $\pi^*(m\cL|_{\cX_K})$ extends to an ample line bundle $\cL'$ on $\cX'$.
  \end{itemize}
\end{lem}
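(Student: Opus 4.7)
The plan for (i) is to unwind the definition of a vertical blow-up. By hypothesis $\cX' = \mathrm{Bl}_\fa \cX$ for some vertical fractional ideal sheaf $\fa$; choose $N \ge 0$ such that $\fa' := \unipar^N \fa$ is an honest vertical ideal sheaf of $\cO_\cX$. The graded $\cO_\cX$-algebras $\bigoplus_n \fa^n$ and $\bigoplus_n \fa'^n$ are isomorphic via multiplication by $\unipar^{nN}$ in degree $n$, so their Proj constructions are canonically $\cX$-isomorphic, and we may replace $\fa$ by $\fa'$. On $\cX' = \mathrm{Proj}\bigoplus_n \fa'^n$ the tautological line bundle $\cO_{\cX'}(1) = \fa'\cdot\cO_{\cX'} = \cO_{\cX'}(-E')$ is $\pi$-ample by construction, where $E'$ is the effective Cartier divisor cut out by $\fa'\cdot\cO_{\cX'}$. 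Since $\fa'$ is vertical, $E'$ is supported on $\cX'_0$, and $A := -E' \in \Div_0(\cX')$ is the desired $\pi$-ample vertical divisor.

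For (ii), I would combine (i) with the standard relative ampleness criterion (\eg EGA~II, 4.6.13): if $\pi$ is projective, $\cM$ is $\pi$-ample on $\cX'$ and $\cN$ is ample on $\cX$, then $\cM + m\pi^*\cN$ is ample on $\cX'$ for all sufficiently large $m$. Applied with $\cM = \cO_{\cX'}(A)$ from (i) and $\cN = \cL$, this yields some $m \in \N$ such that $\cL' := \cO_{\cX'}(A) + m\pi^*\cL \in \Pic(\cX')$ is ample. Since $\fa'$ is vertical we have $\fa'|_{\cX_K} = \cO_{\cX_K}$, so $\pi$ restricts to an isomorphism $\pi_K: \cX'_K \simto \cX_K$ (the blow-up of the unit ideal), and since $A$ is supported on $\cX'_0$ its restriction to $\cX'_K$ is trivial. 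Hence $\cL'|_{\cX'_K}$ is identified via $\pi_K$ with $m\cL|_{\cX_K}$, i.e.\ $\cL'$ extends $\pi^*(m\cL|_{\cX_K})$.

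No serious obstacle is anticipated. Both parts amount to careful bookkeeping around the universal property of the blow-up, combined with the standard criterion for ampleness of a $\pi$-ample line bundle twisted by the pullback of an ample line bundle from the base. The only mildly delicate point is the passage from the fractional ideal $\fa$ to the honest ideal $\fa'$, which is harmless since the associated Rees algebras are canonically isomorphic.
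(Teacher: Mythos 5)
Your proposal is correct and follows essentially the same route as the paper: the $\pi$-ample divisor in (i) comes from the universal property of the blow-up applied to the (suitably cleared) vertical ideal sheaf, and (ii) is the standard fact that a $\pi$-ample class plus a large multiple of the pullback of an ample class is ample. Your explicit reduction from a fractional ideal to an honest one and the check on the generic fiber are details the paper leaves implicit, but the argument is the same.
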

\begin{proof} 
  By definition, there exists a vertical ideal sheaf $\fa$ on $\cX$ such that 
  $\pi$ is obtained as the blowup of $\cX$ along $\fa$. 
  The universal property of blowups yields a $\pi$-ample Cartier divisor 
  $A$ on $\cX'$ such that $\fa\cdot\cO_{\cX'}=\cO_{\cX'}(A)$, and $A$ 
  is also vertical since $\fa$ is, which proves~(i). If $\cL$ is ample on $\cX$ then $m\pi^*\cL+A$ is ample on $\cX'$ for $m\gg 1$, and~(ii) follows. 
\end{proof}

Recall that an $\R$-line bundle on $\cX$ (resp.\ $\cX_K$) is ample if it can be written as a positive linear combination of ample line bundles. 
\begin{cor}\label{cor:ample} 
  If $L\in\Pic(\cX_K)_\R$ is ample, then $L$ extends to an ample $\R$-line bundle $\cL'\in\Pic(\cX')_\R$ for all sufficiently high vertical blowups $\cX'\to\cX$. 
\end{cor}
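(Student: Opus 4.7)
The plan is to prove the corollary in two stages: first, establish that $L$ extends to an ample $\R$-line bundle on \emph{some} vertical blow-up $\cX_0\to\cX$; then show that this property propagates to every higher vertical blow-up of $\cX_0$.

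For the first stage, decompose $L=\sum_{i=1}^{r}c_iL_i$ with $c_i>0$ and $L_i\in\Pic(\cX_K)$ integral and ample. After replacing each $L_i$ by a suitable positive multiple (absorbed into $c_i$), we may assume $L_i$ is very ample, so that it defines a closed immersion $\cX_K\hookrightarrow\P^{N_i}_K$. The normalization of the schematic closure of $\cX_K$ inside $\P^{N_i}_S$ is a normal projective $S$-variety $\cZ_i$ with generic fiber $\cX_K$, endowed with an ample line bundle $\cM_i$ extending $L_i$. Choose a normal projective $S$-variety $\cX_0$ that dominates $\cX$ and each $\cZ_i$ via morphisms that are isomorphisms on the generic fiber (obtained, for instance, as the normalization of the closure of the diagonal of $\cX_K$ inside $\cX\times_S\cZ_1\times_S\cdots\times_S\cZ_r$). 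Each induced morphism is a projective birational morphism inducing an isomorphism on generic fibers, hence is a vertical blow-up in the sense of the paper (blow-up of a vertical fractional ideal sheaf, by the universal property of blow-ups together with Raynaud--Gruson flattening). Let $\pi_i:\cX_0\to\cZ_i$. By Lemma~\ref{lem:ample}(i) there exists $\pi_i$-ample $A_i\in\Div_0(\cX_0)$, and by Lemma~\ref{lem:ample}(ii) the class $m_i\pi_i^*\cM_i+A_i$ is ample on $\cX_0$ for $m_i\gg0$. Since $A_i$ is vertical, the ample $\Q$-line bundle $\widetilde{\cL}_i:=\pi_i^*\cM_i+\tfrac{1}{m_i}A_i\in\Pic(\cX_0)_\Q$ restricts to $L_i$ on $\cX_K$. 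The positive $\R$-linear combination $\cL:=\sum_i c_i\widetilde{\cL}_i\in\Pic(\cX_0)_\R$ is therefore ample and extends $L$.

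For the second stage, let $\pi:\cX'\to\cX_0$ be any vertical blow-up. Writing the ample class $\cL$ obtained above as a positive $\R$-combination of integral ample line bundles on $\cX_0$ and applying Lemma~\ref{lem:ample}(ii) to $\pi$ and each such summand, we obtain, after dividing by the appropriate multiplicities and recombining with the original coefficients, an ample $\R$-line bundle $\cL'\in\Pic(\cX')_\R$ that again restricts to $L$ on the generic fiber (the added vertical contributions being invisible there). This proves the statement for every $\cX'$ dominating $\cX_0$, i.e.\ for all sufficiently high vertical blow-ups.

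The principal subtlety is in the first stage, specifically the construction of the common vertical blow-up $\cX_0$ together with the fact that each induced map $\pi_i:\cX_0\to\cZ_i$ is a vertical blow-up, so that Lemma~\ref{lem:ample} may be invoked on $\pi_i$. Granting this, the rest is a routine double application of Lemma~\ref{lem:ample}(ii): first to convert a mere pullback extension from the auxiliary ample models $\cZ_i$ into an ample extension on $\cX_0$, and then to preserve ampleness when passing to still higher blow-ups.
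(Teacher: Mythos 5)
Your argument is correct and is, as far as I can tell, the natural filling-in of what the authors gloss as ``a direct consequence of Lemma~\ref{lem:ample}'': the paper offers no proof of Corollary~\ref{cor:ample}, so there is no written argument to compare against. The two nontrivial ingredients you supply beyond the statement of the lemma are exactly the ones the phrase ``direct consequence'' conceals: (a) that each integral ample $L_i$ on $\cX_K$ lives on \emph{some} model $\cZ_i$ as the restriction of an ample line bundle (via the normalized Zariski closure of a projective embedding by a very ample multiple), which is needed because Lemma~\ref{lem:ample}(ii) only applies to line bundles that are already ample on the source model; and (b) the directed-set structure on $\cM_X$ (asserted without proof in \S2.2), which lets you dominate $\cX$ and the $\cZ_i$ simultaneously by a single $\cX_0$ and turns the induced morphisms into vertical blow-ups to which the lemma applies. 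You correctly flag (b) as the main subtlety and sketch a Raynaud--Gruson flattening justification; this is consistent with what the paper implicitly assumes. The second stage is a routine re-application of Lemma~\ref{lem:ample}(ii) to each integral ample piece of $\cL$ on $\cX_0$ (after clearing denominators), exactly as you say; one could shortcut it marginally by applying the lemma directly to $\cX'\to\cZ_i$, but this is cosmetic. No gaps.
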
  
\begin{proof} 
Write $L=\sum_ic_iL_i$ where $c_i\in\R_{>0}$ and $L_i\in\Pic(\cX_K)$ 
is ample for all $i$. We may assume that $L_i$ is very ample for each $i$, 
so that the linear system $|L_i|$ embeds $\cX$ into a suitable
projective space $\P^{r_i}_K$ over $K$. 
Let $\cX_i$ be the normalization of the closure of $\cX_K$ in $\P^{r_i}_S$ and 
$\cL_i$ the restriction of $\cO(1)$ to $\cX_i$. 
Let $\cX'$ be any normal $S$-variety dominating $\cX$ as well as all the $\cX_i$, 
and write $\pi_i:\cX'\to\cX_i$ for the associated vertical blowups.
By Lemma~\ref{lem:ample}~(ii) we can find $m_i\in\N$ and 
ample line bundles $\cL'_i$ on $\cX'$ such that 
$\cL'_i|_{\cX'_K}=\pi_i^*(m_i\cL_i|_{\cX_{i,K}})$.
We can then pick $\cL':=\sum_i\frac{c_i}{m_i}\cL'_i$.
\end{proof}

We shall use the following version of the 
\emph{Negativity Lemma}, \cf~\cite[Lemma 3.39]{KM}. The proof we give is a variant of the  argument used in~\cite[Proposition~2.12]{BdFF}. 

\begin{lem}\label{lem:neg} 
  Assume that $\cX$ is vertically $\Q$-factorial and let $\pi:\cX'\to\cX$ be a vertical blowup. If $D\in\Div_0(\cX')_\R$ is $\pi$-nef then $\pi^*\pi_*D-D$ is effective. 
\end{lem}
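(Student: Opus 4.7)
The vertical $\Q$-factoriality assumption guarantees that every vertical $\R$-Weil divisor on $\cX$ is $\R$-Cartier, so $\pi_*D\in\Div_0(\cX)_\R$ is $\R$-Cartier and its pullback $\pi^*\pi_*D\in\Div_0(\cX')_\R$ is defined. Setting $E\=\pi^*\pi_*D-D$, the fact that $\pi_*\pi^*=\id$ on $\R$-Cartier divisors of the normal base yields $\pi_*\pi^*\pi_*D=\pi_*D$, so $\pi_*E=0$; in particular every prime component of $E$ is a vertical $\pi$-exceptional divisor. Moreover, for any $k$-proper curve $C\subset\cX'$ contracted by $\pi$, the projection formula gives $\pi^*\pi_*D\cdot C=\pi_*D\cdot\pi_*C=0$, while $D\cdot C\ge 0$ by the $\pi$-nef hypothesis, so $E\cdot C\le 0$. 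Thus $-E$ is $\pi$-exceptional and $\pi$-nef.

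\textbf{Reduction to a surface.} What remains is an instance of the classical negativity lemma (\cf~\cite[Lemma 3.39]{KM}): a $\pi$-exceptional $\R$-Cartier divisor $E$ with $-E$ relatively nef must be effective. Write $E=E_+-E_-$ with $E_\pm\ge 0$ of disjoint support and suppose toward contradiction that $E_-\neq 0$. Since every component of $E_\pm$ is $\pi$-exceptional, the image $Z\=\pi(\supp E_-)\subset\cX$ has codimension $\ge 2$. Fix a very ample $H$ on $\cX$ (available since $\cX\to S$ is projective) and cut down by $\dim\cX-2$ general members of $|NH|$ for $N\gg 1$; a Bertini-type argument produces a normal integral surface $\Sigma\subset\cX$ meeting a generic point of a component of $Z$ such that $\Sigma'\=\pi^{-1}(\Sigma)$ is also a normal surface, $\pi|_{\Sigma'}:\Sigma'\to\Sigma$ is birational, and each prime component $F_i\subset\supp E$ restricts to an irreducible curve $\tilde F_i\subset\Sigma'$ with multiplicity one, so that $E|_{\Sigma'}=\sum_i e_i\tilde F_i$ inherits the coefficients of $E$, in particular with $e_i<0$ for some $i$.

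\textbf{Conclusion and main obstacle.} On the normal surface $\Sigma'$, all $\tilde F_i$ are contracted by $\pi|_{\Sigma'}$. The Zariski--Mumford theorem asserts that the intersection matrix of curves contracted by a birational morphism of normal surfaces is negative definite on the $\R$-span of the curves lying above any single point. Combined with the inequalities $E|_{\Sigma'}\cdot\tilde F_j\le 0$ (inherited from $E\cdot C\le 0$ in higher dimension), a standard argument (\cf~\cite[Proposition 2.11]{BdFF}) then forces all $e_i\ge 0$, contradicting $e_i<0$ for some $i$. The main obstacle is the Bertini-style construction of $\Sigma$: one must arrange the general hyperplane sections so that $\Sigma'=\pi^{-1}(\Sigma)$ is normal, $\pi|_{\Sigma'}$ remains birational, and each prime component of $\supp E$ restricts transversally to a nonempty irreducible curve preserving coefficients. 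Once this is set up, the application of Zariski's lemma and the extraction of the contradiction are formal; alternatively one may simply invoke the Noetherian normal form of \cite[Lemma 3.39]{KM} once $-E$ has been shown to be $\pi$-exceptional and $\pi$-nef.
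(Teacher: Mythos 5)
Your reduction is fine as far as it goes: $\pi_*E=0$, so every component of $E:=\pi^*\pi_*D-D$ is vertical and $\pi$-exceptional, and $-E$ is $\pi$-nef, so the statement is indeed an instance of the classical negativity lemma. But the step you yourself flag as ``the main obstacle'' is a genuine gap, not a routine verification. A general complete intersection surface $\Sigma\subset\cX$ meets $Z=\pi(\supp E_-)$ only along those components of $Z$ of codimension exactly $2$; any component $F_i$ of $E_-$ whose image has codimension $\ge 3$ in $\cX$ satisfies $F_i\cap\pi^{-1}(\Sigma)=\emptyset$, so the surface argument says nothing about its coefficient. The standard fix is to localize $\cX$ at the generic point of $\pi(F_i)$ before cutting, but then ``general member of $|NH|$'' loses its projective meaning and the Bertini statements must be reworked over a local base. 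Relatedly, the claim that each $F_i$ restricts to an irreducible curve with multiplicity one is false in general: $F_i\cap\Sigma'$ is a union of fibers of $F_i\to\pi(F_i)$ over the finitely many points of $\pi(F_i)\cap\Sigma$. (This particular point is harmless, since Zariski's lemma only needs the full collection of components of $\supp(E|_{\Sigma'})$ and the signs of the coefficients, which are preserved.) Finally, invoking \cite[Lemma 3.39]{KM} verbatim is not quite legitimate: the conventions there are varieties over a field, whereas $\pi$ is a morphism of schemes over the discrete valuation ring $R$, so one needs the excellent-schemes version of the negativity lemma (Lipman's negative definiteness for two-dimensional excellent schemes together with a correct dimension reduction).

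The paper avoids all of this by a completely different and much shorter argument, which is worth knowing. Using Lemma~\ref{lem:ample}~(i), the vertical $\pi$-ample divisors form a nonempty open convex cone in $\Div_0(\cX')_\R$, so one can choose a basis $A_1,\dots,A_r$ of $\pi$-ample vertical Cartier divisors and perturb $D$ to $D_\e=D+\sum_i\e_iA_i$, which is a $\pi$-ample $\Q$-divisor by Kleiman's criterion; it therefore suffices to treat the case where $D$ itself is $\pi$-ample and rational. Then for $m\gg1$ the sheaf $\cO_{\cX'}(mD)$ is $\pi$-globally generated, so $\fa:=\pi_*\cO_{\cX'}(mD)$ satisfies $\fa\cdot\cO_{\cX'}=\cO_{\cX'}(mD)$; since obviously $\fa\subset\cO_\cX(m\pi_*D)$, pulling back gives $\cO_{\cX'}(mD)\subset\cO_{\cX'}(m\pi^*\pi_*D)$, i.e.\ $m(\pi^*\pi_*D-D)$ is effective. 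No Bertini, no negative definiteness. If you want to keep your route you must either supply the excellent-scheme negativity lemma with a correct reduction to surfaces, or switch to this global-generation argument.
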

The condition that $\cX$ is vertically $\Q$-factorial guarantees that the pull-back of $\pi_*D$ by $\pi$
is well defined.
 
\begin{proof} As a first step, we reduce the assertion to the case where $D\in\Div_0(\cX')_\Q$ is $\pi$-ample. Indeed, the set of vertical $\pi$-ample $\R$-divisors, which is an open convex cone in $\Div_0(\cX')_\R$, is non-empty by (i) of Lemma~\ref{lem:ample}. We may thus choose a basis $A_1,\dots,A_r$ of $\Div_0(\cX')_\R$ made up of $\pi$-ample Cartier divisors. Let $\e=(\e_i)\in\R_+^r$ be such that $D_\e:=D+\sum_i\e_i A_i$ is a $\Q$-divisor. The fact that $D$ is $\pi$-nef means that $D\cdot C\ge 0$ for each curve $C$ contained in a fiber of $\pi$. Since each $A_i$ is $\pi$-ample, it follows from Kleiman's criterion~\cite{Kle} that $D_\e$ is $\pi$-ample on the projective $k$-scheme $\cX_0'$, hence $D_\e$ is also $\pi$-ample on $\cX'$ by~\cite[III.4.7.1]{EGA}. Upon replacing $D$ with $D_\e$ for $\e$ arbitrarily small we may thus assume as desired that $D\in\Div_0(\cX')_\Q$ is $\pi$-ample. 

Now choose $m\gg 1$ such that $\cO_{\cX'}(mD)$ is $\pi$-globally generated, which means that the vertical fractional ideal sheaf $\fa:=\pi_*\cO_{\cX'}(mD)$ satisfies $\fa\cdot\cO_{\cX'}=\cO_{\cX'}(mD)$. It is obvious that $\fa\subset\cO_{\cX}(m\pi_*D)$, hence $\cO_{\cX'}(mD)\subset\cO_{\cX'}(m\pi^*\pi_*D)$, and the result follows. 
\end{proof}



\section{Projective Berkovich spaces and model functions}

\subsection{Analytifications}\label{sec:berko}
Let $Y$ be a proper $K$-scheme. 
As a topological space, its $K$-analytification $Y^\mathrm{an}$ 
in the sense of Berkovich is compact and can be described as follows (cf.~\cite[Theorem 3.4.1]{Ber}). 
Choose a finite cover of $Y$ by Zariski open subsets of the form 
$U=\Spec A$ where $A$ is a finitely generated $K$-algebra.
The Berkovich space $\Uan$ is defined as the set of all multiplicative 
seminorms $|\cdot|_x:A\to\R_+$ extending the given absolute value of $K$, 
endowed with the topology of pointwise convergence. 
It is common usage to write $|f(x)|:=|f|_x$ for $f\in A$ and $x\in U^\an$. 
The space $Y^\mathrm{an}$ is then obtained by gluing together the open sets $\Uan$. 

There is a canonical continuous map $s:Y^\mathrm{an}\to Y$, locally defined on $\Uan$ 
by setting 
$$
s(x)=\left\{f\in A\mid |f(x)|=0\right\}.
$$
The seminorm $|\cdot|_x$ defines a norm on the 
residue field $\kappa(s(x))$, extending the given absolute value on $K$.
In particular, when $Y$ is integral, the set of points $x\in Y^\mathrm{an}$ for which $s(x)$ is the generic
point of $Y$ can be identified with the set of norms on the function field of $Y$ extending the given norm on $K$.

\subsection{GAGA}\label{sec:gaga}
The Berkovich space $Y^\mathrm{an}$ naturally comes with a structure sheaf that we will not
define nor explicitly use. 
We will also not define general $K$-analytic spaces~\cite{Ber,BerIHES} here.
However, we will make use of the general GAGA results in~\cite[3.4]{Ber}. 
For example, any projective $K$-analytic space $X$ is the analytification of a projective
$K$-scheme $Y$, that is, $X=Y^\mathrm{an}$. Further, all line bundles on 
projective Berkovich spaces are induced by line bundles on the underlying scheme.
Similarly, morphisms between projective Berkovich spaces arise from morphisms between the
underlying schemes.

\subsection{Centers}\label{sec:centers}
Now let $\cX$ be a proper $S$-variety. Its generic fiber $\cX_K$ is, in particular, a proper $K$-scheme, so the discussion above applies. Write $X=\cX_K^\an$ for the analytification of $\cX_K$ and 
$s:X\to\cX_K\subset\cX$ for the continuous map defined in~\S\ref{sec:berko}.
Given $x\in X$, denote by $R_x$ 
the corresponding valuation ring in the residue field $\kappa(s(x))$. 
By the valuative criterion of properness,
the map $T_x:=\Spec R_x\to S$ admits a unique lift $T_x\to\cX$ mapping the 
generic point to $s(x)$. In line with valuative terminology~\cite{vaquie}, 
we call the image of the closed point of $T_x$ in $\cX$ the \emph{center} of $x$ 
on $\cX,$ and denote it by $\cent_\cX(x)$. It is a specialization of $s(x)$ 
in $\cX$. It also belongs to $\cX_0$ since it maps to the closed point of 
$S$ by construction. The map $\cent_\cX:\cX_K^\mathrm{an}\to\cX_0$ so defined is 
anti-continuous, \ie preimages of open sets are closed and vice versa. 
It is referred to as  the \emph{reduction map} in rigid geometry. 

\subsection{Models}\label{sec:modberko}
From now on we let $X$ be a given smooth connected projective $K$-analytic space in the sense of Berkovich. By a \emph{model} of $X$ we will mean a \emph{normal and projective} $S$-variety $\cX$ together with the datum of an isomorphism $\cX^\mathrm{an}_K\simeq X$. By GAGA, the latter is equivalent to an isomorphism between $\cX_K$ and the (smooth, connected) algebraic variety $Y$ underlying $X=Y^\mathrm{an}$. 

In particular, the set $\cM_X$ of models of $X$ is non-empty. Indeed, given an embedding $Y$ into a suitable projective space $\P^m_K$ we can take $\cX$ as the normalization of the closure of $Y$ in $\P^m_S$.
A similar construction shows that $\cM_X$ becomes a directed set by 
declaring $\cX'\ge\cX$ if there exists a vertical blowup $\cX'\to\cX$ (which is then unique).

For any model $\cX$ of $X$ and any irreducible component $E$ of $\cX_0$, there exists a unique point $x_E\in \Xan\simeq\cX_K^\mathrm{an}$ whose 
center on $\cX$ is the generic point of $E$. Such points will be called 
\emph{divisorial points}.\footnote{Divisorial points are called \emph{Shilov boundaries} in~\cite{YZ09}. They are usually referred to as Type II points when $X$ is a curve, see~\cite[1.4.4]{Ber}}
Observe that the local ring of the scheme $\cX$ at the generic point of $E$ is precisely the valuation ring of the valuation $x_E$.

The set $\Xdiv$ of divisorial points is dense in 
$\Xan$, see Corollary~\ref{C208} below and also~\cite{poineau}.

\subsection{Model functions}\label{sec:modelfnts}
Let $\cX$ be a model of $X$. By Noetherianity, each vertical fractional ideal sheaf $\fa$ on $\cX$ is locally generated by a finite set of rational functions on $\cX$, and thus defines a continuous function $\log|\fa|\in C^0(\Xa)$ by setting
\begin{equation}\label{equ:log}
\log|\fa|(x):=\max\left\{\log|f(x)|\mid f\in\fa_{\cent_\cX(x)}\right\}. 
\end{equation}
In particular, each vertical Cartier divisor $D\in\Div_0(\cX)$ defines a vertical fractional ideal sheaf $\cO_\cX(D)$, hence a continuous function 
$$
\f_D:=\log|\cO_\cX(D)|.
$$ 
Note that $\f_{\cX_0}$ is the constant function $1$ since $\log|\unipar|^{-1}=1$. 
Since models are assumed to be normal, a vertical divisor $D$ is uniquely determined by the values $\f_D(x_E)$ at divisorial points $x_E$, and we have in particular $\f_D\ge0$ iff $D$ is effective.
The map $D\mapsto\f_D$ extends by linearity to an injective map $\Div_0(\cX)_\R\to C^0(\Xa)$. 

Following~\cite{Y} we introduce the following terminology. 
\begin{defi}
A \emph{model function}\footnote{Model functions are called \emph{algebraic} in~\cite{CL1} and \emph{smooth} in~\cite{CL2}.} is a function $\f$ on $\Xan$ such that there exists a model $\cX$ and a divisor $D \in \Div_0(\cX)_\Q$ with $\f = \f_D$. We let $\cD(\Xa)=\cD(\Xa)_\Q$ be the space of model functions on $\Xan$.
\end{defi}

We shall also occasionally consider the similarly defined spaces $\cD(\Xa)_\Z$ and $\cD(\Xa)_\R$. 

As a matter of terminology, we say that a model $\cX$ is a \emph{determination} of a model function $\f$ if $\f=\f_D$ for some $D\in\Div_0(\cX)_\Q$. By the above remarks we have a natural isomorphism
$$
\varinjlim_{\cX \in \cM_X} \Div_0(\cX)_\Q\simeq\cD(\Xa)\subset C^0(X).
$$

\medskip

The next result summarizes the key properties of model functions. Since our setting does not require any machinery from rigid geometry we provide direct arguments for the convenience of the reader. 

\begin{prop}\label{prop:gen} 
  For each model $\cX$ of $X$, the subgroup of $C^0(X)$ spanned by the functions $\log|\fa|$, with $\fa$ ranging over all vertical (fractional) ideal sheaves of $\cX$, coincides with $\cD(\Xa)_\Z$. 
It is furthermore stable under max and separates points of $\Xan$. 
\end{prop}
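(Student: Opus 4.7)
The plan is to establish the three claims in sequence.

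\emph{Equality of groups.} The inclusion $\cD(\Xa)_\Z\subset\langle\log|\fa|\rangle$ is immediate from the definition: $\f_D=\log|\cO_\cX(D)|$ and $\cO_\cX(D)$ is a vertical fractional ideal sheaf whenever $D\in\Div_0(\cX)_\Z$. For the converse, after reducing to an honest vertical ideal sheaf via $\log|\unipar^m\fa|=\log|\fa|-m$, I would take the normalization $\pi:\cX'\to\cX$ of the blow-up of $\cX$ along $\fa$; this is again a normal projective model of $X$, and $\fa\cdot\cO_{\cX'}=\cO_{\cX'}(-D)$ for an effective vertical Cartier divisor $D$. The key identity is
\[
\log|\fa|=\log|\fa\cdot\cO_{\cX'}|=\f_{-D}\in\cD(\Xa)_\Z.
\]
For any $x\in\Xan$, the valuative criterion of properness gives $\pi(\cent_{\cX'}(x))=\cent_\cX(x)$, so the stalk inclusion $\fa_{\cent_\cX(x)}\subset(\fa\cdot\cO_{\cX'})_{\cent_{\cX'}(x)}\subset R_x$ yields $\le$, while the reverse inequality follows from the non-Archimedean triangle inequality applied to decompositions $f=\sum g_ih_i$ with $g_i\in\fa_{\cent_\cX(x)}$, $h_i\in\cO_{\cX',\cent_{\cX'}(x)}\subset R_x$ and $|h_i(x)|\le 1$.

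\emph{Max-stability.} For vertical (fractional) ideal sheaves $\fa,\fb$ on a common model I would prove
\[
\max(\log|\fa|,\log|\fb|)=\log|\fa+\fb|,
\]
the $\ge$ being obvious from $\fa,\fb\subset\fa+\fb$, and the $\le$ being the ultrametric inequality on decompositions $f=g+h$ with $g\in\fa_{\cent_\cX(x)}$, $h\in\fb_{\cent_\cX(x)}$. Together with the multiplicative identity $\log|\fa\fb|=\log|\fa|+\log|\fb|$, this lets me write the maximum of any two differences $\log|\fa_i|-\log|\fb_i|$ as $\log|\fa_1\fb_2+\fa_2\fb_1|-\log|\fb_1\fb_2|$, which still lies in the subgroup.

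\emph{Separation.} Given $x\ne y$ in $\Xan$, I split into cases. If the centers $\cent_\cX(x)$ and $\cent_\cX(y)$ differ on some model---which after a vertical blow-up I may assume to be distinct closed points of $\cX_0$---the vertical ideal sheaf of one of them separates $x$ from $y$ via $\log|\cdot|$. If the centers agree at some $\xi$ on every model, then since any two Berkovich points with identical underlying scheme structure are distinguished by some regular germ, there is $f\in\cO_{\cX,\xi}$ with $|f(x)|\ne|f(y)|$. Choosing $n$ large enough that $-n$ lies strictly below both $\log|f(x)|$ and $\log|f(y)|$, the vertical fractional ideal sheaf $\fa$ with stalk $(f,\unipar^n)$ at $\xi$ gives $\log|\fa|=\max(\log|f|,-n)$ near $\xi$, separating $x$ from $y$.

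The main obstacle is this last step: producing a globally coherent vertical ideal sheaf with prescribed stalk $(f,\unipar^n)$ at $\xi$. I would achieve this by gluing with $\unipar^n\cO_\cX$ away from a Zariski neighborhood of $\xi$ on which $f$ is defined, possibly after a vertical blow-up so that the zero locus of $(f,\unipar^n)$ extends cleanly across $\cX$.
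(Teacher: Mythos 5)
Your argument for the containment $\cD(\Xa)_\Z\subset\langle\log|\fa|\rangle$ has a genuine gap, and it is precisely at the point that makes the proposition nontrivial. You write that this inclusion is "immediate" because $\f_D=\log|\cO_\cX(D)|$ for $D\in\Div_0(\cX)_\Z$ — but that only accounts for model functions \emph{determined on the fixed model} $\cX$. By definition, $\cD(\Xa)_\Z\simeq\varinjlim_{\cX'}\Div_0(\cX')_\Z$ contains $\f_D$ for $D$ a vertical divisor on an arbitrary (typically much higher) model $\cX'$, and such a $\f_D$ is in general \emph{not} of the form $\f_{D_0}$ for any $D_0\in\Div_0(\cX)$ (its restriction to a face of $\D_\cX$ is only piecewise affine, not affine). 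The whole content of the statement is that these functions can nevertheless be realized as differences $\log|\fb|-\log|\fa|$ with $\fa,\fb$ ideal sheaves on the \emph{fixed} $\cX$. The paper does this by taking a vertical blow-up $\pi:\cX'\to\cX$ carrying $D$, choosing a $\pi$-ample vertical divisor $A\in\Div_0(\cX')$ (Lemma \ref{lem:ample}), and using that $\cO_{\cX'}(mA)$ and $\cO_{\cX'}(D+mA)$ are $\pi$-globally generated for $m\gg1$, so that their pushforwards $\fa:=\pi_*\cO_{\cX'}(mA)$ and $\fb:=\pi_*\cO_{\cX'}(D+mA)$ are vertical fractional ideal sheaves on $\cX$ with $\f_{mA}=\log|\fa|$ and $\f_{D+mA}=\log|\fb|$, whence $\f_D=\log|\fb|-\log|\fa|$. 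This descent step is entirely missing from your proposal and cannot be dispensed with.

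The remaining pieces are essentially sound. Your reverse containment ($\log|\fa|=\f_{-D}$ on the normalized blow-up) is the paper's argument. Your max-stability argument, via the identities $\max(\log|\fa|,\log|\fb|)=\log|\fa+\fb|$ and $\log|\fa\fb|=\log|\fa|+\log|\fb|$ applied to $\log|\fa_1\fb_2+\fa_2\fb_1|-\log|\fb_1\fb_2|$, is correct and has the minor advantage of being intrinsic to the span on the fixed model (the paper instead passes to a common determination and invokes the group equality). Your separation argument is the paper's: the only cleanup needed is that there is no need for a vertical blow-up or for distinct \emph{closed} centers in the first case, and in the second case the extension of $\cO_\cU\cdot f$ to a coherent ideal sheaf on all of $\cX$ is standard for Noetherian schemes, after which adding $(\unipar^m)$ makes it vertical — your worry about gluing is resolved exactly this way.
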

\begin{proof} If $\fa$ is a vertical fractional ideal sheaf on a given model $\cX$ then $\fa':=\unipar^m\fa$ is a vertical ideal sheaf for some $m\in\N$ and we have $\log|\fa|=\log|\fa'|-m$, so it is enough to consider vertical ideal sheaves. 

Observe first that $\log|\fa|$ belongs to $\cD(\Xa)_\Z$. Indeed if $\cX'\to\cX$ denotes the normalization of the blowup of $\cX$ along $\fa$, then the Cartier divisor $D$ on $\cX'$ such that $\fa\cdot\cO_{\cX'}=\cO_{\cX'}(D)$ satisfies $\f_D=\log|\fa|$. Conversely, let $\f\in\cD(\Xa)_\Z$, and let us show that $\f$ can be written as 
$$
\f=\log|\fa|-\log|\fb|
$$
with $\fa,\fb$ vertical ideal sheaves on $\cX$. By definition, $\f$ is determined by $D\in\Div_0(\cX')$ for some vertical blowup $\pi:\cX'\to\cX$. By Lemma~\ref{lem:ample} we may choose a $\pi$-ample vertical Cartier divisor $A\in\Div_0(\cX')$. Both sheaves $\cO_{\cX'}(mA)$ and $\cO_{\cX'}(D+mA)$ are then $\pi$-globally generated for $m\gg 1$. If we introduce the vertical fractional ideal sheaves 
$\fa:=\pi_*\cO_{\cX'}(D+mA)$ 
and 
$\fb:=\pi_*\cO_{\cX'}(mA)$,
then the $\pi$-global generation property yields 
$\fa\cdot\cO_{\cX'}=\cO_{\cX'}(D+mA)$
and  
$\fb\cdot\cO_{\cX'}=\cO_{\cX'}(mA)$.
It follows that $\f_{mA}=\log|\fa|$ and $\f_{D+mA}=\log|\fb|$, and hence $\f_D=\log|\fa|-\log|\fb|$. It remains to replace $\fa$ and $\fb$ with $\unipar^p\fa$ and $\unipar^p\fb$ with $p\gg 1$, so that they become actual ideal sheaves. 

We next prove that $\cD(X)_\Z$ is stable under max. Given $\f,\f'\in\cD(X)_\Z$, choose a model $\cX$ on which both functions are determined, say by $D,D'\in\Div_0(\cX)$, respectively. We then have
$$
\max\{\f_D,\f_{D'}\}=\log|\fa|,
$$
with $\fa:=\cO_{\cX}(D)+\cO_\cX(D')$, which shows that $\max\{\f_D,\f_{D'}\}\in\cD(X)_\Z$. 

In order to get the separation property, we basically argue as in~\cite[Corollary 7.7]{Gub98}, which in turn relied on~\cite[Lemma 2.6]{BL}. Let $\cX$ be a fixed model and pick two distinct points $x\neq y\in\Xan$. If $\xi:=c_\cX(x)$ is distinct from $c_\cX(y)$ then $\log|\fm_\xi|$ already separates $x$ and $y$. Otherwise, let $\cU=\Spec\cA$ be an open neighborhood of $\xi$ in $\cX$. By definition of $\cU_K^\an$ there exists $f\in\cA$ such that $|f(x)|\neq|f(y)|$. Since the scheme $\cX$ is Noetherian, $\cO_\cU\cdot f$ extends to a (coherent) ideal sheaf $\fa$ on $\cX$. For each positive integer $m$ the ideal sheaf $\fa_m:=\fa+(\varpi^m)$ is vertical on $\cX$, and we have
$$
\log|\fa_m|=\max\{\log|f|,-m\}
$$
at $x$ and $y$, so we see that $\log|\fa_m|\in\cD(X)_\Z$ separates $x$ and $y$ for $m\gg 1$. 
\end{proof} 

Thanks to the ``Boolean ring version'' of the Stone-Weierstrass theorem,  we get as a consequence the following crucial result, which is equivalent to~\cite[Theorem 7.12]{Gub98} (compare~\cite[Lemma 3.5]{Y} and the remark following it).  
\begin{cor}\label{cor:dense}
  The $\Q$-vector space $\cD(\Xa)$ is stable under max and separates points. 
  As a consequence, it is dense in $C^0(\Xa)$ for the topology of uniform convergence. 
\end{cor}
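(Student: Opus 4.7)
The plan is to deduce the corollary directly from Proposition~\ref{prop:gen} by first lifting the max-stability and separation statements from $\cD(\Xa)_\Z$ to $\cD(\Xa)_\Q$, and then appealing to the lattice form of the Stone--Weierstrass theorem (the so-called ``Kakutani--Stone'' or ``Boolean'' version), which applies since $\Xan$ is compact Hausdorff.

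First I would check stability of $\cD(\Xa)$ under max. Given $\f,\psi\in\cD(\Xa)_\Q$, choose a common determination, so that $\f=\f_D$ and $\psi=\f_{D'}$ for some $D,D'\in\Div_0(\cX)_\Q$ on a single model $\cX$. Picking $N\in\N$ such that $ND,ND'\in\Div_0(\cX)$, Proposition~\ref{prop:gen} gives $\max\{N\f,N\psi\}\in\cD(\Xa)_\Z$; dividing by $N$ yields $\max\{\f,\psi\}\in\cD(\Xa)_\Q$. Since $\cD(\Xa)$ is a $\Q$-vector space, it is then also stable under min via $\min\{\f,\psi\}=-\max\{-\f,-\psi\}$. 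The separation of points is immediate from the inclusion $\cD(\Xa)_\Z\subset\cD(\Xa)$ and the corresponding property in Proposition~\ref{prop:gen}. Moreover $\cD(\Xa)$ contains $\Q$ since $\f_{\cX_0}=1$ by the normalization $\log|\unipar|^{-1}=1$.

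Second, I would invoke the lattice Stone--Weierstrass theorem: any $\Q$-linear subspace of $C^0(Y)$ for $Y$ compact Hausdorff, which contains $\Q$, separates points, and is stable under $\max$ and $\min$, is dense in $C^0(Y)$ in the sup-norm. Applied to $Y=\Xan$ (compact Hausdorff by Berkovich's theorem recalled in Section~\ref{sec:berko}) and $\cA=\cD(\Xa)$, this gives the asserted density.

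The only real content is Proposition~\ref{prop:gen}; the obstacle, if any, lies in checking the ``$\Q$ lifting'' of max-stability, which is why I would argue on a common determining model so that clearing denominators is literally a scalar operation on Cartier divisors. Everything else is formal.
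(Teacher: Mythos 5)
Your proof is correct and follows the same route as the paper: the corollary is deduced from Proposition~\ref{prop:gen} together with the lattice (Boolean/Kakutani--Stone) form of the Stone--Weierstrass theorem, the passage from $\cD(\Xa)_\Z$ to $\cD(\Xa)_\Q$ being exactly the denominator-clearing on a common determining model that you spell out. The paper leaves these routine verifications implicit, but there is nothing different in substance.
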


Corollary~\ref{cor:dense} in turn implies the following result, which corresponds to~\cite[Lemma~2.4]{YZ09}. We reproduce the short proof for completeness.
\begin{cor}\label{C208}
  The set $\Xdiv$ of divisorial points is dense in $\Xan$. 
\end{cor}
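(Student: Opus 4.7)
The plan is to argue by contradiction. If $\Xdiv$ is not dense in $\Xan$, its closure misses some $x_0\in\Xan$, and Urysohn's lemma applied in the compact Hausdorff space $\Xan$ furnishes $\psi\in C^0(\Xan)$ with $\psi(x_0)=1$, $0\le\psi\le 1$, and $\psi\equiv 0$ on $\overline{\Xdiv}$. Corollary~\ref{cor:dense} then provides a model function $\f\in\cD(\Xan)$ with $\|\f-\psi\|_\infty<1/3$, so that
\[
\sup_{\Xan}\f>2/3>1/3>\sup_{\Xdiv}\f.
\]
The contradiction will come from showing that every model function attains its supremum at a divisorial point.

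To establish this supremum claim, I write $\f=\f_D$ with $D\in\Div_0(\cX)_\Q$, and invoke the existence of SNC models dominating $\cX$ (recalled at the end of \S\ref{sec:mod}) together with the invariance of $\f_D$ under pullback by vertical blow-ups to reduce to the case where $\cX$ is SNC, with $\cX_0=\sum_i b_iE_i$ and $D=\sum_i a_iE_i$. For $x\in\Xan$, set $\xi:=\cent_\cX(x)$ and $J:=\{i:\xi\in E_i\}$, and pick local equations $u_i\in\cO_{\cX,\xi}$ of $E_i$ for $i\in J$; these form part of a regular system of parameters by the SNC hypothesis. The identities $\unipar=(\prod_{i\in J}u_i^{b_i})\cdot(\text{unit})$ and $v_x(\unipar)=1$, where $v_x:=-\log|\cdot|_x$, yield
\[
\sum_{i\in J}b_iv_x(u_i)=1,\qquad v_x(u_i)\ge 0.
\]
Combining the local presentation $\cO_\cX(D)_\xi=(\prod_{i\in J}u_i^{-a_i})\cO_{\cX,\xi}$ with the fact that $|g(x)|\le 1$ for $g\in\cO_{\cX,\xi}$ (since $\cO_{\cX,\xi}$ maps into the valuation ring $R_x\subset\cH(x)$, with equality for $g=1$) gives the exact formula $\f_D(x)=\sum_{i\in J}a_iv_x(u_i)$. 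Standard linear programming on the simplex cut out by the above constraints then bounds this by $\max_{i\in J}(a_i/b_i)$, with equality at the vertex $v_x=v_{x_{E_i}}$. Since $\f_D(x_{E_i})=a_i/b_i$, I conclude that $\sup_{\Xan}\f_D=\max_i\f_D(x_{E_i})$ is attained in $\Xdiv$, contradicting the estimate above.

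The main obstacle is the SNC local computation: writing $\unipar$ as a monomial in local equations of the components meeting $\xi$, identifying a local generator of $\cO_\cX(D)_\xi$ as the opposite monomial, and verifying the valuation-ring bound $|g(x)|\le 1$ for $g\in\cO_{\cX,\xi}$ via the description of $\cent_\cX(x)$ through the valuative criterion of properness. Once these ingredients are in place, the linear-programming step is immediate and the reduction via Urysohn and Corollary~\ref{cor:dense} closes the argument.
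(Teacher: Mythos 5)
Your proof is correct, and it follows the same overall strategy as the paper (Urysohn plus the density of model functions from Corollary~\ref{cor:dense}, reduced to the fact that a model function is controlled by its values at divisorial points), but you implement the key step quite differently. The paper's proof is a two-liner: if $|\f-\f_D|\le\e$ on $\Xan$ with $\f$ vanishing on $\Xdiv$, then $\e\cX_0\pm D$ has nonnegative values at all divisorial points, hence is effective, hence $\f_{\e\cX_0\pm D}\ge0$ everywhere — this uses only the remark in \S\ref{sec:modelfnts} that $\f_D\ge0$ iff $D$ is effective, which holds on any \emph{normal} model and needs no resolution of singularities. You instead prove the stronger maximum principle $\sup_{\Xan}\f_D=\max_i\f_D(x_{E_i})$ by passing to an SNC model and running the monomial-valuation/linear-programming computation on the constraints $\sum_{i\in J}b_iv_x(u_i)=1$, $v_x(u_i)\ge0$; this computation is sound (it is essentially the content of the later Propositions~\ref{prop:PA} and~\ref{prop:convex} and of the normalization step in \S\ref{secbounding}). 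The trade-off: your route requires the existence of SNC models (hence residue characteristic zero via~\cite{Tem}) where the paper's does not, but in exchange it delivers the maximum principle for model functions directly, rather than only the effectivity criterion.
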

\begin{proof}
  Since $X$ is a compact Hausdorff space, Urysohn's lemma applies, so 
  it suffices to prove that any continuous function vanishing on $\Xdiv$
  vanishes on all of $X$.

  So pick $\f\in C^0(\Xa)$ vanishing on $\Xdiv$ and $\e>0$ rational. 
  By Corollary~\ref{cor:dense} there exists a model $\cX$ and a 
  divisor $D\in\Div_0(\cX)_\Q$ such that $|\f-\f_D|\le\e$ on $\Xan$. Since $\varphi$ vanishes 
on all divisorial points corresponding to irreducible components of $\cX_0$, it follows that both
divisors $\e\cX_0\pm D\in\Div_0(\cX)_\Q$ are effective, proving
  $|\f_D|\le\e$ and hence $|\f|\le2\e$ on $\Xan$.
\end{proof}

The collection of finite dimensional spaces $\Div_0(\cX)^*_\R \simeq \cD(\cX)^*_\R$ endowed with the transpose of pull-back morphisms on divisors and the topology of the pointwise convergence forms an inductive system, and we have:
\begin{cor}\label{cor:gelfand} For each model $\cX$, let $\ev_\cX:\Xan\to\Div_0(\cX)^*_\R$ be the evaluation map defined by $\langle\ev_\cX(x),D\rangle=\f_D(x)$. Then the induced map 
$$
\ev:\Xan\to\varprojlim_{\cX\in \cM_X}\Div_0(\cX)_\R^*\simeq\cD(\Xa)_\R^*
$$
is a homeomorphism onto its image.
\end{cor}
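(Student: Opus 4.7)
The plan is to verify the three properties (continuity, injectivity, closed image) and then invoke compactness. The key inputs are Proposition~\ref{prop:gen} (model functions separate points) and the compactness of $\Xan$ recalled in Section~\ref{sec:berko}.

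First I would address continuity. By the universal property of projective limits, it suffices to show that each $\ev_\cX : \Xan \to \Div_0(\cX)_\R^*$ is continuous when the target carries the topology of pointwise convergence. Since $\Div_0(\cX)_\R$ is finite dimensional, this topology is induced by the evaluation pairings $\langle\cdot,D\rangle$ with $D$ ranging over a basis; but $x \mapsto \langle \ev_\cX(x),D\rangle = \f_D(x)$ is continuous on $\Xan$ by construction of $\f_D$ in Section~\ref{sec:modelfnts}. Thus $\ev_\cX$, and hence $\ev$, is continuous.

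Next, for injectivity I would argue as follows: if $\ev(x) = \ev(y)$, then $\f_D(x) = \f_D(y)$ for every model $\cX$ and every $D \in \Div_0(\cX)_\R$. In particular $\f(x) = \f(y)$ for every model function $\f \in \cD(\Xa)$. But $\cD(\Xa)_\Z \subset \cD(\Xa)$ separates points of $\Xan$ by Proposition~\ref{prop:gen}, so $x = y$.

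Finally, to upgrade injectivity to a topological embedding, I would use that $\Xan$ is compact Hausdorff (Section~\ref{sec:berko}) while the target $\varprojlim_\cX \Div_0(\cX)_\R^*$ is Hausdorff, being a subspace of the product $\prod_\cX \Div_0(\cX)_\R^*$ of Hausdorff spaces. A continuous injection from a compact space to a Hausdorff space is automatically a closed embedding, hence a homeomorphism onto its image. The identification $\varprojlim_\cX \Div_0(\cX)_\R^* \simeq \cD(\Xa)_\R^*$ is the linear dual of the colimit description $\varinjlim_\cX \Div_0(\cX)_\R \simeq \cD(\Xa)_\R$ already recorded in Section~\ref{sec:modelfnts}. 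No step is genuinely hard here; everything has been prepared, and the only point requiring slight care is checking that the pointwise-convergence topology on the inverse limit is the correct one to get a Hausdorff target, which follows immediately from finite-dimensionality of each $\Div_0(\cX)_\R$.
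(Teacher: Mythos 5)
Your proof is correct and follows exactly the paper's approach: continuity from continuity of each $\f_D$, injectivity from the separation property of model functions (the paper cites Corollary~\ref{cor:dense}, which is the Stone--Weierstrass consequence of Proposition~\ref{prop:gen} that you invoke), and then the standard fact that a continuous injection from a compact space to a Hausdorff space is a closed embedding. You simply spell out details (Hausdorffness of the target, the pointwise topology on the finite-dimensional duals) that the paper leaves implicit.
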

The image of this map will be described in Corollary~\ref{cor:retract}. 
\begin{proof} 
  The map in question is continuous since any model function is continuous.
  It is injective by Corollary~\ref{cor:dense}.
  Since $\Xan$ is compact and since 
  $\varprojlim_{\cX\in \cM_X}\Div_0(\cX)_\R^*$ is Hausdorff,
  we conclude that the map is a homeomorphism onto its image.
\end{proof}




\section{Dual complexes}
In this section we define, following~\cite{KS}, an embedding 
of the dual complex $\D_\cX$ of an SNC
model $\cX$ into the Berkovich space $\Xan$. This construction is essentially a special case of~\cite{Ber2} (see also~\cite{Thu,ACP12}), but the present setting allows a much more elementary and explicit approach. We also explain how to construct (not necessarily SNC) models dominating $\cX$ from suitable subdivisions of $\D_\cX$, adapting some of the toroidal techniques of~\cite{KKMS}. 

\subsection{The dual complex of an SNC model}
Let $\cX$ be an SNC model of $X$. 
The image of the evaluation map $\ev_\cX:\Xan\to\Div_0(\cX)_\R^*$ defined in Corollary~\ref{cor:gelfand} then 
admits the structure of a rational simplicial complex, defined as follows. 
Write the special fiber as $\cX_0=\sum_{i\in I}b_iE_i$, where $b_i\in\N^*$ 
and $(E_i)_{i\in I}$ are the irreducible components.
Let $x_{E_i}\in \Xan$ be the associated divisorial points and set 
$e_i:=\ev_\cX(x_{E_i})\in\Div_0(\cX)_\Q^*$.
Recall from Definition~\ref{defi:snc} that for each $J\subset I$ the intersection 
$E_J:=\bigcap_{j\in J} E_j$ is either empty or a smooth irreducible $k$-variety.
For each $J \subset I$ such that $E_J\ne\emptyset$, let 
$\hat{\sigma}_J\subset\Div_0(\cX)^*_\R$ be the
simplicial cone defined by 
$\hat{\sigma}_J := \sum_{j \in J} \R_+ e_j$. 
These cones naturally define a (regular) fan $\hat\D_\cX$ in $\Div_0(\cX)^*_\R$. 
Slightly abusively, we shall also denote by $\hat{\D}_\cX$ the 
support of this fan, that is, the union of all the cones $\hat{\sigma}_J$. 
We then define the \emph{dual complex}\footnote{The dual complex is called the \emph{Clemens polytope} in~\cite{KS}.} of $\cX$ by
$$
\D_\cX:= \hat{\D}_\cX \cap \left\{ \langle \cX_0, \cdot \rangle = 1 \right\}.
$$
Each $J\subset I$ such that $E_J\neq\emptyset$ corresponds to a simplicial face 
\begin{equation*}
  \sigma_J
  := \hat{\sigma}_J\cap\left\{ \langle \cX_0, \cdot \rangle = 1 \right\}
  =\Conv\{e_j\mid j\in J\}
\end{equation*}
of dimension $|J|-1$ in $\D_\cX$, where $\Conv$ denotes convex hull.
This endows $\D_\cX$ with the 
structure of a (compact rational) 
simplicial complex, such that $\sigma_J$ is a face of $\sigma_L$ iff $J\supset L$.        

\subsection{Embedding the dual complex in the Berkovich space}
\begin{thm}\label{thm:construct-dual} Let $\cX$ be any SNC model of $X$.
  \begin{itemize}
  \item[(i)] 
    The image of the evaluation map $\ev_\cX: \Xan \to \Div_0(\cX)^*_\R$ 
    coincides with $\D_\cX$.
  \item[(ii)] There exists a unique continuous (injective) map 
    $\emb_\cX: \D_\cX \to \Xan$ such that:
    \begin{itemize}
    \item[(a)]
      $\ev_\cX \circ\emb_\cX$ is the identity on $\D_\cX$;
    \item[(b)]
      for $s\in\D_\cX$, the center of $\emb_\cX(s)$ on $\cX$ is the 
      generic point $\xi_J$ of $E_J$ for the unique subset $J\subset I$ such that $s$ is 
      contained in the relative interior of $\sigma_J$.
    \end{itemize}
  \end{itemize}
\end{thm}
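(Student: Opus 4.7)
I will construct $\emb_\cX$ explicitly via monomial (quasi-monomial) seminorms attached to the SNC stratification, and derive (i) by a direct local computation. Throughout I use that, by the SNC hypothesis, any $\xi\in\cX_0$ lies in a unique open stratum $E_{I_\xi}\setminus\bigcup_{J'\supsetneq I_\xi} E_{J'}$, around which there exist local coordinates $(z_j)_{j\in I_\xi}$ with $E_j = \{z_j = 0\}$ and $\unipar = u\prod_{j\in I_\xi}z_j^{b_j}$ for some unit $u\in\cO_{\cX,\xi}^\times$.

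\textbf{Proof of (i).} Fix $x\in\Xan$ and set $\xi := \cent_\cX(x)$. By the valuative criterion, $\cO_{\cX,\xi}\subset R_x$, so $|f(x)|\le 1$ for $f\in\cO_{\cX,\xi}$ and $|f(x)|<1$ for $f\in\fm_\xi$. Applied to $\cO_\cX(E_j)_\xi=z_j^{-1}\cO_{\cX,\xi}$, the definition~\eqref{equ:log} then yields $\f_{E_j}(x) = -\log|z_j(x)| > 0$ for $j\in I_\xi$ and $\f_{E_j}(x)=0$ otherwise. Writing $\ev_\cX(x)=\sum_i\lambda_i e_i$, a direct computation at the divisorial points $x_{E_i}$ gives $\langle e_i,E_j\rangle=\delta_{ij}/b_i$, whence $\lambda_j=-b_j\log|z_j(x)|>0$ for $j\in I_\xi$ and $\lambda_j=0$ otherwise. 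The normalization $\sum_j\lambda_j=1$ drops out of $\f_{\cX_0}(x)=1$, i.e.\ of $\unipar=u\prod z_j^{b_j}$ and $\log|\unipar|^{-1}=1$. Hence $\ev_\cX(x)\in\rel(\sigma_{I_\xi})\subset\D_\cX$, proving the containment; the reverse inclusion will come from (ii).

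\textbf{Construction for (ii).} Given $s\in\D_\cX$, let $J$ be the unique index set with $s\in\rel(\sigma_J)$ and write $s=\sum_{j\in J}b_j t_j e_j$ with $t_j>0$ and $\sum_j b_j t_j=1$. Since $\xi_J$ is a regular point of $\cX$ of codimension $|J|$, the completion $\hat\cO_{\cX,\xi_J}\simeq\kappa(\xi_J)\cro{z_j:j\in J}$, and I define the monomial seminorm
\[
\Bigl|\sum_\alpha c_\alpha z^\alpha\Bigr|_s
:=\max_\alpha\exp\Bigl(-\sum_{j\in J}\alpha_j t_j\Bigr),
\]
using the trivial absolute value on $\kappa(\xi_J)$. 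Standard reasoning on regular local rings shows $|\cdot|_s$ is multiplicative; the relation $\sum b_j t_j=1$ forces $|\unipar|_s=e^{-1}$, so $|\cdot|_s$ extends the given absolute value on $K$, and thus restricts to a point $\emb_\cX(s)\in\Xan$. By construction $\cent_\cX(\emb_\cX(s))=\xi_J$, establishing (b), while $\f_{E_j}(\emb_\cX(s))=t_j$ gives $\ev_\cX(\emb_\cX(s))=\sum_j b_j t_j e_j=s$, establishing (a) and simultaneously showing that $\emb_\cX$ is injective and that $\ev_\cX$ is surjective onto $\D_\cX$.

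\textbf{Continuity, uniqueness, main obstacle.} By Corollary~\ref{cor:gelfand} continuity of $\emb_\cX$ is equivalent to $s\mapsto\f_D(\emb_\cX(s))$ being continuous on $\D_\cX$ for every $D\in\Div_0(\cX')_\Q$ and every vertical blow-up $\cX'\to\cX$; the monomial description makes these functions piecewise $\Q$-affine on each face of $\D_\cX$ with consistent values on common boundaries, hence continuous globally. For uniqueness, condition (b) at a vertex $e_i$ forces $\emb_\cX(e_i)=x_{E_i}$, the unique divisorial point of $\Xan$ normalized so that $\ev_\cX(x_{E_i})=e_i$. The same rigidity holds at every rational point $s\in\D_\cX(\Q)$: a suitable weighted blow-up of $\cX$ in the spirit of~\cite{KKMS} realizes $|\cdot|_s$ as a divisorial valuation with uniquely determined center, so any continuous section satisfying (a) and (b) must take the value $\emb_\cX(s)$ there. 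Rational points being dense in $\D_\cX$, uniqueness on all of $\D_\cX$ follows by continuity. The main obstacle I foresee is precisely this rigidity at rational points: one needs just enough of the toroidal blow-up machinery to see that quasi-monomial valuations with rational weights are divisorial, a fact the paper develops from scratch in Section~3 but which is morally classical. A secondary, more mechanical point is the multiplicativity of $|\cdot|_s$ on $\hat\cO_{\cX,\xi_J}$, which is a standard check for monomial orderings on regular local rings.
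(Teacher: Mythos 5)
Your construction of $\emb_\cX$ via monomial seminorms, the continuity check, and the computation for (i) all match the paper's proof in substance (the paper likewise reduces (i) to (ii) and builds $\emb_\cX(s)$ as the pullback of the monomial valuation on $\hcO_{\cX,\xi_J}\simeq\kappa(\xi_J)[[t_j]]$). The problem is the uniqueness argument, where there is a genuine gap. You claim rigidity at \emph{rational} points $s$: that because $\val_{\cX,s}$ is divisorial (realized on a weighted blow-up $\cX'\to\cX$), any continuous map satisfying (a) and (b) must send $s$ to $\emb_\cX(s)$. But conditions (a) and (b) only prescribe the values of the seminorm on the local equations $z_j$ and its center \emph{on $\cX$}; they say nothing about its center on $\cX'$. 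At a rational point in the relative interior of a face of dimension $\ge 1$, there are in fact many points $y\in\Xan$ with $\ev_\cX(y)=s$ and $\cent_\cX(y)=\xi_J$: for instance, any valuation whose center on $\cX'$ is a non-generic point of the divisor $E'$ lying over $\xi_J$ (and away from the strict transforms of the $E_j$) still satisfies both constraints. So the divisoriality of $\val_{\cX,s}$ buys you nothing here, and the rigidity claim fails except at the vertices $e_i$, where the center is a codimension-one point and the domination of a DVR does force uniqueness.

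The paper's uniqueness argument goes through the opposite class of points: those $s\in\rel(\sigma_J)$ whose coordinates $(s_j)_{j\in J}$ are \emph{$\Q$-linearly independent}. Writing $f=\sum_\alpha f_\alpha z^\alpha$ in $\hcO_{\cX,\xi_J}$ with each nonzero $f_\alpha$ a unit, any valuation satisfying (a) and (b) gives $\val(f_\alpha z^\alpha)=\langle s,\alpha\rangle$, and the $\Q$-independence makes these values pairwise distinct, so the ultrametric inequality degenerates to the equality $\val(f)=\min_\alpha\langle s,\alpha\rangle$. Thus (a) and (b) completely determine the seminorm at such $s$; these points are dense in $\rel(\sigma_J)$, and continuity finishes the proof. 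Your argument can be repaired by replacing the rational points with the $\Q$-linearly independent ones and substituting this ultrametric rigidity for the blow-up rigidity; as written, however, the uniqueness step does not hold.
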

The proof is given in~\S\ref{S206}. Let us derive some consequences.

For any vertical blowup $\pi: \cX \to \cY$ between models, the natural map ${}^t\pi^*: \Div_0(\cX)^* \to \Div_0(\cY)^*$
maps $\D_\cX$ onto $\D_\cY$ since ${}^t\pi^* \circ \ev_\cX = \ev_\cY$ by definition. We may thus form the inverse limit
$\varprojlim_{\cX\ \text{SNC}} \D_\cX$, and we have
\begin{cor}\label{cor:retract}
  The maps $\ev_\cX: \Xan \to \D_\cX\subset\Div_0(\cX)_\R^*$ induce a homeomorphism
  \begin{equation}\label{e201}
    \ev:\Xan \to \varprojlim_{\cX\  \text{SNC}} \D_\cX.
  \end{equation}
\end{cor}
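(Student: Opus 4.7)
The plan is to deduce this from Corollary \ref{cor:gelfand} together with Theorem \ref{thm:construct-dual}, using that SNC models are cofinal in $\cM_X$ (by \cite{Tem}, as recalled in \S\ref{sec:mod}). Cofinality implies that the natural projection $\varprojlim_{\cX \in \cM_X} \Div_0(\cX)_\R^* \to \varprojlim_{\cX\text{ SNC}} \Div_0(\cX)_\R^*$ is a homeomorphism, so Corollary \ref{cor:gelfand} already gives that $\ev : \Xan \to \varprojlim_{\cX\text{ SNC}} \Div_0(\cX)_\R^*$ is a homeomorphism onto its image. By Theorem \ref{thm:construct-dual}(i), each $\ev_\cX$ has image exactly $\D_\cX$, so the image of $\ev$ is contained in $\varprojlim_{\cX\text{ SNC}} \D_\cX$. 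In particular the induced map $\ev : \Xan \to \varprojlim_{\cX\text{ SNC}} \D_\cX$ is continuous and injective.

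It remains to establish surjectivity. Given $s = (s_\cX)_\cX \in \varprojlim_{\cX\text{ SNC}} \D_\cX$, I use Theorem \ref{thm:construct-dual}(ii) to set $x_\cX := \emb_\cX(s_\cX) \in \Xan$, obtaining a net indexed by the directed set of SNC models. By compactness of $\Xan$ this net admits a cluster point $x \in \Xan$. For any fixed SNC model $\cY$ and any SNC $\cX$ dominating $\cY$ via $\pi : \cX \to \cY$, the naturality of $\ev$ combined with the compatibility $\pi_*(s_\cX) = s_\cY$ yields
\[
\ev_\cY(x_\cX) \;=\; \pi_*(\ev_\cX(x_\cX)) \;=\; \pi_*(s_\cX) \;=\; s_\cY .
\]
Hence $x_\cX$ eventually lies in the closed fiber $\ev_\cY^{-1}(s_\cY) \subset \Xan$, and so does the cluster point $x$. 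This forces $\ev_\cY(x) = s_\cY$ for every SNC $\cY$, i.e.\ $\ev(x) = s$.

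Putting things together, $\ev : \Xan \to \varprojlim_{\cX\text{ SNC}} \D_\cX$ is a continuous bijection between compact Hausdorff spaces, hence a homeomorphism. The main subtlety is the surjectivity step: the auxiliary points $\emb_\cX(s_\cX)$ do \emph{not} stabilize under refinement of $\cX$, so one really has to use the compactness of $\Xan$ and pass to a cluster point; once that is done, the fact that $\ev_\cY(x_\cX)$ is eventually exactly equal to $s_\cY$ for each fixed $\cY$ forces any cluster point to have the correct coordinate in every $\D_\cY$.
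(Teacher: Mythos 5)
Your proposal is correct and follows essentially the same route as the paper: both arguments reduce everything to the key identity $\ev_\cY(\emb_\cX(s_\cX))=s_\cY$ for $\cX\ge\cY$ (coming from $\ev_\cX\circ\emb_\cX=\id$ and naturality of $\ev$), combined with compactness of $\Xan$. The only cosmetic difference is that the paper shows the net $\ev(\emb_\cX(s_\cX))$ converges to $s$ downstairs and invokes closedness of the image, whereas you extract a cluster point of $\emb_\cX(s_\cX)$ upstairs; these are equivalent formulations of the same argument.
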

\begin{rmk}\label{R303}
  Corollary~\ref{cor:retract} can be found in~\cite[p.77, Theorem 10]{KS} and is an example
  of a result exhibiting a non-Archimedean space as an inverse limit of polyhedral objects.
  Other examples can be found in~\cite[Theorem~6.22]{valtree},
  \cite[Theorem~1.13]{hiro},
  \cite[Theorem~1.1]{Payne},
  \cite[Theorem~2.21]{BR},
  \cite[Theorem~13.2.4]{HL},
  \cite[Theorem~4.9]{jonmus},
  \cite[Proposition~6.1]{HLP},
  and~\cite[Theorem~2.3]{BdFFU}.
\end{rmk}
\begin{proof}[Proof of Corollary~\ref{cor:retract}]
  The map $\ev$ is well-defined by 
  Theorem~\ref{thm:construct-dual}~(i).
  It  is a homeomorphism onto its image by
  Corollary~\ref{cor:gelfand}
  and the fact that any model is dominated by an SNC model.
  As $\Xan$ is compact,  we only
  need to show that $\ev(\Xan)$ is dense in $\varprojlim\D_\cX$. 
  Pick $s=(s_\cX)_\cX\in\varprojlim\D_\cX$ and
  fix an SNC model $\cX$. If $\cY$ is an SNC model dominated by 
  $\cX$, then $\ev_\cX\circ\emb_\cX=\id$ yields 
  $\ev_\cY(\emb_\cX(s_\cX))=s_\cY$. 
  Hence $s=\lim_\cX\ev(\emb_\cX(s_\cX))\in\overline{\ev(\Xan)}$.
\end{proof}
\begin{defi}
  For any SNC model $\cX$ we define a continuous map
  $\retr_\cX: \Xan \to \Xan$ by
  \begin{equation*}
    \retr_\cX:=\emb_\cX\circ\ev_\cX.
  \end{equation*}
\end{defi}

It follows from Theorem~\ref{thm:construct-dual} that 
$\retr_\cX$ satisfies $\retr_\cX\circ\retr_\cX=\retr_\cX$ and  $\retr_\cX(x)=x$ iff $x\in\emb_\cX(\D_\cX)$.
Hence we view $\retr_\cX$ as a \emph{retraction} of 
$\Xan$ onto the image of the embedding $\emb_\cX:\D_\cX\to \Xan$.

\begin{lem}\label{L207} 
  The retraction  map $\retr_\cX$ satisfies the following properties: 
\begin{itemize}
\item[(i)] $\cent_\cX(x)\in\overline{\{\cent_\cX(\retr_\cX(x))\}}$ for
  all $x\in\Xan$; 
  more precisely, $c_\cX(\retr_\cX(x))$ is the generic point of $E_J$, where
  $J\subset I$ is the set of indices $j$ for which $c_\cX(x)\in E_j$;
\item[(ii)] $\f_D\circ\retr_\cX=\f_D$ for all $D\in\Div_0(\cX)_\R$. 
\end{itemize} 
\end{lem}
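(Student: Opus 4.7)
Both statements follow from Theorem \ref{thm:construct-dual}, and I would dispatch (ii) first as it is almost immediate. By construction of the evaluation map one has $\f_D(y)=\langle\ev_\cX(y),D\rangle$ for every $y\in\Xan$ and every $D\in\Div_0(\cX)_\R$. Combined with $\ev_\cX\circ\emb_\cX=\id_{\D_\cX}$ from Theorem \ref{thm:construct-dual}(ii)(a), this yields
\[
\ev_\cX(\retr_\cX(x))=\ev_\cX(\emb_\cX(\ev_\cX(x)))=\ev_\cX(x),
\]
and pairing with $D$ gives (ii) at once.

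For (i) the plan is to identify the unique face of $\D_\cX$ whose relative interior contains $s:=\ev_\cX(x)$, and then invoke Theorem \ref{thm:construct-dual}(ii)(b). Set
\[
I_x:=\{i\in I\mid \cent_\cX(x)\in E_i\},
\]
which is nonempty since $\cent_\cX(x)\in\cX_0=\bigcup_i E_i$. For $i\in I_x$ pick a local equation $z_i\in\cO_{\cX,\cent_\cX(x)}$ of $E_i$. I claim $s\in\rel\sigma_{I_x}$.

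To verify this I would compute $\langle s,E_i\rangle=\f_{E_i}(x)$ directly from the definition \eqref{equ:log}. Around $\cent_\cX(x)$ the invertible sheaf $\cO_\cX(E_i)$ is trivial for $i\notin I_x$ and is generated by $z_i^{-1}$ for $i\in I_x$. The valuative criterion of properness forces $|f(x)|\le 1$ for every $f\in\cO_{\cX,\cent_\cX(x)}$, with strict inequality whenever $f\in\fm_{\cent_\cX(x)}$, so the maximum in \eqref{equ:log} is attained at the distinguished generator: $\f_{E_i}(x)=0$ for $i\notin I_x$ and $\f_{E_i}(x)=-\log|z_i(x)|>0$ for $i\in I_x$. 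The same computation performed at the divisorial point $x_{E_j}$ (whose center is the generic point of $E_j$, and whose valuation is normalized by the fact that $\unipar$ has $E_j$-order $b_j$) gives $\langle e_j,E_i\rangle=\delta_{ij}/b_j$. Writing $s=\sum_{i\in I}t_ie_i$ and equating coefficients forces $t_i=0$ for $i\notin I_x$ and $t_i=-b_i\log|z_i(x)|>0$ for $i\in I_x$; this is precisely the condition $s\in\rel\sigma_{I_x}$. Theorem \ref{thm:construct-dual}(ii)(b) then identifies $\cent_\cX(\retr_\cX(x))$ with the generic point $\xi_{I_x}$ of the (irreducible, by the SNC hypothesis) stratum $E_{I_x}$, and $\cent_\cX(x)\in E_{I_x}=\overline{\{\xi_{I_x}\}}$ by the very definition of $I_x$.

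The only finicky point in the argument is the verification that $\f_{E_i}(x)$ coincides with the value at the single generator $z_i^{-1}$ rather than being enlarged by some other section of the local fractional ideal; this is precisely where the valuative criterion enters, together with the SNC assumption which ensures that each $E_i$ is Cartier and that each nonempty $E_J$ is irreducible. Once this local computation is in hand, the face-identification, and hence both assertions, are formal consequences of Theorem \ref{thm:construct-dual}.
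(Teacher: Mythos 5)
Your proposal is correct and follows essentially the same route as the paper: part (ii) via $\ev_\cX\circ\emb_\cX=\id$, and part (i) by identifying the face of $\D_\cX$ whose relative interior contains $\ev_\cX(x)$ as the one indexed by $\{i:\cent_\cX(x)\in E_i\}$ and then applying Theorem~\ref{thm:construct-dual}~(ii)(b). The only difference is that the paper asserts the equivalence $\cent_\cX(x)\in E_i\Leftrightarrow\langle\ev_\cX(x),E_i\rangle>0$ "by definition of $\cent_\cX$", whereas you carry out the local computation with $z_i^{-1}$ and the domination of $\cO_{\cX,\cent_\cX(x)}$ by $R_x$ explicitly -- a harmless (and welcome) elaboration.
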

\begin{proof}
  By definition of $\cent_\cX$ we have $\cent_\cX(x)\in E_i$ for a given $i\in I$ iff $\langle\ev_\cX(x),E_i\rangle>0$, and it follows that 
  $\ev_\cX(x)$ lies in the relative interior of the simplex 
  $\sigma_J$ for the maximal $J\subset I$ such that $\cent_\cX(x)\in E_J$.
  Property~(b) in Theorem~\ref{thm:construct-dual} then shows that
  $\cent_\cX(\retr_\cX(x))$ is the generic point of $E_J$, which proves (i). 
  
  Let us prove (ii). For each $x\in\Xan$ we have
   $$
  \f_D(\retr_\cX(x))=\langle D,\ev_\cX(\retr_\cX(x))\rangle=\langle D,\ev_\cX\circ\emb_\cX\circ\ev_\cX(x)\rangle=\langle D,\ev_\cX(x)\rangle=\f_D(x),
  $$
 using the identity $\ev_\cX\circ\emb_\cX=\id$. 
\end{proof}

\begin{prop}\label{P206}
  If $\cX\ge\cY$ are two SNC models, then 
  \begin{itemize}
  \item[(i)] $\ev_\cY\circ\retr_\cX=\ev_\cY$;
  \item[(ii)] $\retr_\cY\circ\retr_\cX=\retr_\cY$;
  \item[(iii)]  $\retr_\cX\circ\emb_\cY=\emb_\cY$;
  \item[(iv)] $\retr_\cX\circ\retr_\cY=\retr_\cY$.
  \end{itemize}
\end{prop}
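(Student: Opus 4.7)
For part~(i), my plan is to reduce the first identity to the invariance already supplied by Lemma~\ref{L207}(ii). Unwinding $\ev_\cY$, the identity $\ev_\cY\circ\retr_\cX=\ev_\cY$ is equivalent to $\f_D\circ\retr_\cX=\f_D$ for every $D\in\Div_0(\cY)_\R$. I would use that $\f_D=\f_{\pi^*D}$ as functions on $\Xan$, where $\pi:\cX\to\cY$ is the given vertical blow-up; this is nothing but the compatibility ${}^t\pi^*\circ\ev_\cX=\ev_\cY$ noted just before Corollary~\ref{cor:retract}, tested against $D$. Since $\pi^*D\in\Div_0(\cX)_\R$, Lemma~\ref{L207}(ii) gives $\f_{\pi^*D}\circ\retr_\cX=\f_{\pi^*D}$, and the first identity drops out. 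The second identity is then a one-line consequence obtained by precomposing with $\emb_\cY$ on the left:
$$
\retr_\cY\circ\retr_\cX=\emb_\cY\circ\ev_\cY\circ\retr_\cX=\emb_\cY\circ\ev_\cY=\retr_\cY.
$$

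For part~(ii), the strategy is to invoke the uniqueness statement in Theorem~\ref{thm:construct-dual}(ii) by checking that $\retr_\cX\circ\emb_\cY:\D_\cY\to\Xan$ is continuous and satisfies conditions (a) and (b) of that theorem. Continuity is clear, and condition~(a) is immediate from part~(i), since $\ev_\cY\circ(\retr_\cX\circ\emb_\cY)=\ev_\cY\circ\emb_\cY=\id_{\D_\cY}$. For condition~(b), I would fix $s$ in the relative interior of a face $\sigma_J\subset\D_\cY$, set $y:=\emb_\cY(s)$ and $y':=\retr_\cX(y)$, and show that $c_\cY(y')$ equals the generic point $\xi_J$ of $E_J\subset\cY_0$. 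The uniqueness clause of the valuative criterion of properness applied to the composition $T_{y'}\to\cX\to\cY$ yields $c_\cY(y')=\pi(c_\cX(y'))$. By Lemma~\ref{L207}(i) applied on $\cX$, the point $c_\cX(y)$ is a specialization of $c_\cX(y')$; applying the continuous map $\pi$ and using $c_\cY(y)=\xi_J$ from property~(b) of $\emb_\cY$, I obtain that $c_\cY(y')\in\cY_0$ is a generization of $\xi_J$, forcing $c_\cY(y')=\xi_{J'}$ for some $J'\subseteq J$.

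The reverse inclusion $J\subseteq J'$ comes from reading off the support of $\ev_\cY(y')=s$ inside $\D_\cY$: for $i\notin J'$, the component $E_i$ does not contain $c_\cY(y')=\xi_{J'}$, so $\cO_\cY(E_i)$ is trivial at $\xi_{J'}$ and $\f_{E_i}(y')=\langle\ev_\cY(y'),E_i\rangle=0$; on the other hand, $s$ lying in the relative interior of $\sigma_J$ means precisely that $\langle s,E_i\rangle>0$ iff $i\in J$. Comparing forces $J\subseteq J'$, hence $J'=J$. This verifies condition~(b), and uniqueness in Theorem~\ref{thm:construct-dual}(ii) then gives $\retr_\cX\circ\emb_\cY=\emb_\cY$. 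The main obstacle, I expect, is precisely this last matching of two faces---one extracted from the scheme-theoretic transport of centers through $\pi$, the other from the support of the functional $\ev_\cY(y')$ on $\D_\cY$; everything else is a formal manipulation of the definitions and of Lemma~\ref{L207}.
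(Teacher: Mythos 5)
Your part~(i) is correct and is exactly the paper's argument (the paper even leaves implicit the step $\f_D=\f_{\pi^*D}$ that you spell out), and your overall strategy for part~(ii) — verify conditions (a) and (b) of Theorem~\ref{thm:construct-dual} for $\retr_\cX\circ\emb_\cY$ and invoke uniqueness — is also the paper's. But the verification of (b) contains one unjustified step: from ``$\cent_\cY(y')$ is a generization of $\xi_J$ lying in $\cY_0$'' you conclude that $\cent_\cY(y')=\xi_{J'}$ for some $J'\subseteq J$. This does not follow: a generization of $\xi_J$ inside $\cY_0$ need not be the generic point of any stratum $E_{J'}$ (already for a threefold it could be the generic point of an arbitrary irreducible curve in $\cY_0$ through $\xi_J$), and nothing you have established at that point rules this out. (Knowing that $y'$ lies in $\emb_\cX(\D_\cX)$ only tells you that $\cent_\cX(y')$ is a stratum generic point of $\cX_0$; its image under $\pi$ need not be one for $\cY_0$.) As written, the subsequent ``reverse inclusion'' argument therefore starts from a hypothesis you have not earned.

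The good news is that your own support computation closes the gap once the argument is reordered, so no new idea is needed. From part~(i) you know $\ev_\cY(y')=\ev_\cY(y)=s$, and since $\cent_\cY(y')\in E_i$ iff $\langle\ev_\cY(y'),E_i\rangle>0$ (this is the observation opening the proof of Lemma~\ref{L207}), the fact that $s\in\rel(\sigma_J)$ gives directly $\cent_\cY(y')\in E_J$, hence $\overline{\{\cent_\cY(y')\}}\subset E_J$. Your generization step gives the reverse inclusion $E_J=\overline{\{\xi_J\}}\subset\overline{\{\cent_\cY(y')\}}$, and since $E_J$ is irreducible its generic point is unique, so $\cent_\cY(y')=\xi_J$ — no intermediate $J'$ is ever needed. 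For comparison, the paper gets the inclusion $\cent_\cY(y')\in\overline{\{\cent_\cY(y)\}}$ without any support computation, by noting that $\retr_\cY(y')=\retr_\cY\circ\retr_\cX\circ\emb_\cY(s)=y$ (a consequence of part~(i)) and applying Lemma~\ref{L207}~(i) on $\cY$; it then concludes by mutual specialization. Either route is fine once your middle assertion is replaced by the two-sided closure argument.
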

Note that~(iii) says that the image in $\Xan$ of $\D_\cY$ 
is contained in the image of $\D_\cX$.
\begin{proof} 
  Property (i) amounts to the fact that $\f_D\circ\retr_\cX=\f_D$ for all $D\in\Div_0(\cY)$, 
  which is a special case of Lemma~\ref{L207}~(ii). 
  Postcomposing~(i) with $\emb_\cY$ we then get~(ii).
  
  Let us now prove (iii). The map $\emb'_\cY:=\retr_\cX\circ\emb_\cY:\D_\cY\to\Xan$
  is continuous, and~(i) implies 
  that $\ev_\cY\circ\emb'_\cY=\ev_\cY\circ\emb_\cY=\id$ on $\D_\cY$. 
  By the uniqueness part of 
  Theorem~\ref{thm:construct-dual} it suffices
  to prove that $\cent_\cY\circ\emb'_\cY=\cent_\cY\circ\emb_\cY$ on $\D_\cY$.
  Pick $s\in\D_\cY$ and set $x:=\emb_\cY(s)$, $x':=\emb'_\cY(s)$. 
  On the one hand,~(ii) shows that
    $$
  \retr_\cY(x')=\retr_\cY\circ\retr_\cX\circ\emb_\cY(s)=\retr_\cY\circ\emb_\cY(s)=x,
  $$
  so $\cent_\cY(x')\in\overline{\{\cent_\cY(x)\}}$ by (i) of 
  Lemma~\ref{L207}. On the other hand $\retr_\cX(x)=x'$ by definition, so
  $\cent_\cX(x)\in\overline{\{\cent_\cX(x')\}}$ and hence 
  $\cent_\cY(x)\in\overline{\{\cent_\cY(x')\}}$ by continuity of the map $\cX\to\cY$ for the Zariski topology.

  Finally~(iv) follows by postcomposing~(iii) with~$\ev_\cY$.
\end{proof}
\begin{defi}
  We define the subset $\Xqm\subset\Xan$ of \emph{quasimonomial points} 
  as
  \begin{equation*}
    \Xqm:=\bigcup_\cX\emb_\cX(\Delta_\cX),
  \end{equation*}
  where $\cX$ ranges over SNC models of $\cX$.
\end{defi}  
\begin{rmk}
The set $\Xqm$ coincides with the set of real valuations on the function field of $X$
whose restriction to $K$ agrees with the given valuation, and 
are Abhyankar in the sense that the sum of their rational rank
and their transcendence degree is equal to $\dim X+1$; 
see~\cite[Proposition~3.7]{jonmus}.\footnote{The rational rank of such a valuation $v$ is 
  defined as the dimension of the $\Q$-vector space generated by the value group of $v$.
  The transcendence degree of $v$ is the transcendence degree of the field extension $k(v)/k$,
  where $k(v)=\{v\ge 0\}/\{v>0\}$ is the residue field of $v$.}
\end{rmk}

\begin{cor}\label{C205}
  We have $\lim_\cX \retr_\cX=\id$ pointwise on $\Xan$. Hence $\Xqm$ is dense in $\Xan$.
\end{cor}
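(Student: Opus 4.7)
The plan is to use the homeomorphism $\ev\colon\Xan\simeq\varprojlim_{\cX\text{ SNC}}\D_\cX$ from Corollary~\ref{cor:retract} to reduce the pointwise convergence $\retr_\cX\to\id$ to convergence in the projective limit. Convergence in a projective limit is tested level by level, so it suffices to show that for every fixed SNC model $\cY$, the net $\bigl(\ev_\cY(\retr_\cX(x))\bigr)_\cX$ is eventually equal to $\ev_\cY(x)$.

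First, I would fix $x\in\Xan$ and an arbitrary SNC model $\cY$. For every SNC model $\cX\ge\cY$, Proposition~\ref{P206}(i) gives the identity $\ev_\cY\circ\retr_\cX=\ev_\cY$, so
\[
\ev_\cY(\retr_\cX(x))=\ev_\cY(x)
\]
for all $\cX\ge\cY$. Since the set of SNC models dominating $\cY$ is cofinal in the directed system $\cM_X^{\mathrm{SNC}}$ (any model is dominated by an SNC model by~\cite{Tem}), the net $\ev_\cY(\retr_\cX(x))$ is eventually constant equal to $\ev_\cY(x)$, and in particular converges to $\ev_\cY(x)$ in $\D_\cY$.

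Because this holds for every $\cY$, the net $(\ev(\retr_\cX(x)))_\cX$ converges to $\ev(x)$ in $\varprojlim_\cY\D_\cY$, equipped with the projective limit topology. Applying the inverse of the homeomorphism $\ev$ yields $\lim_\cX\retr_\cX(x)=x$ in $\Xan$, which is the first assertion.

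For the density of $\Xqm$, observe that by definition $\retr_\cX(x)=\emb_\cX(\ev_\cX(x))\in\emb_\cX(\D_\cX)\subset\Xqm$ for every SNC model $\cX$. Hence every $x\in\Xan$ is the limit of a net contained in $\Xqm$, so $\Xqm$ is dense in $\Xan$. The only subtle point is the cofinality used above, but this is immediate from the fact that any model is dominated by an SNC model, so no real obstacle arises.
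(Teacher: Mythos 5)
Your proof is correct and follows essentially the same route as the paper: reduce pointwise convergence to the projective limit via Corollary~\ref{cor:retract}, then use the identity $\ev_\cY\circ\retr_\cX=\ev_\cY$ from Proposition~\ref{P206}(i) for $\cX\ge\cY$. You merely spell out the cofinality and the level-by-level convergence that the paper leaves implicit.
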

Of course, we already knew from Corollary~\ref{C208} that 
$\Xdiv\subset\Xqm$ is dense in $\Xan$.
\begin{proof}
  By Corollary~\ref{cor:retract} it suffices to show that 
  $\lim_\cX\ev\circ \retr_\cX=\ev$, which amounts to proving 
  $\lim_\cX\ev_\cY\circ \retr_\cX=\ev_\cY$ for each $\cY$. This follows from (i) of Proposition~\ref{P206}. 
\end{proof}

\subsection{Proof of Theorem~\ref{thm:construct-dual}}\label{S206}
  Proving the inclusion $\ev_\cX(\Xan)\subset\D_\cX$ is a matter of 
  unwinding definitions. The reverse inclusion will follow from~(a).
  Hence~(ii) implies~(i).

The proof of (ii) is essentially the same as that of~\cite[Proposition 3.1]{jonmus}. It is also closely related to~\cite[Lemma 5.6]{Ber2} and~\cite[Corollaire 3.13]{Thu}. Fix a subset $J\subset I$ with $E_J\neq\emptyset$, let $\xi_J$ be its generic point and let $\sigma_J$ be the corresponding face of $\D_\cX$. It will be enough to show the existence and uniqueness of a continuous map $\emb_\cX:\sigma_J\to\Xan$ satisfying (a) and (b) of Theorem~\ref{thm:construct-dual} for $s\in\sigma_J$. 

For each $j\in J$ pick a local equation $z_j\in\cO_{\cX,\xi_J}$ of $E_j$, so that $(z_j)_{j\in J}$ is a regular system of parameters of $\cO_{\cX,\xi_J}$ thanks to the SNC condition. 
After choosing a field of representatives of $\kappa(\xi_J)$ in $\cO_{\cX,\xi_J}$, Cohen's theorem yields an isomorphism 
\begin{equation}\label{equ:cohen}
\hcO_{\cX,\xi_J}\simeq\kappa(\xi_J)\cro{t_j,j\in J}
\end{equation}
sending $z_j$ to $t_j$. 

To prove uniqueness, suppose $\emb_\cX,\emb'_\cX:\sigma_J\to\Xan$ 
are two continuous maps satisfying (a) and (b) for $s\in\sigma_J$. 
Property (a) means that the valuations $\val_{\cX,s}$ and $\val'_{\cX,s}$ 
on $\hcO_{\cX,\xi_J}$ defined by
$$
\val_{\cX,s}(f):=-\log|f(\emb_\cX(s)|
\quad\text{and}\quad
\val'_{\cX,s}(f):=-\log|f(\emb'_\cX(s)|
$$ 
both take value $s_j$ on $z_j$. 
By property~(b) it follows that when $s$ belongs to the relative interior $\rel(\sigma_J)$, 
the valuations $\val_{\cX,s}$, $\val_{\cX,s}'$ have center $\xi_J$ on $\cX$, and hence extend by continuity to the completion $\hcO_{\cX,\xi_J}$. The isomorphism (\ref{equ:cohen}) enables us to write any given $f\in\hcO_{\cX,\xi_J}$ as $f = \sum_{\alpha \in \N^{J}} f_\alpha z^\alpha$ with $f_\alpha \in\hcO_{\cX,\xi_J}$, in such a way that each non-zero 
  $f_\a$ is a unit. For any $s\in\rel(\sigma_J)$ we then have 
  $$
  \val_{\cX,s}(f_\alpha z^\alpha)=\langle s,\alpha\rangle=\val'_{\cX,s}(f_\a z^\a)
  $$
  for each $\a\in\N^J$. If $(s_j)_{j\in J}$ is $\Q$-linearly independent then these numbers are furthermore mutually distinct as $\a$ ranges over $\N^J$, and the ultrametric property yields
\begin{equation}\label{equ:monoval}
  \val_{\cX,s}(f)=\min_{\a\in\N^J}\langle s,\alpha\rangle=\val'_{\cX,s}(f).
 \end{equation}
  We conclude that $\emb_\cX(s)=\emb'_\cX(s)$ on the dense set of points $s\in\rel(\sigma_J)$ such that $(s_j)_{j\in J}$ is $\Q$-linearly independent; hence $\emb_\cX=\emb_\cX'$ on $\sigma_J$ by continuity.  
  
Now we turn to existence.
Given $s\in\sigma_J$, define $v$ as the 
monomial valuation on the ring of formal power series
$\kappa(\xi_J)\cro{t_j,\,j\in J}$ taking values $v(t_j)=s_j$, $j\in J$. 
In other words, the value of $v$ on an element
$$
g=\sum_{\a\in\N^J}g_\a t^\a\in\kappa(\xi_J)\cro{t_j,\,j\in J}
$$ 
is given by
\begin{equation}\label{equ:mon}
v(g)=\min\{\langle s,\a\rangle\mid g_\a\neq 0\}.
\end{equation}
Using the isomorphism (\ref{equ:cohen}) we may thus define
$\val_{\cX,s}$ as the restriction to $\cO_{\cX,\xi_J}$ of the 
pull-back of $v$.
The center of $\val_{\cX,s}$ is then equal to the generic point of
$\bigcap_{s_j>0}\left\{z_j=0\right\}$, \ie the generic point of
$E_{J'}$ where $\sigma_{J'}$ is the face containing $s$ in its
relative interior. The continuity of $s\mapsto\val_{\cX,s}(f)$ on
$\sigma_J$ is also easy to see using (\ref{equ:mon}). Setting 
\begin{equation*}
  \emb_\cX(s):=\exp\left(-\val_{\cX,s}\right)
\end{equation*}
therefore concludes the proof.   
  
\begin{rmk}\label{rmk:otherpoint} 
  For each $\xi\in\cX_0$ let $I_\xi$ be the set of indices $i\in I$ for
  which $\xi\in E_i$. Arguing as above shows that there exists a
  unique way to define, for each $s\in\sigma_{I_\xi}$, a valuation 
  $\val_{\hcX_\xi,s}$ on $\hcX_\xi:=\Spec\hcO_{\cX,\xi}$, if we impose that:
  \begin{itemize}
  \item $\val_{\hcX_\xi,s}$ is centered at $\xi_J$ for $s\in\rel(\sigma_J)\subset\sigma_{I_\xi}$;
  \item $\val_{\hcX_\xi,s}(E_i)=s_i$ for each $i\in I_\xi$;
  \item $s\mapsto\val_{\hcX_\xi,s}(f)$ is continuous for each $f\in\hcO_{\cX,\xi}$. 
  \end{itemize}
  Indeed, choose a regular system of parameters $(z_i)_{i\in L}$ of
  $\cO_{\cX,\xi}$ 
  such that $z_i$ is a local equation of $E_i$ for $i\in I_\xi\subset L$, 
  and a field of representatives of $\kappa(\xi)$ in $\cO_{\cX,\xi}$. 
  We then have an isomorphism
  $\hcO_{\cX,\xi}\simeq\kappa(\xi)\cro{t_i,\,i\in L}$ 
  under which $\val_{\hcX_\xi,s}$ corresponds to the 
  monomial valuation taking value $s_i$ on $t_i$ 
  for $I\in I_\xi$, and $0$ on $t_i$ for $i\in L\setminus I_\xi$. 
  Note that $\val_{\cX,s}$ is then the image of $\val_{\hcX_\xi,s}$ 
  under the natural morphism $\hcX_\xi\to\cX$. 
\end{rmk}  

\begin{rmk}
Although we shall not use it, there exists a deformation retraction of $X$ onto
the image of the dual complex $\emb_\cX(\D_\cX)$ in $X$, see~\cite[Theorem~3.26]{Thu} and  
\cite[Theorem~3.1.3]{nicaise-xu}.
\end{rmk}  

\subsection{Functions on dual complexes}

\begin{prop}\label{prop:PA} 
  Let $\cX$ be an SNC model of $X$ and let $\fa$ be a vertical fractional 
  ideal sheaf on $\cX$. Then $\f:=\log|\fa|\in\cD(\Xa)$ satisfies:
  \begin{itemize} 
  \item[(i)] 
    $\f\circ\emb_\cX$ is piecewise affine and convex 
    on each face of $\D_\cX$;
  \item[(ii)]
    $\f\le\f\circ \retr_\cX$.
  \end{itemize}
 \end{prop}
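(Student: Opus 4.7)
My plan is to unwind both claims through the explicit monomial description of $\emb_\cX(s)$ coming from the proof of Theorem~\ref{thm:construct-dual}. Since $\log|\unipar^m\fa|=\log|\fa|-m$ differs from $\log|\fa|$ by an additive constant, I may assume $\fa$ is an honest vertical ideal sheaf. For $s$ in the relative interior of a face $\sigma_J$, the point $\emb_\cX(s)$ has center $\xi_J$ on $\cX$, and the corresponding valuation is the monomial valuation on $\hcO_{\cX,\xi_J}\simeq\kappa(\xi_J)\cro{t_j,\,j\in J}$ assigning weight $s_j=\langle s,E_j\rangle$ to $t_j$. Choosing finitely many generators $f_1,\dots,f_r$ of the ideal $\fa_{\xi_J}\subset\cO_{\cX,\xi_J}$, the ultrametric inequality gives $\f(\emb_\cX(s))=\max_i(-\val_{\cX,s}(f_i))$, so both parts reduce to controlling each $\val_{\cX,s}(f_i)$.

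For (i), expanding $f_i=\sum_{\alpha\in\N^J}c_{i,\alpha}t^\alpha$ yields $\val_{\cX,s}(f_i)=\inf\{\langle s,\alpha\rangle:c_{i,\alpha}\ne 0\}$. Although $\supp(f_i)$ may be infinite, this infimum is realized on the finite vertex set $V_i$ of the Newton polyhedron $\Conv(\supp f_i)+\R_+^J$; the finiteness of $V_i$ follows from Dickson's Lemma (any subset of $\N^J$ has only finitely many componentwise-minimal elements). Hence on $\rel(\sigma_J)$, and therefore by continuity on the closed face $\sigma_J$, one has $\f(\emb_\cX(s))=\max_{i,\,\alpha\in V_i}(-\langle s,\alpha\rangle)$, a maximum of finitely many linear functions of $s$, hence piecewise affine and convex.

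For (ii), let $x\in\Xan$, set $\xi:=\cent_\cX(x)$, $s:=\ev_\cX(x)$, and observe that it suffices to prove the pointwise comparison $\val_x(f)\ge\val_{\cX,s}(f)$ for every $f\in\cO_{\cX,\xi}$: applied to generators of $\fa_\xi\subset\fa_{\xi_J}$ and taking the minimum, this gives $\val_x(\fa_\xi)\ge\val_{\cX,s}(\fa_{\xi_J})$, which rephrases as $\f(x)\le\f(\retr_\cX(x))$. To prove the comparison, I first extend $\val_x$ from $\cO_{\cX,\xi}$ to its $\fm_\xi$-adic completion $\hcO_{\cX,\xi}$: the strict inequality $|z|_x<1$ for $z\in\fm_\xi$, applied to a finite set of generators $(z_i)_{i\in L}$ of $\fm_\xi$, gives $|f|_x\le r^N$ for all $f\in\fm_\xi^N$ with $r:=\max_i|z_i|_x<1$, so the seminorm is $\fm_\xi$-adically continuous and extends uniquely. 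Using Cohen's theorem to write $\hcO_{\cX,\xi}\simeq\kappa(\xi)\cro{z_i,\,i\in L}$, and Remark~\ref{rmk:otherpoint} to describe $\val_{\cX,s}$ there as the monomial valuation with weight $s_i$ on $z_i$ for $i\in I_\xi$ and weight $0$ otherwise, I expand $f=\sum c_\alpha z^\alpha$; every $c_\alpha$ is a unit in $\hcO_{\cX,\xi}$ (as a nonzero lift from $\kappa(\xi)$), so $\val_x(c_\alpha)=0$, and the ultrametric inequality on finite truncations, combined with $\val_x(z_i)\ge\val_{\cX,s}(z_i)$ (equality on $I_\xi$), yields the required inequality.

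The main technical obstacle is this extension of the Berkovich seminorm from $\cO_{\cX,\xi}$ to its completion, since the monomial formalism underlying both parts lives most naturally on $\hcO_{\cX,\xi}$ whereas $\val_x$ is originally defined only on the local ring. Dickson's Lemma plays the second key role, converting the a priori infinite infimum over the support of a power series into a finite minimum over the Newton polyhedron's vertices, which is precisely what makes $\f\circ\emb_\cX$ genuinely piecewise affine in (i) and guarantees that the monomial minimum used in (ii) is actually attained on sufficiently long finite truncations.
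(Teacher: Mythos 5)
Your argument is correct and follows essentially the same route as the paper's proof: generators of $\fa$ at the center, the monomial-valuation formula on faces for (i), and extension of the seminorm to $\hcO_{\cX,\xi}$ plus the ultrametric inequality on monomial expansions for (ii). The only difference is that you make explicit two points the paper leaves implicit (Dickson's Lemma for the finiteness of the Newton-polyhedron vertices, and the $\fm_\xi$-adic continuity of the seminorm), which is a welcome but not essential elaboration.
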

\begin{cor}\label{C304}
  Let $\cX$ be an SNC model of $X$ and 
  $\p\in\cD(\Xan)$ a model function. Then 
  $\p\circ\emb_\cX$ is piecewise affine on the faces of $\D_\cX$.
  Further, $\p\circ\emb_\cX$ is affine on all faces iff $\p$ is determined on $\cX$.
\end{cor}

\begin{proof}
  By Proposition~\ref{prop:gen}, we can write $\p=\sum_{i=1}^mc_i\log|\fa_i|$
  for vertical ideal sheaves $\fa_i$ on $\cX$ and rational numbers $c_i$. 
  According to Proposition~\ref{prop:PA}, each function $\log|\fa_i|\circ\emb_\cX$ is piecewise 
  affine on the faces of $\D_\cX$; hence so is $\p$.

  The second point follows from the fact that a function on $\D_\cX$ is affine on all the faces of $\D_\cX\subset\Div_0(\cX)^*_\R$ iff it comes from a linear form, \ie an element of $\Div_0(\cX)_\R$. 
\end{proof}

\begin{proof}[Proof of Proposition~\ref{prop:PA}]
  Upon multiplying by $\unipar^m$ with $m\gg 1$, we may assume that 
  $\fa\subset\cO_\cX$ is a vertical ideal sheaf. Pick $J\subset I$ 
  such that $E_J$ is non-empty, choose a point $\xi\in E_J$ and let $f_1,\dots,f_M$ be generators of $\fa\cdot\cO_{\cX,\xi}$. With the notation introduced in the proof of Theorem~\ref{thm:construct-dual} we then have   
  \begin{equation}\label{equ:logfa}
    \log|\fa|(\emb_\cX(s))=\max_{1\le m\le M}-\val_{\cX,s}(f_m).
  \end{equation}
  By (\ref{equ:mon}) each function $s\mapsto -\val_{\cX,s}(f_m)$ 
  is piecewise affine and convex on $\sigma_J$, proving~(i).

  To prove~(ii), pick any $x\in\Xan$, set $\xi:=\cent_\cX(x)$ and let $I_\xi\subset I$ be the set of indices $i\in I$ such that $\xi\in E_i$. Arguing similarly with generators of $\fa\cdot\cO_{\cX,\xi}$, it is enough to show that $|f(x)|\le|f(\retr_\cX(x))|$ for each $f\in\cO_{\cX,\xi}$. Note that the seminorm $f\mapsto|f(x)|$ extends by continuity to $\hcO_{\cX,\xi}$ since $\xi=c_\cX(x)$. Writing, in the notation of Remark~\ref{rmk:otherpoint}, $f=\sum_{\a\in\N^L}f_\a z^\a\in\hcO_{\cX,\xi}$ we then have 
  $$
  |f(x)|\le\sup_{f_\a\neq 0}\prod_{i\in I_\xi}|z_j(x)|^{\a_i}
  $$ 
  by the ultrametric property, using that $|f_\a(x)|=1$ since each
  non-zero $f_\a\in\hcO_{\cX,\xi}$ is a unit. On the other hand, if we
  set $s_i:=-\log|z_i(x)|$ for $i\in I_\xi$ then we have by definition
  $p_\cX(x)=\emb_\cX(s)$; hence
  $$
  \sup_{f_\a\neq 0}\prod_{i\in I_\xi}|z_i(x)|^{\a_i}=|f(p_\cX(x))|
  $$
  and the result follows. 
  \end{proof}
Let $\PA(\D_\cX)_\Z$ be the set of all continuous functions $h:\D_\cX\to\R$ whose restriction to each face of $\D_\cX$ is piecewise affine, with gradients given by $\Z$-divisors $D\in\Div_0(\cX)$. 

\begin{defi}\label{defi:fah} Let $h\in\PA(\D_\cX)_\Z$. For each $J\subset I$ such that $E_J\neq\emptyset$ we set $\cX_J:=\cX\setminus\bigcup_{i\in I\setminus J}E_i$ and define a vertical fractional ideal sheaf $\fa_h$ on $\cX$ by letting for each $J$ 
\begin{equation}\label{equ:faxj}
  \fa_h|_{\cX_J}:=\sum\left\{\cO_{\cX_J}(D)\mid D\in\Div_0(\cX)\
    \text{and $\langle D,\cdot\rangle\le h$ on $\sigma_J$}\right\}.
\end{equation}
\end{defi}
Note that these locally defined sheaves glue well together, and that $\log |\fa_h| \circ \emb_\cX$  is equal to the convex envelope of $h$ on each face of $\D_\cX$.

\subsection{Subdivisions and vertical blowups}\label{S201}
Let $\cX$ be an SNC model.
A \emph{subdivision} $\D'$ of $\D_\cX$ is a compact rational polyhedral complex of $\Div_0(\cX)_\R^*$ refining $\D_\cX$. Each subdivision $\D'$ is thus of the form $\hat{\D}'\cap\{\langle\cX_0,\cdot \rangle=1\}$ where $\hat{\D}'$ is a rational fan refining $\hat{\D}_\cX$. A subdivision $\D'$ is \emph{simplicial} if its faces are simplices.

A subdivision $\D'$ is \emph{projective} if it admits a 
\emph{strictly convex support function}, 
that is, a function  $h\in\PA(\D_\cX)_\Z$ that is convex on each face 
of $\D_\cX$ and such that $\D'$ is the coarsest subdivision of $\D_\cX$ on each of whose faces $h$ is affine.

\begin{thm}\label{T201}
  Let $\cX$ be an SNC model of $X$ and let $\D'$ be a simplicial projective subdivision of $\D_\cX$. Then there exists a vertical blowup $\pi:\cX'\to\cX$ with the following properties:
  \begin{itemize}
  \item[(i)] $\cX'$ is normal and vertically $\Q$-factorial. 
   \item[(ii)] The vertices $(e'_i)_{i\in I'}$ of $\D'$ are in bijection with the irreducible components $(E'_i)_{i\in I'}$ of $\cX'_0$, in such a way that $c_{\cX'}(\emb_\cX(e'_i))$ is the generic point of $E'_i$ for each $i\in I'$. 
   \item[(iii)]
    If $J'\subset I'$, then $E'_{J'}:=\bigcap_{j\in J'}E'_j$ is normal, irreducible, and nonempty iff the corresponding vertices $e'_j$, $j\in J'$ of $\Delta'$ 
    span a face $\sigma'_{J'}$ of $\Delta'$. In this case, $E'_{J'}$ has codimension $|J'|$ and its generic point is the center of $\emb_\cX(s)$ on $\cX'$ for all $s$ in the relative interior of $\sigma'_{J'}$. 
  \item[(iv)] For each $D\in\Div_0(\cX')$ the function $\f_D\circ\emb_\cX$ is affine on the faces of $\D'$.
  \end{itemize}

\end{thm}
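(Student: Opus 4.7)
The plan is to construct $\cX'$ as the normalization of the blow-up of $\cX$ along the ideal sheaf $\fa_h$ attached via Definition~\ref{defi:fah} to a strictly convex support function $h\in\PA(\D_\cX)_\Z$ realizing $\D'$, and then to perform a local toric analysis in the spirit of the toroidal techniques of~\cite{KKMS}. The existence of such an $h$ is guaranteed by the projectivity of $\D'$. Setting $\pi:\cX'\to\cX$ to be the normalization of the blow-up of $\fa_h$, we see that $\pi$ is a vertical blow-up (since $\fa_h$ is vertical) and $\cX'$ is normal and projective (normalization is finite in our excellent setting).

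The heart of the argument is a local computation around each $\xi\in\cX_0$. Set $J=I_\xi$ and invoke Cohen's theorem together with local equations $(z_j)_{j\in J}$ of the $E_j$ through $\xi$ to identify $\hcO_{\cX,\xi}\simeq\kappa(\xi)[[t_i,\,i\in L]]$ with $J\subset L$ and $z_j\leftrightarrow t_j$, as in Remark~\ref{rmk:otherpoint}. By~(\ref{equ:faxj}), the completion of $\fa_h$ at $\xi$ is the monomial ideal generated by those $t^{\alpha}$, $\alpha\in\N^J$, such that $\langle\alpha,s\rangle\ge h(s)$ for all $s\in\sigma_J$, and its Newton polyhedron is therefore dual to the simplicial subdivision of $\hat\sigma_J$ induced by $\D'$. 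The normalized blow-up of such a monomial ideal is the standard toric variety attached to the refined fan on $\hat\sigma_J$, crossed with $\spec\kappa(\xi)[t_i,\,i\in L\setminus J]$. Since $\D'$ is simplicial, this refined fan is simplicial and the resulting local model is normal and $\Q$-factorial; its special-fibre components are in bijection with the rays of the refinement lying in $\sigma_J$, i.e.\ with the vertices $e'_i$ of $\D'$ meeting $\sigma_J$; the intersection $E'_{J'}$ for $J'\subset I'$ is nonempty and irreducible iff the $e'_j$, $j\in J'$, span a face of $\D'$, in which case the orbit--cone correspondence gives $\codim E'_{J'}=|J'|$ and identifies its generic point with the center of $\emb_\cX(s)$ on $\cX'$ for $s$ in the relative interior of the corresponding face. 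Descending these statements from completions yields~(i)--(iii).

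For~(iv), the same local toric description shows that any local equation of a component $E'_i$ of $\cX'_0$ is, up to a unit, a monomial in the $(t_j)_{j\in J}$; by~(\ref{equ:mon}) the function $s\mapsto\f_{E'_i}(\emb_\cX(s))$ is then affine on each face of $\D'$ contained in $\sigma_J$. Since $\cX'$ is vertically $\Q$-factorial by~(i), every $D\in\Div_0(\cX')_\R$ is an $\R$-linear combination of the $E'_i$, and~(iv) follows by linearity.

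The main obstacle is the careful passage between the local toric picture on the completion $\hcO_{\cX,\xi}$ and the global scheme structure on $\cX'$: one must verify that normalization and blow-up commute sufficiently well with completion along $\xi$ for the normality, $\Q$-factoriality and combinatorial labelling of components by vertices of $\D'$ to descend to $\cX'$ itself, and that the scheme-theoretic center $\cent_{\cX'}(\emb_\cX(s))$ coincides with the orbit--cone prediction. A secondary point is that simpliciality of $\D'$ is essential in upgrading $\Q$-Weil to $\Q$-Cartier for the components $E'_i$; without it one would only obtain normality and not $\Q$-factoriality.
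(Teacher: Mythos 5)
Your strategy—blow up the ideal sheaf $\fa_h$ attached to a strictly convex support function and reduce to a local toric computation via Cohen's theorem—is exactly the approach the paper takes, so the overall architecture is right. (One cosmetic difference: the paper takes the blow-up itself and \emph{proves} it is normal, rather than normalizing; since the blow-up turns out to be normal, the two constructions agree, but normality is among the things to be established, not assumed.)

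However, what you flag at the end as ``the main obstacle''---passing between the toric picture on the completion $\hcO_{\cX,\xi}$ and the actual scheme $\cX'$---is not a secondary verification but is the entire technical content of the proof, and your proposal leaves it unaddressed. The paper handles it by \emph{not} working directly with the formal scheme as a toric object. Instead it interpolates between $\cX$ and a genuine toric $\kappa(\xi)$-variety $Z=\A^L_{\kappa(\xi)}$ through two \emph{regular} morphisms $p:\spec\hcO_{\cX,\xi}\to\cX$ and $q:\spec\hcO_{\cX,\xi}\to Z$, checks that $p^{-1}\fa_h\cdot\hcO_{\cX,\xi}=q^{-1}\fb_h\cdot\hcO_{\cX,\xi}$ for the corresponding toric ideal $\fb_h$, and then uses that blow-ups commute with flat base change and that regularity is preserved under finite-type base change to get a commutative Cartesian diagram comparing $\cX'$, $\hcX'_\xi$, and the toric blow-up $Z'$. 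The descent of normality, irreducibility, $\Q$-Cartierness, and the labelling of components then requires several nontrivial ingredients your sketch omits: (1) the Stein-factorization-type statement $\rho_*\cO_{H'_{J'}}=\cO_{H_J}$, proved via the $T$-orbit structure and geometric integrality of fibres; (2) Lemma~\ref{lem:irred} to upgrade connectedness to irreducibility after regular base change; (3) an argument that $p'^{-1}(E')$ is irreducible for each global component $E'$ dominating $\xi$, which crucially uses the injectivity of $\emb_\cX$ (i.e.\ that the valuations $\val_{\cX,e'_i}$ are pairwise distinct) to rule out the same global component pulling back to two different toric rays; and (4) a flat-descent criterion for $\Q$-Cartier divisors via formal neighborhoods. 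None of these appear in your argument, and without them the claim that the combinatorial bijection and normality ``descend to $\cX'$ itself'' is a genuine gap, not a routine check.

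Your argument for (iv) is in the right spirit but is likewise underjustified: you invoke vertical $\Q$-factoriality to reduce to the components $E'_i$ and claim their local equations are monomial ``up to a unit,'' but establishing that the composition $\f_D\circ\emb_\cX$ is affine on faces of $\D'$ requires the comparison $\val_{\cX,s}=p_*\val_{\hcX_\xi,s}$ and $\val_{Z,s}=q_*\val_{\hcX_\xi,s}$ from Remark~\ref{rmk:otherpoint} together with the toric linearity statement, exactly the machinery whose validity you left open.
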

This result is in essence contained in the toroidal theory
of~\cite{KKMS}. However, strictly speaking, these authors only deal
with varieties over an algebraically closed field and with toroidal
$S$-varieties, neither of which appears to adequately handle the case
of SNC $S$-varieties when the special fiber is non-reduced. Since
Theorem~\ref{T201} is one of the crucial ingredients in the proof of
Theorem A, we therefore provide a complete proof, mostly
adapting~\cite[pp.76-82]{KKMS}. See also~\cite{Thu} for a similar
construction.

\begin{proof}
~

\smallskip
  \textbf{Step 1}. Given a finite set $L$ and a field $\kappa$, we rely on basic toric geometry (cf.~\cite{KKMS,Ful,Oda}) to show that $Z:=\A^L_\kappa=\Spec\kappa[t_i,\,i\in L]$ and its coordinate hyperplanes $(H_i)_{i\in L}$ satisfy an analogue of (i)-(iv).
Set $T:=(\G_{m,\kappa})^L$  to be the multiplicative split torus of dimension $L$ over $\kappa$. The fan $\Sigma$ of the toric $\kappa$-variety $Z$ consists of the cones $\hat\sigma_J=\sum_{j\in J}\R_+ e_j$, $J\subset L$. For each $s\in\R_+^L$ let 
$$
\val_{Z,s}:\kappa\cro{t_i,\,i\in L}\to\R_+
$$ 
be the monomial valuation with $\val_{Z,s}(t_i)=s_i$ for $i\in L$, so that the center of $\val_{Z,s}$ on $Z$ is the generic point of $H_J:=\bigcap_{j\in J}H_j$ for all $s$ in the relative interior of $\hat\sigma_J$. 
  
  Let $\Sigma'$ be a simplicial fan decomposition of $\Sigma$. The toric $\kappa$-variety $Z'$ attached to $\Sigma'$ comes with a $T$-equivariant proper birational morphism $\rho:Z'\to Z$ satisfying the following properties:
\begin{itemize}
\item[(a)] $Z'$ is normal (because it is toric), and all toric Weil divisors of $Z'$ are $\Q$-Cartier (since $\Sigma'$ is simplicial). 
\item[(b)]  There is a bijection between the set of rays $(R_i)_{i\in L'}$ of $\Sigma'$ and the toric prime divisors $(H'_i)_{i\in L'}$ of $Z'$, in such a way that for each $s\in R_i\setminus\{0\}$ the center of $\val_{Z,s}$ on $Z'$ is the generic point of $H'_i$. 
\item[(c)] For each $J'\subset L'$ the intersection $H'_{J'}:=\bigcap_{j\in J'} H'_j$ is normal, irreducible, and non-empty iff $\hat\sigma'_{J'}=\sum_{j\in J'} R_j$ is a cone of $\Sigma'$. In this case $H'_{J'}$ has codimension $|J'|$, and its generic point is the center of $\val_{Z,s}$ on $Z'$ for all $s$ in the relative interior of $\hat\sigma'_{J'}$. 
\item[(d)] For each toric divisor $G$ of $Z'$, the map $s\mapsto\val_{Z,s}(G)$ is linear on each cone of $\Sigma'$. Here we use the fact that $mG$ is Cartier for some non-zero $m\in\N$ by (a) to set $\val_{Z,s}(G):=\tfrac 1 m\val_{Z,s}(f)$, with $f$ a local equation of $mG$ at the center of $\val_{Z,s}$. 
\end{itemize}
With the notation of (c), assume that $H'_{J'}$ is non-empty and let $\hat\sigma_J$ be the smallest cone of $\Sigma$ containing $\hat\sigma'_{J'}$. We then have $\rho(H'_{J'})=H_J$, and we claim that 
\begin{equation}\label{equ:connected}
\rho_*\cO_{H'_{J'}}=\cO_{H_J}.
\end{equation}
Indeed, denote by $\zeta'_{J'}$ and $\zeta_J$ the generic points of $H'_{J'}$ and $H_J$, respectively. Since $H_J$ is normal,  (\ref{equ:connected}) will follow from the fact that $\kappa(\zeta_J)$ is algebraically closed in $\kappa(\zeta'_{J'})$ (cf.~\cite[III.4.3.12]{EGA}). But $H'_{J'}$ is the closure of a $T$-orbit $(H'_{J'})^0$ in $Z'$, mapping to the $T$-orbit $H_J^0:=(\bigcap_{j\in J} H_j)\setminus(\bigcup_{j\notin J} H_j)$ in $Z$. The stabilizer of $H_J^0$ in $T$ is $(\G_{m,\kappa})^J$, so the $T$-equivariant morphism $(H'_{J'})^0\to H_J^0$ has geometrically integral fibers. In particular $\zeta'_{J'}$ is the generic point of the fiber over $\zeta_J$, and $\kappa(\zeta_J)$ is algebraically closed in $\kappa(\zeta'_{J'})$ by~\cite[IV.4.5.9]{EGA}. 
  
\smallskip
  \textbf{Step 2}. Let $h\in\PA(\D_\cX)_\Z$ be a strictly convex support function for $\D'$. We define $\cX'$ as the blowup of $\cX$ along the fractional ideal sheaf $\fa_h$ given in Definition~\ref{defi:fah} (see \S\ref{sec:mod}). Note that $\cX$ is normal since $\fa_h$ is integrally closed, being defined by valuative conditions. 
  
Let $\xi\in\cX_0$ be a given point and use the notation of Remark~\ref{rmk:otherpoint}. Since $\cX$ and $Z:=\A^L_{\kappa(\xi)}$ are excellent we get a diagram
$$
\xymatrix{\cX & \hcX_\xi\ar[l]_{p}\ar[r]^q & Z}
$$
where $p$ and $q$ are \emph{regular}, \ie flat and with (geometrically) regular fibers (but typically not of finite type, as opposed to a smooth morphism). By Remark~\ref{rmk:otherpoint} we have
\begin{equation}\label{equ:valcomp}
p_*\val_{\hcX_\xi,s}=\val_{\cX,s}\ \text{and $q_*\val_{\hcX_\xi,s}=\val_{Z,s}$}
\end{equation}
for all $s\in\sigma_{I_\xi}$. The subdivision of $\sigma_{I_\xi}$ defined by $\D'$ induces a simplicial fan decomposition $\Sigma'$ of $\R_+^L$, to which the results of Step 1 apply. Since $h$ is a support function of $\D'$, the toric $\kappa(\xi)$-variety $Z'$ attached to $\Sigma'$ coincides in fact with the blowup of $Z$ along the toric fractional ideal sheaf
$$
\fb_h:=\sum\{\cO_Z(H_m),\,m\in\Z^{I_\xi},\,\langle m,\cdot\rangle\le h\text{ on }\sigma_\xi\},
$$
where we have set $H_m:=\sum_{i\in I_\xi} m_i H_i$. Comparing with (\ref{equ:faxj}), we see that 
$$
p^{-1}\fa_h\cdot\hcO_{\cX,\xi}=q^{-1}\fb_h\cdot\hcO_{\cX,\xi}.
$$ 
Since blowups commute with flat base change (cf.~\cite[8.1.12]{Liu}), $\tcX'_\xi:=\cX'\times_{\cX}\Spec\hcX_\xi$ sits in a commutative diagram
\begin{equation}\label{equ:diagram1}
\xymatrix{\cX'\ar[d]_{\pi} & \tcX'_\xi\ar[l]_{p'}\ar[r]^{q'}\ar[d] & Z'\ar[d]_{\rho}\\ 
\cX & \hcX_\xi\ar[l]_p\ar[r]^q & Z} 
\end{equation}
where the two squares are Cartesian. The morphisms $p'$ and $q'$ are also regular, since the latter property is preserved under finite type base change (cf.~\cite[IV.6.8.3]{EGA}). 

Let $(e'_i)_{i\in I'_\xi}$ be the set of vertices of $\D'$ contained
in $\sigma_{I_\xi}$, so that each ray $\R_+e'_i$ belongs to the fan
$\Sigma'$. If we let $H'_i$ be the corresponding toric prime divisor
of $Z'$ and pick $J'\subset I'_\xi$ then $H'_{J'}=\bigcap_{j\in
  J'}H_j$ is normal, irreducible, and non-empty iff the $e'_j$, $j\in
J'$ span a face $\sigma'_{J'}$ of $\D'$, by property (c). Since $q'$
is regular, if follows that $q'^{-1}(H'_{J'})$ is normal and is either
empty or of codimension $|J'|$. It is furthermore \emph{irreducible}
(and thus nonempty), by (\ref{equ:connected}) and Lemma~\ref{lem:irred} below. In particular, $(q'^{-1}(H'_i))_{i\in I'_\xi}$ is exactly the set of irreducible components of the special fiber of $\tcX'_\xi$. Using that the special fiber of $\tcX'_\xi$ is precisely the union of the zero loci of the $z_i$'s for $i\in I_\xi$, it is now easy to obtain the analogue of (i)-(iv) of Theorem~\ref{T201} with $\tcX'_\xi$, $\sigma_{I_\xi}$ and $\val_{\hcX_\xi}$ in place of $\cX'$, $\D_\cX$ and $\val_\cX$. 

In particular, $\tcX'_\xi$ is normal for each $\xi\in\cX_0$, which shows that $\cX'$ is normal, hence a model of $X$. 

On the other hand, for each irreducible component $E'$ of $\cX'_0$
such that $\pi(E')$ contains $\xi$, we claim that the divisor $p'^{-1}(E')$ is
irreducible. Indeed, each irreducible component of the divisor
$p'^{-1}(E')$ is of the form $q'^{-1}(H'_i)$ for some $i\in I'_\xi$,
since it is contained in the special fiber by construction and of
codimension one by flatness. 
If we denote by $\xi'$ and $\eta'_i$ the generic points of $E'$ and $p'^{-1}(H_i')$, respectively, then we have on the one hand $p'(\eta'_i)=\xi'$ since $p'$ is flat. On the other hand, $\eta_i'$ is the center of $\val_{\hcX_\xi,e'_i}$ on $\tcX'_\xi$, hence $p'(\eta_i')=c_{\cX'}(\val_{\cX,e'_i})$ thanks to (\ref{equ:valcomp}). For dimension reason it follows that $\emb_{\cX}(e'_i)=x_{E'}\in\Xan$, and the injectivity of $\emb_\cX$ shows that $i$ is uniquely determined by $E'$, which implies as desired that $p'^{-1}(E')$ is irreducible. 

We may thus write the irreducible components of $\cX'_0$ that are mapped to $\overline\{\xi\}$ as $(E'_i)_{i\in I'_\xi}$, with the property that 
$$
p'^{-1}(E'_i)=q'^{-1}(H'_i).
$$
By flat descent it follows that $E'_i$ is normal at each point of the fiber of $\xi$. It is also $\Q$-Cartier, since a Weil divisor is Cartier at a point iff its restriction to the formal neighborhood of that point is Cartier  (see e.g.~\cite[Proposition~1]{samuel})
It is now easy to conclude the proof of (i)-(iv), using the analogous properties for $\hcX'_\xi$ together with (\ref{equ:valcomp}). 
\end{proof}

\begin{lem}\label{lem:irred} Assume that 
$$ 
\xymatrix{U'\ar[d]_{f}\ar[r] & V'\ar[d]^g\\ U\ar[r] & V} 
$$
is a Cartesian square of Noetherian schemes such that the vertical arrows are proper and surjective and the horizontal morphisms are regular. If $U$, $V$ are $V'$ are irreducible, $V$ and $V'$ are normal and $g_*\cO_{V'}=\cO_V$ then $U'$ is normal and irreducible.  
\end{lem}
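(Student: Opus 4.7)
The plan is to establish normality of $U'$ and irreducibility of $U'$ separately. Normality will come directly from the regularity of the top arrow together with the normality of $V'$; irreducibility will come from transferring the hypothesis $g_*\cO_{V'}=\cO_V$ to the top row via flat base change, and then upgrading connectedness to irreducibility using normality.

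First I would check that $U'$ is normal. By hypothesis the top horizontal arrow $b:U'\to V'$ is regular, hence flat with geometrically regular (in particular, geometrically normal) fibers. Since the conditions $R_1$ and $S_2$ of Serre are inherited by the source along such morphisms and $V'$ is normal, $U'$ is normal as well (\cf~\cite[IV.6.5.4]{EGA}).

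Next I would derive the identity $f_*\cO_{U'}=\cO_U$. Write $a:U\to V$ for the bottom arrow. Regularity of $a$ entails flatness, and $g$ is proper, so flat base change for the Cartesian square gives $f_*\cO_{U'}=a^*g_*\cO_{V'}$; plugging in the hypothesis $g_*\cO_{V'}=\cO_V$ yields the desired equality. Taking global sections produces an isomorphism $H^0(U',\cO_{U'})\simeq H^0(U,\cO_U)$. Since $U$ is irreducible and hence connected, the right-hand side contains no nontrivial idempotent, so neither does the left. Because clopen decompositions of a scheme $X$ correspond exactly to idempotents in $H^0(X,\cO_X)$, this shows that $U'$ is connected.

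Finally, I would combine connectedness with normality: a normal Noetherian scheme has pairwise disjoint irreducible components, so a connected one is irreducible. The only step I expect to require real care is the flat base change identity $f_*\cO_{U'}=a^*g_*\cO_{V'}$; one has to ensure that the standard hypotheses (flatness of $a$, together with properness or at least the quasi-compact-and-quasi-separated property for $g$) are actually in force. Both are immediate from the assumptions, and once they are granted the rest of the argument is essentially formal.
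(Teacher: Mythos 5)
Your proof is correct, and its overall skeleton matches the paper's: normality of $U'$ by ascent along the regular morphism $U'\to V'$ (the same reference \cite[IV.6.5.4]{EGA}), the identity $f_*\cO_{U'}=\cO_U$ by flat base change, and finally ``normal $+$ connected $\Rightarrow$ irreducible.'' The one step you handle differently is how you extract connectedness of $U'$ from $f_*\cO_{U'}=\cO_U$. The paper first deduces from this identity, via the theorem on formal functions \cite[III.4.3.2]{EGA}, that $f$ has connected fibers, and then uses that a closed surjective map with connected fibers over a connected base has connected source. You instead pass directly to global sections, getting $H^0(U',\cO_{U'})\simeq H^0(U,\cO_U)$, and read off connectedness from the absence of nontrivial idempotents. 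Your route is more elementary (no Stein-factorization-type input) and perfectly valid; the paper's route gives the slightly stronger intermediate conclusion that all fibers of $f$ are connected, which is not needed here. Both arguments use the hypotheses in the same way (properness of $g$ to justify base change of the direct image, surjectivity only to guarantee $U'\neq\emptyset$), so there is nothing to repair.
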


\begin{proof} Note first that $U$ and $U'$ are normal
  by~\cite[IV.6.5.4]{EGA}. Since direct images commute with flat base
  change we have $f_*\cO_{U'}=\cO_U$, which implies that $f$ has
  connected fibers as a consequence of the theorem on formal functions
  (cf.~\cite[III.4.3.2]{EGA}). Since $U$ is connected and nonempty,
  and $f$ is closed, surjective and has connected fibers, it follows
  that $U'$ is connected and nonempty, hence irreducible since it is normal. 
\end{proof} 

\begin{cor}\label{cor:ratdiv}
For each SNC model $\cX$, the set of rational points of $\D_\cX$
coincides with the set $\emb_\cX^{-1}(\Xdiv)\cap\D_\cX$;
hence $\emb_{\cX}(\D_\cX)\cap\Xdiv$ is dense in $\emb_{\cX}(\D_\cX)$.
\end{cor}
\begin{proof} If $s\in\D_\cX$ is a rational point,
  Theorem~\ref{T201} yields a vertical blowup $\cX'$ such that
  $\emb_{\cX'}(s)=x_{E'}$ for some irreducible component $E'$ of
  $\cX'_0$. Conversely, if $\emb_\cX(s)$ is a divisorial point, then the
  corresponding valuation takes rational values on the local equations
  of the irreducible components of $\cX_0$, which shows that $s$ is a rational point of $\D_\cX$.
\end{proof}

\section{Metrics on line bundles and closed $(1,1)$-forms}\label{S202}

\subsection{Metrics}\label{S207}
We refer to~\cite{CL2} for a general discussion of metrized
line bundles in a non-Archimedean context.
Suffice it to say that a \emph{continuous} metric $\|\cdot\|$ on a line bundle $L$ on $X$
is a way to produce a continuous function
$\|s\|$ on (the Berkovich space) $\Xan$ from any local section $s$ of $L$. 
Given a continuous metric $\|\cdot\|$, any other continuous metric on $L$
is of the form $\|\cdot\|e^{-\f}$, with $\f\in C^0(\Xan)$. 
If we in this expression allow an arbitrary function 
$\f:\Xan\to[-\infty,+\infty[$, then we obtain a 
\emph{singular metric} on $L$.

Let $\cX$ be a model and $\cL$ a line bundle on $\cX$ such that 
$\cL|_X=L$. 
To this data one can associate 
a unique metric $\|\cdot\|_\cL$ on $L$ with the following property:
if $s$ is a nonvanishing local section of $\cL$ on an 
open set $\cU\subset\cX$, then $\|s\|_\cL\equiv1$ 
on $U:=\cU\cap X$. This makes sense since such a section 
$s$ is uniquely defined up to multiplication by an element of 
$\Gamma(\cU,\cO_\cX^*)$ and such elements have norm 1.

More generally, any $\cL\in\Pic(\cX)_\Q$ such that 
$\cL|_X=L$ in $\Pic(X)_\Q$ induces  a metric $\|\cdot\|_\cL$ on $L$ by setting $\|s\|_\cL=\|s^{\otimes m}\|_{m\cL}^{1/m}$ for any non-zero 
$m\in\N$ such that $m\cL$ is an actual line bundle. 
By definition, a \emph{model metric}\footnote{See Table~\ref{table:modmet} on page~\pageref{table:modmet} for alternative terminology regarding metrics used in the literature.}
on $L$ is a metric of the form $\|\cdot\|_\cL$ with 
$\cL\in\Pic(\cX)_\Q$ for some model $\cX$ such that $\cL|_X=L$. 
Model metrics are clearly continuous. 
If $\|\cdot\|$ is a model metric, then $\|\cdot\|e^{-\f}$ is 
a model metric iff $\f$ is a model function.

If we denote by $\widehat{\Pic}(\Xa)$ the group of isomorphism classes of line bundles on $X$ endowed with a model metric, then it is easy to check that there is a natural isomorphism
\begin{equation}\label{equ:pichat}
  \varinjlim_{\cX\in\cM_X}\Pic(\cX)_\Q\simeq\widehat{\Pic}(\Xa)_\Q
\end{equation} 
and that the natural sequence
\begin{equation}\label{equ:exactpic}
  0
  \to\Q\cX_0
  \to\cD(\Xa)
  \to\widehat{\Pic}(\Xa)_\Q
  \to\Pic(X)_\Q\to 0
\end{equation}
is exact.
%
%

\subsection{Closed $(1,1)$-forms}
Recall that $N^1(\cX/S)$ is the set of $\R$-line bundles on a model $\cX$ modulo
those that are numerically trivial on the special fiber.
\begin{defi} 
  The space of \emph{closed $(1,1)$-forms} on $\Xan$ is defined as the direct limit 
  $$
  \cZ^{1,1}(\Xa):=\varinjlim_{\cX\in\cM_X} N^1(\cX/S)
  $$
\end{defi}
As with model functions, we say that $\theta\in\cZ^{1,1}(\Xa)$ is \emph{determined} on a given model $\cX$ if it is the image of an element $\theta_\cX\in N^1(\cX/S)$. By definition, two classes $\a\in N^1(\cX/S)$ and $\a'\in N^1(\cX'/S)$
define the same element in $\cZ^{1,1}(\Xa)$ iff they pull-back to the same class on a model dominating both $\cX$ and $\cX'$. 

\begin{rmk}\label{R203} The previous definition is directly inspired from~\cite{BGS}, where closed forms and currents are defined in the non-Archimedean setting. We choose however to work modulo numerical equivalence instead of rational equivalence. One justification for this choice is Corollary~\ref{cor:flat} below. The fact that each space $N^1(\cX/S)$ is endowed with a natural topology as a finite dimensional vector space is another reason. 
\end{rmk}

\smallskip

The isomorphism (\ref{equ:pichat}) shows that there is a natural map
$$
\widehat{\Pic}(\Xa)\to\cZ^{1,1}(\Xa).
$$
The image of $(L,\|\cdot\|)\in\widehat{\Pic}(\Xa)$ under this map is denoted by $c_1(L,\|\cdot\|)\in\cZ^{1,1}(\Xa)$ and called the \emph{curvature form} of the metrized line bundle $(L,\|\cdot\|)$. 

By definition, any model function $\f \in \cD(\Xa)$ is determined on some
model $\cX$ by some divisor $D\in\Div_0(\cX)_\R$. 
We set $dd^c \f$ to be the form determined by the numerical class of $D$ in 
$N^1(\cX/S)$. In this way, we get a natural linear map 
$$
dd^c:\cD(\Xa)\to\cZ^{1,1}(\Xa).
$$
On the other hand, the restriction maps $N^1(\cX/S) \to N^1(X):=N^1(\cX_K/K)$ induce a linear map
$$
\{\cdot\} : \cZ^{1,1}(\Xa) = \varinjlim_{\cM_X} N^1(\cX/S) \to N^1(X).
$$
We call $\{\theta\}\in N^1(X)$ the 
\emph{de Rham class} of the closed $(1,1)$-form $\theta$. 
Note that
$$
\{c_1(L,\|\cdot\|)\}=c_1(L)
$$
for each metrized line bundle $(L,\|\cdot\|)\in\widehat{\Pic}(\Xa)$. 
The next result is an analogue of the $dd^c$-lemma in the complex setting.

\begin{thm}\label{thm:ddc} Let $X$ be a smooth connected projective $K$-analytic variety. Then the natural sequence
$$
0\to\R\to \cD(\Xa)_\R \mathop{\longrightarrow}\limits^{dd^c} \cZ^{1,1}(\Xa)\to N^1(X)\to 0
$$
is exact.
\end{thm}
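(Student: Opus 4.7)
The plan is to split the exactness statement into its four constituent parts: injectivity of $\R\hookrightarrow\cD(\Xa)_\R$, the equality $\ker(dd^c)=\R$, the equality $\mathrm{im}(dd^c)=\ker\{\cdot\}$, and surjectivity of $\{\cdot\}:\cZ^{1,1}(\Xa)\to N^1(X)$.

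The outer pieces are mostly formal. The map $\R\to\cD(\Xa)_\R$ sends $c$ to $\f_{c\cX_0}$, which is the constant function $c$ because $\cX_0=\div(\unipar)$ and $\log|\unipar|^{-1}=1$; this is clearly injective. For the surjectivity of $\{\cdot\}$, given $\a\in N^1(X)$ represented by some $L\in\Pic(X)_\R$, I would pick an SNC (in particular regular) model $\cX$, lift $L$ to $\cL\in\Pic(\cX)_\R$ using the surjectivity of $\Pic(\cX)_\R\to\Pic(\cX_K)_\R$ recalled in \S\ref{sec:mod}, and take $\theta$ to be the image of $[\cL]\in N^1(\cX/S)$ in $\cZ^{1,1}(\Xa)$.

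The most substantial part is the middle exactness. The containment $\mathrm{im}(dd^c)\subseteq\ker\{\cdot\}$ is tautological since any $D\in\Div_0(\cX)_\R$ restricts to zero on $\cX_K$. For the reverse inclusion, let $\theta\in\cZ^{1,1}(\Xa)$ with $\{\theta\}=0$ be represented by $\cL\in\Pic(\cX)_\R$ on a regular model $\cX$, so that $\cL|_{\cX_K}$ is numerically trivial in $N^1(X)_\R$. My plan is to find a numerically trivial $\cL'\in\Pic(\cX')_\R$ on some vertical blow-up $\pi:\cX'\to\cX$ with $\cL'|_{\cX_K}=\cL|_{\cX_K}$; then $\pi^*\cL-\cL'$ is trivial on the generic fiber, hence equals $\cO_{\cX'}(D)$ for some $D\in\Div_0(\cX')_\R$ by the exact sequence of \S\ref{sec:mod}, and therefore $\theta=dd^c\f_D$. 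To construct $\cL'$, I would reduce to the integer Corollary~\ref{cor:flat} by a torsion trick: fix integer lifts $L_1,\dots,L_\rho\in\Pic(\cX_K)$ of a basis of $N^1(X)_\Z/\text{torsion}$, express $\cL|_{\cX_K}$ as an $\R$-combination $\sum_ir_iM_i$ of integer line bundles, decompose each $M_i=\sum_jn_{ij}L_j+T_i$ with $c_1(T_i)$ torsion, and observe that the relation $\sum_ir_ic_1(M_i)=0\in N^1(X)_\R$ forces $\sum_ir_in_{ij}=0$ for each $j$, so that $\cL|_{\cX_K}=\sum_ir_iT_i$. Since each $T_i$ satisfies $c_1(T_i)=0$ in $N^1(X)$, Corollary~\ref{cor:flat} extends it to a numerically trivial $\cT_i\in\Pic(\cX_i)_\Q$ on some model $\cX_i$, and on a common refinement $\cX'$ the class $\cL':=\sum_ir_i\cT_i$ has the required properties.

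Finally, for $\ker(dd^c)=\R$: given $\f=\f_D$ with $D\in\Div_0(\cX)_\R$ and $[\cO_\cX(D)]=0$ in $N^1(\cX/S)$, the task is to show $D\in\R\cX_0$, which immediately yields $\f=c$. Because $X$ is smooth and connected, $\cX_K$ is geometrically integral in characteristic zero, and Zariski's connectedness theorem applied to $\pi:\cX\to S$ (noting $\pi_*\cO_\cX=\cO_S$ by normality) forces the special fiber $\cX_0$ to be connected. With an ample $H\in\Pic(\cX)$, the intersection pairing $(D_1,D_2)\mapsto D_1\cdot D_2\cdot H^{n-2}$ on $\Div_0(\cX)_\R$ is then negative semi-definite with kernel exactly $\R\cX_0$: in dimension two this is classical Zariski's lemma, and in higher dimension it reduces to the surface case by restricting to a general complete intersection of divisors in $|mH|$ for $m\gg 1$. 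Numerical triviality of $\cO_\cX(D)$ places $D$ in this kernel, so $D=c\cX_0$ as desired. I expect verifying this Zariski-lemma-type statement in our $S$-variety context to be the most delicate technical point; the torsion trick in the middle exactness, while somewhat intricate, is essentially formal bookkeeping once the integer Corollary~\ref{cor:flat} is in hand.
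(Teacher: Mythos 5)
Your treatment of the two outer terms and of $\ker(dd^c)=\R$ is sound and close to the paper's: the paper also first establishes connectedness of $\cX_0$ (via the theorem on formal functions rather than Zariski connectedness, but to the same effect) and then identifies the kernel of the intersection form; rather than cutting down to a surface, it works directly with the quadratic form $q(a)=-\left(\sum_i a_iE_i\right)^2\cdot\cA^{\dim X-1}$ on $\R^I$ and invokes the standard algebraic fact (\cite[Lemma 2.10]{BPV}) that an indecomposable matrix with non-positive off-diagonal entries annihilating the positive vector $b=(b_i)$ has kernel exactly $\R b$. Either route is acceptable there.

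The genuine gap is in the middle exactness. Your argument for $\ker\{\cdot\}\subseteq\mathrm{im}(dd^c)$ reduces, via the torsion bookkeeping, to the claim that a line bundle $T$ on $X$ with $c_1(T)=0$ in $N^1(X)$ extends to a numerically trivial $\Q$-line bundle on some model --- and for this you cite Corollary~\ref{cor:flat}. But Corollary~\ref{cor:flat} is not an independent input: it is precisely the reformulation of the exactness at $\cZ^{1,1}(\Xa)$ that you are trying to prove, and the paper derives it \emph{from} Theorem~\ref{thm:ddc}. Your proof is therefore circular at exactly the point the paper identifies as the difficult one. The missing content is K\"unnemann's argument (\cite[Lemma 8.1]{Kun}): after a finite extension $K'/K$ a numerically trivial line bundle becomes algebraically equivalent to zero, so it can be written as $q_*(p^*M\cdot D)$ for a numerically trivial $\Q$-line bundle $M$ on a smooth projective $K'$-curve $T$ and a divisor $D$ on $\cX_T$; one then settles the curve case directly with the intersection matrix of the special fiber, extends $M$ to a numerically trivial $\cM$ on a regular model $\cT$, takes the closure $\cD$ of $D$, and verifies that $\cL:=q_*(p^*\cM\cdot\cD)$ is numerically trivial using the module structure of $\CH(\cX_\cT)_\Q$ over $\CH(\cX)_\Q$ from arithmetic intersection theory. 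None of this is formal, and without it (or an equivalent substitute) the proof of the theorem is incomplete.
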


The following equivalent reformulation is also familiar in the complex setting. 
\begin{cor}\label{cor:flat} Let $L$ be a line bundle on $X$. Then $c_1(L)\in N^1(X)$ vanishes iff $L$ admits a model metric with zero curvature. Such a metric is then unique up to a constant. 
\end{cor}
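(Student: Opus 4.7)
The plan is to deduce both statements as a formal consequence of Theorem~\ref{thm:ddc} combined with the exact sequence~(\ref{equ:exactpic}).

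One direction is immediate: since the de Rham class map satisfies $\{c_1(L,\|\cdot\|)\}=c_1(L)$ for any model metric, vanishing of the curvature forces $c_1(L)=0$ in $N^1(X)$.

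For the converse, assume $c_1(L)=0$. Surjectivity of $\widehat{\Pic}(\Xa)\to\Pic(X)$ in (\ref{equ:exactpic}) supplies at least one model metric $\|\cdot\|_0$ on $L$. Its curvature $\theta_0:=c_1(L,\|\cdot\|_0)\in\cZ^{1,1}(\Xa)$ has de Rham class $\{\theta_0\}=c_1(L)=0$, so exactness of the sequence in Theorem~\ref{thm:ddc} at $\cZ^{1,1}(\Xa)$ produces a model function $\f\in\cD(\Xa)_\R$ with $dd^c\f=\theta_0$. The modified metric $\|\cdot\|:=\|\cdot\|_0\, e^{\f}$ is then a model metric on $L$ whose curvature equals $\theta_0-dd^c\f=0$, as required.

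For uniqueness, any two model metrics on $L$ differ by a multiplicative factor $e^{-\psi}$ for some model function $\psi$, by exactness of~(\ref{equ:exactpic}) at $\widehat{\Pic}(\Xa)$. If both metrics have vanishing curvature, then $dd^c\psi=0$, and exactness of Theorem~\ref{thm:ddc} at $\cD(\Xa)_\R$ forces $\psi$ to be a real constant. Hence any two such metrics differ by a positive scalar.

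All the substantive work is concentrated in Theorem~\ref{thm:ddc} (compare Remark~\ref{R203}); given that theorem, the corollary is essentially a formal reading of the exactness of its sequence together with the basic sequence~(\ref{equ:exactpic}) for metrized line bundles. I do not anticipate any genuine obstacle beyond a small bookkeeping point about rational-versus-real coefficient conventions when writing down the modifying model function $\f$ and checking that the modified metric is of ``model'' type in the sense adopted in Section~\ref{S207}.
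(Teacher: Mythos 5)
Your proposal is correct and is essentially the paper's own reading: the paper offers no separate proof, presenting the corollary as an "equivalent reformulation" of Theorem~\ref{thm:ddc}, and your formal derivation from that exact sequence together with~(\ref{equ:exactpic}) is exactly the intended one. The rational-versus-real bookkeeping point you flag is harmless, both because the linear system determining $\f$ is defined over $\Q$ with rational right-hand side (so a rational solution exists whenever a real one does) and because the paper's proof of exactness at $N^1(\cX/S)$ in fact directly produces a numerically trivial extension $\cL\in\Pic(\cX)_\Q$ of $L$.
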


Theorem~\ref{thm:ddc} is more difficult than its rather straightforward analogue (\ref{equ:exactpic}), whose proof is valid without any assumption of the residue field. Here the existence of regular models is used. Exactness at $\cD(\Xa)$ follows from a rather standard Hodge-index type argument 
(compare~\cite[Theorem 2.1]{YZ09} and see~\cite{YZ13a,YZ13b} for far-reaching generalizations), 
whereas exactness at $\cZ^{1,1}(\Xa)$ is essentially a reformulation of a result by K\"unneman~\cite[Lemma 8.1]{Kun}; see also \cite[Theorem 8.9]{Gub03}. We provide some details for the convenience of the reader. 

\begin{proof}[Proof of Theorem~\ref{thm:ddc}.] 
  We are going to prove the stronger assertion that
  $$
  0\to\R\cX_0\to\Div_0(\cX)_\R\to N^1(\cX/S)\to N^1(\cX_K/K)\to 0
  $$
  is exact for every \emph{regular} model $\cX$ of $X$. 
  We first prove the exactness at  $\Div_0(\cX)_\R$. 
  Let $\cX_0=\sum_{i\in I}b_iE_i$ be the irreducible decomposition of the special fiber. 
  We claim that $\cX_0$ is connected. Since $X\simeq\cX_K^\mathrm{an}$
  is connected by assumption, the (easy direction of the) GAGA principle implies that $\cX_K$ is also connected. If $\cX_0$ were disconnected then $H^0(\cX,\cO_\cX)$ 
  would split as a product by the Grothendieck-Zariski theorem on formal
  functions~\cite[Theorem 11.1]{Har}, 
  which would contradict the connectedness of $\cX_K$. 
  Since $\cX$ is regular, each $E_i$ is Cartier. Pick any ample divisor $\cA$ on $\cX$ 
  and define a quadratic form $q$ on $\R^I$ by setting 
  $$
  q(a):=-\left(\sum_i a_i E_i\right)^2\cdot\cA^{\dim X-1}.
  $$
  We have $q_{ij}\le 0$ for $i\neq j$, and the matrix $(q_{ij})$ is indecomposable 
  since $\cX_0$ is connected. By~\cite[Lemma 2.10]{BPV} it follows that $b$ 
  spans the kernel of $q$. Now let $D=\sum_i a_i E_i$ be a vertical $\R$-divisor 
  whose numerical class on $\cX_0$ is $0$. It follows that $a$ 
  belongs to the kernel of $q$, hence is proportional to $b$, which precisely 
  means that $D\in\R\cX_0$ as desired. 

  \smallskip
  Let us now turn to exactness at $N^1(\cX/S)$, which amounts to the 
  following assertion: every numerically trivial $L\in\Pic(\cX_K)$ 
  admits a numerically trivial extension $\cL\in\Pic(\cX)_\Q$. 

  Arguing as in~\cite[Lemma 8.1]{Kun}, assume first that $X$ is one-dimensional. 
  Let $\cL\in\Pic(\cX)_\Q$ be an arbitrary extension of $L$ to the 
  regular model $\cX$. In the notation above we 
  have $\sum_i b_i(\cL\cdot E_i)=0$ since $\cL$ is numerically trivial 
  on the generic fiber $X$. Since $b=(b_i)_{i\in I}$ spans the kernel of the 
  intersection matrix $(E_i\cdot E_j)$, we may thus find $a\in\Q^I$ 
  such that $\sum_i a_i E_i\cdot E_j=\cL\cdot E_j$ for $j\in I$, 
  which shows that $\cL-\sum_i a_i E_i$ is a numerically trivial 
  extension of $L$ to $\cX$. 

  We now consider the general case, again following~\cite[Lemma
  8.1]{Kun}. Given any $S$-scheme $Y$ we write $\cX_Y:=\cX\times_S
  Y$. Since $L$ is numerically trivial on $\cX_K$, some multiple $mL$ belongs to $\Pic^0(\cX_{\bar K})$ by~\cite{Mat}, and hence there exists a finite extension $K'/K$ such that the pull-back of $mL$ to $\cX_{K'}$
  is algebraically equivalent to $0$. This implies that there exists a smooth projective $K'$-curve $T$, a numerically trivial $\Q$-line bundle $M$ on $T$ and a (Cartier) divisor $D$ on $\cX_T$ such that 
$$
L=q_*\left(p^*M\cdot D\right)
$$ 
in $\Pic(\cX_K)_\Q$, where $p:\cX_T\to T$ and $q:\cX_T\to\cX_K$ are the natural morphisms. Now let $\cT$ be a regular model of $T$ over the integral closure $S'$ of $S$ in $K'$, and consider the commutative diagram
\begin{equation}\label{equ:diagram2}
\xymatrix{T\ar[d] & \cX_T \ar[l]_{p}\ar[r]^{q}\ar[d] & \cX_K\ar[d]\\ 
\cT & \cX_\cT\ar[l]_p\ar[r]^q & \cX} 
\end{equation}
where we also use for simplicity $p$ and $q$ to denote the natural projections $\cX_\cT\to\cT$ and $\cX_\cT\to\cX$. By the one-dimensional case, $M$ extends to a numerically trivial $\Q$-line bundle $\cM\in\Pic(\cT)_\Q$. Let also $\cD$ be the closure of $D$ in $\cX_\cT$, which is \emph{a priori} merely a Weil divisor. We may then set
$$
\cL:=q_*\left(p^*\cM\cdot\cD\right).
$$
Note that $\cL$ belongs to $\CH^1(\cX)_\Q=\Pic(\cX)_\Q$ since $\cX$ is
regular. It is clear that $\cL$ extends $L$, and it remains to show
that $\deg(\cL\cdot C)=0$ for each vertical projective curve $C$ on
$\cX$. Since $\cX$ is regular, $\CH(\cX)_\Q$ is a graded commutative
algebra with respect to cup-product, by~\cite[\S8.3]{GS87}. As
in~\cite[\S2.3]{GS92} one can then define the cap-product $\a\cdot_q\b$
of $\a\in\CH(\cX)_\Q$ and $\b\in\CH(\cX_\cT)_\Q$, which turns
$\CH(\cX_\cT)_\Q$ into a graded $\CH(\cX)_\Q$-module such that both
$q_*:\CH(\cX_\cT)_\Q\to\CH(\cX)_\Q$ and multiplication with
$\b'\in\Pic(\cX_\cT)_\Q$ are maps of $\CH(\cX)_\Q$-modules. Applying
this with $\b'=p^*\cM\in\Pic(\cX_\cT)_\Q$, which is numerically
trivial on the special fiber of $\cX_\cT$,  we get
  \begin{equation*}
    \deg\left(C\cdot\cL\right)
    =\deg\left(C\cdot_q\left(\b'\cdot\cD\right)\right)
    =\deg\left(\b'\cdot\left(C\cdot_q\cD\right)\right)
    =0.
  \end{equation*}

Finally, the surjectivity of $N^1(\cX/S)\to N^1(\cX_K/K)$ is clear since $N^1(\cX_K/K)$ is spanned by classes of Cartier divisors on $X$, the closures in $\cX$ of which are also Cartier since $\cX$ is regular.
\end{proof}

%
%
\section{Positivity of forms and metrics}

\subsection{Positive closed $(1,1)$-forms and metrics}
The following definition extends the ones in~\cite{Zha,Gub98,CL1}. 
\begin{defi}
  A closed $(1,1)$-form $\theta$ is said to be:
  \begin{itemize}
  \item[(i)] 
    \emph{semipositive} if $\theta_\cX\in N^1(\cX/S)$ is nef for some
    (or, equivalently, any) determination $\cX$ of $\theta$;
  \item[(ii)] 
    \emph{$\cX$-positive} if $\cX\in\cM_X$ is a determination of $\theta$ 
    and $\theta_\cX\in N^1(\cX/S)$ is ample. 
  \end{itemize}
  A model metric $\|\cdot\|$ on a line bundle $L$ is said to be 
  semipositive if the curvature form $c_1(L,\|\cdot\|)$ is semipositive.
\end{defi}
The equivalence in (i) follows from the following standard fact: if $\a\in N^1(\cX/S)$ is a numerical class and $\pi:\cX'\to\cX$ is a vertical blowup then $\pi^*\a$ is nef iff $\a$ is nef. On the other hand, the analogous result is obviously wrong for ample classes, so that it is indeed necessary to specify the model in (ii). If $\om$ is $\cX$-positive and $\theta\in\cZ^{1,1}(\Xa)$ is determined on $\cX$, then $\om+\e\theta$ is also $\cX$-positive for all $0<\e\ll 1$. 

Note that if a closed $(1,1)$-form $\theta$ is semipositive, then its de Rham class 
$\{\theta\}\in N^1(X)$
is automatically nef. See Remark~\ref{R301} for a more precise statement.

The set of all semipositive closed $(1,1)$-forms is a convex cone 
$\cZ^{1,1}_+(\Xa)$ of $\cZ^{1,1}(\Xa)$ that can be equivalently defined as
$$
\cZ^{1,1}_+(\Xa):=\varinjlim_\cX\Nef (\cX/S).
$$ 
\begin{prop} \label{prop:positiverep}
  Let $\theta$ be a closed $(1,1)$-form whose de Rham class 
  $\{\theta\}\in N^1(X)$ is ample. 
  For every sufficiently high model $\cX$, 
  we may then find a model function $\f$ such that $\theta+dd^c\f$ is $\cX$-positive. If $\theta$ is furthermore semipositive, then we may 
  also arrange that $-\e\le\f\le 0$ for any given $\e>0$. 
\end{prop}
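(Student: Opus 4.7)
Fix a sufficiently high SNC (in particular regular) model $\cX$ of $X$ on which $\theta$ is determined by a class $\theta_\cX\in N^1(\cX/S)$. Since $\{\theta\}\in N^1(X)$ is ample, Corollary~\ref{cor:ample} guarantees that, upon further increasing $\cX$ if necessary, the class $\{\theta\}$ extends to an ample class $\cL\in N^1(\cX/S)_\R$. Both $\cL$ and $\theta_\cX$ restrict to $\{\theta\}$ on the generic fiber, so $\alpha:=\cL-\theta_\cX\in N^1(\cX/S)_\R$ lies in the kernel of the restriction map $N^1(\cX/S)\to N^1(\cX_K/K)$. By the stronger form of the exact sequence
$$
\Div_0(\cX)_\R\to N^1(\cX/S)\to N^1(\cX_K/K)\to 0
$$
established in the proof of Theorem~\ref{thm:ddc}, which applies precisely because $\cX$ is regular, there exists $D\in\Div_0(\cX)_\R$ with $[D]=\alpha$. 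Setting $\f_0:=\f_D$ yields $dd^c\f_0=\alpha$ inside $N^1(\cX/S)$, and hence $\theta+dd^c\f_0=\cL$ in $N^1(\cX/S)$. Since $\cL$ is ample, the form $\theta+dd^c\f_0$ is $\cX$-positive, which proves the first assertion.

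For the refinement under the semipositivity hypothesis, the class $\theta_\cX$ is nef. For $t\in(0,1]$ consider the convex combination $\cL_t:=(1-t)\theta_\cX+t\cL$ in $N^1(\cX/S)_\R$. Being the sum of the nef class $(1-t)\theta_\cX$ with the ample class $t\cL$, the class $\cL_t$ is ample on the projective $S$-scheme $\cX$: indeed, on any projective scheme the ample cone is the interior of the nef cone, and the sum of an interior point with a point in the closure of a convex cone lies again in the interior. Moreover $\{\cL_t\}=\{\theta\}$, and setting $\f_t:=t\f_0$ we have
$$
\theta+dd^c\f_t=\theta_\cX+t[D]=\theta_\cX+t\alpha=\cL_t
$$
in $N^1(\cX/S)$, so $\theta+dd^c\f_t$ is $\cX$-positive for every $t\in(0,1]$.

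Since $\|\f_t\|_\infty\le t\|\f_0\|_\infty$ and the function $\f_0$ is continuous on the compact space $\Xan$, we may ensure $2t\|\f_0\|_\infty\le\e$ by choosing $t>0$ small enough. To obtain the prescribed normalization, set $c:=\sup_X\f_t$ and replace $\f_t$ by $\f:=\f_t-c$. Since constants lie in the kernel of $dd^c$, the equality $\theta+dd^c\f=\cL_t$ persists, so $\theta+dd^c\f$ remains $\cX$-positive, while by construction $\f\le 0$ and $\f\ge-2\|\f_t\|_\infty\ge-\e$ on $X$.

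The only conceptual input is the exact sequence of Theorem~\ref{thm:ddc}, which allows one to correct the discrepancy $\cL-\theta_\cX$ by the $dd^c$ of a model function; the main minor obstacle, and the reason the convex combination $\cL_t$ is used rather than $\cL$ itself, is precisely that ampleness on $\cX$ is an open condition which allows one to slide $\cL$ arbitrarily close to the nef class $\theta_\cX$ while staying in the ample cone, making the corresponding potential arbitrarily small.
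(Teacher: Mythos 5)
Your argument is correct in substance and follows the same overall strategy as the paper (extend the ample generic-fibre class to an ample class on a high model, absorb the discrepancy into a vertical divisor, and for the semipositive refinement take a convex combination $(1-t)\theta+t(\theta+dd^c\f_0)$ with $t$ small and subtract a constant). The one place where you diverge is the lifting step, and it has a real consequence. You take the discrepancy $\a=\cL-\theta_\cX$ in $N^1(\cX/S)$, observe it dies in $N^1(X)$, and lift it to $D\in\Div_0(\cX)_\R$ using the exactness of $\Div_0(\cX)_\R\to N^1(\cX/S)\to N^1(X)\to 0$. That exactness is the hard, K\"unnemann-type part of Theorem~\ref{thm:ddc} and is only established for \emph{regular} models; this is why you must insist that $\cX$ be SNC, whereas the statement asserts the conclusion for \emph{every} sufficiently high model, i.e.\ every normal model dominating a fixed one. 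The paper avoids this by working with actual $\R$-line bundles rather than numerical classes: it fixes a representative $\cL'\in\Pic(\cX')_\R$ of $\theta$, extends the specific $\R$-line bundle $L=\cL'|_{\cX_K}$ (not just its numerical class) to an ample $\cL\in\Pic(\cX)_\R$ via Corollary~\ref{cor:ample}, so that $\cL-\pi^*\cL'$ restricts to zero in $\Pic(\cX_K)_\R$ and is therefore a vertical $\R$-divisor by the elementary exact sequence $0\to\Z\cX_0\to\Div_0(\cX)\to\Pic(\cX)\to\Pic(\cX_K)$ valid for any normal model. So your proof is not wrong, but it proves a slightly weaker statement with a much bigger hammer; if you arrange for $\cL$ to extend the line bundle $\cL'|_{\cX_K}$ itself rather than merely its class in $N^1(X)$, the appeal to Theorem~\ref{thm:ddc} disappears and the regularity hypothesis with it. Your treatment of the semipositive case (ample plus nef is ample, rescaling by $t$, normalizing by the supremum) matches the paper's argument exactly and is fine.
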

\begin{proof} 
  Let $\cX'$ be a determination of $\theta$ and let $\cL'\in\Pic(\cX')_\R$ be a 
  representative of $\theta$. The assumption implies that the $\R$-line bundle 
  $L:=\cL'|_{\cX_K}$ is ample. By Corollary~\ref{cor:ample} we may thus assume that 
  $\cX'$ has been chosen so that $L$ admits an ample extension 
  $\cL\in\Pic(\cX)_\R$ for each model $\cX$ dominating $\cX'$.  
  If $\pi:\cX\to\cX'$ denotes the corresponding vertical blowup 
  then $\cL-\pi^*\cL'=D$ for some $D\in\Div_0(\cX)_\R$, 
  and $\f=\f_D$ is a model function such that $\theta+dd^c\f$ 
  is $\cX$-positive. 

  Now suppose $\theta$ is semipositive and pick $\cX$, $\f$  
  as above.  Upon replacing $\f$ by $\f-\sup_X\f$ we may assume that $\f\le 0$.
  Then the closed $(1,1)$-form 
  $$
  \theta+dd^c(\e\f)=\e(\theta+dd^c\f)+(1-\e)\theta
  $$
  is also $\cX$-positive for each $0<\e<1$, completing the proof since
  $\f$ is bounded. 
\end{proof}
Since the nef cone of $N^1(X)$ is the closure of the ample cone, we get as a consequence:

\begin{cor}\label{cor:closure} The closure of the image of $\cZ_+^{1,1}(\Xa)$ in $N^1(X)$ coincides with the nef cone of $N^1(X)$.
\end{cor}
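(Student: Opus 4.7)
The plan is to prove the two inclusions separately, using what has already been established. First, I would verify that the image of $\cZ^{1,1}_+(\Xa)$ inside $N^1(X)$ is contained in the nef cone. If $\theta \in \cZ^{1,1}_+(\Xa)$ is determined on a model $\cX$ by a nef class $\theta_\cX \in N^1(\cX/S)$, then picking any $\cL \in \Pic(\cX)_\R$ representing $\theta_\cX$, Lemma \ref{lem:def-c1} applies (or rather its immediate $\R$-linear extension) to show that $\cL|_{\cX_K}$ is nef on the generic fiber, so $\{\theta\} \in N^1(X)$ is nef. Since the nef cone is closed, this gives the inclusion $\overline{\{\cZ^{1,1}_+(\Xa)\}} \subset \Nef(X)$.

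For the reverse inclusion, the key observation is that the image of $\cZ^{1,1}_+(\Xa)$ already contains every ample class of $N^1(X)$. Indeed, given an ample class $\alpha \in N^1(X)$, the surjectivity of $\{\cdot\}: \cZ^{1,1}(\Xa) \to N^1(X)$ established in Theorem \ref{thm:ddc} yields some $\theta \in \cZ^{1,1}(\Xa)$ with $\{\theta\} = \alpha$. Proposition \ref{prop:positiverep} then provides a model function $\f$ such that $\theta + dd^c \f$ is $\cX$-positive (and in particular semipositive) on a sufficiently high model $\cX$. Since $dd^c \f$ has vanishing de Rham class, we obtain $\alpha = \{\theta + dd^c \f\}$ with $\theta + dd^c \f \in \cZ^{1,1}_+(\Xa)$, as required.

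Combining the two, $\overline{\{\cZ^{1,1}_+(\Xa)\}}$ contains the ample cone of $N^1(X)$, and since the nef cone is by definition the closure of the ample cone, it contains $\Nef(X)$. Together with the first inclusion this proves equality. I do not anticipate any real obstacle; the entire argument is essentially a matter of stringing together Lemma \ref{lem:def-c1}, the surjectivity in Theorem \ref{thm:ddc}, and Proposition \ref{prop:positiverep}, plus the elementary fact that $\Nef(X)$ is the closure of the ample cone.
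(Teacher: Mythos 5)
Your proof is correct and follows essentially the same route as the paper: the paper deduces the corollary directly from Proposition \ref{prop:positiverep} (giving the ample cone inside the image, via the surjectivity in Theorem \ref{thm:ddc} and the vanishing of $\{dd^c\f\}$) together with the fact that $\Nef(X)$ is the closure of the ample cone, while the containment of the image in $\Nef(X)$ is exactly the content of Lemma \ref{lem:def-c1} as you use it. Nothing to add.
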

\begin{rmk}\label{R301}
  In the complex case, it is not always possible to find a smooth
  semipositive form in a nef class, so the image of 
  $\cZ^{1,1}_+(\Xa)$ in $N^1(X)$ is \emph{strictly}  
  contained in $\Nef(X)$ in general, see~\cite[Example~1.7]{DPS}. 
  In the non-Archimedean setting, the situation is unclear.
\end{rmk}




\subsection{$\theta$-psh model functions}\label{S203}
By analogy with the complex case, we introduce:
\begin{defi}\label{defi:psh-model} 
  Let $\theta\in\cZ^{1,1}(\Xa)$ be a closed $(1,1)$-form. A model function $\f\in\cD(X)$ is said to be $\theta$-plurisubharmonic ($\theta$-psh for short) if the closed $(1,1)$-form $\theta+dd^c\f$ is semipositive. 
\end{defi}
Note that constant functions are $\theta$-psh model functions iff $\theta$ is semipositive.
Moreover, the existence of a $\theta$-psh model function implies that the de Rham class
$\{\theta\}\in N^1(X)$ is nef.
Also note that if $\psi\in\cD(\Xa)$, then $\f$ is a $\theta$-psh model function iff
$\f-\psi$ is $(\theta+dd^c\psi)$-psh. 

We will need two technical results relating $\theta$-psh model functions to fractional ideal sheaves. 

\begin{lem}\label{lem:gen} 
  Let $\cL\in\Pic(\cX)$ and let $\|\cdot\|$ be the corresponding model metric on 
  $L:=\cL|_{\cX_K}$. If $\fa$ is a vertical fractional ideal sheaf 
  on $\cX$ such that $\cL\otimes\fa$ is generated by its 
  global sections, then $\log|\fa|$ is a $c_1(L,\|\cdot\|)$-psh model function. 
\end{lem}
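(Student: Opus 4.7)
The plan is to prove the lemma in two steps: first identify $\log|\fa|$ as a model function via a specific determination, then show that the corresponding numerical class is nef on the special fiber.

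First, I would note that the fact that $\log|\fa| \in \cD(\Xa)$ is already part of Proposition \ref{prop:gen}. To make the argument concrete, I would pick $m \gg 1$ so that $\fa':=\unipar^m\fa\subset\cO_\cX$ is a vertical ideal sheaf, and let $\pi : \cX'\to \cX$ be the normalized blow-up of $\cX$ along $\fa'$. Then $\fa'\cdot\cO_{\cX'}=\cO_{\cX'}(D')$ for a vertical effective Cartier divisor $D'$ on $\cX'$, so $\fa\cdot\cO_{\cX'}=\cO_{\cX'}(D)$ with $D:=D'-m\cX_0\in\Div_0(\cX')$, and the proof of Proposition \ref{prop:gen} yields $\log|\fa|=\f_D$.

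Next, I would compute the curvature form determined on $\cX'$. By definition of the metric $\|\cdot\|_\cL$ and of $dd^c$, the class
\[
c_1(L,\|\cdot\|_\cL) + dd^c\log|\fa| = c_1(L,\|\cdot\|_\cL) + dd^c\f_D
\]
is the image in $\cZ^{1,1}(\Xa)$ of the class $\pi^*\cL+D\in N^1(\cX'/S)$, which by construction represents the line bundle $\pi^*\cL\otimes\cO_{\cX'}(D)=\pi^*(\cL\otimes\fa)$ (as a subsheaf identity, interpreting $\cL\otimes\fa$ as the image of global sections in $\cL$). So it suffices to show that this $\R$-line bundle is nef on $\cX'_0$.

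The key observation is that global generation passes to pullbacks: since $\cL\otimes\fa$ is generated by its global sections on $\cX$, its pullback $\pi^*\cL\otimes\cO_{\cX'}(D)$ is generated by global sections on $\cX'$, hence nef. By the definition of semipositivity of a closed $(1,1)$-form, this concludes the proof that $c_1(L,\|\cdot\|_\cL)+dd^c\log|\fa|$ is semipositive, i.e.\ $\log|\fa|$ is $c_1(L,\|\cdot\|_\cL)$-psh.

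There is no substantial obstacle here; the only mildly delicate point is the bookkeeping around passing from the possibly fractional ideal $\fa$ to an honest ideal $\unipar^m\fa$ to realize $\log|\fa|$ as $\f_D$ on a concrete model, after which the nefness assertion reduces to the trivial fact that globally generated line bundles are nef.
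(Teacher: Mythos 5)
Your argument is correct and is essentially the paper's own proof: blow up $\fa$ (normalized), observe that global generation of $\cL\otimes\fa$ pulls back to global generation, hence nefness, of $\pi^*\cL\otimes\cO_{\cX'}(D)$, which is exactly the class representing $c_1(L,\|\cdot\|)+dd^c\log|\fa|$. (One cosmetic slip: with the paper's convention, $\fa'\cdot\cO_{\cX'}=\cO_{\cX'}(D')$ for an honest ideal sheaf $\fa'$ forces $D'$ to be \emph{anti}-effective, not effective, but your argument never actually uses effectivity.)
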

Since $S$ is affine, the direct image on $S$ of a coherent sheaf $\cF$ on $\cX$ is always generated by its global sections; hence $\cF$ is globally generated in the absolute sense iff it is globally generated in the relative sense. 

\begin{proof} Let $\pi:\cX'\to\cX$ be the normalization of the blowup of $\cX$ along $\fa$ and let $D\in\Div_0(\cX')$ be the vertical Cartier divisor such that $\fa\cdot\cO_{\cX'}=\cO_{\cX'}(D)$. The assumption implies that $\pi^*\cL\otimes\cO_{\cX'}(D)$ is also generated by its global sections, so that $\pi^*\cL+D$ is nef. The result follows since the model function $\log|\fa|$ is determined on $\cX'$ by $D$. 
\end{proof}

\begin{lem}\label{lem:approx} Let $\theta$ be a closed $(1,1)$-form and let $\cX$ be a determination of $\theta$. Then each $\theta$-psh model function $\f\in\cD(\Xa)$ is a uniform limit on $\Xan$ of functions of the form $\tfrac 1 m\log|\fa|$ with $m\in\N^*$ and $\fa$ a vertical fractional ideal sheaf on $\cX$.
\end{lem}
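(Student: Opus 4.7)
The plan is to reduce the statement to an explicit construction of the kind used in Proposition~\ref{prop:gen}: represent $\theta$ by $\cL\in\Pic(\cX)_\Q$ and write $\f=\f_D$ for some $D\in\Div_0(\cX'')_\Q$ on a vertical blow-up $\pi:\cX''\to\cX$. By Definition~\ref{defi:psh-model}, the $\theta$-psh condition on $\f$ translates into $\pi^*\cL+D\in\Pic(\cX'')_\Q$ being nef. The obstacle to mimicking the proof of Proposition~\ref{prop:gen} directly is that nefness is not enough to obtain $\pi$-global generation of a line bundle; we therefore plan to perturb by an arbitrarily small ample class and let the perturbation go to zero.

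Fix once and for all an ample line bundle $\cA\in\Pic(\cX)$ and a $\pi$-ample vertical Cartier divisor $A\in\Div_0(\cX'')$ (which exists by Lemma~\ref{lem:ample}(i)). A standard application of Kleiman's criterion shows that $\pi^*\cA+\delta A$ is ample on $\cX''$ for some fixed small $\delta>0$: the intersection with any curve contracted by $\pi$ is positive by $\pi$-ampleness of $A$, while on non-contracted curves $\pi^*\cA$ is uniformly positive (after normalization by a fixed ample class) and $A\cdot C$ is bounded. Consequently, for any rational $\e>0$,
\begin{equation*}
\pi^*(\cL+\e\cA)+D+\e\delta A=\underbrace{(\pi^*\cL+D)}_{\text{nef}}+\e\underbrace{(\pi^*\cA+\delta A)}_{\text{ample}}
\end{equation*}
is ample on $\cX''$. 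Choose $N=N(\e)\in\N^*$ sufficiently divisible and large so that $N$ times the class above is an actual very ample line bundle on $\cX''$; in particular it is $\pi$-globally generated.

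Now set $\fa:=\pi_*\cO_{\cX''}(ND+N\e\delta A)$, which is a vertical fractional ideal sheaf on $\cX$ since $ND+N\e\delta A$ is vertical. By the projection formula applied to the $\pi$-globally generated line bundle $\pi^*(N\cL+N\e\cA)\otimes\cO_{\cX''}(ND+N\e\delta A)$, and after tensoring by the inverse of the pulled-back factor, the adjunction surjection yields $\fa\cdot\cO_{\cX''}=\cO_{\cX''}(ND+N\e\delta A)$. A routine stalkwise ultrametric computation (using that the local sections of $\cO_{\cX''}$ have norm at most $1$ at the corresponding center) shows $\log|\fa|=\log|\fa\cdot\cO_{\cX''}|$ on $\Xan$, so
\begin{equation*}
\tfrac{1}{N}\log|\fa|=\f+\e\delta\,\f_A.
\end{equation*}
Since $\f_A$ is a fixed bounded continuous function on the compact space $\Xan$, letting $\e\to0$ gives uniform convergence to $\f$, as required. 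I expect the only delicate point to be the ampleness of $\pi^*\cA+\delta A$ on $\cX''$, which is not quite stated in the text but is standard; everything else is an assembly of Proposition~\ref{prop:gen} and the perturbative trick from Proposition~\ref{prop:positiverep}.
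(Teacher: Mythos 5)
Your construction is essentially the paper's: perturb the vertical divisor by a small multiple of a $\pi$-ample vertical divisor $A$, scale so that the result is relatively globally generated, and push forward to obtain the fractional ideal $\fa$; the resulting identity $\tfrac1N\log|\fa|=\f+\e\delta\,\f_A$ with $\f_A$ bounded then gives uniform convergence. The steps you flag as delicate are fine (in particular the ampleness of $\pi^*\cA+\delta A$ is exactly Lemma~\ref{lem:ample}~(ii) after rescaling, and your reduction from global generation of $\pi^*(\cdot)\otimes\cO_{\cX''}(ND+N\e\delta A)$ to the adjunction surjection for $\fa$ is correct).

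There is, however, one genuine gap: your very first step, ``represent $\theta$ by $\cL\in\Pic(\cX)_\Q$'', is not available for a general closed $(1,1)$-form. By definition $\theta_\cX$ is an element of $N^1(\cX/S)$, which is a quotient of $\Pic(\cX)_\R$, so $\theta_\cX$ is in general only an $\R$-class; the lemma is stated (and later used, e.g.\ in Theorem~\ref{thm:main}) for such classes. For irrational $\theta_\cX$ the step ``choose $N$ so that $N$ times $\pi^*(\cL+\e\cA)+D+\e\delta A$ is an actual very ample line bundle'' fails, and with it your route to relative global generation, which passes through absolute very-ampleness of the full class. The paper avoids ever needing a representative of $\theta$: since $\pi^*\theta_\cX$ has zero intersection with every curve contracted by $\pi$, the nefness of $\pi^*\theta_\cX+D$ already implies that $D$ itself is $\pi$-nef; then $D+\e A$ is $\pi$-ample by Kleiman, $m(D+\e A)$ is $\pi$-globally generated for $m\gg1$, and one pushes forward exactly as you do. If you replace your appeal to absolute very-ampleness by this purely relative argument (which only involves the vertical $\Q$-divisors $D$ and $A$), your proof goes through for arbitrary $\theta$ and coincides with the paper's.
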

\begin{proof} 
  Let $\pi:\cX'\to\cX$ be a (normalized) vertical blowup such that $\f=\f_D$ for some 
  $D\in\Div_0(\cX')_\Q$. Since $\theta$ is determined by 
  $\theta_\cX\in N^1(\cX/S)$, the assumption that $\f$ is $\theta$-psh 
  implies that $D$ is $\pi$-nef. 
  By Lemma~\ref{lem:ample} and Kleiman's criterion~\cite{Kle}, we may find a 
  vertical $\pi$-ample $\Q$-divisor $A\in\Div_0(\cX')_\Q$ arbitrarily close to $D$. 
  It is then clear that $\f_A$ is uniformly close to $\f=\f_D$ on $\Xan$
  (see the proof of Corollary~\ref{C208}).
  Since $A$ is $\pi$-ample we may find $m\gg 1$ such that $\cO_{\cX'}(mA)$ is 
  $\pi$-globally generated. If we set $\fa:=\pi_*\cO_{\cX'}(mA)$ we then have 
  $\f_A=\tfrac 1 m\log|\fa|$, which concludes the proof. 
\end{proof} 

We are now in a position to establish the first properties of $\theta$-psh model functions. 
\begin{prop}\label{prop:maxmodel} 
  Let $\theta\in\cZ^{1,1}(X)$ be a closed $(1,1)$-form. 
  Then the set of $\theta$-psh model functions 
  $\f\in\cD(\Xa)$ is ($\Q$-)convex and stable under max. 
\end{prop}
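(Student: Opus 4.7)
My plan is to treat the two assertions separately, in each case working on a common determination $\cX\in\cM_X$ of $\theta$, $\f_1$ and $\f_2$ (available since $\cM_X$ is directed). On such a model, write $\f_i=\f_{D_i}$ with $D_i\in\Div_0(\cX)_\Q$ and let $\theta_\cX\in N^1(\cX/S)$ be the class representing $\theta$; the $\theta$-psh condition then amounts to saying that $\theta_\cX+[D_i]$ lies in $\Nef(\cX/S)$ for $i=1,2$.

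The $\Q$-convexity is essentially immediate: for $t\in[0,1]\cap\Q$, $t\f_1+(1-t)\f_2$ is the model function attached to $tD_1+(1-t)D_2\in\Div_0(\cX)_\Q$, and the corresponding class $\theta_\cX+t[D_1]+(1-t)[D_2]$ lies in the convex cone $\Nef(\cX/S)$.

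For stability under $\max$, first I would check that $\max(\f_1,\f_2)\in\cD(\Xa)$: after clearing denominators by some $N\in\N^*$, this is the stability of $\cD(\Xa)_\Z$ under $\max$ from Proposition~\ref{prop:gen}. I would then represent the max via the vertical fractional ideal sheaf $\fa:=\cO_\cX(ND_1)+\cO_\cX(ND_2)$, which satisfies $\log|\fa|=N\max(\f_1,\f_2)$. Letting $\pi:\cX'\to\cX$ be the normalization of the blow-up of $\fa$, there is a unique $D\in\Div_0(\cX')$ with $\fa\cdot\cO_{\cX'}=\cO_{\cX'}(D)$ and $\f_D=\log|\fa|$, so that $\max(\f_1,\f_2)=\f_{D/N}$. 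The problem reduces to showing that $\pi^*\theta_\cX+[D]/N\in N^1(\cX'/S)$ is nef.

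The main point is a geometric observation about the two vertical divisors $F_i:=D-\pi^*ND_i\in\Div_0(\cX')$: both are effective (since $\cO_\cX(ND_i)\subset\fa$ pulls back to $\cO_{\cX'}(\pi^*ND_i)\subset\cO_{\cX'}(D)$), and their supports are disjoint. To check disjointness I would work locally, choosing generators $f_1,f_2$ of $\cO_\cX(ND_1),\cO_\cX(ND_2)$; the two standard charts of the blow-up of $(f_1,f_2)$ cover the preimage, and on the chart where $f_2/f_1$ is regular one has $\fa\cdot\cO_{\cX'}=(f_1)=\cO_{\cX'}(\pi^*ND_1)$, so $F_1=0$ there, and symmetrically for $F_2$; normalization does not disturb this. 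Once disjointness is established, for any projective curve $C\subset\cX'_0$ I can choose $i\in\{1,2\}$ with $C\not\subset\supp(F_i)$ and compute
\begin{equation*}
\bigl(\pi^*\theta_\cX+[D]/N\bigr)\cdot C=\pi^*\bigl(\theta_\cX+[D_i]\bigr)\cdot C+\tfrac{1}{N}F_i\cdot C\ge 0,
\end{equation*}
the first summand being nonnegative by nef-ness of $\theta_\cX+[D_i]$ (paired with the effective $1$-cycle $\pi_*C$, or vanishing if $C$ is $\pi$-exceptional), the second by effectivity of $F_i$ combined with $C\not\subset\supp(F_i)$.

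The step I expect to be the crux is the disjoint-supports claim: without it, the effective correction $F_i$ could spoil nef-ness along curves contracted by $\pi$, and the decomposition $D=\pi^*ND_i+F_i$ would yield no useful lower bound for $(\pi^*\theta_\cX+[D]/N)\cdot C$ on exceptional curves.
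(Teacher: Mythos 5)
Your proof is correct but takes a genuinely different route from the paper's. The paper handles stability under $\max$ by perturbing $\theta$ to a nearby \emph{ample} rational class $\theta_\e$ (using a basis of $N^1(\cX/S)$ by ample classes), then applies global generation of $\cO_\cX(m(\cL_\e+D_i))$ and the sum $\fa_m=\cO_\cX(mD_1)+\cO_\cX(mD_2)$ together with Lemma~\ref{lem:gen} to conclude, letting $\e\to 0$ at the end. You instead work directly on the (normalized) blow-up $\pi:\cX'\to\cX$ of the fractional ideal $\fa=\cO_\cX(ND_1)+\cO_\cX(ND_2)$ and prove the structural fact that the effective vertical divisors $F_i=D-\pi^*ND_i$ have \emph{disjoint} supports (via the two standard charts $D_+(f_1),D_+(f_2)$ of the blow-up, which cover it, and whose preimages still cover after normalization); this lets you bound $(\pi^*\theta_\cX+[D]/N)\cdot C$ from below curve by curve using whichever $F_i$ avoids $C$. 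Your argument is correct, a bit more elementary, and avoids both the ample perturbation and the global-generation step; the paper's route has the virtue of reusing Lemma~\ref{lem:gen} and dovetailing with the global-generation and multiplier-ideal machinery that drives the rest of Sections 5 and 8. One small point worth stating explicitly when you globalize the disjointness claim: the charts $D_+(f_1),D_+(f_2)$ depend on the local trivializations, but if a point lay in $\supp(F_1)\cap\supp(F_2)$ one could choose a trivializing neighborhood of its image in $\cX$ and derive a contradiction from one of the two charts covering it, so the local argument does patch.
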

\begin{proof}
  Convexity is clear from the definition. To prove stability under maxima,
  let $\f_1,\f_2\in\cD(\Xa)$ be $\theta$-psh, pick 
  a common determination $\cX$ of $\theta$ and the $\f_i$'s
  and let $D_i\in\Div_0(\cX)_\Q$ be a representative of $\f_i$ for $i=1,2$. 
  
  Since the ample cone of $N^1(\cX/S)$ is open, we may find ample line bundles
  $\cA_1,\dots,\cA_r\in\Pic(\cX)$ whose numerical classes 
  $\a_1,\dots,\a_r$ form a basis of $N^1(\cX/S)$. 
  We may thus pick $t_1,\dots,t_r\in\R$ such that $\cL:=\sum_j t_j\cA_j$ 
  is a representative of $\theta$ in $\Pic(\cX)_\R$. Let $\e_1,\dots,\e_r>0$ be 
  (small) positive numbers such that $t_j+\e_j\in\Q$ for each $j$ and set 
  $\cL_\e:=\sum_j(t_j+\e_j)\cA_j$. Since $\f_i$ is $\theta$-psh it follows that 
  $\cL_\e+D_i$ is an ample $\Q$-divisor on $\cX$ for $i=1,2$. 
  We may thus find a positive integer $m$ such that 
  $m\cL_\e\in\Pic(\cX)$, $mD_i\in\Div_0(\cX)$ and both sheaves
  $\cO_\cX\left(m\left(\cL_\e+D_i\right)\right)$, $i=1,2$ 
  are generated by their global sections on $\cX$. 
  If we introduce the vertical fractional ideal sheaf 
  $$
  \fa_m:=\cO_\cX(mD_1)+\cO_\cX(mD_2)
  $$ 
  then it follows that $\cO_\cX(m\cL_\e)\otimes\fa_m$ is also generated 
  by its global sections. By Lemma~\ref{lem:gen}, 
  $\log|\fa_m|=m\max\left\{\f_1,\f_2\right\}$ is thus psh with respect 
  to $m(\theta+\sum_j\e_j\a_j)$, that is,
  $$
  \theta+\sum_j\e_j\a_j+dd^c\max\{\f_1,\f_2\}\ge 0.
  $$
  Letting $\e_j\to0$, we conclude as desired that $\theta+dd^c\max\{\f_1,\f_2\}\ge 0$. 
\end{proof}

\begin{prop}\label{prop:convex} 
   Let $\theta\in\cZ^{1,1}(X)$ be a closed $(1,1)$-form and let $\cX$ be a SNC model on 
  which $\theta$ is determined. Then each $\theta$-psh model function 
  $\f\in\cD(\Xa)$ satisfies:
  \begin{itemize}
  \item[(i)] 
    $\f\circ\emb_\cX$ is piecewise affine and convex on each face of $\D_\cX$;
  \item[(ii)] 
    $\f\le\f\circ \retr_\cX$ with equality if $\f$ is determined on $\cX$.
  \end{itemize}
\end{prop}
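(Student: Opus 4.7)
\medskip

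The plan is to reduce both (i) and (ii) to the corresponding statements for functions of the form $\log|\fa|$ established in Proposition~\ref{prop:PA}, and then pass to a uniform limit via the approximation provided by Lemma~\ref{lem:approx}.

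Since $\cX$ is a determination of $\theta$, Lemma~\ref{lem:approx} supplies a sequence $\psi_m = \tfrac{1}{m}\log|\fa_m|$, with $\fa_m$ a vertical fractional ideal sheaf on $\cX$, such that $\psi_m \to \f$ uniformly on $\Xan$. Proposition~\ref{prop:PA} applied to each $\fa_m$ yields two properties: first, $\psi_m \circ \emb_\cX$ is convex on every face of $\D_\cX$, and second, $\psi_m \leq \psi_m \circ \retr_\cX$ pointwise on $\Xan$. Both pass to the uniform limit, giving that $\f \circ \emb_\cX$ is convex on each face of $\D_\cX$ and that $\f \leq \f \circ \retr_\cX$. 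This settles the convexity half of (i) and the inequality in (ii).

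For the remaining piecewise-affine assertion in (i), the $\theta$-psh hypothesis is not needed: by the observation recorded just after Proposition~\ref{prop:PA} (a direct consequence of Propositions~\ref{prop:gen} and~\ref{prop:PA}), for \emph{every} model function $\psi \in \cD(\Xa)$ the composition $\psi \circ \emb_\cX$ is automatically piecewise affine on each face of $\D_\cX$, hence so is $\f \circ \emb_\cX$. Finally, when $\f$ is itself determined on $\cX$, write $\f = \f_D$ with $D \in \Div_0(\cX)_\Q$; Lemma~\ref{L207}(ii) gives $\f_D \circ \retr_\cX = \f_D$, yielding equality in (ii).

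The only conceptual point worth flagging is that a uniform limit of convex piecewise-affine functions on a simplex is convex but in general not piecewise affine; this is why the two components of (i) must be obtained by separate arguments (convexity by approximation from $\theta$-psh data, piecewise-affineness from the fact that $\f$ is already a model function). Beyond this observation, no serious obstacle is expected.
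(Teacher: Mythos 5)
Your proof is correct and takes essentially the same route as the paper, which disposes of the proposition with the single line ``This follows directly from Lemma~\ref{lem:approx} and Proposition~\ref{prop:PA}.'' You are right to flag that piecewise-affineness does not survive the uniform limit and must instead come from the remark following Proposition~\ref{prop:PA} applied to the model function $\f$ directly (and similarly that the equality case of (ii) comes from Lemma~\ref{L207}~(ii) rather than from approximation); the paper's terse statement glosses over both of these points, which your write-up makes explicit.
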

\begin{proof} 
  This follows directly from Lemma~\ref{L207}~(ii), Proposition~\ref{prop:PA} and Lemma~\ref{lem:approx} 
\end{proof} 

Finally we show that $\theta$-psh model functions are plentiful as
soon as $\{\theta\}$ is ample. 
\begin{prop}\label{prop:generate}
   Let $\theta\in\cZ^{1,1}(X)$ be a closed $(1,1)$-form
   whose de Rham class 
  $\{\theta\}\in N^1(X)$ is ample. 
  Then $\cD(\Xa)$ is spanned by $\theta$-psh model functions.
\end{prop}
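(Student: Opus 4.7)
The plan is to use Proposition~\ref{prop:positiverep} together with the fact that ampleness is an open condition on $N^1(\cX/S)$ to perturb a $\theta$-psh model function by any prescribed $\psi\in\cD(\Xa)$.

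Concretely, given an arbitrary $\psi\in\cD(\Xa)$, I would first choose a sufficiently high model $\cX$ that simultaneously determines $\theta$, is a determination of $\psi$ (so that $\psi=\f_D$ for some $D\in\Div_0(\cX)_\Q$, whence $dd^c\psi$ is determined on $\cX$), and is high enough that Proposition~\ref{prop:positiverep} applies. Apply that proposition to produce a model function $\f_0\in\cD(\Xa)$ such that $\omega:=\theta+dd^c\f_0$ is $\cX$-positive, i.e.\ $\omega_\cX\in N^1(\cX/S)$ is ample.

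Next, since the ample cone of $N^1(\cX/S)$ is open and $dd^c\psi$ is determined on $\cX$, the closed $(1,1)$-forms
\[
\omega \pm \e\, dd^c\psi \;=\; \theta + dd^c\!\left(\f_0 \pm \e\psi\right)
\]
remain $\cX$-positive, hence semipositive, for any sufficiently small rational $\e>0$ (as noted in the discussion after Definition of semipositivity). Thus both $\f_0+\e\psi$ and $\f_0-\e\psi$ lie in $\cD(\Xa)$ and are $\theta$-psh model functions. The identity
\[
\psi \;=\; \frac{1}{2\e}\Bigl[(\f_0+\e\psi)-(\f_0-\e\psi)\Bigr]
\]
then exhibits $\psi$ as a $\Q$-linear combination of two $\theta$-psh model functions, proving that $\cD(\Xa)$ is $\Q$-spanned by $\theta$-psh model functions.

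There is no serious obstacle here: the whole argument is a one-line perturbation trick, and all the real work has already been packaged into Proposition~\ref{prop:positiverep} (which itself relied on the extension statement Corollary~\ref{cor:ample} for ample $\R$-line bundles). The only point deserving attention is to make sure that $dd^c\psi$ is determined on the \emph{same} model on which $\theta+dd^c\f_0$ is ample, which is why we take $\cX$ to determine $\psi$ as well; openness of the ample cone in $N^1(\cX/S)$ then immediately yields the conclusion.
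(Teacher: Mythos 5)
Your proposal is correct and follows essentially the same route as the paper: apply Proposition~\ref{prop:positiverep} on a model determining both $\theta$ and the given model function, then use openness of the ample cone to perturb. The only cosmetic difference is that you write $\psi$ as $\tfrac{1}{2\e}\bigl[(\f_0+\e\psi)-(\f_0-\e\psi)\bigr]$, whereas the paper uses the one-sided decomposition $\e\f=(\psi+\e\f)-\psi$; both are valid.
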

\begin{proof} 
  Let $\f\in\cD(\Xa)$. By Proposition~\ref{prop:positiverep} we may 
  find a model $\cX$ and a model function $\psi$ such that 
  $\theta$, $\f$ and $\psi$ are all determined on $\cX$ and such that
  $\theta+dd^c\psi$ is $\cX$-positive. 
  Since the closed $(1,1)$-form $dd^c\f$ is determined on $\cX$ 
  we may thus find a rational number $0<\e\ll 1$ such that 
  $\theta+dd^c(\psi+\e\f)\ge 0$. It follows that $\e\f=(\psi+\e\f)-\psi$ 
  is a difference of $\theta$-psh model functions, and the result follows.
\end{proof}

\subsection{Closedness of $\theta$-psh model functions}\label{sec:closed}
The next result will be used to show that the definition of $\theta$-psh functions in Section~\ref{S208} below extends the one for model functions. 

\begin{thm}\label{thm:closed} Let $\theta\in\cZ^{1,1}(X)$ be a closed $(1,1)$-form. Then the set of $\theta$-psh model functions is closed in $\cD(X)$ with respect to the topology of pointwise convergence on $\Xdiv$. 
\end{thm}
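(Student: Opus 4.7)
The plan is to reduce the closedness to a finite-dimensional statement on a fixed model. Fix an SNC model $\cX$ of $X$ that simultaneously determines $\theta$ and $\f$, and write $\f=\f_D$ with $D\in\Div_0(\cX)_\Q$. Since $\Nef(\cX/S)$ is a closed convex cone in the finite-dimensional space $N^1(\cX/S)$, it will suffice to produce a net $D_j'\to D$ in $\Div_0(\cX)_\R$ such that $\theta_\cX+D_j'$ is nef on $\cX_0$ for each $j$.

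For each $\theta$-psh model function $\f_j$ in the convergent net, choose an SNC model $\cX_j$ with $\pi_j\colon\cX_j\to\cX$ on which $\f_j=\f_{D_j}$ for some $D_j\in\Div_0(\cX_j)_\Q$, and set $D_j':=\pi_{j*}D_j\in\Div_0(\cX)_\Q$. First I would verify that $D_j'\to D$. The key observation is that for every irreducible component $E_i$ of $\cX_0$, its strict transform $\tilde E_i\subset\cX_{j,0}$ has the same multiplicity $b_i$ (since $\pi_j^*\unipar=\unipar$), and a direct computation then yields $\f_{D_j'}(x_{E_i})=\f_j(x_{E_i})$. Pointwise convergence $\f_j\to\f$ on $\Xdiv$, restricted to the finitely many vertices $\{x_{E_i}\}$ of $\D_\cX$, therefore forces $D_j'\to D$ coefficient-wise in $\Div_0(\cX)_\R$.

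Next I would invoke the Negativity Lemma (Lemma~\ref{lem:neg}). The $\theta$-psh hypothesis makes $\pi_j^*\theta_\cX+D_j$ nef, hence $\pi_j$-nef, on $\cX_{j,0}$, so
\[
F_j:=\pi_j^*D_j'-D_j=\pi_j^*\pi_{j*}(\pi_j^*\theta_\cX+D_j)-(\pi_j^*\theta_\cX+D_j)
\]
is effective and supported on the $\pi_j$-exceptional locus. For any curve $C\subset\cX_0$ not contained in the center $V(\fa_j)$ of $\pi_j$, its strict transform $\tilde C$ avoids $\supp F_j$, so $F_j\cdot\tilde C\ge 0$, and
\[
(\theta_\cX+D_j')\cdot C=(\pi_j^*\theta_\cX+D_j)\cdot\tilde C+F_j\cdot\tilde C\ge 0.
\]
Once one knows $\theta_\cX+D_j'\in\Nef(\cX/S)$ for every $j$, the closedness of the nef cone combined with $D_j'\to D$ gives $\theta_\cX+D\in\Nef(\cX/S)$, so $\f$ is $\theta$-psh.

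The hard part will be extending the inequality above to curves $C\subset V(\fa_j)$, which do not admit a strict transform in $\cX_j$, and which cannot be avoided simply by passing to a higher model (any $\cX_j'\ge\cX_j$ has center containing $V(\fa_j)$). I expect this to be settled by a density/movability argument: since $V(\fa_j)\subset\cX_0$ has codimension at least one, curves avoiding it should generate a sufficiently large subcone of $\overline{NE}(\cX_0/k)$ to detect nefness, and the continuity of intersection pairings together with a suitable approximation then reduces the general case to the one treated by the Negativity Lemma.
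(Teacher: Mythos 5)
Your reduction to the finite-dimensional target $N^1(\cX/S)$ via pushforwards $D_j':=\pi_{j*}D_j$ is natural, and the steps you carry out in detail are correct: the identity $\f_{D_j'}(x_{E_i})=\f_j(x_{E_i})$ holds, so $D_j'\to D$ coefficient-wise; the Negativity Lemma applies because $D_j$ is $\pi_j$-nef (for a $\pi_j$-contracted curve $\gamma$ one has $\pi_j^*\theta_\cX\cdot\gamma=\theta_\cX\cdot\pi_{j*}\gamma=0$, hence $D_j\cdot\gamma=(\pi_j^*\theta_\cX+D_j)\cdot\gamma\ge 0$); and your intersection computation gives $(\theta_\cX+D_j')\cdot C\ge 0$ whenever the strict transform $\tilde C$ is not contained in $\supp F_j$.

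The gap you flag at the end, however, is fatal rather than technical. The expectation that curves avoiding $V(\fa_j)$ generate a large enough subcone of $\overline{NE}(\cX_0/k)$ to detect nefness is false in general: extremal rays of $\overline{NE}(\cX_0/k)$ can be spanned by \emph{rigid} curves (for instance a $(-1)$-curve inside a component of $\cX_0$), and a class can be nonnegative against every curve outside such a curve yet negative on it. Moreover, for a fixed curve $C_0\subset\cX_0$ there is no reason a cofinal subnet of $j$'s should satisfy $C_0\not\subset V(\fa_j)$; the net of determinations $\cX_j$ is entirely arbitrary and the centers can grow. Finally, pushforward does not preserve nefness, and the convergence $\f_j\to\f$ at the finitely many vertices of $\D_\cX$ controls the coefficients of $D_j'$ but carries no information about the pairing with curves strictly inside a single component $E_i$ — which is precisely what nefness tests.

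The paper's proof is entirely different and compensates for exactly this lack of control by cohomological input. After reducing (by the $\e$-perturbation trick in the ample basis $\a_i$) to Lemma~\ref{lem:closed} — that a line bundle $\cL$ on an SNC model whose metric is a pointwise limit on $\Xdiv$ of semipositive model metrics is nef — it studies the graded sequence of base-ideals $\fa_m$ of $m\cL$. Step~1 shows $\tfrac 1m\log|\fa_m|\to 0$ pointwise on $\Xdiv$ using the convexity from Proposition~\ref{prop:convex} and the ampleness input from Proposition~\ref{prop:positiverep}; Step~2 promotes this to $\cJ(\fa_\bullet^m)=\cO_\cX$ via the subadditivity of multiplier ideals (Theorem~\ref{thm:subadd}) and then applies uniform global generation (Theorem~\ref{thm:uniform}), which rests on Kawamata--Viehweg vanishing for $S$-varieties (Appendix~B), to get a uniform ample twist $\cA$ with $m\cL+\cA$ globally generated for all $m$, forcing $\cL$ nef. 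The authors' remark after the proof acknowledges that a more elementary argument (they point to~\cite{Good}) would be desirable; your attempt is in that spirit, but the movability step needs to be replaced by a genuinely new idea before it can close.
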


This theorem in particular implies that S.-W. Zhang's definition of continuous semipositive metrics as uniform limits of semipositive model metrics (cf.~\cite[3.1]{Zha}) is consistent when applied to model metrics. Another argument for this, valid in arbitrary residue characteristic, has been communicated to the authors by A. Thuillier. This argument uses a theorem of Tate to reduce to the case of curves. 

We start the proof with the following special case. 
\begin{lem}\label{lem:closed} Let $\cX$ be an SNC model and pick $\cL\in\Pic(\cX)$ such that $L:=\cL|_{\cX_K}$ is ample. Assume that the model metric $\|\cdot\|_\cL$ is a pointwise limit over $\Xdiv$ of semipositive model metrics on $L$. Then $\|\cdot\|_\cL$ itself is semipositive, \ie $\cL$ is nef.
\end{lem}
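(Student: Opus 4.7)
The strategy is to argue that the pushforward classes $\pi_{j*}\cL_j\in N^1(\cX/S)$ all lie in the closed cone $\Nef(\cX/S)$ and converge to $\cL$ there, forcing $\cL$ to be nef.

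After replacing each $\cX_j$ by a higher SNC model dominating $\cX$ (which preserves the nefness of $\cL_j$), I write $\cL_j=\pi_j^*\cL+D_j$ with $\pi_j\colon\cX_j\to\cX$ a vertical blow-up and $D_j\in\Div_0(\cX_j)_\Q$. The function $\f_j:=\f_{D_j}$ then satisfies $h_{\cL_j}=h_\cL\,e^{-\f_j}$, and $\f_j\to 0$ pointwise on $\Xdiv$ by hypothesis. Since $\pi_j^*\cL$ is $\pi_j$-numerically trivial, the nefness of $\cL_j$ forces $D_j$ to be $\pi_j$-nef. The SNC model $\cX$ being vertically $\Q$-factorial, Lemma~\ref{lem:neg} provides an effective vertical divisor $F_j:=\pi_j^*\pi_{j*}D_j-D_j\ge 0$ on $\cX_j$; expanding components shows that $F_j$ is supported on the $\pi_j$-exceptional divisorial locus, because the strict transforms $\tilde E_i$ of the components $E_i$ of $\cX_0$ contribute identical coefficients to $\pi_j^*\pi_{j*}D_j$ and to $D_j$.

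The key quantitative input is the identity
\begin{equation*}
\pi_{j*}D_j=\sum_i b_i\,\f_j(x_{E_i})\,E_i\quad\text{in }\Div_0(\cX)_\R,
\end{equation*}
where $\cX_0=\sum_i b_i E_i$. It follows from the formula $\f_D(x_E)=a_E/b_E$ for $D=\sum a_i E_i\in\Div_0(\cX)_\R$ (an immediate consequence of the normalization $\val_{x_E}(\unipar)=1$) applied to $D_j$, together with the fact that $\tilde E_i$ and $E_i$ correspond to the same divisorial valuation with the same multiplicity $b_i$, while $\pi_j$-exceptional components push forward to zero. Since the index set is finite and each $\f_j(x_{E_i})\to 0$, we get $\pi_{j*}D_j\to 0$ in $\Div_0(\cX)_\R$ and hence $\pi_{j*}\cL_j=\cL+\pi_{j*}D_j\to\cL$ in $N^1(\cX/S)$.

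It now suffices to check $\pi_{j*}\cL_j\cdot C\ge 0$ for every $k$-proper curve $C\subset\cX_0$. By the projection formula, for any effective $1$-cycle $\tilde C$ on $\cX_j$ (or on a higher SNC model dominating $\cX_j$) with $\pi_{j*}\tilde C=m\,C$, $m\ge 1$, one computes
\begin{equation*}
m\,\pi_{j*}\cL_j\cdot C=\pi_j^*\pi_{j*}\cL_j\cdot\tilde C=(\cL_j+F_j)\cdot\tilde C=\cL_j\cdot\tilde C+F_j\cdot\tilde C.
\end{equation*}
The first summand is nonnegative by nefness of $\cL_j$, and the second is nonnegative provided $\tilde C\not\subset\supp F_j$. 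When $C$ is not contained in the blow-up center of $\pi_j$, the strict transform of $C$ is a valid choice of $\tilde C$: it maps birationally to $C$ and is therefore not contained in any $\pi_j$-exceptional divisor. The main obstacle lies in the remaining case where $C$ is contained in the blow-up center of $\pi_j$; there one passes to a sufficiently high SNC model dominating both $\cX_j$ and a resolution of the blow-up of $\cX$ along $C$, and takes $\tilde C$ to be the strict transform of a generic multisection of the exceptional projective bundle above $C$, arranging by genericity that $\tilde C$ avoids the relevant analogue of $\supp F_j$ on this higher model.
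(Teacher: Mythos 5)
Your reduction is correct up to the decisive point: the normalization $\cL_j=\pi_j^*\cL+D_j$, the identity $\pi_{j*}D_j=\sum_i b_i\,\f_j(x_{E_i})E_i$, and the conclusion $\pi_{j*}\cL_j\to\cL$ in $N^1(\cX/S)$ are all fine. But everything then hinges on the claim that each $\pi_{j*}\cL_j$ is nef, and the argument you give for curves $C$ contained in the center of $\pi_j$ does not close. Your fix is to pass to a higher model $\mu:\cX'_j\to\cX_j$ and choose a ``generic'' multisection $\tilde C$ avoiding the support of the effective divisor; this cannot work. Since $\pi'_{j*}\mu^*D_j=\pi_{j*}D_j$, the relevant divisor upstairs is $F'_j=\mu^*F_j$, whose support is the \emph{full preimage} $\mu^{-1}(\supp F_j)$. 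Already for $D_j=-\e G$ with $G$ the unique exceptional divisor over a curve $C$ one has $F_j=\e G$ and $\supp F_j=\pi_j^{-1}(C)$ exactly, so \emph{every} curve on \emph{every} higher model dominating $C$ lies in $\supp F'_j$; no genericity escapes this. And for $\tilde C\subset\supp F_j$ the sign of $F_j\cdot\tilde C$ is genuinely uncontrolled: if $N_{C/\cX}$ is anti-ample (say $\cO(-1)\oplus\cO(-1)$ on a rational curve), then $-G|_G$ is ample, so $G\cdot\tilde C<0$ for every curve $\tilde C\subset G=\pi_j^{-1}(C)$ and your inequality chain yields only a negative lower bound for $\pi_{j*}\cL_j\cdot C$. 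In exactly this flop-type situation one can arrange $\pi_j^*\cL-G$ nef while $\cL\cdot C<0$, so the statement you need for each fixed $j$ --- that the pushforward of a nef class under a vertical blow-up is again nef --- is false in general. A telling symptom is that your proof would use the hypothesis only at the finitely many divisorial points $x_{E_i}$ of the fixed model $\cX$, whereas the lemma assumes pointwise convergence on all of $\Xdiv$; a single semipositive model metric agreeing with $h_\cL$ at those points would then force $\cL$ nef, which is not true.

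The paper's proof is of an entirely different (and necessarily heavier) nature. It first shows, using the convergence hypothesis at an \emph{arbitrary} point of $\Xdiv$ together with Propositions~\ref{prop:convex} and~\ref{prop:positiverep}, that the normalized base-ideal functions $\tfrac1m\log|\fa_m|$ of $m\cL$ converge to $0$ pointwise on $\Xdiv$; it then invokes subadditivity of asymptotic multiplier ideals to get $\cJ(\fa_\bullet^m)=\cO_\cX$, and the uniform global generation theorem (resting on the vanishing theorems of Appendix~B) to conclude that $m\cL+\cA$ is globally generated for a fixed ample $\cA$ and all $m$, whence $\cL$ is nef. The fact that the authors resort to multiplier ideals here, and explicitly remark that a more elementary proof would be desirable, reflects precisely the obstruction on which your argument founders.
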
 
\begin{proof} 
\smallskip
  \textbf{Step 1}. For each $m\ge 0$ let $\fa_m\subset\cO_\cX$ be the base-ideal of 
  $\cO_\cX(m\cL)$, \ie the image of the evaluation map 
 $$
 H^0(\cO_\cX(m\cL))\otimes\cO_\cX(-m\cL)\to\cO_\cX.
 $$
 We are going to show that $\tfrac 1 m\log|\fa_m|$ converges pointwise to $0$ on $\Xdiv$. Note that $\fa_m$ is vertical for $m\gg 1$ since $\cL$ is ample on the generic fiber of $\cX$. The sequence $\fa_\bullet=(\fa_m)_{m\ge 0}$ is a graded sequence of ideals, \ie we have $\fa_m\cdot\fa_l\subset\fa_{m+l}$ for all $m,l$. It follows that $(\log|\fa_m|)_m$ is a super-additive sequence, which implies that 
\begin{equation}\label{equ:convsup}
\lim_{m\to\infty}\frac 1 m\log|\fa_m|=\sup_m\frac 1 m\log|\fa_m|\le 0
\end{equation}
pointwise on $\Xan$. Pick a rational number $\e>0$ and $x\in\Xdiv$. Let $\theta$ be the curvature form of $\|\cdot\|_\cL$. Since $0$ is by assumption a pointwise limit of $\theta$-psh model functions, there exists a vertical blowup $\pi:\cX'\to\cX$ and $D\in\Div_0(\cX')_\Q$ such that $\f_D$ is $\theta$-psh, $\f_D(x)\ge-\e$ and $\f_D(x_{E_i})\le\e$ for each irreducible component $E_i$ of our given model $\cX$.  By Proposition~\ref{prop:convex} the latter condition yields $\f_D\le\e$ on $\Xan$, so that $D':=D+\e\cX'_0\in\Div_0(\cX')$ satisfies $D'\le 0$ and $\f_{D'}(x)\ge -2\e$. On the other hand, we may assume that $\cX'$ has been chosen high enough to apply Proposition~\ref{prop:positiverep} and get $D''\in\Div_0(\cX')_\Q$ with $D''\le 0$, $\f_{D''}\ge -\e$ on $\Xan$ and $\pi^*\cL+D'+D ''$ ample. Since $D'+D''\le 0$ we then have 
$$
\cO_{\cX'}(m\left(\pi^*\cL+D'+D''\right))\subset\cO_{\cX'}(m\pi^*\cL)
$$
for all $m\in\N$. Now the left-hand side is globally generated for some $m$. Since $\pi_*\cO_{\cX'}=\cO_\cX$, all sections of $m\pi^*\cL$ are pull-backs of sections of $m\cL$, and the projection formula therefore yields 
$$
\cO_{\cX'}(m (D'+D''))\subset\cO_{\cX'}\cdot \fa_m,
$$ 
hence
$$
-3\e\le\f_{D'+D''}(x)\le \frac1m \log|\fa_m| (x). 
$$
We have thus shown that $\sup_m\tfrac 1 m\log|\fa_m|\ge 0$ at each $x\in\Xdiv$, which implies as desired that $\tfrac 1 m\log|\fa_m|$ converges to $0$ pointwise on $\Xdiv$ thanks to (\ref{equ:convsup}). 

\smallskip
  \textbf{Step 2}. Let us now show that $\cL$ is nef. For each $c>0$ let $\cJ(\fa_\bullet^c)\subset\cO_\cX$ be the multiplier ideal attached to the graded sequence $\fa_\bullet$ (cf.~Appendix B). We have the elementary inclusion $\fa_m\subset\cJ(\fa_\bullet^m)$ for all $m\in\N$, whereas the subadditivity property (cf.~Theorem~\ref{thm:subadd}) implies $\cJ(\fa_\bullet^{ml})\subset\cJ(\fa_\bullet^m)^l$ for all $l,m\in\N$. We infer that $\fa_{ml}\subset\cJ(\fa_\bullet^m)^l$ for any $m,l$ and hence
$$
\sup_l\tfrac 1{l}\log|\fa_{ml}|\le\log|\cJ(\fa_\bullet^m)|\le 0. 
$$
By Step 1 we conclude that $\log|\cJ(\fa_\bullet^m)|=0$, \ie $\cJ(\fa_\bullet^m)=\cO_\cX$ since multiplier ideals are integrally closed by definition. The uniform global generation property of multiplier ideals (Theorem~\ref{thm:uniform}) now yields an (ample) line bundle $\cA\in\Pic(\cX)$ independent of $m$ such that $m\cL+\cA$ is globally generated (and hence nef) for all $m\in\N$. This immediately shows that $\cL$ is nef, since the latter is a closed condition. 
\end{proof}

\begin{proof}[Proof of Theorem~\ref{thm:closed}]
Suppose that $\f\in\cD(X)$ is a pointwise limit of $\theta$-psh model functions. Our goal is to show that $\f$ is $\theta$-psh. Upon replacing $\theta$ with $\theta+dd^c\f$ we may assume that $\f=0$. Note that the existence of at least one $\theta$-psh model function implies that the de Rham class $\{\theta\}\in N^1(X)$ is nef. 
Let $\cX$ be a determination of $\theta$. Thus $\theta_\cX|_{\cX_K}$ is nef. As in Proposition~\ref{prop:maxmodel} we can choose finitely many ample line bundles $\cA_i\in\Pic(\cX)$ such that their numerical classes $\a_i\in N^1(\cX/S)$ form a basis of $N^1(\cX/S)$. There exist arbitrarily small positive numbers $\e_i$ such that $\theta_\cX+\sum_i\e_i\a_i$ is a rational class, hence the class of a $\Q$-line bundle $\cL_\e$ on $\cX$, whose restriction to $\cX_K$ is ample. Since $0$ is a pointwise limit of $\theta$-psh model functions and since $\|\cdot\|_{\cL_\e}e^{-\psi}$ is semipositive for each $\theta$-psh model function $\psi$, we may now apply Lemma~\ref{lem:closed} to conclude that $\cL_\e$ is nef. It follows that $\theta_\cX\in\Nef(\cX/S)$ by closedness of the nef cone. 
\end{proof}

\begin{rmk}
The use of multiplier ideals in Step 2 is similar to~\cite[Proposition 2.8]{ELMNP}, and
very much in the spirit of the arguments we shall use to prove Theorem~B.
It would be interesting to have a proof along the lines of~\cite[p.178, Proposition 8]{Good}. 
\end{rmk}

\subsection{Comparison of terminology}
The terminology for (semipositive) model metrics is unfortunately not uniform across the literature. Here is a tentative summary. 

\medskip
\begin{table}[h]
  \begin{tabular}{|l| l| }
    \hline
    \emph{Model metric}:~\cite{YZ09}  
    & 
    \emph{Semipositive continuous metric}:~\cite{CL1,CL2}
    \\
    Algebraic metric:~\cite{BPS,CL1,Liu11} 
    & 
    Approachable metric:~\cite{BPS}
    \\
    Smooth metric:~\cite{CL2}
    & 
    Semipositive metric:~\cite{YZ09,YZ13a,YZ13b,Liu11} 
    \\
    Root of an algebraic metric:~\cite{Gub98} 
    & 
    Semipositive admissible metric:~\cite{Gub98}\\
    \hline 
  \end{tabular}
  \caption{Terminology for metrics on line bundles.}\label{table:modmet}
\end{table}


\section{Equicontinuity}\label{S209}
The following result is the key to the compactness property in Theorem~A.
\begin{thm}\label{thm:main} 
  Let $X$ be a smooth connected projective $K$-analytic space. 
  Let $\cX$ be a SNC model of $X$ 
  and  $\theta\in\cZ^{1,1}(\Xa)$ a closed $(1,1)$-form determined on $\cX$.
  Then there exists a constant $C=C(\cX,\theta)>0$ such that for every 
  $\theta$-psh model function $\f$, the composition 
  $\f\circ\emb_\cX$ is convex, piecewise affine 
  and $C$-Lipschitz continuous on each face of  $\D_\cX$.  
\end{thm}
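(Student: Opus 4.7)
The convexity and piecewise-affine property on each face of $\D_\cX$ are already contained in Proposition~\ref{prop:convex}(i), so the substance of the theorem lies in the uniform Lipschitz bound. Normalize $\f$ by $\sup_{\D_\cX}\f=0$, which is harmless for Lipschitz constants; convexity on each simplex then forces the supremum to be attained at some vertex, so $\f(x_{E_i})\le 0$ for every component $E_i$ of $\cX_0$. The proof then proceeds in two steps, in accordance with the outline in the introduction.

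\emph{Step 1 (uniform lower bound at vertices).} The goal is to produce a constant $C_1=C_1(\cX,\theta)>0$ such that $\f(x_{E_i})\ge -C_1$ at every vertex of $\D_\cX$, independently of $\f$. I would pick an SNC model $\pi:\cX'\to\cX$ simultaneously determining $\theta$ and $\f$; since $\theta$ is already determined on $\cX$ one may represent it by $\pi^*\cL$ for some $\cL\in\Pic(\cX)_\R$, and write $\f=\f_D$ with $D\in\Div_0(\cX')_\Q$, so that $\pi^*\cL+D$ is nef on $\cX'$. Testing this nefness against curves of the form $C=E_i'\cap\pi^*H_1\cap\dots\cap\pi^*H_{d-1}$, where $E_i'$ is the strict transform of $E_i$ and the $H_j$ are general members of a fixed ample linear system on $\cX$, yields inequalities $(\pi^*\cL+D)\cdot C\ge 0$ whose $\cL$-part is an intersection number on $\cX$ and whose $D$-part is a linear combination of the $\f$-values at the vertices of $\D_{\cX'}$. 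Combining with the upper bound $\f(x_{E_j})\le 0$ for all $j$, and exploiting the numerical triviality of $\sum_j b_j'E_j'=\cX_0'$ on fibers to invert the system, produces the required lower bound depending only on $\cX$ and $\theta$.

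\emph{Step 2 (uniform Lipschitz bound).} The function $g:=\f\circ\emb_\cX$ is convex and piecewise affine on each face $\sigma$ of $\D_\cX$, so its Lipschitz constant on $\sigma$ is the maximum of the norms of the gradients of its affine pieces. These gradients live, a priori, in $\Div_0(\cX')_\R$ for the higher model $\cX'$ determining $\f$, so a direct estimate would not be independent of $\cX'$. Instead, I would invoke Theorem~\ref{T201}: for each relevant direction along which to bound the slope of $g$, choose a simplicial projective subdivision $\D'$ of $\D_\cX$ adapted to that direction, and let $\cY\to\cX$ be the associated vertically $\Q$-factorial blow-up with dual complex $\D'$. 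Applying the Negativity Lemma~\ref{lem:neg} on a common refinement of $\cX'$ and $\cY$, the slope of $g$ across a codimension-one face of $\D'$ can be expressed as a curve-intersection number on $\cY$, whose sign is controlled by the nef hypothesis on $\pi^*\cL+D$. Combined with the vertex lower bound from Step~1, this yields a bound on each slope depending only on $(\cX,\theta)$, and the convex-analysis result of Appendix~A packages these pointwise slope bounds into the required Lipschitz constant on $\sigma$.

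The main obstacle is Step~2: the passage from divisor data on arbitrarily high models to intersection numbers controlled by $(\cX,\theta)$ alone. This is exactly what Theorem~\ref{T201} is designed to enable, by providing vertically $\Q$-factorial blow-ups with combinatorially prescribed dual complex; without such flexibility, the putative Lipschitz constant would unavoidably depend on the particular model determining $\f$, defeating the uniformity claimed in the theorem.
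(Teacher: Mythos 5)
Your two-step architecture matches the paper's, and you correctly identify the roles of Theorem~\ref{T201}, the Negativity Lemma~\ref{lem:neg}, and Appendix~A. But both steps, as written, stop exactly where the real difficulties begin, and in Step~1 your route has a genuine problem. You propose to test nefness of $\pi^*\cL+D$ against curves cut out on strict transforms inside the high model $\cX'$, obtaining inequalities whose ``$D$-part is a linear combination of the $\f$-values at the vertices of $\D_{\cX'}$'' and then ``inverting the system.'' The vertices of $\D_{\cX'}$, and hence the size and shape of that system, depend on $\cX'$, which depends on $\f$; so the inversion cannot be uniform in $(\cX,\theta)$ as stated. The paper avoids this entirely by first pushing forward: $\pi_*G=\sum_j b_j\f(e_j)E_j$ involves only the components of $\cX_0$, and the projection formula gives $(\theta_\cX+\pi_*G)\cdot E_i\cdot\cA^{n-1}\ge 0$ directly on $\cX$. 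Even then one does not ``invert'' anything: the resulting inequalities are exploited through a chain argument in the (connected) dual graph, walking from a vertex where $\f=0$ to the minimizing vertex along $1$-dimensional faces and propagating the bound with a factor $\lambda$ at each step. That combinatorial propagation is the actual content of Step~1 and is absent from your sketch.

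In Step~2 the assertion that ``the slope of $g$ across a codimension-one face of $\D'$ can be expressed as a curve-intersection number on $\cY$, whose sign is controlled by the nef hypothesis'' is precisely the statement that needs proof, and it is not a formality. The paper's argument requires: (a) a very specific subdivision $\D'(\e,v)$ obtained by scaling the star of $\sigma$ toward a rational point $v$ by a factor $\e$ chosen so that $\f$ is affine on $\sigma'$ and on each segment $[v,e'_j]$; (b) the identity of Lemma~\ref{lem:pos1} in $\Pic(E'_J)_\Q$, which converts the directional derivatives $D_v\f(e_j)$ into the class $(\mu_*G)|_{E'_J}$; and (c) the positivity statement of Lemma~\ref{lem:pos2}, where the Negativity Lemma gives effectivity of $F=\mu^*\mu_*G-G$ but one must still check that the center of $v$ on the top model is not contained in $\supp F$ (this uses $\f_F\equiv 0$ on $\sigma'_J$) and then invoke Lemma~\ref{lem:pull-psef} to handle the generically finite map $E'_J\to E_J$. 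Finally the whole of Step~2 is an induction on the dimension of the face $\tau$, needed because the estimate \eqref{equ:lowerdv} mixes the unknown derivatives $D_v\f(e_j)$ for $j\in L\setminus J$ with the already-controlled data on $\sigma=\sigma_J$. None of these ingredients appears in your plan, so while the strategy points in the right direction, the proof itself is not yet there.
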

\begin{cor}\label{C201}
  With the same notation, the family
  \begin{equation*}
    \left\{\f\circ\emb_\cX\mid\f\ \text{a $\theta$-psh model function}\right\}
    \subset C^0(\D_\cX)
  \end{equation*}
   is equicontinuous on $\D_\cX$. 
\end{cor}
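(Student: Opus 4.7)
The plan is to deduce equicontinuity directly from Theorem~\ref{thm:main} via a standard fact about finite simplicial complexes. First I will fix the Euclidean metric $d$ on $\D_\cX$ (induced by the embedding in $\Div_0(\cX)^*_\R$ and compatible with the affine structure) and introduce the associated intrinsic length metric $d_\D$ on $\D_\cX$, defined as the infimum of lengths of piecewise affine paths in $\D_\cX$ joining two given points. Tautologically $d\le d_\D$.

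Next I will upgrade the per-face Lipschitz estimate of Theorem~\ref{thm:main} to a global estimate in terms of $d_\D$. Given $x,y\in\D_\cX$ and any piecewise affine path $\gamma\subset\D_\cX$ from $x$ to $y$, I decompose $\gamma$ into finitely many affine segments each lying in a single face of $\D_\cX$, apply the $C$-Lipschitz bound of Theorem~\ref{thm:main} on each segment, and sum. Since $\f$ (and hence $\f\circ\emb_\cX$) is continuous across face boundaries, this yields
\[
|\f\circ\emb_\cX(x)-\f\circ\emb_\cX(y)|\le C\cdot\mathrm{length}(\gamma)
\]
for every $\theta$-psh model function $\f$. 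Taking the infimum over all such $\gamma$ gives the key uniform bound $|\f\circ\emb_\cX(x)-\f\circ\emb_\cX(y)|\le C\, d_\D(x,y)$.

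Finally I will invoke the standard fact that, because $\D_\cX$ has only finitely many faces, $d_\D$ is Lipschitz-equivalent to $d$ on $\D_\cX$: there is a constant $M>0$ depending only on $\D_\cX$ such that $d_\D(x,y)\le M\cdot d(x,y)$ for all $x,y\in\D_\cX$. This may be proved by reducing via compactness of $\D_\cX\times\D_\cX$ to the local comparability at a single point, where the complex is a finite union of convex simplices sharing a common face through which short connecting paths are readily constructed. Combined with the previous step, this shows that each $\f\circ\emb_\cX$ is in fact $CM$-Lipschitz on $\D_\cX$, so the family is uniformly Lipschitz and, a fortiori, equicontinuous. I do not anticipate any real obstacle: the corollary is an essentially formal consequence of Theorem~\ref{thm:main}, and the only step deserving brief independent justification is the local comparability of $d$ and $d_\D$ on the finite simplicial complex $\D_\cX$.
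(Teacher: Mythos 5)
Your argument is correct in substance, and the corollary really is a formal consequence of Theorem~\ref{thm:main} (the paper offers no separate proof, treating it as immediate). The one step you should not wave away is the comparison $d_\D\le M\,d$: as a statement about an arbitrary finite simplicial complex embedded in Euclidean space it is \emph{false} — two simplices can be close in the ambient space while combinatorially far apart in the complex, so the length metric need not be controlled by the ambient one — and one also needs $\D_\cX$ to be connected for $d_\D$ to be finite (this follows from the connectedness of $\cX_0$, established in the proof of Theorem~\ref{thm:ddc}). What saves you is the special position of $\D_\cX$: the vertices $e_i$ are affinely independent (one checks $\langle e_i,E_j\rangle=b_i^{-1}\delta_{ij}$), so $\D_\cX$ is a subcomplex of the full simplex $\Conv\{e_i\mid i\in I\}$. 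Reading off barycentric coordinates, if $x\in\rel(\sigma_J)$, $y\in\rel(\sigma_{J'})$ and $\|x-y\|$ is small, then $J\subset J'$, so $x$ and $y$ lie in the common closed face $\sigma_{J'}$. This observation is exactly what makes your local comparability work, but it also renders the path-metric detour unnecessary: once nearby points are known to share a closed face, the per-face $C$-Lipschitz bound of Theorem~\ref{thm:main} gives $|\f(x)-\f(y)|\le C\|x-y\|$ directly, which is equicontinuity; your global estimate via $d_\D$ then only adds a uniform Lipschitz constant on all of $\D_\cX$, which is not needed for the corollary.
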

The rest of this section is devoted to the proof of Theorem~\ref{thm:main}.
For the sake of notational simplicity we will (in this section only) 
ignore the map $\emb_\cX$ and simply view $\D:=\D_\cX$ as a subset of~$\Xan$. 

\medskip
Let us first set some notation.
Let $\cX_0=\sum_{i\in I}b_iE_i$ be the irreducible decomposition 
of the special fiber of $\cX$ and 
$e_i=\ev_\cX(x_{E_i})$ the vertex of $\D$ corresponding to $E_i$. 
Recall that the faces $\sigma_J$ of the simplical complex $\D$ 
correspond to subsets $J\subset I$ such that $E_J=\bigcap_{j\in J}E_j$ 
is non-empty, in such a way that $\sigma_ J$ is a simplex with 
$\{e_j,\,j\in J\}$ as its vertices. 
The \emph{star} $\sta(\sigma)$ of a face $\sigma$ of $\D$ is defined as usual as the
union of all faces of $\D$ containing $\sigma$. 
An irreducible component $E_i$ intersects $E_J$ iff the corresponding vertex 
$e_i$ of $\D$ belongs to $\sta (\sigma_J)$; the intersection
is proper iff $e_i\not\in\sigma_J$.

Fix an ample line bundle $\cA$ on $\cX$.
In what follows we denote by $C>0$ a dummy constant, 
which may vary from line to line but only depends on 
$\cX$, $\theta$ and $\cA$. 

Let $\f\in\cD(\Xa)$ be a $\theta$-psh model function
and $\pi:\cY\to\cX$ a vertical blowup such 
that $\f=\f_G$ for some $G\in\Div_0(\cY)_\Q$. 
By Proposition~\ref{prop:convex} we have $\sup_{\Xan}\f=\max_{i\in I}\f(e_i)$. Upon replacing $G$ with $G-(\max_{i\in I}\f(e_i))\cY_0$ we may thus assume that $\f$ is normalized by $\sup_{\Xan}\f=0$. 

\subsection{Bounding the values on vertices} \label{secbounding}
We first prove
\begin{equation}\label{e:bd-val-vert}
  \max_{i\in I}|\f(e_i)|\le C. 
\end{equation}
Recall that we have normalized $\f$ so that $\max_{i\in I}\f(e_i)=0$. We may therefore assume that $\cX_0$ has at least two irreducible components. Observe that the push-forward of the $\Q$-Weil divisor $G$
is given by $\pi_*G=\sum_j b_j\f(e_j)E_j$. 
For each $i\in I$, the projection formula shows that 
$$
(\theta_\cX+\pi_*G)\cdot E_i\cdot\cA^{n-1}
=(\pi^*\theta_\cX+G)\cdot\pi^*E_i\cdot(\pi^*\cA)^{n-1},
$$
which is non-negative since $\pi^*E_i\in\Div_0(\cY)$ is effective, and
both classes $\pi^*\cA$ and $\pi^*\theta_\cX+G$ are nef (the latter because $\varphi$ is 
$\theta$-psh).
It follows that there exists $C=C(\cX,\theta,\cA)$ such that
\begin{equation}\label{equ:sumf}
\sum_jb_j\f(e_j)(E_i\cdot E_j\cdot\cA^{n-1}) + C \ge 0
\end{equation}
for all $i$.
Note that $E_i\cdot E_j\cdot\cA^{n-1}\ge 0$ for all
$i \neq j$, with strict inequality if $E_i\cap E_j\ne\emptyset$.
Thus
$$
b_iE_i \cdot E_i \cdot \cA^{n-1}  = E_i \cdot (b_iE_i-\cX_0) \cdot \cA^{n-1}
= - \sum_{j\neq i} b_j\,  E_i \cdot E_j \cdot \cA^{n-1} \le -1
$$
for all $i$, since $\cX_0$ has 
connected support and contains at least two irreducible components.

Now pick 
$i_0,\dots,i_M$ such that $\f(e_{i_0}) = 0$, $\f(e_{i_M}) =\min_{i\in I}\f(e_i)$, and
$e_{i_m}$ and $e_{i_{m+1}}$ are connected by a $1$-dimensional face, so that 
$E_{i_m}\cdot E_{i_{m+1}}\cdot\cA^{n-1} \ge 1$. 
Write
$$
\lambda:= \max_{i\in I}\left\{-b_i E_i^2 \cdot \cA^{n-1}\right\}\ge 1.
$$ 
Applying~\eqref{equ:sumf} to $i=i_m$, $0\le m<M$, we get
\begin{multline*}
  \lambda\f(e_{i_m})
  \le-b_{i_m}\f(e_{i_m})(E_{i_m}^2\cdot\cA^{n-1})
  \le C+\sum_{j\ne i_m}b_j\f(e_j)(E_{i_m}\cdot E_j\cdot\cA^{n-1})\\
  \le C+b_{i_{m+1}}\f(e_{i_{m+1}})(E_{i_m}\cdot E_{i_{m+1}}\cdot\cA^{n-1})
  \le C+\f(e_{i_{m+1}}),
\end{multline*}
so that 
\begin{multline*}
  0\ge \f(e_{i_M}) 
  \ge -C+\lambda\f(e_{i_{M-1}})
  \ge-C-C\lambda+\lambda^2\f(e_{i_{M-2}})
  \ge\dots\ge\\
  \ge-C\sum_{m=0}^{M-1}\lambda^m+\lambda^M\f(e_{i_0})
  =-C\sum_{m=0}^{M-1}\lambda^m,
\end{multline*}
which proves~\eqref{e:bd-val-vert}.

\subsection{Special subdivisions}\label{sec:special}
We shall need the following construction, see Figure~\ref{F2}.
Let $\sigma=\sigma_J$ be a face of $\D$ and 
$L\subset I$ the set of vertices of $\D$ contained in
$\sta_\D(\sigma)$.
Consider a rational point $v$ in the relative interior  of $\sigma$.
Given $0<\e<1$ rational and $j\in L$ set $e_j^\e:=\e e_j+(1-\e)v$. 
We shall define a projective simplicial subdivision $\D'=\D'(\e,v)$ of $\D$.

To define $\D'$, we first introduce a polyhedral subdivision $\D^\e=\D^\e(v)$ 
of $\D$ leaving the complement of $\sta_\D(\sigma)$ unchanged, as follows. The set of vertices of $\D^\e$ is precisely 
$(e_i)_{i\in I}\cup (e_j^\e)_{j\in L}$ and the faces of 
$\D^\e$ contained in $\sta(\sigma)$ are of one of the following types:
\begin{itemize}
\item
  if the convex hull $\Conv(e_{j_1}, \dots ,  e_{j_m})$ is a face of $\D$ 
  containing $\sigma$, then
  $\Conv( e_{j_1}^\e, \dots ,  e_{j_m}^\e)$ is a face of $\D^\e$;
\item
  if  $\Conv( e_{j_1}, \dots ,  e_{j_m})$ is a face of $\D$ contained in $\sta(\sigma)$ 
  but not containing $\sigma$, then both
  $\Conv( e_{j_1},\dots ,  e_{j_m})$ and $\Conv( e_{j_1}, \dots,e_{j_m},e_{j_1}^\e,\dots,e_{j_m}^\e)$ 
  are faces of $\D^\e$.
\end{itemize}
In a neighborhood of $v$, note that the subdivision $\D^\e$
is obtained by scaling $\D$ by a factor $\e$.
More precisely,  consider the affine map $\psi^\e:\sta(\sigma)\to\sta(\sigma)$ 
defined by $\psi^\e (w) = \e w+(1-\e)v$.
Then $\sigma^\e:=\psi^\e (\sigma)$ is the face of $\D^\e$ containing 
$v$ in its relative interior, and $\psi^\e(\sta_{\D}(\sigma))=\sta_{\D^\e} (\sigma^\e)$.
In particular, even though $\D^\e$ is not simplicial in general, 
all polytopes of $\D^\e$ containing $\sigma^\e$ are simplicial.

We claim that $\D^\e$ is projective. 
To see this, write $v=\sum_{j\in J}s_je_j$, with $s_j>0$ rational 
and $\sum s_j=1$. For $j\in J$, define a linear function $\ell_j$ 
on $\sum_{i\in I}\R_+e_i\supset\D$ by $\ell_j(\sum t_ie_i)=-t_j/s_j$
and set $h=\max\{\max_{j\in J}\ell_j,-(1-\e)\}$. 
A suitable integer multiple of $h$ 
is then a strictly convex 
support function for $\D^\e$ in the sense of~\S\ref{S201}.

Now define $\D'=\D'(\e)$ as a simplicial subdivision of 
$\D^\e$ obtained using repeated barycentric subdivision
in a way that leaves $\sta_{\D^\e}(\sigma^\e)$ unchanged. 
By~\cite[pp.115--117]{KKMS}, $\D'$ is still projective.

Note that $\sigma':=\sigma^\e$ is the face of $\D'$ 
containing $v$ in its relative interior.
For $j\in L$ set $e'_j=e^\e_j$. These are the vertices of $\D'$ contained
in $\sta_{\D'}(\sigma')$.
\begin{figure}[h]
\begin{center}
     \includegraphics[width=10cm]{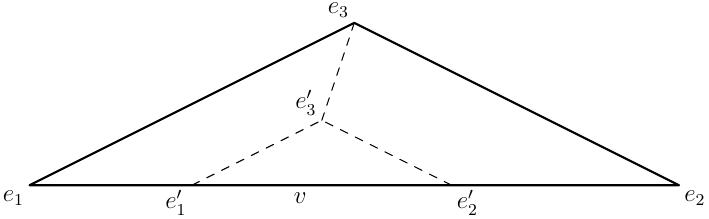}
    \end{center}
    \caption{The subdivision of~\S\ref{sec:special}.
      Here $v$ lies in the relative interior of the simplex $\sigma$ of $\D_\cX$
      with vertices $e_1$ and $e_2$. 
      The picture shows the intermediate subdivision $\D^\e$, where
      $v$ lies in the relative interior of the simplex $\sigma'$ with 
      vertices $e'_1$ and $e'_2$.
      The final subdivision $\D'$ is obtained from $\D^\e$ by barycentric
      subdivision of the quadrilaterals
      $\Conv(e_1,e_3,e'_1,e'_3)$ and $\Conv(e_2,e_3,e'_2,e'_3)$}\label{F2}
  \end{figure}

\subsection{Bounding Lipschitz constants}\label{S302}
Let $\tau$ be a face of $\D$. 
Our aim is to prove by induction on $\dim\tau$ that the $C^{0,1}$-norm of 
$\f$ on $\tau$ is bounded by $C$.
Recall that the $C^{0,1}$-norm is defined as the sum of 
the sup-norm and the Lipschitz norm; see Appendix~A.

The case $\dim\tau=0$ is settled by~\eqref{e:bd-val-vert}, so let us assume
that $\dim\tau>0$.
By Proposition~\ref{prop:convex}, the restriction of $\f$ to $\tau$ is piecewise affine and convex. It therefore admits directional derivatives, and we set as in Appendix A 
$$
D_v\f(w):=\left.\frac{d}{dt}\right|_{t=0+}\f\left((1-t)v+tw\right)
$$
for $v,w\in\tau$. 

Let us say that a codimension $1$ face of $\tau$ is \emph{opposite} a vertex when it is the convex hull of the remaining vertices of $\tau$. This notion is well-defined since $\tau$ is a simplex. 
\begin{prop}\label{prop:keyprop}
  There exists a constant $C>0$ such that 
  $$
  D_v\f(e) \ge -C
  $$
  for any vertex $e$ of $\tau$, and any rational point 
  $v$ in the relative interior of the face $\sigma$ of $\tau$ 
  opposite $e$, such that $\f|_\sigma$ is affine near $v$.
\end{prop}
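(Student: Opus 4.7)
We bound $D_v\f(e)$ from below by realizing it as the slope of $\f$ along a short segment and exploiting non-negativity of intersection numbers on the toroidal blow-up of $\cX$ associated with $\Delta'(\e,v)$. We may assume $\dim\tau\ge 1$, the case $\dim\tau=0$ being covered by~\S\ref{secbounding}. By Proposition~\ref{prop:convex}, $\f\circ\emb_\cX$ is piecewise affine and convex on each face of $\Delta_\cX$; together with the affineness of $\f|_\sigma$ near $v$, this means that for every sufficiently small rational $\e>0$, $\f$ is affine on both the segment $[v,e^\e_e]\subset\tau$ and the scaled sub-simplex $\sigma^\e\subset\sigma$. Hence
\[
\f(e^\e_e)=\f(v)+\e\,D_v\f(e),\qquad \f(e^\e_j)=\f(v)+\e\,D_v\f(e_j)\ (j\in J).
\]
The induction hypothesis applied to $\sigma$ (of dimension $\dim\tau-1$) gives $|D_v\f(e_j)|\le C$ uniformly in $\f$, so $\f(e^\e_j)=\f(v)+O(\e)$. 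The statement $D_v\f(e)\ge -C$ is therefore equivalent to $\f(e^\e_e)\ge\f(v)-C\e$.

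\textbf{Model and nefness inequality.} Applying Theorem~\ref{T201} to $\Delta'(\e,v)$ yields a vertical blow-up $\pi:\cX'\to\cX$ with $\cX'$ normal and vertically $\Q$-factorial, in which $e^\e_e$ (resp.\ $e^\e_j$ for $j\in J$) corresponds to an irreducible component $E'$ (resp.\ $E'_j$) of $\cX'_0$ with multiplicity $b'$ (resp.\ $b'_j$). Pass to an SNC model $\cY$ dominating both $\cX'$ and a determination of $\f$, writing $\mu:\cY\to\cX'$ and $\f=\f_G$ for some $G\in\Div_0(\cY)_\Q$. The $\theta$-psh condition says that $\mu^*\pi^*\theta_\cX+G$ is nef on $\cY$. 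Setting
\[
G':=\mu_*G=\sum_{k\in I'}b'_k\f(e'_k)E'_k\in\Div_0(\cX')_\Q
\]
(which is $\Q$-Cartier by Theorem~\ref{T201}(i)), the projection formula applied to curves transverse to the $\mu$-exceptional locus yields
\[
(\pi^*\theta_\cX+G')\cdot C'\ge 0
\]
for every projective curve $C'\subset\cX'_0$ that lifts to such a curve on $\cY_0$.

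\textbf{Distinguished curve and conclusion.} Using the local toric description of $\pi$ near the generic point of $E_\tau\subset\cX_0$ (Step~1 of the proof of Theorem~\ref{T201}), construct an effective projective curve $C'\subset\cX'_0$ lying in the stratum $E'_{\sigma^\e}:=\bigcap_{j\in J}E'_j$, meeting $E'$ transversally with $m_e:=E'\cdot C'>0$ and meeting no other component of $\cX'_0$. Writing $m_j:=E'_j\cdot C'$, the triviality of $\cX'_0$ in $N^1(\cX'/S)$ forces
\[
b'm_e+\sum_{j\in J}b'_j m_j=0,
\]
and $|\pi^*\theta_\cX\cdot C'|=|\theta_\cX\cdot\pi_*C'|$ is uniformly bounded since $C'$ is mapped into $E_\sigma\subsetneq E_\tau$. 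Plugging $C'$ into the nefness inequality and substituting $\f(e^\e_j)=\f(v)+O(\e)$ together with $\sum_j b'_j m_j=-b'm_e$:
\[
b'm_e\bigl(\f(e^\e_e)-\f(v)\bigr)\ge -\pi^*\theta_\cX\cdot C'-O(\e)\sum_{j\in J}b'_j|m_j|.
\]
A primitive-generator computation in the fan $\Sigma'$ shows that $b'$ is of order $1/\e$ and that $b'm_e\ge c/\e$ while $\sum_j b'_j|m_j|\le C_1/\e$ (uniformly in $\e$ and $v$), so dividing gives $\f(e^\e_e)-\f(v)\ge -C\e$, which completes the proof.

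\textbf{Main obstacle.} The technical heart of the argument is the construction and control of $C'$: producing a curve with the prescribed intersection pattern against $E'$ and the $E'_j$'s, verifying the asserted scaling of $b'm_e$ in~$\e$, and checking that the local toric construction promotes to a global projective curve on $\cX'$ with bounds uniform in the rational point $v\in\rel(\sigma)$. This reduces, via Cohen's theorem, to a careful analysis of the primitive generators of the cones of $\Sigma'$ adjacent to the rays $\R_+e^\e_e$ and $\R_+e^\e_j$ in the local toric model underlying the proof of Theorem~\ref{T201}.
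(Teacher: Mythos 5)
Your setup (the subdivision $\D'(\e,v)$, the associated blow-up, the normalization $\f(e^\e_j)=\f(v)+\e D_v\f(e_j)$ for small $\e$, and the inductive control of $D_v\f(e_j)$ for $j\in J$) matches the paper's. But the core of your argument — the ``distinguished curve'' $C'$ — is exactly the step you defer, and it is where the proof lives; as written there is a genuine gap, for three reasons. First, a curve $C'\subset E'_J$ meeting $E'$ positively and \emph{no other} component of $\cX'_0$ need not exist: by Theorem~\ref{T201}, every component $E'_j$ with $e'_j\in\sta_{\D'}(\sigma')$ meets $E'_J$ in a nonempty divisor, and if $\tau$ (or the ambient star) has several vertices outside $J$ these restricted divisors typically have positive intersection with every curve class on $E'_J$ (e.g.\ when they are ample on $E'_J$), so the prescribed intersection pattern is unobtainable in general. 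Second, the uniformity in $v$ fails as claimed: the multiplicity $b'=\ord_{E'}(t)$ is the least $\lambda$ with $\lambda e^\e_e$ in the dual lattice, hence involves the denominators of the barycentric coordinates of $v$, which are unbounded as $v$ ranges over rational points of $\rel(\sigma)$; so $b'$ is \emph{not} of order $1/\e$, and the ratio $(\pi^*\theta_\cX\cdot C')/(\e\, b'm_e)$ is not controlled by your argument. Third, pushing the nefness of $\mu^*\pi^*\theta_\cX+G$ on $\cY$ down to an inequality against $C'$ on $\cX'$ requires knowing that the strict transform of $C'$ is not contained in $\supp(\mu^*\mu_*G-G)$; this needs the Negativity Lemma plus an argument that the relevant center avoids that support (in the paper this is the computation $v(F)=0$), and cannot be waved through as ``the projection formula applied to curves transverse to the exceptional locus.''

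The paper avoids all three problems by not choosing any curve. It proves an exact identity in $\Pic(E'_J)_\Q$ (Lemma~\ref{lem:pos1}) expressing $(\mu_*G)|_{E'_J}$ as $\rho^*\bigl(\f(v)\cX_0+\sum_{j\in L}D_v\f(e_j)\,b_jE_j\bigr)|_{E'_J}$ — here the lattice multiplicities $b'_j$ appear on both sides and cancel — and a positivity statement $E'_J\cdot(\rho^*\theta_\cX+\mu_*G)\cdot(\rho^*\cA)^{n-p-1}\ge0$ (Lemma~\ref{lem:pos2}, via the Negativity Lemma). Intersecting the identity with $(\rho^*\cA)^{n-p-1}$ and applying the projection formula yields an inequality whose coefficients are the numbers $E_J\cdot E_j\cdot\cA^{n-p-1}$ computed on the \emph{fixed} model $\cX$, hence manifestly uniform in $v$ and $\e$. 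Finally, the single vertex $e$ is isolated not by a special curve but by bounding all other terms $D_v\f(e_j)$, $j\in L\setminus J$, from \emph{above} via convexity and using $E_J\cdot E_j\cdot\cA^{n-p-1}>0$. If you want to salvage your route, you would essentially be forced to replace your single curve by the complete-intersection class $E'_J\cdot(\rho^*\cA)^{n-p-1}$ and to prove the cancellation of multiplicities — which is precisely the paper's argument.
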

Note that the assumptions of the proposition are automatically satisfied when 
$\tau$ has dimension 1, with boundary consisting of $e$ and $v$.

Granting this result, let us explain how to conclude the proof.
 By induction we have $\sup_{\partial\tau} |\f| \le C$.
 The convexity of $\f$ and the inductive assumption 
 imply $D_v\f(e)\le\f(e)-\f(v)\le C$ for any  $e,v \in \partial \tau$.
 As for a lower bound, Proposition~\ref{prop:keyprop} gives 
 $D_v\f(e)\ge-C$ for any vertex $e$ of $\tau$ and any rational point $v$ 
 in the relative interior of the face $\sigma$ opposite $e$ such that 
 $\f|_\sigma$ is affine near $v$. 
 Now, by elementary properties of convex functions, 
 $v\mapsto D_v\f(e)$ is upper semicontinuous on 
 the relative interior of $\sigma$, so since $\f$ is piecewise affine,
 the lower bound $D_v\f(e)\ge-C$ holds for any $v$ in the relative interior of $\sigma$. 
 We conclude by Proposition~\ref{prop:lip}
 that the $C^{0,1}$-norm of $\f|_\tau$ is bounded by $C$,
 completing the proof of Theorem~\ref{thm:main}.

 \begin{proof}[Proof of Proposition~\ref{prop:keyprop}]
   Let $I$ be the set of vertices in $\D$, let
   $L\subset I$ be the set of vertices contained in $\sta_\D(\sigma)$
   and $J\subset L$ the set of vertices of $\sigma$.
   Thus $\sigma=\sigma_ J$.
   
   Consider the simplicial projective subdivision $\D'=\D'(\e)$ 
   constructed in~\S\ref{sec:special}. 
   For $j\in L$, $e'_j:=\e e_j+(1-\e)v$ is a vertex of $\D'$.
   Recall that $\sigma'=\sigma'_J$ is the face of $\D'$ containing 
   $v$ in its relative interior.
   Since $\f|_\sigma$ is assumed affine in a neighborhood of 
   $v$,  we may choose $\e>0$ small enough that:
   \begin{itemize}
   \item 
     $\f$ is affine on $\sigma'\subset\sigma$
   \item 
     $\f$ is affine on each segment $[v,e'_j]$, $j\in L$. 
   \end{itemize}

   Let $\rho:\cX'\to\cX$ be the vertical blowup corresponding to the 
   subdivision $\D'$ of $\D$ as in Theorem~\ref{T201}. 
   Note that $\rho$ induces a generically finite map $E'_J\to E_J$ of projective $k$-varieties.
   Indeed, $E_J$ (resp.\ $E'_J$) is the closure of the center of $v$
   on $\cX$ (resp.\ $\cX'$), and both have codimension $|J|$ by Theorem~\ref{T201}. 
   
   Recall that $\f = \f_G$ for some $G \in \Div_0(\cY)$.  We may assume that the determination $\cY$ of $\f$ 
   dominates $\cX'$, so that $\pi$ factors as $\pi=\rho\circ\mu$ with $\mu:\cY\to\cX'$.
   Note that $\mu_*G$ is $\Q$-Cartier since $\cX'$ is vertically $\Q$-factorial.
   
   As we shall see shortly, a first computation shows:
   \begin{lem}\label{lem:pos1} We have
     $$
     \left. \rho^* \left(
         \sum_{j\in L} D_v\f (e_j) \,b_j E_j \right) \right|_{E'_J}
     = (\mu_*G)|_{E'_J}
     $$
     in $\Pic(E'_J)_\Q$. 
   \end{lem}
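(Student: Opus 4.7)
The strategy is to verify the equality in $\Pic(E'_J)_\Q$ by comparing both sides as $\Q$-divisors on $\cX'$ supported on $\cX'_0$ and then restricting to $E'_J$. By Theorem~\ref{T201}(iii), a component $E'_i$ of $\cX'_0$ meets $E'_J$ precisely when its associated vertex $e'_i$ lies in $\sta_{\D'}(\sigma'_J)$; by construction of $\D'$ the vertices of this star are exactly $(e'_j)_{j\in L}$. Components disjoint from $E'_J$ contribute trivially to $\Pic(E'_J)_\Q$, so it suffices to match the coefficients of the components $E'_j$, $j\in L$, on both sides.

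For the right-hand side I will describe $\mu_*G$ explicitly. Since $\mu:\cY\to\cX'$ is a birational morphism of normal schemes, each component $E'_i$ of $\cX'_0$ has a unique strict transform $F_i\subset\cY_0$ dominating it, while exceptional components of $\mu$ push forward to zero. Using $\mu^*\cX'_0=\cY_0$, the multiplicity of $F_i$ in $\cY_0$ equals $b'_i$, so the divisorial valuation $v_{E'_i}=(b'_i)^{-1}\ord_{E'_i}$ agrees with $(b'_i)^{-1}\ord_{F_i}$ under the identifications on $K(X)$. Since $\f(x_{E'_i})=v_{E'_i}(G)$, this yields
$$\mu_*G=\sum_{i\in I'}b'_i\,\f(x_{E'_i})\,E'_i.$$
By the choice of $\e$, the function $\f$ is affine on the segment $[v,e'_j]$ for every $j\in L$; combined with $e'_j=\e e_j+(1-\e)v$ this gives $\f(x_{E'_j})=\f(v)+\e\,D_v\f(e_j)$.

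For the left-hand side, $\cX'$ is vertically $\Q$-factorial by Theorem~\ref{T201}(i), so each $\rho^*E_k$ is a well-defined $\Q$-Cartier divisor whose coefficient along $E'_i$ is $b'_i\langle e'_i,E_k\rangle$. Writing $v=\sum_{k\in J}s_ke_k$ with $s_k>0$ and $\sum_{k\in J}s_k=1$, the duality relation $\langle e_m,E_k\rangle=\delta_{mk}/b_k$ together with $e'_i=\e e_i+(1-\e)v$ gives, for $i\in L$,
$$b_k\langle e'_i,E_k\rangle=\e\delta_{ik}+(1-\e)s_k\mathbf{1}_{k\in J}.$$
Combining with $\rho^*(\f(v)\cX_0)=\f(v)\cX'_0$, the coefficient of $E'_i$ in the left-hand side is
$$b'_i\Bigl(\f(v)+\e\,D_v\f(e_i)+(1-\e)\sum_{k\in J}s_k\,D_v\f(e_k)\Bigr).$$

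To conclude the matching, I will show that the last sum vanishes: since $\f$ is affine near $v$ in $\sigma_J$, $D_v\f$ extends to a linear functional on the tangent space at $v$ to the affine span of $\sigma_J$, and the identity $\sum_{k\in J}s_k(e_k-v)=0$ (from $\sum s_k=1$ and $v=\sum s_ke_k$) forces $\sum_{k\in J}s_k\,D_v\f(e_k)=0$. The coefficient then collapses to $b'_i(\f(v)+\e\,D_v\f(e_i))$, matching the right-hand side for every $i\in L$. The main obstacle I expect is pinning down the formula $\mu_*G=\sum b'_i\f(x_{E'_i})E'_i$, which requires careful tracking of strict transforms and the normalization of divisorial valuations; once this dictionary between vertices of $\D_\cX$, barycentric coordinates, and coefficients of vertical divisors is set up, the remainder is bookkeeping.
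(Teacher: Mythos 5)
Your proof is correct and follows essentially the same route as the paper: you compute the coefficients of both sides along the components $E'_j$, $j\in L$ (the ones meeting $E'_J$), using $e'_j=\e e_j+(1-\e)v$ and the affineness of $\f$ on $[v,e'_j]$ to get $\f(e'_j)=\f(v)+\e D_v\f(e_j)$, and then close the gap with the identity $\sum_{j\in J}s_j D_v\f(e_j)=0$. The paper organizes the bookkeeping a bit differently (it writes each $\rho^*(b_iE_i)|_{E'_J}$ via the model functions $\f_i$ and invokes $\sum_{i\in J}s_i\f(e'_i)=\f(v)$, which is the same fact coming from $\f$ being affine on $\sigma'_J$ with $v=\sum_{i\in J}s_i e'_i$), but the two computations are equivalent.
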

   
   The key observation is now the following positivity property:
   \begin{lem}\label{lem:pos2}
     If $\cL \in\Pic(\cX')$ is nef then 
     $E'_J\cdot\left(\rho^*\theta_{\cX}+\mu_*G\right)\cdot\cL^{n-|J|-1}\ge0$.
   \end{lem}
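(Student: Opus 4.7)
My approach will be to transfer the intersection from $\cX'$ to $\cY$ via the projection formula, then decompose $\mu^*\mu_*G=G+(\mu^*\mu_*G-G)$ to separately exploit two different manifestations of positivity: the global nefness of $\pi^*\theta_\cX+G$ (from $\f=\f_G$ being $\theta$-psh) and the effectivity of $\mu^*\mu_*G-G$ (from the Negativity Lemma). Concretely, I would first consider the strict transform $\widetilde{E'_J}\subset\cY$ of $E'_J$ under $\mu$, which is birational onto $E'_J$, and apply the projection formula to rewrite
\begin{equation*}
E'_J\cdot(\rho^*\theta_\cX+\mu_*G)\cdot\cL^{n-p-1}=\widetilde{E'_J}\cdot\mu^*(\rho^*\theta_\cX+\mu_*G)\cdot(\mu^*\cL)^{n-p-1}.
\end{equation*}
Using $\pi=\rho\circ\mu$, the middle factor becomes $\mu^*(\rho^*\theta_\cX+\mu_*G)=(\pi^*\theta_\cX+G)+(\mu^*\mu_*G-G)$, which reduces the problem to bounding two contributions.

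For the first contribution $\widetilde{E'_J}\cdot(\pi^*\theta_\cX+G)\cdot(\mu^*\cL)^{n-p-1}$, the $\theta$-pshness of $\f_G$ means that $\pi^*\theta_\cX+G$ is a nef representative of $\theta+dd^c\f$ on $\cY$, and $\mu^*\cL$ is nef since $\cL$ is nef on $\cX'$; hence one is computing a top intersection of nef line bundles on the projective $k$-variety $\widetilde{E'_J}$, which is non-negative. For the second contribution, the key observation is that $\pi^*\theta_\cX=\mu^*\rho^*\theta_\cX$ is numerically trivial on $\mu$-fibers, so the global nefness of $\pi^*\theta_\cX+G$ forces $G$ to be $\mu$-nef. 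Since $\cX'$ is vertically $\Q$-factorial by Theorem~\ref{T201}(i), the Negativity Lemma~\ref{lem:neg} then gives that $\mu^*\mu_*G-G$ is an effective vertical divisor on $\cY$, automatically supported on the exceptional locus of $\mu$ since its pushforward vanishes. As $\widetilde{E'_J}$ is the strict transform and therefore not contained in this exceptional locus, the restriction $(\mu^*\mu_*G-G)|_{\widetilde{E'_J}}$ is an effective class, whose intersection with the nef class $(\mu^*\cL)^{n-p-1}|_{\widetilde{E'_J}}$ is again non-negative.

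The main obstacle I expect is a technical point about the strict transform: one must ensure that $\widetilde{E'_J}$ is non-empty and $\mu_*\widetilde{E'_J}=E'_J$, i.e., that $E'_J$ is not entirely contained in the image of the exceptional locus of $\mu$. When $|J|=1$ this is automatic, because the image of the exceptional locus of the vertical blow-up $\mu$ has codimension at least two in $\cX'$, whereas $E'_J$ is a divisor. In higher codimension this should be arranged by taking $\cY$ to be a sufficiently refined common model dominating $\cX'$ and invoking the explicit geometry provided by Theorem~\ref{T201}; alternatively one can bypass strict transforms altogether by writing $E'_J$ as the $\Q$-Cartier intersection $\prod_{j\in J}E'_j$ (each $E'_j$ is $\Q$-Cartier since $\cX'$ is vertically $\Q$-factorial) and applying the ordinary projection formula for Cartier divisors.
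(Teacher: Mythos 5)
Your first half matches the paper exactly: the decomposition $\mu^*(\rho^*\theta_\cX+\mu_*G)=(\pi^*\theta_\cX+G)+F$ with $F=\mu^*\mu_*G-G$, the observation that $\pi^*\theta_\cX$ is $\mu$-numerically trivial so that $G$ is $\mu$-nef, and the appeal to the Negativity Lemma (legitimate since $\cX'$ is vertically $\Q$-factorial by Theorem~\ref{T201}(i)) to get $F\ge0$ are all as in the paper. The gap is in how you transfer the computation to $\cY$. For $|J|\ge2$ the subvariety $E'_J$ has codimension $\ge2$ in $\cX'$, and a component of $\supp F$ (a divisor of $\cY$ with $\mu_*=0$) may well dominate $E'_J$; in that case $\mu$ is not an isomorphism over the generic point of $E'_J$, there is no strict transform birational onto $E'_J$, and every component of $\mu^{-1}(E'_J)$ dominating $E'_J$ could a priori lie inside $\supp F$, so that "restrict $F$ and get an effective class" fails. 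Neither of your fixes repairs this: you cannot refine $\cY$ at will (it must remain a determination of $\f$ dominating $\cX'$, and the obstruction lives in $\f$ itself), and the $\Q$-Cartier product $\prod_{j\in J}\mu^*E'_j$ is not an effective or movable cycle class, so its intersection against the nef class $\pi^*\theta_\cX+G$ and powers of $\mu^*\cL$ has no sign a priori (already $E\cdot E=-1$ for an exceptional curve shows products of effective divisors go negative).

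The paper resolves both difficulties at once. It takes $W$ to be the closure of the center of the valuation $v$ on $\cY$; this always surjects onto $E'_J$ (compatibility of centers), though possibly with $\dim W>\dim E'_J$. It then proves $W\not\subset\supp F$ by a combinatorial computation: $\f_{\mu_*G}$ is affine on the face $\sigma'_J$ by Theorem~\ref{T201}(iv), $\f_G$ is affine on $\sigma'_J$ by the choice of $\e$ in the subdivision, and the two agree at the vertices $e'_j$, so $\f_F\equiv0$ on $\sigma'_J$ and in particular $v(F)=0$, i.e.\ the generic point of $W$ avoids $\supp F$. This is the step your argument is missing, and it is exactly where the careful construction of $\D'(\e,v)$ gets used. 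Finally, since $\mu|_W$ need not be generically finite, the paper invokes Lemma~\ref{lem:pull-psef}, which handles an arbitrary surjection $W\to V$ with $\mu^*\a=\text{nef}+\text{effective}$ by cutting $W$ with hyperplanes avoiding the components of the effective part. You should incorporate these two ingredients (the choice of $W$ together with the $v(F)=0$ computation, and the reduction lemma for non-generically-finite surjections) to close the argument.
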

   Grant this result for the moment. Lemma~\ref{lem:pos1} and the projection formula yield
   \begin{equation*}
     \deg(\rho|_{E'_J})E_J\cdot\left(
       \theta_{\cX}
       + \sum_{j\in L} D_v\f (e_j) \, b_j E_j
     \right)
     \cdot\cA^{n-|J|-1}
     =E'_J\cdot(\rho^*\theta_\cX+\mu_*G)\cdot\rho^*\cA^{n-|J|-1}.
   \end{equation*}
   Here the right-hand side is non-negative by Lemma~\ref{lem:pos2}, 
   since $\rho^*\cA$ is nef, and we get
   \begin{equation}\label{equ:lowerdv}
    \sum_{j\in L}D_v\f(e_j)\, b_j\left(
      E_J\cdot E_j\cdot\cA^{n-|J|-1}
    \right)
    \ge -(E_J\cdot\theta_\cX\cdot\cA^{n-|J|-1}).
  \end{equation}
  By induction, the $C^{0,1}$-norm of $\f|_\sigma$ is under control.
  Since $v$ belongs to $\sigma=\sigma_J$, this gives
  $$
  |\f(v)|\le C\text{ and }\max_{j\in J}|D_v\f(e_j)|\le C,
  $$
  and (\ref{equ:lowerdv}) yields a lower bound
  \begin{equation}\label{e301}
    \sum_{j\in L\setminus J} D_v \f (e_j)\, b_j\left(E_J\cdot E_j\cdot\cA^{n-|J|-1}\right)\ge-C.
  \end{equation}
Now the convexity of $\f$ and the normalization $\sup_{\Xan}\f=0$ show that
  $$
  \max_{j\in L\setminus J} D_v\f(e_j)\le\max_{j\in L\setminus J}(\f(e_j)-\f(v))\le -\f(v)\le C. 
  $$ 
  Here $E_j|_{E_J}$ is a non-zero effective divisor for $j\notin J$, hence
  $E_J\cdot E_j\cdot\cA^{n-|J|-1}\ge1$ since $\cA$ is ample.
  From~\eqref{e301} we therefore obtain, as desired, 
  that $D_v\f(e)\ge -C$, since $e=e_j$ for some $j\in L\setminus J$. 
\end{proof}
\begin{proof}[Proof of Lemma~\ref{lem:pos1}]
  We write $v = \sum_{j\in J} s_j e_j$ with $s_j>0$ rational and
  $\sum_{j\in J} s_j=1$. Set $s_i=0$ for $i\in I\setminus J$.
  For $i\in I$ let $\f_i$ 
  be the model function induced by the vertical divisor 
  $b_iE_i\in\Div_0(\cX)$. This function is affine on each face of $\D$
  and satisfies $\f_i(e_j) =\d_{ij}$ for all $j\in I$. 
  Since $e'_j=\e e_j+(1-\e) v$ for $j\in L$ we get:
  \begin{equation*}
    \f_i(e'_j) =
    \begin{cases}
      \e+(1-\e) s_i   & \text{if $i= j\in J$}\\
      (1-\e) s_i & \text{if $i \ne j \in J$} \\
      \e & \text{if $i= j\in L\setminus J$}\\
      0 & \text{if $i\ne  j\in L\setminus J$}
    \end{cases} 
  \end{equation*}
  By Theorem~\ref{T201}, 
  $E'_j$ intersects $E'_J$ iff $j\in L$.
  We thus have
  $$
  \rho^*(b_i E_i)|_{E'_J}= \sum_{j\in L} \f_i(e'_j)b'_j\, E'_j|_{E'_J}\ \text{for all $i\in I$}
  $$
  and 
  $$
  (\mu_*G-\f(v)\rho^*\cX_0)|_{E'_J}=\sum_{j\in L}(\f(e'_j)-\f(v))b'_j\, E'_j|_{E'_J} 
  $$
  in $\Pic(E'_J)_\Q$, where we have set $b'_j:=\ord_{E'_j}(\unipar)$. 
  
  Recall also that $\f$ is affine on each segment $[v,e'_i]$, so that
  $D_v\f(e_i) = \e^{-1}\left(\f(e'_i) - \f(v)\right)$
  for $i\in L$. 
  We can now compute in $\Pic(E'_J)_\Q$
  \begin{multline*}
    \left. \rho^* \left(\sum_{i\in L} D_v \f(e_i)\, b_i E_i\right) \right|_{E'_J}
    =\sum_{i\in L} \e^{-1} \left(\f(e'_i) - \f(v)\right)\left(\sum_{j\in L}\f_i(e'_j) \,b'_j\, E'_j |_{E'_J}\right)
    =\\=
    \sum_{i\in J} \e^{-1} (\f(e'_i)-\f(v)) \left(\e\, b'_i \,E'_i|_{E'_J}
      +s_i\sum_{j\in J} (1-\e)\,b'_j\, E'_j|_{E'_J}\right) +\\+
    \sum_{i\in L\setminus J}\e^{-1}(\f(e'_i) - \f(v))\, \e\, b'_i \,E'_i |_{E'_J}
    =\\
    = 
    \sum_{i\in L}(\f(e'_i)-\f(v))\,b'_i\, E'_i |_{E'_J} 
    +
    \e^{-1}(1-\e)\left(\sum_{i\in J} s_i (\f(e'_i) - \f(v))\right)\left(\sum_{j\in J}\,b'_j\, {E'_j}|_{E'_J}\right)
    \\
    =(\mu_*G- \f(v)\rho^*\cX_0) |_{E'_J}
    =\mu_*G|_{E'_J}
  \end{multline*}
  The second to last equality follows from the fact that $\f $ is affine on the simplex 
  $\sigma'_J$ of $\D'$ so that
  $\sum_{i\in J} s_i\f(e'_i)=\f(v) = \sum_{i\in J} s_i \f(v)$.
  This concludes the proof.
\end{proof}
\begin{proof}[Proof of Lemma~\ref{lem:pos2}]
  Set $F:=\mu^*\mu_*G-G\in\Div_0(\cY)_\Q$. 
  The divisor $G$ is $\mu$-nef since $\mu^*(\rho^*\theta_\cX)+G$ 
  is nef by assumption, and Lemma~\ref{lem:neg} therefore implies that $F$ is effective. 
  
  Let $W$ be the closure of the center of $v$ on $\cY$. Since the center of $v$
  on $\cX'$ is the generic point of $E'_J$, we must have
  $\mu(W)=E'_J$. Note, however, that we do not claim $\dim W=\dim E'_J$. 
  
  By Theorem~\ref{T201}, the function $\f_{\mu_*G}$ is affine on the face 
  $\sigma'_J$ of $\D'$. But $\f_G$ is also affine on $\sigma'_J$ by assumption, and we have 
  $$
  \f_G(e'_j)=\frac{1}{b'_j}\ord_{E'_j}(G)=\f_{\mu_*G}(e'_j)
  \quad\text{for all $j\in J$}.
  $$ 
  It follows that $\f_F\equiv0$ on $\sigma'_J$, and in particular $v(F)=0$ (here we view $v$
as an element of the dual of the space of Cartier divisors supported on the special fiber). 
  But this means precisely that $W$ is not contained in $\supp F$, 
  so that $F|_W$ is an effective $\Q$-Cartier divisor. Hence
  $$
  \mu^*(\rho^*\theta_{\cX}+\mu_*G)|_{E'_J}
  =(\pi^*\theta_{\cX}+\mu^*\mu_*G)|_W
  =(\pi^*\theta_{\cX}+G)|_W+F|_W
  $$
  is the sum of a nef class and an effective class. 
  We conclude by Lemma~\ref{lem:pull-psef} below. 
\end{proof}

\begin{lem}\label{lem:pull-psef} 
  Let $\mu:W\to V$ be a surjective morphism between 
  projective varieties over a field $k$ and let 
  $\a\in N^1(V)$. If $\mu^*\a=\gamma+F$ where $\gamma$ is nef and 
  $F$ is effective then $\left(\a\cdot\b^{\dim V-1}\right)_V\ge 0$ 
  for every nef class $\b\in N^1(V)$. 
\end{lem}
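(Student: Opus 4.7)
The plan is to reduce the statement on $V$ to a top-degree intersection on $W$ via the projection formula, and then apply Kleiman's theorem (non-negativity of intersections of nef classes on a projective variety) twice. Set $n=\dim V$ and $m=\dim W$, and fix an ample class $H\in N^1(W)$. The preliminary step is to show that
$$
\mu_*(H^{m-n})=d\,[V]\quad\text{in }N_n(V)\text{ with }d>0.
$$
Representing $H^{m-n}$ by a transverse intersection of $m-n$ general very ample divisors, one obtains an $n$-dimensional subvariety $Z\subset W$; since $\mu$ is surjective and the restriction of $H$ to any fiber of $\mu$ is ample, a standard Bertini-type argument shows that $Z$ meets every general fiber and therefore maps generically finitely and surjectively onto $V$, so $d=\deg(\mu|_Z)>0$.

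Granting this, the projection formula yields
$$
d\,(\a\cdot\b^{n-1})_V
=\bigl(\mu^*\a\cdot(\mu^*\b)^{n-1}\cdot H^{m-n}\bigr)_W,
$$
so it suffices to show that the right-hand side is non-negative. Substituting $\mu^*\a=\gamma+F$, I would split the right-hand side as
$$
\bigl(\gamma\cdot(\mu^*\b)^{n-1}\cdot H^{m-n}\bigr)_W
+\bigl(F\cdot(\mu^*\b)^{n-1}\cdot H^{m-n}\bigr)_W.
$$
The first term is non-negative directly by Kleiman's theorem, since $\gamma$, $\mu^*\b$ and $H$ are all nef. For the second, write $F=\sum_i a_iF_i$ with $a_i\ge 0$ and $F_i$ irreducible, and use
$$
\bigl(F_i\cdot(\mu^*\b)^{n-1}\cdot H^{m-n}\bigr)_W
=\bigl((\mu^*\b|_{F_i})^{n-1}\cdot(H|_{F_i})^{m-n}\bigr)_{F_i},
$$
which is a product of restrictions of nef classes on the projective variety $F_i$, hence non-negative by a second application of Kleiman. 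Dividing by $d>0$ gives the desired inequality.

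The only genuinely subtle point is the positivity $d>0$; the rest of the argument is a routine combination of the projection formula with Kleiman's theorem. The Bertini-type surjectivity of $\mu|_Z$ holds over an arbitrary field provided $H$ is chosen sufficiently ample, so no characteristic assumption is needed.
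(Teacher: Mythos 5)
Your proof is correct, and it runs on the same fuel as the paper's argument --- the projection formula to transport the computation to $W$, Kleiman's non-negativity of intersection numbers of nef classes, and a Bertini-type hyperplane device to bridge the gap between $\dim W=m$ and $\dim V=n$ --- but the packaging is genuinely different. The paper first reduces to $k$ algebraically closed (replacing $V$ and $W$ by an irreducible component of $V\otimes\bar k$ and a component of $W\otimes\bar k$ dominating it), and then literally cuts $W$ down by $m-n$ general hyperplane sections chosen \emph{not to contain any component of $F$}; the resulting $n$-dimensional variety is generically finite over $V$ and $F$ restricts to it as an effective class, so the single identity $(\deg\mu)\,(\a\cdot\b^{n-1})_V=(\mu^*\a\cdot(\mu^*\b)^{n-1})$ on that section finishes the proof. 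You instead never modify $W$: you cap with $H^{m-n}$ and dispose of the effective term by restricting to the components $F_i$ and applying Kleiman on each $F_i$. That is a nice simplification --- you need no genericity of the $H_i$ relative to $\supp F$ --- but it concentrates all the delicacy into the claim $d>0$, which is precisely where the hyperplane-section argument resurfaces. There your closing parenthetical is too quick: over a finite field one cannot in general choose the $H_i$ "generically" in the Bertini sense. The remedy is the paper's opening move: $d$ is determined by $d\,(\b^n)_V=\bigl((\mu^*\b)^n\cdot H^{m-n}\bigr)_W$ for $\b$ ample, intersection numbers are invariant under base field extension, so one verifies $d>0$ after base change to $\bar k$, working on a component of $W\otimes\bar k$ that dominates a component of $V\otimes\bar k$. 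With that adjustment your argument is complete.
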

\begin{proof} 
  Let $k^a$ be an algebraic closure of $k$ and let $V'$ be an irreducible 
  component of (the reduced scheme associated to) $V^a:=V\otimes k^a$. 
  There exists a component $W'$ of $W^a$ dominating $V'$. 
  Upon replacing $W$ and $V$ by $W'$ and $V'$ we are reduced to the 
  case where $k$ is algebraically closed. Upon taking successive hyperplane 
  sections of $W$ not containing any component of $F$ and choosing an 
  irreducible component dominating $V$ we may then assume that $\mu$ 
  is generically finite. In that case we have
  $$
  (\deg\mu)\left(\a\cdot\b^{\dim V-1}\right)_V=\left(\mu^*\a\cdot\mu^*\b^{\dim V-1}\right)_W
  $$
  and the result follows from Kleiman's theorem (see e.g.~\cite[Theorem~1.4.9]{Laz}) since $\mu^*\b$ is nef. 
\end{proof}




\section{General $\theta$-psh functions and semipositive singular metrics}\label{S208}
We are now ready to introduce the class of general $\theta$-psh functions 
and their cousins: semipositive singular metrics. The equicontinuity result 
in Corollary~\ref{C201} will be used to show Theorem~A, asserting that 
the space of $\theta$-psh functions is compact up to translation.

Throughout this section we let $X$ be as before a smooth connected projective $K$-analytic variety and fix a closed $(1,1)$-form 
$\theta\in\cZ^{1,1}(\Xa)$ whose de Rham class $\{\theta\}\in N^1(X)$ 
is \emph{ample}.  As before we write ``psh'' as a shorthand 
for ``plurisubharmonic''. Similarly, ``usc'' and ``lsc'' will mean ``upper semicontinuous''
and ``lower semicontinuous'', respectively.
\begin{defi}\label{defi:psh} 
  Let $\theta$ be as above. 
  A \emph{$\theta$-psh function} 
  $\f:\Xan\to[-\infty,+\infty[$ 
  is a usc function such that for each SNC model $\cX$ of $X$
  on which $\theta$ is determined we have
  \begin{enumerate}
  \item[(i)] 
    $\f\le\f\circ \retr_{\cX}$ on $\Xan$;
  \item[(ii)] 
    the restriction of $\f$ to $\emb_\cX(\D_\cX)$ is a uniform limit 
    of restrictions of $\theta$-psh model functions. 
  \end{enumerate}
  We write $\PSH(\Xa,\theta)$ for the set of $\theta$-psh functions on $X$.
\end{defi}
We say that $\f:\Xan\to[-\infty,+\infty[\,$ is \emph{quasi-psh} 
if $\f$ is $\theta$-psh for some $\theta$ as above. Thanks to
Theorem~\ref{thm:closed}, the previous definition is consistent with
Definition~\ref{defi:psh-model} when $\f$ is a model function. 
In particular, constant functions are $\theta$-psh iff $\theta$ is 
semipositive.
\begin{rmk}
  A function on a compact (complex) K\"ahler manifold $X$ 
  is quasi-psh if it is locally the sum of a psh function and a smooth function. 
  Given a closed $(1,1)$-form $\theta$, a $\theta$-psh function $\f$ is
  a quasi-psh function such that $\theta+dd^c\f\ge 0$ in the sense of currents. 
  When the de Rham class $\{\theta\}\in H^{1,1}(X)$ is a K\"ahler class, 
  we have a global characterization: $\theta$-psh functions are decreasing 
  limits of sequences of smooth $\theta$-psh functions, 
  see~\cite[Theorem 1.1]{regularization}. In our current non-Archimedean setting, 
  a general local theory of psh functions is still to be developed
  (although the first steps are taken in~\cite{CLD}). For this reason,
  we work globally and assume that $\{\theta\}$ is ample.
\end{rmk}
Recall the definitions from~\S\ref{S202} of singular and model 
metrics on line bundles.
\begin{defi}\label{defi:possingmet} 
  Let $L$ be an ample line bundle on $X$. 
  A singular metric on $L$ is \emph{semipositive} 
  if it is of the form
  $\|\cdot\|e^{-\f}$, where $\|\cdot\|$ is a model 
  metric and $\f$ is $c_1(L,\|\cdot\|)$-psh.
\end{defi}
One checks that this definition does not depend on the choice of 
reference metric $\|\cdot\|$. Below we shall state various properties
of $\theta$-psh functions. We leave it to the reader to formulate
analogous assertions about semipositive singular metrics on ample 
line bundles.

\subsection{Basic properties}
Fix a closed $(1,1)$-form $\theta\in\cZ^{1,1}(\Xa)$ as above.
From Proposition~\ref{prop:maxmodel} and~\ref{prop:convex} 
we obtain:
\begin{prop}\label{P301}
  The set $\PSH(\Xan,\theta)$ is convex.
  If $\f,\p$ are $\theta$-psh and $c\in\R$, then $\f+c$ and 
  $\max\{\f,\p\}$ are $\theta$-psh.
\end{prop}
\begin{prop}\label{P201}
  If $\f\in\PSH(\Xa,\theta)$ and $\cX$ 
  is an SNC model on which $\theta$ is determined, then:
  \begin{itemize}
  \item[(i)]
    $\f\circ\emb_\cX$ is continuous (and finite) on $\D_\cX$ and convex on each face;
  \item[(ii)]
    $\f\circ \retr_\cX$ is continuous on $\Xan$ and $\f\le\f\circ \retr_\cX$.
 \end{itemize}
\end{prop}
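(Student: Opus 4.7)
The plan is to deduce both parts essentially by unpacking the definition of $\PSH(\Xa,\theta)$ and combining it with Proposition~\ref{prop:convex}, the only nontrivial input being the passage from model to limit functions.

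For part~(i), I would argue as follows. By condition~(ii) in Definition~\ref{defi:psh}, the function $\f\circ\emb_\cX$ is a uniform limit on $\D_\cX$ of functions of the form $\psi\circ\emb_\cX$ with $\psi$ a $\theta$-psh model function. Since $\theta$ is determined on $\cX$, Proposition~\ref{prop:convex}~(i) applies to each such $\psi$: the restriction $\psi\circ\emb_\cX$ is piecewise affine and convex on each face of $\D_\cX$, and in particular continuous. Both continuity and convexity of a real-valued function on a compact set are preserved under uniform convergence, so $\f\circ\emb_\cX$ is continuous on $\D_\cX$ and convex on every face.

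For part~(ii), the inequality $\f\le\f\circ\retr_\cX$ is literally condition~(i) in Definition~\ref{defi:psh}. For the continuity of $\f\circ\retr_\cX$, recall that $\retr_\cX=\emb_\cX\circ\ev_\cX$ by definition, hence
\begin{equation*}
  \f\circ\retr_\cX=(\f\circ\emb_\cX)\circ\ev_\cX.
\end{equation*}
The evaluation map $\ev_\cX:\Xan\to\D_\cX$ is continuous (Corollary~\ref{cor:gelfand}), and $\f\circ\emb_\cX$ is continuous on $\D_\cX$ by~(i). The composition is therefore continuous on $\Xan$.

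There is no real obstacle here, as the proposition is essentially a routine consequence of the definition together with the model-function case already established in Proposition~\ref{prop:convex}; the only small point worth recording is that Proposition~\ref{prop:convex} requires $\theta$ to be determined on the SNC model one is working with, which is precisely the hypothesis made here.
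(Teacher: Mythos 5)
Your proof is correct and follows exactly the route the paper intends: the paper states this proposition without proof as an immediate consequence of Definition~\ref{defi:psh} and Proposition~\ref{prop:convex}, and your argument (uniform limits preserve continuity and convexity; $\f\circ\retr_\cX=(\f\circ\emb_\cX)\circ\ev_\cX$ is a composition of continuous maps; the inequality is part of the definition) is precisely the implicit one.
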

The next result shows how to reconstruct a 
$\theta$-psh function from its values on quasimonomial points.
\begin{prop}\label{prop:usc}
  Let $\f\in\PSH(\Xa,\theta)$. 
  Then, as $\cX$ runs through the directed set of SNC
  models on which $\theta$ is determined, 
  $(\f\circ \retr_\cX)_\cX$ forms a decreasing net
  of continuous functions on $\Xan$, converging pointwise to $\f$.
\end{prop}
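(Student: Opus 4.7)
The plan is to establish three things: that the family is genuinely a net (i.e.\ the index set is directed), that it is decreasing, and that it converges pointwise to $\f$ from above. Each step boils down to invoking results already proved in the paper.

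First I would check that the SNC models on which $\theta$ is determined form a cofinal subset of all SNC models: given such $\cX$ and an arbitrary SNC $\cY$, Temkin's theorem yields an SNC $\cZ$ dominating both $\cX$ and $\cY$, and $\cZ$ still determines $\theta$ since $\cX$ does. Thus this subfamily is directed, and the results of Corollary~\ref{C205} and Corollary~\ref{cor:retract} apply verbatim along it. Continuity of each $\f\circ\retr_\cX$ is immediate from Proposition~\ref{P201}~(ii).

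The monotonicity is the key algebraic step. Suppose $\cX'\ge\cX$ are two SNC models determining $\theta$, and pick $x\in\Xan$. Applying condition~(i) of Definition~\ref{defi:psh} to $\f$ at the point $\retr_{\cX'}(x)\in\Xan$ with respect to the model $\cX$ gives $\f(\retr_{\cX'}(x))\le\f(\retr_\cX(\retr_{\cX'}(x)))$, and by Proposition~\ref{P206}~(i) we have $\retr_\cX\circ\retr_{\cX'}=\retr_\cX$, so
\begin{equation*}
\f\circ\retr_{\cX'}(x)\le\f\circ\retr_\cX(x).
\end{equation*}
This shows that $(\f\circ\retr_\cX)_\cX$ is a decreasing net. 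In particular $\inf_\cX\f\circ\retr_\cX$ is well-defined pointwise, and Definition~\ref{defi:psh}~(i) gives the trivial lower bound $\f\le\inf_\cX\f\circ\retr_\cX$.

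For the reverse inequality at a given point $x\in\Xan$, I would invoke Corollary~\ref{C205}: $\retr_\cX(x)\to x$ in $\Xan$ as $\cX$ increases through SNC models determining $\theta$. Upper semicontinuity of $\f$ then yields
\begin{equation*}
\inf_\cX\f(\retr_\cX(x))=\limsup_\cX\f(\retr_\cX(x))\le\f(x),
\end{equation*}
where the first equality uses the monotone character of the net. Combined with the opposite inequality, this gives $\inf_\cX\f\circ\retr_\cX(x)=\f(x)$, i.e.\ pointwise convergence of the decreasing net to~$\f$.

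There is no real obstacle here: everything reduces to the compatibility $\retr_\cY\circ\retr_\cX=\retr_\cY$ for $\cX\ge\cY$, the approximation $\retr_\cX\to\id$, the upper semicontinuity of $\f$ and property~(i) of Definition~\ref{defi:psh}. The only minor care needed is the cofinality check at the start, which ensures that the ambient convergence statements of Section~3 remain available along the directed system of models determining~$\theta$.
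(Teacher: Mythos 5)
Your proof is correct and follows essentially the same route as the paper's: monotonicity from $\f\le\f\circ\retr_\cX$ evaluated at $\retr_{\cX'}(x)$ together with $\retr_\cX\circ\retr_{\cX'}=\retr_\cX$, and pointwise convergence from $\retr_\cX\to\id$ combined with upper semicontinuity of $\f$. The cofinality check you add at the start is a harmless extra precaution that the paper leaves implicit.
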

\begin{proof} 
  Let $\cX'\ge\cX$ be two SNC models on which $\theta$ is determined.
  Then $\retr_\cX\circ \retr_{\cX'}=\retr_\cX$. By Proposition~\ref{P201}~(ii) this implies
  $\f\le\f\circ \retr_{\cX'}\le\f \circ \retr_{\cX} \circ \retr_{\cX'}=\f\circ \retr_{\cX}$,
  with equality on $\emb_\cX(\D_\cX)$.
  Set $\tilde{\f}:=\lim_\cX\f\circ \retr_\cX$. 
  Then $\tilde{\f}\ge\f$. On the other hand, 
  by Corollary~\ref{C205},
  $\retr_\cX$ converges to the identity on $\Xan$, 
  so by upper semicontinuity of $\f$ we have $\f\ge\tilde{\f}$.
\end{proof}
\begin{cor}\label{C302}
  If $\f,\p\in\PSH(\Xan,\theta)$ and $\f=\p$ on $\Xdiv$, then $\f=\p$.
\end{cor}
\begin{proof}
  Let $\cX$ be an SNC model on which $\theta$ is determined.
  By Corollary~\ref{cor:ratdiv}, $\Xdiv$ is dense in $\emb(\D_\cX)$ and 
  by Proposition~\ref{P201}, $\f$ and $\p$ are continuous on $\emb(\D_\cX)$, so
  our assumptions imply that $\f=\p$ on $\emb(\D_\cX)$.
  Thus $\f\circ\retr_\cX=\p\circ\retr_\cX$. Letting $\cX$ run though all
  sufficiently large models, we conclude using  Proposition~\ref{prop:usc}.
\end{proof}

\subsection{Equicontinuity}
The Lipschitz estimates in Theorem~\ref{thm:main} carry over to
general $\theta$-psh functions. As a consequence we have 
\begin{cor}\label{C301}
  For any SNC model on which $\theta$ is determined, the family
  \begin{equation*}
    \{\f\circ\emb_\cX\mid \f\in\PSH(\Xan,\theta)\}
  \end{equation*}
  is an equicontinuous family of functions on $\D_\cX$.
\end{cor}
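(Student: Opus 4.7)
The plan is to transfer the uniform Lipschitz estimate from Theorem~\ref{thm:main}, already established for $\theta$-psh \emph{model} functions, to arbitrary $\theta$-psh functions via the uniform approximation built into the very definition of $\PSH(X,\theta)$. Let $C=C(\cX,\theta)>0$ denote the constant furnished by Theorem~\ref{thm:main}, so that for every $\theta$-psh model function $\psi$, the restriction $\psi\circ\emb_\cX$ is $C$-Lipschitz continuous on each face of $\D_\cX$ (with respect to any fixed Euclidean metric compatible with the affine structure).

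Now fix $\f\in\PSH(\Xa,\theta)$. By Definition~\ref{defi:psh}(ii), there exists a sequence $(\psi_n)$ of $\theta$-psh model functions such that $\psi_n\circ\emb_\cX\to\f\circ\emb_\cX$ uniformly on $\D_\cX$. Since the property of being $C$-Lipschitz on a given metric space is preserved under uniform limits, and each $\psi_n\circ\emb_\cX$ is $C$-Lipschitz on every face of $\D_\cX$, it follows at once that $\f\circ\emb_\cX$ is also $C$-Lipschitz on every face of $\D_\cX$, with the \emph{same} constant $C$.

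Since $\D_\cX$ is a compact simplicial complex with finitely many faces, a function that is $C$-Lipschitz on each face is automatically $C'$-Lipschitz on $\D_\cX$ as a whole, for some constant $C'=C'(\cX)$ depending only on the combinatorics of $\D_\cX$ (any two points in $\D_\cX$ can be joined by a path consisting of a uniformly bounded number of segments, each contained in a single face). As $C'$ is independent of $\f\in\PSH(\Xa,\theta)$, we conclude that the family $\{\f\circ\emb_\cX\mid\f\in\PSH(\Xa,\theta)\}$ is uniformly $C'$-Lipschitz, and in particular equicontinuous, which is the desired statement.

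There is no serious obstacle here: the entire content of Corollary~\ref{C301} is packaged into Theorem~\ref{thm:main} together with Definition~\ref{defi:psh}(ii); the only minor point to verify is that uniform Lipschitz bounds pass to uniform limits, which is an immediate elementary estimate.
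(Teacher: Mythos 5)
Your proof is correct and is exactly the argument the paper intends: the corollary follows by combining the uniform Lipschitz bound of Theorem~\ref{thm:main} for $\theta$-psh model functions with clause (ii) of Definition~\ref{defi:psh}, since Lipschitz bounds with a fixed constant persist under uniform (indeed pointwise) limits. The only cosmetic remark is that for equicontinuity alone you do not even need the global Lipschitz constant $C'$ on all of $\D_\cX$: near any point, nearby points lie in a common face, so the face-wise constant $C$ already suffices.
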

\begin{cor}\label{C303}
  If $(\f_i)_i$ is a decreasing net in $\PSH(\Xan,\theta)$ and
  $\f=\lim_i\f_i$,
  then either $\f\equiv-\infty$ or $\f\in\PSH(\Xan,\theta)$.
\end{cor}
\begin{proof}
  Assume $\f\not\equiv-\infty$, and the upper
  semicontinuity of $\f_i$ implies that of $\f$.
  Let $\cX$ be any SNC model on which $\theta$ is determined. The inequality $\f_i\le\f_i\circ\retr_\cX$ implies $\f\le\f\circ\retr_\cX$, and hence 
  $$
  \sup_{\D_\cX}(\f\circ\emb_\cX)=\sup_X\f
  $$
  is finite. By Proposition~\ref{P201}, the supremum of each $\f_i$ is 
  attained on the finite set of divisorial valuations associated to
  vertices of $\cD_X$. 
  Hence 
  $$
  \sup_{\D_\cX}(\f_i\circ\emb_\cX)\ge\sup_{\D_\cX}(\f\circ\emb_\cX)>-\infty
  $$
  for all $i$. It then follows from Corollary~\ref{C301} that 
  $\f_i\circ\emb_\cX$ converges uniformly to $\f\circ\emb_\cX$ and 
  that $\f\circ\emb_\cX$ is continuous.
  \end{proof}

\subsection{Compactness}\label{S301}
We endow the set $\PSH(\Xa,\theta)$ of all $\theta$-psh functions 
with the topology of uniform convergence on dual complexes. 
A basis of open neighborhoods of a fixed $\theta$-psh function 
$\f_0$ is then given by $\{\f\mid\sup_{\D_\cX} |\f-\f_0|\le \e\}$ 
where $\cX$ ranges over SNC models on which $\theta$ is determined and
where $\e>0$.
Thanks to Proposition~\ref{prop:usc}, the natural map
\begin{equation*}
  \PSH(\Xa,\theta)\to\prod_\cX C^0(\D_\cX)
\end{equation*}
is a homeomorphism onto its image. 
Note also that $\cD(\Xa)\cap\PSH(\Xa,\theta)$ 
is dense in $\PSH(\Xa,\theta)$ by definition. 
The following result is a more precise version of Theorem~A.
\begin{thm}\label{thm:proper} 
  If $\p\in\cD(X)$ is any model function, then 
  the map $\PSH(\Xa,\theta)\to\R$ defined by $\f\mapsto\sup_{\Xan}(\f-\p)$ 
  is continuous and proper. Hence $\PSH(\Xa,\theta)/\R$ is compact.
  Furthermore, the topology on $\PSH(\Xa,\theta)$ is equivalent to 
  the topology of pointwise convergence on either $\Xqm$ or $\Xdiv$.
\end{thm}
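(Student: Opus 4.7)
The main ingredients are the equicontinuity of Corollary~\ref{C301} together with the identity $\sup_{\Xan}\f=\sup_{\D_\cX}\f\circ\emb_\cX$ for any SNC model $\cX$ on which $\theta$ is determined, which is immediate from the inequality $\f\le\f\circ\retr_\cX$ of Proposition~\ref{P201}. This identity yields continuity of $\f\mapsto\sup_X\f$ with respect to uniform convergence on $\D_\cX$. For properness, I plan to exploit the embedding $\PSH(\Xa,\theta)\hookrightarrow\prod_\cX C^0(\D_\cX)$ and show that $\sup$-bounded subsets sit inside a product of compacts.

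\textbf{Properness and compactness modulo $\R$.} Given a net $(\f_\a)$ with $\sup_X\f_\a\le M$, Corollary~\ref{C301} provides a uniform modulus of continuity for $\{\f_\a\circ\emb_\cX\}$ on the compact complex $\D_\cX$, which combined with the upper bound $M$ gives two-sided uniform boundedness. Arzel\`a--Ascoli then ensures relative compactness in $C^0(\D_\cX)$, and Tychonoff yields a subnet converging uniformly on every $\D_\cX$ to a single compatible function $g:\Xqm\to\R$. I would then extend $g$ to $\Xan$ by setting $\f(x):=\inf_\cX g(\retr_\cX(x))$. The key verifications are: $\f$ is usc as an infimum of continuous functions; $\f=g$ on $\Xqm$ since $\retr_\cX$ fixes $\emb_\cX(\D_\cX)$; $\f\le\f\circ\retr_\cX$, because $\retr_\cX(x)\in\Xqm$ implies $\f(\retr_\cX(x))=g(\retr_\cX(x))\ge\f(x)$; and the restriction of $\f$ to each $\D_\cX$ is a uniform limit of $\theta$-psh model functions, by a diagonal argument from the same property of each $\f_\a$. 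These checks place $\f$ in $\PSH(\Xa,\theta)$ and realize $\f_\a\to\f$ in our topology. The quotient $\PSH(\Xa,\theta)/\R$ is then compact as a formal consequence: the level set $\{\psi:\sup_X\psi=0\}$ is closed and $\sup$-bounded, hence compact, and surjects onto the quotient.

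\textbf{Equivalence of topologies, and the main obstacle.} Uniform convergence on each $\D_\cX$ trivially implies pointwise convergence on $\Xqm=\bigcup_\cX\emb_\cX(\D_\cX)$, and a fortiori on $\Xdiv$. Conversely, suppose $\f_\a\to\f$ pointwise on $\Xdiv$ within $\PSH(\Xa,\theta)$. For any SNC $\cX$, the values $\f_\a(x_{E_i})$ at the divisorial vertices of $\D_\cX$ are bounded; since each $\f_\a\circ\emb_\cX$ is convex on every simplex (Proposition~\ref{P201}), the maximum on each simplex is attained at a vertex, so $\sup_{\D_\cX}\f_\a$ is bounded. Equicontinuity and Arzel\`a--Ascoli give relative compactness in $C^0(\D_\cX)$, and any subnet limit must agree with $\f|_{\D_\cX}$ on the dense set of rational points of $\D_\cX$---equivalently, on the divisorial points of $\emb_\cX(\D_\cX)$ by Corollary~\ref{cor:ratdiv}---hence equals $\f|_{\D_\cX}$ by continuity, forcing uniform convergence. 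The most delicate point of the whole argument is the construction of the $\theta$-psh extension $\f=\inf_\cX g\circ\retr_\cX$ in the properness step; once that extension lemma is in place, the theorem follows from equicontinuity and standard function-space compactness.
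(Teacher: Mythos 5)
Your proof is correct and follows essentially the same route as the paper: continuity via $\sup_{\Xan}\f=\sup_{\D_\cX}\f\circ\emb_\cX$, properness via the embedding into $\prod_\cX C^0(\D_\cX)$ combined with Corollary~\ref{C301}, Ascoli and Tychonoff, and the topology comparison via equicontinuity together with the density (Corollary~\ref{cor:ratdiv}) of divisorial points in $\emb_\cX(\D_\cX)$. Your explicit construction of the limit as $\f=\inf_\cX g\circ\retr_\cX$ usefully supplies the closedness of $\cF_C$ in the product, a step the paper leaves implicit (it parallels its Lemma~\ref{lem:uscenv}).
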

\begin{proof} 
  Let $\cX$ be an SNC model on which $\theta$ and $\p$ are determined.
  Then $\p=\p\circ\retr_\cX$,  so Proposition~\ref{P201} implies that
  $(\f-\p)\circ\retr_\cX$ is continuous and that 
  $(\f-\p)\circ\retr_\cX\ge(\f-\p)$ for any $\f\in\PSH(\Xan,\theta)$.
  Hence the supremum of $\f-\p$ is attained on $\emb_\cX(\D_\cX)$,
  which implies the continuity of $\f\mapsto\sup_\Xan(\f-\p)$.

  To prove properness, we need to show that 
  $$
  \cF_C:=\{\f\in\PSH(\Xa,\theta)\mid |\sup_{\Xan}(\f-\p)|\le C\}
  $$ 
  is compact for any $C>0$.
  Recall that $\PSH(\Xa,\theta)$ embeds in 
  $\prod_\cX C^0(\D_\cX)$. By Tychonoff's theorem, the compactness of $\cF_C$
  is therefore equivalent to the compactness in $C^0(\D_\cX)$ 
  of the closure of the image of $\cF_C$ in $C^0(\D_\cX)$, for each SNC 
  model $\cX$ on which $\theta$ and $\p$ are  determined. 
  But this is a direct consequence of Corollary~\ref{C301} 
  and Ascoli's theorem.

  Picking $\p=0$, we see that $\f\mapsto\sup_X\f$ is proper, which implies
  the compactness of $\PSH(X,\theta)$.

  For the last statement, it is clear that convergence in $\PSH(\Xa,\theta)$
  implies pointwise convergence on $\Xqm$ which in turn implies pointwise
  convergence on $\Xdiv$.
  Now let $(\f_\a)_{\a\in A}$ be a net of $\theta$-psh functions converging 
  pointwise to $\f\in\PSH(\Xa,\theta)$ on $\Xdiv$. 
  Fix any SNC model $\cX$ on which $\theta$ is determined.
  We must show that $\f_\a$ converges uniformly to $\f$
  on $\emb_\cX(\D_\cX)$. But $\Xdiv\cap\emb_\cX(\D_\cX)$ is dense in $\emb_\cX(\D_\cX)$
  by Corollary~\ref{cor:ratdiv}, so this 
  follows from the equicontinuity statement in Corollary~\ref{C301}.
\end{proof}

\subsection{Upper envelopes}
Finally we shall prove the 
following result, whose complex analogue
serves as a basic ingredient of pluripotential theory. 
While we will not go deeper into pluripotential theory here, 
we will use the result below in~\S\ref{S204}.
The proof will in fact not use the compactness result in~\S\ref{S301}.
\begin{thm}\label{thm:sup} 
  Let $(\f_\a)_{\a\in A}$ be an arbitrary family of $\theta$-psh functions on 
  $\Xan$, and assume that $(\f_\a)$ is uniformly bounded from above. 
  If we set $\f(x):=\sup_{\a\in A}\f_\a(x)$ for each $x\in\Xan$, 
  then the usc regularization $\f^*$ of $\f$ is $\theta$-psh and 
  coincides with $\f$ on $\Xqm=\bigcup_\cX\emb_\cX(\D_\cX)$. 
  Further, we have $\f^*=\lim_\cX\f\circ\retr_\cX=\inf_\cX\f\circ\retr_\cX$.
\end{thm}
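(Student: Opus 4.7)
The plan is to construct $\f^*$ as the decreasing limit $\tilde\f:=\inf_\cX\f\circ\retr_\cX$ over SNC models $\cX$ determining $\theta$, and to verify the three properties defining $\PSH(\Xa,\theta)$ directly. The main technical input is uniform control of $\f$ on each dual complex $\D_\cX$.

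First, fix an SNC model $\cX$ on which $\theta$ is determined. By Corollary~\ref{C301}, the family $\{\f_\a\circ\emb_\cX\}_\a$ is equicontinuous on $\D_\cX$, with a uniform Lipschitz constant $C=C(\cX,\theta)$, and each $\f_\a$ is real-valued on $\D_\cX$ (being a uniform limit of model functions there). Since the family is uniformly bounded above, the pointwise supremum $\f|_{\D_\cX}$ is then real-valued, $C$-Lipschitz, and in particular continuous.

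The crux of the argument is to show that $\f|_{\D_\cX}$ is itself a uniform limit of $\theta$-psh model functions. Fix $\e>0$, cover $\D_\cX$ by finitely many balls of radius $\e/C$ centered at points $x_1,\dots,x_N$, and choose $\a_i$ with $\f_{\a_i}(x_i)\ge\f(x_i)-\e$. The uniform Lipschitz bound gives $\max_i\f_{\a_i}\ge\f-3\e$ on $\D_\cX$, while $\max_i\f_{\a_i}\le\f$ is automatic. Next, approximate each $\f_{\a_i}$ on $\D_\cX$ within $\e$ by a $\theta$-psh model function $\psi_i$. By Proposition~\ref{prop:maxmodel}, $\max_i\psi_i$ is again a $\theta$-psh model function, and it agrees with $\f$ on $\D_\cX$ to within $4\e$. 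This step---which combines the equicontinuity coming from Theorem~\ref{thm:main} with stability of model $\theta$-psh functions under max---I expect to be the main obstacle, since one has to control the entire family $\{\f_\a\}_\a$ simultaneously on $\D_\cX$.

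From $\f_\a\le\f_\a\circ\retr_\cX$ we get $\f\le\f\circ\retr_\cX$ for every SNC model $\cX$ determining $\theta$. Using $\retr_\cY\circ\retr_\cX=\retr_\cY$ for $\cX\ge\cY$ (Proposition~\ref{P206}), the net $(\f\circ\retr_\cX)_\cX$ is decreasing, so its limit $\tilde\f$ is upper semicontinuous. For $y\in\D_\cX$ one has $\retr_{\cX'}(y)=y$ for all $\cX'\ge\cX$, so $\tilde\f(y)=\f(y)$; in particular $\tilde\f=\f$ on $\Xqm$. The inequality $\tilde\f\ge\f^*$ follows from $\tilde\f\ge\f$ together with upper semicontinuity of $\tilde\f$, while the reverse inequality uses $\retr_\cX(x)\to x$ (Corollary~\ref{C205}) and upper semicontinuity of $\f^*$: $\tilde\f(x)\le\limsup_\cX\f^*(\retr_\cX(x))\le\f^*(x)$. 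Hence $\f^*=\tilde\f$, proving the assertion $\f=\f^*$ on $\Xqm$. To conclude that $\f^*\in\PSH(\Xa,\theta)$: it is usc by construction; for any SNC model $\cX$ determining $\theta$, $\retr_\cX(x)\in\Xqm$ implies $\f^*(\retr_\cX(x))=\f(\retr_\cX(x))\ge\tilde\f(x)=\f^*(x)$; and $\f^*|_{\D_\cX}=\f|_{\D_\cX}$ is a uniform limit of $\theta$-psh model functions by the previous paragraph.
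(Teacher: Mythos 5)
Your argument is correct and follows essentially the paper's strategy: use the Lipschitz equicontinuity of Corollary~\ref{C301} to show that $\f\circ\emb_\cX$ is continuous and a uniform limit of $\theta$-psh model functions, then use the retraction structure to identify $\f^*$ with $\inf_\cX\f\circ\retr_\cX$ and conclude $\f^*\in\PSH(\Xan,\theta)$ with $\f^*=\f$ on $\Xqm$. The differences are cosmetic: you re-derive the content of Lemma~\ref{lem:uscenv} explicitly, and you use a finite $(\e/C)$-ball covering of $\D_\cX$ combined with Proposition~\ref{prop:maxmodel} where the paper passes to the increasing net $\f_I=\max_{\a\in I}\f_\a$ and invokes Ascoli--Dini, but both rest on exactly the same equicontinuity input from Theorem~\ref{thm:main}.
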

Recall that the \emph{usc regularization} of a function $u$ on a 
topological space $X$ is the smallest usc function $u^*\ge u$, given by $u^*(x)=\limsup_{y\to x}u(y)$.

\begin{proof}[Proof of Theorem~\ref{thm:sup}]
  Upon considering the new family $\f_I=\max_{\a\in I}\f_\a$ with $I$
  ranging over all finite subsets of $A$, we may assume that $A$ is a 
  directed set and $(\f_\a)$ is an increasing net. 
  We must show that conditions~(i)--(ii) of Definition~\ref{defi:psh} hold for $\f^*$. 

  For each SNC model $\cX$ on which $\theta$ is determined, 
  $\f_\a\circ\emb_\cX$ is convex and continuous on $\D_\cX$ for
  each $\a$. The increasing limit $\f\circ\emb_\cX=\lim_\a\f_\a\circ\emb_\cX$ is therefore
  convex and lsc on $\D_\cX$. 
  On the other hand, any convex function on
  a convex polytope is usc, see~\cite{GKR}.
  Thus $\f\circ\emb_\cX$ is continuous, and, by Dini,
  $\f_\a\circ\emb_\cX$ converges uniformly on $\D_\cX$ to $\f\circ\emb_\cX$.
  Equivalently, $\f\circ\retr_\cX$ is continuous and $\f_\a\circ\retr_\cX$ converges
  uniformly to $\f\circ\retr_\cX$ on $\Xan$.

  Since $\cX\mapsto\f_\a\circ\retr_\cX$ is decreasing, the same is true
  for $\cX\mapsto\f\circ\retr_\cX$. We claim that $\lim_\cX\f\circ\retr_\cX=\f^*$.
  To see this, note that $\p:=\lim_\cX\f\circ\retr_\cX$ is a decreasing limit of
  continuous functions and hence usc. 
  On the one hand, $\f\circ\retr_\cX\ge\f$ for all $\cX$ implies $\p\ge\f$, so that 
  $\p\ge\f^*$ by the definition of the usc regularization.
  On the other hand, Corollary~\ref{C205} and the upper semicontinuity of $\f^*$
  implies
  \begin{equation*}
    \f^*
    \ge\varlimsup_\cX\f^*\circ\retr_\cX
    \ge\varlimsup_\cX\f\circ\retr_\cX
    =\p,
  \end{equation*}
  so that $\f^*=\lim_\cX\f\circ\retr_\cX=\inf_\cX\f\circ\retr_\cX$, hence the claim. 
  
  If $\cY\le\cX$ are SNC models on which $\theta$ is determined, 
  then $\retr_\cX\circ\retr_\cY=\retr_\cY$ by Proposition~\ref{P206}~(iv).
  By what precedes, this implies
  $\f^*\circ\retr_\cY=\lim_\cX\f\circ\retr_\cX\circ\retr_\cY=\f\circ\retr_\cY$.
  In particular, $\f^*=\f$ on $\emb_\cY(\D_\cY)$. Since $\cY$ was arbitrary, we get 
  $\f^*=\f$ on $\Xqm$.

  It remains to prove that $\f^*$ is $\theta$-psh in the sense of 
  Definition~\ref{defi:psh}.
  First, $\f_\a\le\f_\a\circ p_\cX$ 
  for all $\a$ implies $\f\le\f\circ p_\cX$, which gives 
  $\f^*\le(\f\circ p_\cX)^*=\f\circ\retr_\cX=\f^*\circ\retr_\cX$,
  where the first equality follows from the continuity of $\f\circ\retr_\cX$.
  Thus~(i) holds. 
  Second,
  the restriction of each $\f_\a$ to $\emb_\cX(\D_\cX)$ is, by assumption, a
  uniform limit of $\theta$-psh model functions. Since $\f_\a$ converges uniformly
  to $\f=\f^*$ on $\emb_\cX(\D_\cX)$, we see that the restriction of $\f^*$ to $\emb_\cX(\D_\cX)$
  is also a uniform limit of $\theta$-psh model functions. 
  Thus~(ii) also holds and  $\f^*$ is $\theta$-psh.
\end{proof}




\section{Envelopes and regularization}\label{S204}
We continue to assume that $\theta\in\cZ^{1,1}(\Xa)$ is a closed $(1,1)$-form whose de Rham class $\{\theta\}\in N^1(X)$ is ample. This positivity property will be crucial in the arguments to follow.

\subsection{Regularity of envelopes}
As a tool to prove our regularization theorem, we rely on the following envelope construction, whose complex analogue is widely used. 

\begin{defi} The \emph{$\theta$-psh envelope} $P_\theta(u)$ of a continuous function $u\in C^0(\Xa)$ is defined by setting, for each $x\in X$.
$$
P_\theta(u)(x):=\sup\left\{\f(x)\mid\f\in\PSH(\Xan,\theta),\,\f\le u\text{ on }X\right\}.
$$
\end{defi}

Here are a few easy properties of the envelope operator. 
\begin{prop}\label{prop:basicenv} Let $u,u'\in C^0(X)$. 
\begin{itemize}
\item[(i)] $P_\theta(u)$ is $\theta$-psh, and is the largest $\theta$-psh function dominated by $u$ on $X$. 
\item[(ii)] $P_\theta$ is non-decreasing, \ie $u\le v\Rightarrow P_\theta(u)\le P_\theta(v)$. 
\item[(iii)] $P_\theta(u)$ is concave in both arguments, in the sense that 
$$
P_{t\theta+(1-t)\theta'}\left(tu+(1-t)u'\right)\ge t P_\theta(u)+(1-t)P_\theta'(u')
$$ 
for $0\le t\le 1$.
\item[(iv)] For each $c\in\R$ we have $P_\theta(u+c)=P_\theta(u)+c$.
\item[(v)] For each $v\in\cD(X)$ we have $P_{\theta}(u)=P_{\theta+dd^c v}(u-v)+v$. 
\item[(vi)] $P_\theta$ is $1$-Lipschitz continuous with respect to the sup-norm, \ie $\sup_X|P_\theta(u)-P_\theta(v)|\le\sup_X|u-v|$. 
\item[(vii)] Given a determination $\cX$ of $\theta$ and a convergent sequence $\theta_m\to\theta$ in $N^1(\cX/S)$, we have $P_{\theta_m}(u)\to P_{\theta}(u)$ uniformly on $X$. 
\end{itemize}
\end{prop}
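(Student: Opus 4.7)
The plan is to verify (i)--(vii) in order, with the first six being essentially formal and (vii) requiring the substantive work. For (i), I would apply Theorem~\ref{thm:sup} to the family $\cF=\{\f\in\PSH(X,\theta):\f\le u\}$; this family is non-empty since Proposition~\ref{prop:generate} guarantees the existence of $\theta$-psh model functions (which can be shifted by a constant to lie below~$u$), and is uniformly bounded above by the continuous function~$u$. Theorem~\ref{thm:sup} shows that the usc regularization of $\sup_\cF$ is $\theta$-psh; since $u$ is continuous, this regularization remains $\le u$, hence equals $P_\theta(u)$, which is therefore $\theta$-psh. Monotonicity (ii) and the shift identity (iv) are immediate; concavity (iii) uses the convexity of $\PSH(X,\theta)$ from Proposition~\ref{prop:maxmodel}; (v) follows from the bijection $\f\mapsto\f-v$ between $\theta$-psh and $(\theta+dd^c v)$-psh functions, which respects the constraint via $\f-v\le u-v$; and (vi) follows by combining (ii) and (iv), writing $u\le v+\|u-v\|_\infty$.

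For (vii), I would first reduce to the case where $\theta$ is represented by an ample class on some model $\cX'\ge\cX$. Since $\{\theta\}$ is ample, Proposition~\ref{prop:positiverep} provides $\cX'$ and a model function $\psi_0$ such that $\theta+dd^c\psi_0$ is $\cX'$-positive. Applying (v) simultaneously to $P_\theta(u)$ and $P_{\theta_m}(u)$ with $v=\psi_0$ replaces the problem by the analogous one for $(\theta+dd^c\psi_0,u-\psi_0)$ and $(\theta_m+dd^c\psi_0,u-\psi_0)$, so we may assume from the outset that $\theta$ is determined on $\cX'$ by an ample class $\omega\in N^1(\cX'/S)$. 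Since the ample cone of $\cX'$ is open and contains $\omega$, there exists $r>0$ such that $\omega+\xi$ is ample whenever $\|\xi\|<r$ in a fixed norm on $N^1(\cX'/S)$. Writing $\gamma_m:=\theta_m-\theta\in N^1(\cX/S)$, continuity of the pullback $\pi^*\colon N^1(\cX/S)\to N^1(\cX'/S)$ yields $\|\pi^*\gamma_m\|\le C\|\gamma_m\|$, so with $\epsilon_m:=r^{-1}C\|\gamma_m\|\to 0$ the classes $\epsilon_m\omega\pm\pi^*\gamma_m=\epsilon_m(\omega\pm\pi^*\gamma_m/\epsilon_m)$ are ample on $\cX'$. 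This gives the sandwich
\[
(1-\epsilon_m)\theta\le\theta_m\le(1+\epsilon_m)\theta,
\]
where each difference is semipositive.

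From here I invoke two elementary properties of $P$ that are not listed in the proposition but follow directly from the definition: \emph{monotonicity in $\theta$}, namely that if $\theta\le\theta'$ with semipositive difference then every $\theta$-psh function is $\theta'$-psh and hence $P_\theta(u)\le P_{\theta'}(u)$; and \emph{scaling invariance}, $P_{c\theta}(u)=cP_\theta(u/c)$ for $c>0$, via the bijection $\f\mapsto\f/c$. Combining these with the sandwich yields
\[
(1-\epsilon_m)P_\theta\bigl(u/(1-\epsilon_m)\bigr)\le P_{\theta_m}(u)\le(1+\epsilon_m)P_\theta\bigl(u/(1+\epsilon_m)\bigr).
\]
The Lipschitz estimate (vi) gives $\|P_\theta(u/(1\pm\epsilon_m))-P_\theta(u)\|_\infty\le\epsilon_m\|u\|_\infty/(1\mp\epsilon_m)=O(\epsilon_m)$, while $P_\theta(u)$ is itself uniformly bounded (by (vi) applied to a fixed $\theta$-psh model function $\psi$, for which $P_\theta(\psi)=\psi$). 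Expanding the two outer terms and combining the bounds yields $\|P_{\theta_m}(u)-P_\theta(u)\|_\infty\to 0$.

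I expect the delicate step to be ensuring that the sandwich really takes place at the level of numerical classes on $\cX'$ (where monotonicity of $P$ applies) rather than only in $N^1(X)$ where ampleness is known a priori. This is exactly what openness of the ample cone in the finite-dimensional space $N^1(\cX'/S)$ delivers, provided we first used (v) and Proposition~\ref{prop:positiverep} to land in the $\cX'$-positive chamber; everything else is formal manipulation of the envelope via (ii), (iv), (vi) together with the two auxiliary monotonicity/scaling properties.
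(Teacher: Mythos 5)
Your treatment of (i)--(vi) is essentially the same as the paper's: (i) via Theorem~\ref{thm:sup} applied to the usc regularization, (ii)--(v) by formal manipulation of the defining suprema and the convexity of $\PSH(X,\theta)$, (vi) from (ii) and (iv). For (vii), however, you take a genuinely different route. The paper also begins by invoking Proposition~\ref{prop:positiverep} together with (v) to place $\theta$ in the $\cX$-positive chamber and to obtain the uniform a priori bound $\inf_X u - 2\sup_X|v|\le P_\tau(u)\le\sup_X u$ for $\tau$ in a neighborhood $V$ of $\theta$, but it then concludes via part (iii): for each $x$ the map $\tau\mapsto P_\tau(u)(x)$ is concave on $V$, and a concave function that is uniformly bounded on an open convex subset of a finite-dimensional space is locally Lipschitz with constant controlled only by the sup bound, giving uniform-in-$x$ Lipschitz continuity. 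Your argument replaces this appeal to concavity by an explicit sandwich $(1-\epsilon_m)\theta\le\theta_m\le(1+\epsilon_m)\theta$ obtained from openness of the ample cone in $N^1(\cX'/S)$, combined with two auxiliary facts you state and use correctly: monotonicity of $P$ in $\theta$ under addition of a semipositive form, and the scaling identity $P_{c\theta}(u)=cP_\theta(u/c)$; the estimate then follows from (vi) and the uniform bound on $P_\theta(u)$. Both approaches are valid and rest on the same geometric input (openness of the ample chamber reached via Proposition~\ref{prop:positiverep} and (v)); the paper's is slightly slicker via a single appeal to a standard fact about concave functions, while yours is more hands-on. One tiny quantitative slip: with $\epsilon_m:=r^{-1}C\|\gamma_m\|$ you only get $\|\pi^*\gamma_m\|/\epsilon_m\le r$, not $<r$; take $\epsilon_m:=2r^{-1}C\|\gamma_m\|$, say, to stay strictly inside the ample cone.
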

\begin{proof} (i) The only thing to show is that $P_\theta(u)$ is $\theta$-psh. Since $P_\theta(u) \le u$ and $u$ is continuous, it follows that the usc regularization satisfies $P_\theta(u)^*\le u$. Now, $P_\theta(u)^*$ is $\theta$-psh by Theorem~\ref{thm:sup}, and is hence a competitor in the definition of $P_\theta(u)$. Thus $P_\theta(u)=P_\theta(u)^*$ is indeed $\theta$-psh. 

(ii) is trivial. 

(iii) follows from the fact that given $\f\in\PSH(\Xan,\theta)$, $\f'\in\PSH(\Xan,\theta')$ with $\f\le u$ and $\f'\le u'$, $t\f+(1-t)\f'$ belongs to $\PSH(\Xan,t\theta+(1-t)\theta')$ and is dominated by $t u+(1-t)u'$. 

(iv) and (v) are seen similarly. 

(vi) is a formal consequence of (ii) and (iv). 

(vii) By Proposition~\ref{prop:positiverep} we may assume, after perhaps passing to a higher model, that there exists a model function $v$ determined on $\cX$ such that $\theta+dd^c v$ is $\cX$-positive, \ie determined by an ample class in $N^1(\cX/S)$. As a consequence, there exists an open neighborhood $V\subset N^1(\cX/S)$ of $\theta$ such that $\tau+dd^c v$ is $\cX$-positive for all $\tau\in V$. 

We claim that $P_\tau(u)$ is uniformly bounded on $X$ for $\tau\in V$. Indeed for each $\tau\in V$ we have $\R\subset\PSH(\Xan,\tau+dd^c v)$, hence $P_{\tau+dd^c v}(u-v)\ge\inf_X u-\sup_X|v|$. By (v) it follows that 
$$
\inf_X u-2\sup_X|v|\le P_{\tau}(u)\le\sup_X u,
$$
which proves the claim. 

Now for each $x\in X$ the function $\tau\mapsto P_\tau(u)(x)$ is concave on $V$, hence locally Lipschitz continuous on $V$, with local Lipschitz constant only depending on $\sup_{\tau\in V}|P_\tau(u)(x)|$, which is in turn bounded independently of $x\in X$, and the result follows. 
\end{proof} 

Our main result in this section is the following regularity property of envelopes. As we shall see, it is in fact equivalent to the monotone regularization theorem. 

\begin{thm}\label{P202}
 For any $u\in C^0(\Xa)$ the $\theta$-psh envelope $P_\theta(u)$ is a uniform limit on $\Xan$ of $\theta$-psh model functions. In particular, $P_\theta(u)$ is continuous.
\end{thm}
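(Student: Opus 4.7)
The plan is to proceed by two reductions followed by an explicit multiplier-ideal construction. First I would reduce to the case where $u$ is a model function: the envelope operator is $1$-Lipschitz in sup norm by Proposition~\ref{prop:basicenv}(vi) and $\cD(\Xa)$ is dense in $C^0(\Xa)$ by Corollary~\ref{cor:dense}, so a diagonal approximation argument propagates the statement from model functions to arbitrary continuous $u$. Next, for $u\in\cD(\Xa)$, I would use the intertwining identity $P_\theta(u)=u+P_{\theta+dd^c u}(0)$ from Proposition~\ref{prop:basicenv}(v); since $dd^c u$ is de~Rham-trivial by Theorem~\ref{thm:ddc}, the class $\{\theta+dd^c u\}=\{\theta\}$ remains ample, and adding the model function $u$ to $(\theta+dd^c u)$-psh model approximants preserves their being $\theta$-psh model functions. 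This reduces the problem to showing that $P_\theta(0)$ is a uniform limit of $\theta$-psh model functions, for a closed $(1,1)$-form $\theta$ of ample de~Rham class.

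For the construction, I would fix a model $\cX$ determining $\theta$ via a $\Q$-line bundle $\cL$ with $\cL|_{\cX_K}$ ample, and set $\f_m:=\tfrac{1}{m}\log|\fa_m|$, where $\fa_m\subset\cO_\cX$ is the base ideal of $m\cL$. Since $L$ is ample on $\cX_K$, the ideal $\fa_m$ is vertical for $m$ large, and Lemma~\ref{lem:gen} then gives that $\f_m$ is a $\theta$-psh model function; clearly $\f_m\le 0$, so $\f_m\le P_\theta(0)$, and the super-additivity $\fa_m\cdot\fa_\ell\subset\fa_{m+\ell}$ makes $(\f_m)$ monotone increasing along cofinal subsequences. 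To identify the limit, Lemma~\ref{lem:approx} combined with the elementary observation that $\fb\subset\fa_k$ whenever $k\cL\otimes\fb$ is globally generated shows that every $\theta$-psh model function $\le 0$ is dominated by $\sup_m\f_m$; a careful truncation of model-function approximants on dual complexes (using Proposition~\ref{P201}(ii) to control their sups on all of $\Xan$) together with $\psi=\inf_\cX\psi\circ\retr_\cX$ (Proposition~\ref{prop:usc}) extends this to arbitrary $\theta$-psh $\psi\le 0$, yielding $(\sup_m\f_m)^*=P_\theta(0)$ and hence pointwise convergence $\f_m\to P_\theta(0)$ on the dense subset $\Xqm$.

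The main obstacle is to upgrade pointwise convergence on $\Xqm$ to uniform convergence on all of $\Xan$. For this I would introduce the asymptotic multiplier ideals $\cJ_m:=\cJ(\fa_\bullet^m)$ from Appendix~B and study $\tilde\f_m:=\tfrac{1}{m}\log|\cJ_m|\ge\f_m$. The uniform global generation (Theorem~\ref{thm:uniform}) supplies a fixed ample line bundle $\cA$ on $\cX$ with $(m\cL+\cA)\otimes\cJ_m$ globally generated for every $m$, so $\tilde\f_m$ is a $(\theta+\tfrac{1}{m}[\cA])$-psh model function bounded above by $0$; the subadditivity $\cJ_{m\ell}\subset\cJ_m^\ell$ (Theorem~\ref{thm:subadd}) then forces $(\tilde\f_{2^k})_k$ to be pointwise decreasing. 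Using Proposition~\ref{prop:basicenv}(vii) to absorb the $\tfrac{1}{m}[\cA]$ perturbation, one gets $\tilde\f_{2^k}\le P_\theta(0)+\e_k$ with $\e_k\to 0$, and the pointwise limit $\tilde\f_\infty$, being usc as an infimum of continuous functions, satisfies $\tilde\f_\infty=\tilde\f_\infty^*=(\sup_m\f_m)^*=P_\theta(0)$ pointwise on all of $\Xan$. Finally, the sandwich $\f_{2^k}\le P_\theta(0)\le\tilde\f_{2^k}$ makes $\tilde\f_{2^k}-\f_{2^k}$ a continuous, nonnegative, monotone decreasing sequence tending to $0$ pointwise on the compact space $\Xan$; Dini's lemma then forces uniform convergence of the differences to $0$, which gives uniform convergence of $\f_{2^k}$ to $P_\theta(0)$ and yields the theorem.
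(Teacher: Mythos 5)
Your overall strategy is the one the paper follows: reduce to $u=0$ and $\theta$ rational via Proposition~\ref{prop:basicenv}, set $\f_m=\tfrac1m\log|\fa_m|$ for the base ideals of $m\cL$, identify $\sup_m\f_m$ with $P_\theta(0)$ on $\Xqm$, and then control the gap using asymptotic multiplier ideals, subadditivity and uniform global generation. The first two thirds of your argument are essentially correct (modulo the fact that producing, from a $\theta$-psh model function $\psi\le 0$, an ideal $\fb$ with $k\cL\otimes\fb$ globally generated costs an $\e$-perturbation via Proposition~\ref{prop:positiverep}, not just Lemma~\ref{lem:approx}).

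The genuine gap is in your final Dini step. You correctly obtain $P_\theta(0)\le\tilde\f_{2^k}\le P_\theta(0)+\e_k$ everywhere, so $\tilde\f_{2^k}\to P_\theta(0)$ uniformly; but these are only $(\theta+2^{-k}[\cA])$-psh, and to conclude you need $\f_{2^k}\to P_\theta(0)$. Dini's lemma applied to $\tilde\f_{2^k}-\f_{2^k}$ requires this decreasing sequence to converge pointwise to $0$ (or at least to a \emph{continuous} limit) on all of $\Xan$, and you have only established $\sup_k\f_{2^k}=P_\theta(0)$ on $\Xqm$. The increasing limit $\sup_k\f_{2^k}$ is merely lsc, and knowing that its usc regularization equals the continuous function $P_\theta(0)$ does not force equality off $\Xqm$: an lsc $g\le f$ with $g^*=f$ can be strictly smaller than $f$ on a non-dense set, so the pointwise limit of $\tilde\f_{2^k}-\f_{2^k}$ is a priori only a nonnegative usc function vanishing on $\Xqm$, to which Dini does not apply. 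The paper closes exactly this gap by exploiting uniform global generation with the \emph{specific} ample twist $\cA=\cL-E$, $E\in\Div_0(\cX)_\Q$ effective: the injection $\cO_\cX(m\cL+m_0\cA)\otimes\cJ(\fa_\bullet^m)\hookrightarrow\cO_\cX((m+m_0)\cL)$ given by the canonical section of $\cO_\cX(m_0E)$ yields the inclusion of ideal sheaves behind $\log|\cJ(\fa_\bullet^m)|\le\log|\fa_{m+m_0}|+m_0\f_E$, an inequality of model functions valid on all of $\Xan$. Combined with $\tfrac1m\log|\cJ(\fa_\bullet^m)|\ge P_\theta(0)$ this gives the two-sided bound
\begin{equation*}
\f_m\ \le\ P_\theta(0)\ \le\ \tfrac{m}{m-m_0}\f_m+\tfrac{m_0}{m-m_0}\f_E
\end{equation*}
on $\Xqm$, which extends to $\Xan$ by Proposition~\ref{prop:usc} since all terms are quasi-psh, and then uniform boundedness of $\f_m$ and $\f_E$ gives uniform convergence directly, with no appeal to Dini. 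You should replace your last step by this comparison (or otherwise prove pointwise convergence of $\f_m$ to $P_\theta(0)$ off $\Xqm$).
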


Before attacking Theorem~\ref{P202} we shall prove the following
weaker statement.
\begin{lem}\label{L203}
  Let $\tP_\theta(u)$ be the pointwise supremum of all  
  $\theta$-psh \emph{model} functions $\f$ such that $\f\le u$.
  Then $\tP_\theta(u)\le P_\theta(u)$ and equality
  holds on $\Xqm$. 
\end{lem}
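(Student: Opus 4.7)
The inequality $\tP_\theta(u)\le P_\theta(u)$ is immediate, since any $\theta$-psh model function competing in the definition of $\tP_\theta(u)$ is \emph{a fortiori} $\theta$-psh and so also competes in the definition of $P_\theta(u)$. To prove equality on $\Xqm$, my plan is to fix $x\in\Xqm$ and an arbitrary $\f\in\PSH(\Xa,\theta)$ with $\f\le u$, and to construct, for each $\e>0$, a $\theta$-psh model function $\tilde\p\le u$ on $\Xan$ with $\tilde\p(x)\ge\f(x)-4\e$. Letting $\e\to 0$ and taking the supremum over competing $\f$ then yields $\tP_\theta(u)(x)\ge P_\theta(u)(x)$.

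The idea is to perform every approximation on a single SNC model $\cY$ chosen so that: (a) $x\in\emb_\cY(\D_\cY)$; (b) $\theta$ is determined on $\cY$; and (c) $\cY$ determines a model function $v$ with $\sup_{\Xan}|v-u|\le\e$. Such a $\cY$ is produced by starting from any SNC determination of $\theta$ containing $x$ (as $\Xqm=\bigcup_\cY\emb_\cY(\D_\cY)$ is directed by Proposition~\ref{P206}(ii)), approximating $u$ by a model function $v$ via Corollary~\ref{cor:dense}, and passing to an SNC common refinement. Since $v$ is determined on $\cY$, Proposition~\ref{prop:convex}(ii) gives $v=v\circ \retr_\cY$, from which the three-term bound $|u-u\circ \retr_\cY|\le 2\e$ follows on all of $\Xan$. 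Invoking Definition~\ref{defi:psh} applied to $\cY$, I then pick a $\theta$-psh model function $\p$ with $|\p-\f|\le\e$ on $\emb_\cY(\D_\cY)$.

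The key point --- and what makes the argument close --- is that $\p$ need \emph{not} be determined on $\cY$: since $\cY$ determines $\theta$, Proposition~\ref{prop:convex}(ii) still gives the global bound $\p\le\p\circ \retr_\cY$ on $\Xan$. This lets me control $\p$ at an arbitrary $y\in\Xan$ by its value at $\retr_\cY(y)\in\emb_\cY(\D_\cY)$, exactly where $\p$ has been made close to $\f$. Chaining:
\[
\p(y)\le\p(\retr_\cY(y))\le\f(\retr_\cY(y))+\e\le u(\retr_\cY(y))+\e\le u(y)+3\e.
\]
Hence $\tilde\p:=\p-3\e$ is a $\theta$-psh model function with $\tilde\p\le u$ on $\Xan$, and at $x\in\emb_\cY(\D_\cY)$ we have $\tilde\p(x)=\p(x)-3\e\ge\f(x)-4\e$, as required. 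I expect no serious obstacle: the one point that deserves attention is the observation that Proposition~\ref{prop:convex}(ii) does \emph{not} require $\p$ to be determined on $\cY$, only that $\cY$ determine $\theta$; this decouples the choice of model approximation for $u$ from the choice of model approximation for $\f$ and makes the single-model strategy work.
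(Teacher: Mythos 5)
Your proof is correct and follows essentially the same route as the paper's: approximate the competitor $\f$ on a dual complex through $x$ by a $\theta$-psh model function $\p$, propagate the bound from $\emb_\cY(\D_\cY)$ to all of $\Xan$ via $\p\le\p\circ\retr_\cY$, and subtract a small constant. Two minor remarks: the identity $v=v\circ\retr_\cY$ for a model function $v$ determined on $\cY$ should be cited as Lemma~\ref{L207}(ii) rather than Proposition~\ref{prop:convex}(ii) (the latter assumes $v$ is $\theta$-psh, which your $v$ need not be), and your auxiliary step bounding $|u-u\circ\retr_\cY|$ actually handles general continuous $u$ more carefully than the paper's own argument, which is written as if $u$ were constant.
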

\begin{proof}
  The inequality $\tP_\theta(u)\le P_\theta(u)$ is trivial.
  To prove that equality holds on $\Xqm$, pick $\e>0$
  and $x\in\emb_{\cX}(\D_{\cX})$ for some SNC model $\cX$
  on which $\theta$ is determined. 
  By construction, there exists $\psi\in\PSH(\Xa,\theta)$ such that 
  $\psi\le u$ and $\psi(x)\ge P_\theta(u)(x)-\e$. 
  By the definition of $\PSH(\Xa,\theta)$, there then exists a 
  $\theta$-psh model function $\f$ such that $|\f-(\psi-\e)|\le\e$ on 
  $\emb_\cX(\D_\cX)$. 
  Thus $\f\le\f\circ \retr_\cX\le\psi\circ \retr_\cX\le u$ on $\Xan$
  and $\f(x)\ge\psi(x)-2\e\ge P_\theta(u)(x)-3\e$.
  We conclude that $\tP_\theta(u)=P_\theta(u)$ on $\Xqm$.
\end{proof}

\begin{proof}[Proof of Theorem~\ref{P202}]
We shall reduce the statement to a geometric assertion that can be proved using asymptotic multiplier ideals.

First, we may assume that $u\in\cD(X)$, thanks to Corollary~\ref{cor:dense} and (vi) of Proposition~\ref{prop:basicenv}. 

Second, we can reduce to the case when $\theta\in N^1(\cX/S)_\Q$ is a rational class, using (vii) of Proposition~\ref{prop:basicenv}. 

Third, we may further reduce to the case $u=0$, after replacing $\theta$ with $\theta+dd^c u$, using (v) of Proposition~\ref{prop:basicenv}. 

After scaling, we may finally assume that $\theta$ is the curvature form of a model metric determined by a line bundle $\cL$ on some model $\cX$. Now we conclude the proof using the following result.
\end{proof}

\begin{thm}\label{thm:reg} Let $L$ be an ample line bundle on $X$ and $\cL\in\Pic(\cX)$ an extension of $L$ to an SNC model $\cX$. Let $\theta\in\cZ^{1,1}(\Xa)$ be the curvature form of the corresponding model metric on $L$. For $m\gg 1$ let $\fa_m\subset\cO_\cX$ be the (vertical) base-ideal of $m\cL$ and set $\f_m:=\tfrac 1 m\log|\fa_m|$. Then $\f_m$ is a $\theta$-psh model function and $\f_m\to P_\theta(0)$ uniformly on $\Xan$ as $m\to\infty$.
\end{thm}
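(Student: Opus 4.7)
The first claim is immediate: by definition of the base-ideal, the sheaf $\fa_m\otimes\cO_\cX(m\cL)$ is globally generated, so Lemma~\ref{lem:gen} yields that $\log|\fa_m|$ is $m\theta$-psh and hence $\f_m$ is $\theta$-psh. Moreover $\fa_m\subset\cO_\cX$ gives $\f_m\le 0$, so $\f_m\le P_\theta(0)$. The multiplication map $\fa_m\otimes\fa_l\to\fa_{m+l}$ makes $\log|\fa_m|$ superadditive, and Fekete's lemma then yields pointwise convergence $\f_m\to\f_\infty:=\sup_m\f_m$, with the subsequence $(\f_{m!})_m$ monotone increasing.

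To identify $\f_\infty$ with $P_\theta(0)$ on $\Xqm$, by Lemma~\ref{L203} it suffices to show $\psi\le\f_\infty$ for every $\theta$-psh model function $\psi\le 0$. Passing to a suitable SNC model $\pi:\cY\to\cX$, I would write $\psi=\f_D$ with $D\in\Div_0(\cY)_\Q$, $D\le 0$, and with $\cM:=\pi^*\cL+D$ nef on $\cY$. Since $-D$ is effective, the inclusion $\cO_\cY(mD)\hookrightarrow\cO_\cY$ embeds $H^0(\cY,m\cM)$ into $H^0(\cY,m\pi^*\cL)=H^0(\cX,m\cL)$, producing the ideal inclusion $\mathfrak{c}_m\cdot\cO_\cY(mD)\subset\pi^*\fa_m$, where $\mathfrak{c}_m$ is the base-ideal of $m\cM$. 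In logarithms this reads
\[
\psi+\tfrac{1}{m}\log|\mathfrak{c}_m|\le\f_m \quad\text{on }\Xan.
\]
The crucial input will then be that $\tfrac{1}{m}\log|\mathfrak{c}_m|\to 0$ uniformly on $\Xan$, which I would derive by applying the asymptotic multiplier ideal machinery of Appendix~B---the subadditivity of Theorem~\ref{thm:subadd} combined with the uniform global generation of Theorem~\ref{thm:uniform}---to the nef line bundle $\cM$, in the spirit of Step~2 of the proof of Lemma~\ref{lem:closed} but with the implication reversed. Granting this, $\psi\le\f_\infty$ on $\Xan$, so combined with the upper bound $\f_\infty\le P_\theta(0)$ we get $\f_\infty=P_\theta(0)$ on $\Xqm$, hence everywhere after taking usc regularisations.

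Uniform convergence on $\Xan$ then follows from equicontinuity and Dini's lemma: since $\{\f_m\}\cup\{P_\theta(0)\}\subset\PSH(\Xan,\theta)$ is equicontinuous on each dual complex $\D_\cX$ by Corollary~\ref{C301}, pointwise convergence on the dense subset $\Xdiv\cap\D_\cX$ (Corollary~\ref{cor:ratdiv}) upgrades to uniform convergence on $\D_\cX$; Dini's lemma applied to the monotone subsequence $(\f_{m!})$ yields uniform convergence on $\Xan$, and superadditivity then propagates the convergence to the full sequence. The main obstacle is precisely the uniform vanishing $\tfrac{1}{m}\log|\mathfrak{c}_m|\to 0$ for the base-ideals of a nef line bundle whose restriction to the generic fibre is ample; this is the genuine use of the arithmetic Kodaira-type vanishing and multiplier-ideal technology developed in Appendix~B.
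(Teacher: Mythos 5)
Your overall architecture (superadditivity of $\log|\fa_m|$, identification of the limit on $\Xqm$ via Lemma~\ref{L203}, multiplier ideals for uniformity) is close to the paper's, and your reduction of the pointwise identification to the statement ``$\tfrac1m\log|\mathfrak{c}_m|\to0$ for the nef bundle $\cM=\pi^*\cL+D$'' is a legitimate reorganization. But that statement is precisely the special case of the theorem you are proving (when $\cL$ is nef one has $P_\theta(0)=0$), and the tools you invoke for it do not suffice. Subadditivity plus uniform global generation give the sandwich $\sup_l\tfrac1{ml}\log|\mathfrak{c}_{ml}|\le\tfrac1m\log|\cJ(\mathfrak{c}_\bullet^m)|\le\tfrac1m\log|\mathfrak{c}_{m+m_0}|+\tfrac{m_0}{m}\f_{E}$, which shows only that $\tfrac1m\log|\mathfrak{c}_m|$ converges \emph{uniformly to its pointwise supremum} $c_\infty\le0$; nothing in the multiplier-ideal machinery forces $c_\infty=0$. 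To get that you still need the perturbation-to-ample argument of the paper's Step~1: pass to a model where $\cM=\cA+E$ with $\cA$ ample and $E\ge0$ vertical, observe that $\cM-\e E=(1-\e)\cM+\e\cA$ is ample, so some multiple $m(\cM-\e E)$ is globally generated, whence $\tfrac1m\log|\mathfrak{c}_m|\ge-\e\f_E$. Without this step your identification of $\f_\infty$ with $P_\theta(0)$ is unproved (and, as stated, essentially circular). Note that the paper runs this perturbation directly on $\cL$ at an arbitrary quasimonomial point, using Lemma~\ref{L203} and Proposition~\ref{prop:positiverep}, and never needs the nef case as a separate statement.

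The second gap is the passage to uniform convergence on $\Xan$. Corollary~\ref{C301} gives equicontinuity, hence uniform convergence, on each dual complex $\D_\cX$ \emph{separately}; this does not yield uniform convergence on the increasing union $\Xqm$, let alone on $\Xan$. Dini's lemma on $\Xan$ would require the increasing sequence $(\f_{m!})$ to converge pointwise \emph{everywhere} on $\Xan$ to a \emph{continuous} limit; but a priori the lsc function $\sup_m\f_m$ could be strictly smaller than the usc function $P_\theta(0)$ off $\Xqm$, and the continuity of $P_\theta(0)$ is part of the conclusion, not a hypothesis you may assume. The paper circumvents this by extracting from the multiplier-ideal step the explicit two-sided bound $\f_m\le P_\theta(0)\le\tfrac{m}{m-m_0}\f_m+\tfrac{m_0}{m-m_0}\f_E$ on $\Xqm$, extending it to all of $\Xan$ by Proposition~\ref{prop:usc} (an inequality between quasi-psh functions valid on $\Xqm$ propagates through the retractions $\retr_\cX$), and reading off a uniform $O(1/m)$ error. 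You should replace the Dini argument by this quantitative sandwich, or at least by the observation that $\f_m=\f_m\circ\retr_{\cX_m}$ on its determination $\cX_m$ converts a uniform bound on $\Xqm$ into one on $\Xan$.
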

\begin{proof}[Proof of Theorem~\ref{thm:reg}] For $m\gg 1$, $m\cL|_{\cX_K}$ is globally generated, which shows that the ideal sheaf $\fa_m$ is vertical. Since $\cO_\cX(m\cL)\otimes\fa_m$ is globally generated by the definition of $\fa_m$, it follows that $\f_m\in\cD(\Xa)$ is $\theta$-psh by Lemma~\ref{lem:gen}. Note that $\fa_m\cdot\fa_l\subset \fa_{m+l}$ for all $m,l$. This yields the super-additivity property $m\f_m+l\f_l\le(m+l)\f_{m+l}$. As a consequence, the pointwise limit $\lim_m\f_m$ exists and coincides with $\sup_m\f_m$.

\smallskip
\textbf{Step 1}. Let us first prove that $P_\theta(0)=\sup_m\f_m$ on $\Xqm$. The argument is similar to Step 2 of Theorem~\ref{thm:closed}. Since $\f_m$ is $\theta$-psh and $\f_m\le 0$ for all $m$, we have $\sup_m\f_m\le P_\theta(0)$ on $\Xan$. To see that equality holds on $\Xqm$, pick $\e>0$ and $x\in\emb_{\cX'}(\D_{\cX'})$ for some SNC model $\cX'$ dominating $\cX$. By Lemma~\ref{L203}  there exists a $\theta$-psh \emph{model function} $\f$ such that $\f\le 0$ and $\f(x)\ge P_\theta(0)(x)-\e$. Replacing $\cX'$ by a higher model, we may assume that $\f=\f_D$ is determined by some divisor $D\in\Div_0(\cX')_\Q$. Invoking Proposition~\ref{prop:positiverep} we may also assume that there exists $D'\in\Div_0(\cX')_\Q$ with $-\e\le\f_{D'}\le 0$ on $\Xan$ and  $\pi^*\cL+D+D'$ ample. Since $D+D'\le 0$ we then have 
$$
\cO_{\cX'}(m\pi^*\cL+m(D+D'))\subset\cO_{\cX'}(m\pi^*\cL). 
$$
Now the left-hand side is globally generated for some $m$, and we conclude that 
$$
\cO_{\cX'}(m (D+D'))\subset\cO_{\cX'}\cdot \fa_m;
$$ 
hence
$$
P_\theta(0)(x)\le\f_D(x)+ \e\le\f_{D+D'}(x)+2\e\le \frac1m \log|\fa_m| (x) + 2\e\le\sup_l\f_l(x)+2\e. 
$$
  
\smallskip
\textbf{Step 2}. 
Introduce, for each $m\in\N$, the asymptotic multiplier ideal $\fb_m:=\cJ(\fa_\bullet^m)\subset\cO_\cX$ associated to the graded sequence $\fa_\bullet$. We refer to Appendix B for the definition and the proof of the fundamental properties of multiplier ideals in our present setting. We shall use the following results. First, we have the elementary inclusion $\fa_m\subset\fb_m$ for all $m$. Second, the subadditivity property (cf.~Theorem~\ref{thm:subadd}) implies $\fb_{ml}\subset\fb^l_m$ for any $l,m$. We infer that $\fa_{ml}\subset\fb_{ml}\subset\fb_m^l$ for any $m,l$ and hence
\begin{equation}\label{equ:logcj}
\tfrac 1m\log|\fb_m|\ge\sup_l\tfrac 1{ml}\log|\fa_{ml}|=\sup_l\f_{ml}=P_\theta(0)
\end{equation}
on $\Xqm$ for all $m$, where the last equality follows from Step~1.
  
Since both $P_\theta(0)$ and $\f_m$ remain unchanged when $\cX$ is replaced with a higher model, we may assume that there exists an effective divisor $E\in\Div_0(\cX)_\Q$ such that $\cA:=\cL-E$ is ample on $\cX$. By the uniform global generation property of multiplier ideals  (Theorem~\ref{thm:uniform} and Remark~\ref{rmk:uniform}) we may then choose $m_0\in\N$ such that $m_0\cA$ is ample enough to guarantee that $\cO_\cX(m\cL+m_0\cA)\otimes\fb_m$ is globally generated for all $m$. Since $\cO_\cX(m\cL+m_0\cA)$ injects in $\cO_\cX((m+m_0)\cL)$ by multiplying with the canonical section of $\cO_\cX(m_0E)$, it follows that
$$
\log|\fb_m|\le\log|\fa_{m+m_0}|+m_0\f_E.
$$
Replacing $m$ with $m-m_0$ and using (\ref{equ:logcj}) we infer $(m-m_0)P_\theta(0)\le m\f_m+m_0\f_E$, so that
\begin{equation*}
\f_m\le P_\theta(0)\le\tfrac{m}{m-m_0}\f_m+\tfrac{m_0}{m-m_0}\f_E
\end{equation*}
on $\Xqm$ for $m\gg1$. As $\f_m$, $P_\theta(0)$ and $\f_E$ are all $\theta$-psh, Proposition~\ref{prop:usc} shows that this inequality extends to all of $\Xan$. Now $\f_E$ is bounded and $\f_m$ is uniformly bounded, as follows from $\f_1\le\f_m\le0$, so $\f_m$ converges uniformly on $\Xan$ to $P_\theta(0)$, as was to be shown.
\end{proof}
Let us end this subsection with a result that will be used in~\cite{nama}.
\begin{cor}
If $\f\in\PSH(\Xan,\theta)$ and $v\in C^0(\Xan)$ are such that $\f\le v$ then for every $\e>0$ there exists an $\theta$-psh model function $\psi$ such that $\f\le\psi\le v+\e$.
\end{cor}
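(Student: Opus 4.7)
The plan is to invoke Theorem~B together with a standard compactness-continuity argument. By Theorem~B, I write $\f$ as the pointwise limit of a decreasing net $(\f_\a)_{\a\in A}$ of $\theta$-psh model functions; in particular $\f=\inf_\a \f_\a$, and $\f\le \f_\a$ for every $\a\in A$. The only remaining issue is to locate a single index $\a$ for which $\f_\a$ lies below $v+\e$ globally on $\Xan$.

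For each $x\in\Xan$, the inequality $\f(x)\le v(x)<v(x)+\tfrac{\e}{2}$ combined with $\lim_\a\f_\a(x)=\f(x)$ yields some $\a_x\in A$ with $\f_{\a_x}(x)<v(x)+\tfrac{\e}{2}$. Both $\f_{\a_x}$ (being a model function) and $v$ are continuous, so there is an open neighborhood $U_x\ni x$ on which $\f_{\a_x}<v+\e$.

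Since $\Xan$ is compact (Berkovich space over a complete discretely valued field), finitely many such neighborhoods $U_{x_1},\dots,U_{x_N}$ cover $\Xan$. The directedness of the index set $A$ provides an $\a\in A$ dominating all the $\a_{x_i}$; then $\f_\a\le\f_{\a_{x_i}}$ on $\Xan$ by monotonicity of the net, whence $\f_\a<v+\e$ on each $U_{x_i}$ and therefore on all of $\Xan$. The model function $\psi:=\f_\a$ is $\theta$-psh and satisfies $\f\le\psi\le v+\e$, as required.

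I do not see any genuine obstacle: Theorem~B has just been proved in Section~8, and the remainder is a textbook application of compactness, continuity, and directedness of the net. The only point worth flagging is that the argument genuinely needs a \emph{net} (not merely a sequence) together with the directedness of the underlying index set, so that finitely many indices admit a common upper bound; this is built into the statement of Theorem~B.
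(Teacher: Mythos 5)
Your proof is correct, but it takes a different route from the paper. The paper's proof is a one-liner: since $\f\le v$ implies $\f\le P_\theta(v)$, one may replace $\f$ by the envelope $P_\theta(v)$, and Theorem~\ref{P202} says that $P_\theta(v)$ is a \emph{uniform} limit of $\theta$-psh model functions; taking $\psi$ within $\e/2$ of $P_\theta(v)$ and adding $\e/2$ gives $P_\theta(v)\le\psi\le P_\theta(v)+\e\le v+\e$. You instead invoke the monotone regularization Theorem~B (Theorem~\ref{thm:regmono}) and run a Dini-type compactness argument for nets. This is logically sound — Theorem~B is deduced from Theorem~\ref{P202} via Lemma~\ref{lem:contenv} without reference to this corollary, so there is no circularity even though the corollary is stated earlier in the section — and your Dini argument is exactly the one the paper itself uses in Corollary~\ref{cor:richberg} (the sets $\{\f_\a<v+\e\}$ form an increasing open cover of the compact space $\Xan$, so one of them is everything). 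The trade-off is that your route passes through the stronger (and more laboriously obtained) monotone statement, whereas the paper exploits the uniform convergence in Theorem~\ref{P202} directly and thereby avoids any compactness argument; your flagged point about genuinely needing the directedness of the net is accurate and well taken.
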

\begin{proof}
We may assume $\f=P_\theta(v)$, in which case the result follows from Theorem~\ref{P202}.
\end{proof}

\subsection{Monotone regularization of $\theta$-psh functions}
By our definition, the set $\cD(X)\cap\PSH(\Xan,\theta)$ of $\theta$-psh model functions is dense in $\PSH(\Xan,\theta)$ with respect to its topology of uniform convergence on dual complexes. This property may be seen as an analogue of the fact that every $\theta$-psh function is a $L^1$-limit of smooth $\theta$-psh functions in the complex case, which follows from the much more useful fact that every $\theta$-psh function is a \emph{decreasing} limit of smooth $\theta$-psh functions~\cite{regularization}. The next result gives an analogue of this monotone regularization theorem in our context.

\begin{thm}\label{thm:regmono}
  For each $\theta$-psh function $\f$, there exists a decreasing net 
  $(\f_i)_{i\in I}$ of $\theta$-psh model functions that converges 
  pointwise on $\Xan$ to $\f$. 
\end{thm}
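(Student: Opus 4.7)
My plan is to derive the theorem from Theorem~\ref{P202} in two stages: first realise $\f$ as the pointwise infimum of the set $A := \cD(\Xan) \cap \PSH(\Xan,\theta)_{\ge \f}$ of $\theta$-psh model functions dominating it, and then promote this infimum to a genuine decreasing net by a finite-subset bookkeeping argument.

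For stage one, I express $\f$ as a pointwise infimum of ordinary model functions $\ge \f$: since $\Xan$ is compact Hausdorff (hence normal) and $\f$ is usc and bounded above, a standard Urysohn construction realises $\f$ as the pointwise infimum of its continuous majorants, and Corollary~\ref{cor:dense} lets me further approximate each of these uniformly from above by a model function. For any such model function $u \ge \f$, the envelope $P_\theta(u)$ is a continuous $\theta$-psh function satisfying $\f \le P_\theta(u) \le u$ (the lower bound because $\f$ itself is a competitor in the definition of the envelope). By Theorem~\ref{P202}, $P_\theta(u)$ is a uniform limit of $\theta$-psh model functions, and subtracting a suitable rational constant lets me arrange the approximants to lie below $P_\theta(u)$; a small rational upward shift then lands in $A$ within arbitrary uniform precision of $P_\theta(u)$ from above. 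Combining, for any $x \in \Xan$ and any $c > \f(x)$ I produce $\psi \in A$ with $\psi(x) < c$, so $\inf_{\psi \in A} \psi = \f$ pointwise.

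For stage two, I index the net by $I = \{(F, n) : F \subset A \text{ finite nonempty},\ n \in \N^*\}$, ordered by $(F, n) \le (F', n')$ iff $F \subset F'$ and $n' \ge 2n$; this is directed via $(F_1 \cup F_2, 2\max(n_1, n_2))$. For each $(F, n)$ I form $g_F := \min_{\psi \in F} \psi \in \cD(\Xan)$ (which dominates $\f$, using closure of $\cD(\Xan)$ under min) and choose via Theorem~\ref{P202} a $\theta$-psh model function $\psi_{F, n}$ with $P_\theta(g_F) - \tfrac{1}{2n} \le \psi_{F, n} \le P_\theta(g_F)$. Setting
$$
\f_{F, n} := \psi_{F, n} + \tfrac{1}{n},
$$
the bounds $P_\theta(g_F) + \tfrac{1}{2n} \le \f_{F, n} \le P_\theta(g_F) + \tfrac{1}{n}$ force $\f_{F, n} \ge \f$; and for $(F, n) \le (F', n')$ the chain
$$
\f_{F', n'} \le P_\theta(g_{F'}) + \tfrac{1}{n'} \le P_\theta(g_F) + \tfrac{1}{2n} \le \f_{F, n},
$$
using monotonicity of $P_\theta$ on $g_{F'} \le g_F$ together with $n' \ge 2n$, shows the net is decreasing. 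Pointwise convergence $\f_{F, n} \to \f$ is then immediate from stage one: given $c > \f(x)$, pick $\psi_0 \in A$ with $\psi_0(x) < c$ and $n$ large, and use the singleton $F = \{\psi_0\}$.

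The delicate point is the tension in stage two between the rational shift $+\tfrac{1}{n}$ needed to keep $\f_{F, n}$ above $\f$ and the monotonicity required of the net: the doubling $n' \ge 2n$ in the order relation, together with the tighter $\tfrac{1}{2n}$ approximation of $P_\theta(g_F)$ by $\psi_{F, n}$, absorbs the resulting error precisely. The remaining verifications---existence of $\psi_{F, n}$ with the prescribed bounds, directedness of $I$, pointwise convergence---are then routine.
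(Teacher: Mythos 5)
Your argument is correct and follows essentially the same route as the paper's: both reduce the statement to the uniform approximability of envelopes (Theorem~\ref{P202}), first realizing $\f$ as the pointwise infimum of the $\theta$-psh model functions dominating it and then using $P_\theta(\min\{\cdot\})$ together with min-stability of $\cD(\Xan)$ to produce common lower bounds, the only difference being that the paper (Lemma~\ref{lem:contenv}, (iii)$\Rightarrow$(i)) makes the set of \emph{strict} majorants $\{\psi>\f\}$ itself a directed set, while you index by pairs $(F,n)$ with a doubling condition to absorb the constant shifts. One cosmetic point: your relation on $I$ is not reflexive (since $n\ge 2n$ fails), so to have a directed preorder in the usual sense you should pass to its reflexive closure, which changes nothing in the verifications.
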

One may hope that there is in fact a decreasing \emph{sequence} $(\f_m)_{m=1}^\infty$ of $\theta$-psh model functions converging to $\f$. 
In the companion paper~\cite{nama}, we will prove---using 
Theorem~\ref{thm:regmono} and capacity estimates---that this is indeed the case.

As a consequence of Theorem~\ref{thm:regmono}, we get at any rate the following version of the Demailly-Richberg regularization theorem.
\begin{cor}\label{cor:richberg} Every continuous $\theta$-psh function $\f$ is the uniform limit on $X$ of a sequence $(\f_m)_{m\in\N}$ of $\theta$-psh model functions. 
\end{cor}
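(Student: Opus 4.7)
The plan is to combine Theorem~\ref{thm:regmono} with the classical Dini lemma, which is applicable because the Berkovich analytification $X^{\mathrm{an}}$ is a compact Hausdorff space and model functions are continuous.

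By Theorem~\ref{thm:regmono}, we can write $\f$ as the pointwise limit on $X^{\mathrm{an}}$ of a decreasing net $(\f_i)_{i\in I}$ of $\theta$-psh model functions. Consider the net of nonnegative continuous functions $g_i:=\f_i-\f\ge 0$, which decreases pointwise to $0$ on $X^{\mathrm{an}}$. Since $X^{\mathrm{an}}$ is compact Hausdorff, Dini's lemma (in its net version: a monotone net of continuous functions on a compact space converging pointwise to a continuous function does so uniformly) yields
\begin{equation*}
  \sup_{X^{\mathrm{an}}}(\f_i-\f)\longrightarrow 0
\end{equation*}
as $i$ runs through the directed set $I$. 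This uses crucially the assumed continuity of $\f$, without which the limit function would only be upper semicontinuous and Dini would fail.

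To extract a sequence, for each $m\in\N$ choose some index $i_m\in I$ with $\sup_{X^{\mathrm{an}}}(\f_{i_m}-\f)\le 1/m$. Then the sequence $(\f_{i_m})_{m\in\N}$ consists of $\theta$-psh model functions and converges uniformly to $\f$ on $X^{\mathrm{an}}$, which is exactly the conclusion of the corollary.

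I do not foresee a substantive obstacle here: the work has all been done in Theorem~\ref{thm:regmono}, and the passage from pointwise decreasing convergence to uniform convergence is the standard Dini argument on a compact space, with the only mild point being that one must use the net version of Dini rather than the sequential one, which is however entirely routine.
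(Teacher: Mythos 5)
Your argument is correct and is essentially the paper's own proof: Theorem~\ref{thm:regmono} provides the decreasing net of $\theta$-psh model functions, and Dini's lemma (which the paper spells out via the increasing open cover $\{\f_j<\f+\e\}$ of the compact space $X$) upgrades pointwise to uniform convergence, after which a sequence is extracted because the uniform topology is metrizable. No gaps.
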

\begin{proof} By Theorem~\ref{thm:regmono} there exists a decreasing net $(\p_j)$ of $\theta$-psh model functions converging pointwise to $\f$. For each $\e>0$ the compact set $X$ is the increasing union of the open sets $\{\p_j<\f+\e\}$, hence $\p_j<\f+\e$ for some $j$ (Dini's lemma). It follows that $\f$ lies in the closure of $\cD(X)\cap\PSH(\Xan,\theta)$ in $C^0(X)$ with respect to the topology of uniform convergence. Since the latter is defined by a norm, the result follows.
\end{proof}

The proof of Theorem~\ref{thm:regmono} reduces immediately to Theorem~\ref{P202}, in view of the following elementary result.

\begin{lem}\label{lem:contenv} The following properties are equivalent.
\begin{itemize}
\item[(i)] Every $\theta$-psh function $\f$ is the pointwise limit of a decreasing net of $\theta$-psh model functions. \item[(ii)] For each $u\in C^0(X)$ we have 
$$
P_\theta(u)=\sup\left\{\f\mid\f\in\cD(X)\cap\PSH(\Xan,\theta),\,\f\le u\text{ on }X\right\}.
$$  
\item[(iii)] For each $u\in C^0(X)$ $P_\theta(u)$ is a uniform limit of $\theta$-psh model functions. 
\end{itemize}
\end{lem}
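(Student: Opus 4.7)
The plan is to establish the cyclic implications (i) $\Rightarrow$ (ii) $\Rightarrow$ (iii) $\Rightarrow$ (i). Two of the three will reduce to Dini's lemma for monotone nets of continuous functions on the compact space $\Xan$, while the substantive step (iii) $\Rightarrow$ (i) calls for a careful construction of a decreasing net from the uniform approximants provided by (iii).

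For (ii) $\Rightarrow$ (iii), I will observe that by Proposition~\ref{prop:maxmodel} the family $S:=\{\psi\in\cD(\Xa)\cap\PSH(\Xa,\theta):\psi\le u\}$ is stable under max, hence forms an increasing net whose pointwise supremum equals $P_\theta(u)$ by (ii). This supremum is lsc as a supremum of continuous functions, and usc because $P_\theta(u)\in\PSH(\Xa,\theta)$, so $P_\theta(u)$ is continuous, and Dini upgrades the monotone pointwise convergence in $S$ to uniform convergence on $\Xan$. For (i) $\Rightarrow$ (ii), apply (i) to $P_\theta(u)$ to get a decreasing net $(\f_i)$ of $\theta$-psh model functions with $\inf_i\f_i=P_\theta(u)$; the positive parts $(\f_i-u)_+$ then form a decreasing net of nonnegative continuous functions converging pointwise to $(P_\theta(u)-u)_+=0$, so by Dini $M_i:=\sup_\Xan(\f_i-u)_+\to 0$. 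Rounding each $M_i$ up to a nearby rational if necessary, the shifts $\tilde\f_i:=\f_i-M_i$ are $\theta$-psh model functions with $\tilde\f_i\le u$ on $\Xan$ converging pointwise to $P_\theta(u)$, forcing the supremum in (ii) to dominate $P_\theta(u)$; the reverse inequality is trivial.

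For the key direction (iii) $\Rightarrow$ (i), fix $\f\in\PSH(\Xa,\theta)$ and consider the set
$$
I:=\{(u,\e):u\in C^0(\Xa),\,u\ge\f\text{ on }\Xan,\,\e\in\Q_{>0}\}
$$
with the partial order $(u,\e)\le(u',\e')$ iff $u\ge u'$ on $\Xan$ and $\e\ge 3\e'$; this is directed since $(\min(u_1,u_2),\min(\e_1,\e_2)/3)$ dominates $(u_j,\e_j)$ for $j=1,2$. For each $(u,\e)\in I$ use (iii) to pick a $\theta$-psh model function $\chi_{u,\e}$ with $\|\chi_{u,\e}-P_\theta(u)\|_\infty\le\e$, and set $\f_{u,\e}:=\chi_{u,\e}+2\e$. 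These satisfy $\f_{u,\e}\ge P_\theta(u)+\e\ge\f$ and $\f_{u,\e}\le P_\theta(u)+3\e\le u+3\e$; moreover, whenever $(u,\e)\le(u',\e')$, the monotonicity of $P_\theta$ (Proposition~\ref{prop:basicenv}(ii)) combined with $\e\ge 3\e'$ gives $\f_{u,\e}\ge P_\theta(u)+\e\ge P_\theta(u')+3\e'\ge\f_{u',\e'}$, so the net is genuinely decreasing. Given $x\in\Xan$ and $\delta>0$, upper semicontinuity of $\f$ on compact Hausdorff $\Xan$ supplies $u\in C^0(\Xa)$ with $u\ge\f$ and $u(x)\le\f(x)+\delta/2$; taking $\e\in\Q_{>0}$ with $\e\le\delta/6$ yields $\f_{u,\e}(x)\le\f(x)+\delta$, so $\inf_{(u,\e)}\f_{u,\e}(x)=\f(x)$ as required.

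The main obstacle is the construction in (iii) $\Rightarrow$ (i). The approximations from (iii) control $\chi_{u,\e}$ only within $\pm\e$ of $P_\theta(u)$, so producing a \emph{decreasing} family requires both a shift (the buffer $2\e$) to force the approximants to lie strictly above $P_\theta(u)$ and an asymmetric order on $I$ (the factor $3$) to absorb the residual slack as one moves up the directed set; once these constants are matched, directedness, monotonicity, and the infimum property can all be verified by elementary inequalities.
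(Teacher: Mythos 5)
Your proof is correct. The implications (i)$\Rightarrow$(ii) and (ii)$\Rightarrow$(iii) are essentially the paper's arguments: for the first, the paper runs Dini's lemma on the open sets $\{\f_j<u+\e\}$ rather than on $\sup_X(\f_j-u)_+$, and for the second it applies the usc version of Dini to $P_\theta(u)-\f_j$ rather than first observing that $P_\theta(u)$ is continuous, but these are the same argument in different clothes (and both, like the paper, need the harmless remark that the approximating constant must be taken rational to stay in $\cD(X)$). The genuine difference is in (iii)$\Rightarrow$(i). The paper first proves, using upper semicontinuity and (iii), that $\f(x)=\inf\{\psi(x)\mid\psi\ \text{$\theta$-psh model function},\ \psi\ge\f\}$, and then takes as index set the \emph{canonical} family $I=\{\psi\ \text{model $\theta$-psh}\mid\psi>\f\}$, proving it is directed by applying (iii) to $P_\theta(\min\{\psi_1,\psi_2\})$ with the same $3\e$ margin you use. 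You instead index the net by pairs $(u,\e)$ of continuous majorants and tolerances, with an asymmetric order, and enforce monotonicity by hand via the buffer $+2\e$ and the monotonicity of $P_\theta$. The ingredients (usc approximation by continuous majorants, the envelope, and (iii)) are identical; the paper's version buys a cleaner, choice-free index set and a reusable pointwise-infimum identity, while yours avoids having to prove directedness of the set of strict majorants at the cost of a more artificial index set. One cosmetic point: your relation $(u,\e)\le(u',\e')\Leftrightarrow(u\ge u'\ \text{and}\ \e\ge3\e')$ is irreflexive, so strictly speaking it is not a preorder; passing to its reflexive closure (which remains transitive and directed, and along which your family is still monotone) fixes this without any change to the argument.
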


\begin{proof} (i)$\Longrightarrow$(ii). Let $u\in C^0(X)$. By (i) there exists a decreasing net $(\f_j)$ of $\theta$-psh model functions converging pointwise to $P_\theta(u)$. Since $P_\theta(u)\le u$, we see that the compact set $X$ is for each $\e>0$ the increasing union of the open sets $\{\f_j<u+\e\}$, hence $\f_j<u+\e$ for some $j$. Since $\f_j-\e$ is $\theta$-psh and dominated by $u$, we get $\f_j-\e\le P_\theta(u)$ by definition of the envelope, which proves (ii). 

(ii)$\Longrightarrow$(iii). Since the set of $\f\in\cD(X)\cap\PSH(\Xan,\theta)$ such that $\f\le u$ is stable by max, (ii) shows that we can construct an increasing family $\f_j\in\cD(X)\cap\PSH(\Xan,\theta)$ converging pointwise to $P_\theta(u)$. But $P_\theta(u)-\f_j$ is usc for each $j$, and Dini's lemma therefore shows that the convergence is uniform on $X$. 

(iii)$\Longrightarrow$(i). Let $\f$ be a $\theta$-psh function. We first claim that for each $x\in \Xan$ we have 
\begin{equation}\label{e302}
  \f(x)=\inf\left\{\p(x)\mid\p\in\cD(X)\cap\PSH(\Xan,\theta),\p\ge\f\right\}. 
\end{equation}
Indeed, given $\e>0$ there exists $u\in C^0(X)$ such that $u\ge\f$ and $u(x)\le\f(x)+\e$, simply because $\f$ is usc. Since $\f$ is $\theta$-psh, the maximality property of envelopes implies $\f\le P_\theta(u)$. By (iii) we may then find $\psi\in\cD(X)\cap\PSH(\Xan,\theta)$ such that $P_\theta(u)\le\psi\le P_\theta(u)+\e$. We thus have $\psi\ge\f$ and $\psi(x)\le\f(x)+2\e$, and the claim follows. 
  
Now consider the set $I$ of all $\psi\in\cD(X)\cap\PSH(\Xan,\theta)$ such that $\psi>\f$ on $X$. Note that the latter condition implies that $\psi\ge\f+\e$ for some $\e>0$, since $\f-\psi$ is usc. We claim that $I$ is a directed set, which will conclude the proof, since 
$$
\f=\inf_{\p\in I}\p=\lim_{\p\in I}\p
$$
pointwise on $X$, thanks to (\ref{e302}). To get the claim, let $\psi_1,\psi_2\in I$ and choose $\e>0$ such that $\min\{\psi_1,\psi_2\}\ge\f+3\e$. We then also have $P_\theta(\min\{\psi_1,\psi_2\})\ge\f+3\e$. By (iii) we find $\p_3\in\cD(X)\cap\PSH(\Xan,\theta)$ such that $|\psi_3-P_\theta(\min\{\psi_1, \psi_2\})|\le\e$. 
Then $\f+\e\le\psi_3-\e\le\min\{\psi_1,\psi_2\}$, which concludes the proof.
\end{proof}

\appendix
\section{Lipschitz constants of convex functions}
Let $V$ be a finite dimensional real vector space and $K\subset V$ be a convex body, \ie a compact convex set with nonempty interior. Denote by $\cE(K)$ the set of 
extremal points of $K$. 
Given a norm $\|\cdot\|$ on $V$ the Lipschitz constant 
of a continuous function $\f:K\to\R$ is defined as usual as
$$
\lip_K(\f):=\sup_{v\neq v'}\frac{|\f(v)-\f(v')|}{\|v-v'\|}\in [0,+\infty]
$$
and its $C^{0,1}$-norm is then 
$$
\|\f\|_{C^{0,1}(K)}:=\|\f\|_{C^0(K)}+\lip_K(\f).
$$
This quantity of course depends on the choice of $\|\cdot\|$, but since all norms on $V$ are equivalent, choosing another norm only affects the estimates to follow by an overall multiplicative constant. 

Let $\f:K\to\R$ be a continuous convex function. Our goal is to estimate the $C^{0,1}$-norm of $\f$ on $K$ in terms of $\|\f\|_{C^0(\partial K)}$ and certain directional derivatives of $\f$ at boundary points.
Let us first introduce some notation. 
First, for $v,w\in K$ we define the
directional derivative of $\f$ at $v$ towards $w$ as
\begin{equation}\label{equ:der}
  D_v\f(w):=\left.\frac{d}{dt}\right|_{t=0_+}\f((1-t)v+tw);
\end{equation}
this limit exists by convexity of $\f$.
Second, given a point 
$e\in K$  
we define a projection 
$\pi_e:K\setminus\{e\}\to\partial K$ by setting 
$$
t_e(v):=\sup\left\{t\in\R,\,e+t(v-e)\in K\right\}
$$
and
$$
\pi_e(v):=e+t_e(v)(v-e),
$$ 
so that $\pi_e(v)\in\partial K$ 
is the unique point such that $v\in[e,\pi_e(v)]$. 
\begin{prop}\label{prop:lip}
There exists $C>0$ such that every Lipschitz continuous 
convex function $\f:K\to\R$ satisfies
$$
C^{-1}\|\f\|_{C^{0,1}(K)}\le \|\f\|_{C^0(\partial K)}+\sup_{e\in\cE(K),v\in\mathrm{int}(K)}\left|D_{\pi_e(v)}\f(e)\right|\le C\,\|\f\|_{C^{0,1}(K)}.
$$
\end{prop}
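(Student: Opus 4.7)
The strategy is to prove the two inequalities separately. The right-hand inequality $N(\f)\le C\|\f\|_{C^{0,1}(\tau)}$, where $N(\f)$ denotes the middle expression, is immediate: one has $\|\f\|_{C^0(\partial\tau)}\le\|\f\|_{C^0(\tau)}$, and since $D_{\pi_e(v)}\f(e)$ is the right-derivative at $0$ of $t\mapsto\f((1-t)e+t\pi_e(v))$, its absolute value is bounded by $\lip_\tau(\f)\cdot\|\pi_e(v)-e\|\le\diam(\tau)\lip_\tau(\f)$. Thus $C=\max(1,\diam(\tau))$ works.

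For the reverse inequality I first dispose of the sup-norm bound $\|\f\|_{C^0(\tau)}\le N(\f)$. By convexity the maximum of $\f$ on $\tau$ is attained at an extremal point, whence $\sup_\tau\f\le\|\f\|_{C^0(\partial\tau)}$. For the lower bound at an arbitrary $v\in\tau$, Krein--Milman (or Carath\'eodory, in finite dimension) yields $e\in\cE(\tau)$ such that $v$ lies on the segment $[e,\pi_e(v)]$; writing $v=(1-s)e+s\pi_e(v)$ with $s\in[0,1]$ and exploiting the monotonicity of incremental quotients of the convex function $t\mapsto\f((1-t)e+t\pi_e(v))$ gives $\f(v)\ge\f(e)+sD_{\pi_e(v)}\f(e)\ge-N(\f)$.

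The substantive step is bounding $\lip_\tau(\f)$ by a multiple of $N(\f)$. For an interior point $v\in\tau$ and a unit vector $u$, I extend the line $v+\R u$ to meet $\partial\tau$ at $p^\pm=v\pm t_\pm u$; one-dimensional convexity of $g(t):=\f(v+tu)$ on $[-t_-,t_+]$ and the monotonicity of $g'$ squeeze the directional derivative $\partial_u\f(v)$ between the inward one-sided derivatives of $\f$ at $p^-$ and at $p^+$. It thus suffices to bound inward directional derivatives of $\f$ at arbitrary boundary points by $C\cdot N(\f)$. I plan to carry this out by induction on the dimension of the minimal face $F$ of $\tau$ containing the given boundary point $p$: directions tangent to $F$ are handled by applying the inductive hypothesis to $\f|_F$, using that $\cE(F)\subset\cE(\tau)$, that $\pi^F_e=\pi^\tau_e$ on $\rel F$ for $e\in\cE(F)$ (since $\tau\cap\mathrm{aff}(F)=F$ because $F$ is a face), and that $\partial F\subset\partial\tau$; transverse inward directions are handled by expressing $p$ as a convex combination of extremal points of $\tau$ not in $F$ and combining one-dimensional convexity along an auxiliary segment with the sup-norm bound already established. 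The main obstacle is arranging the induction so that the final constant $C$ depends only on $\tau$ and not on $\f$; this will require a compactness argument exploiting the finite-dimensionality of $V$ to make the choices of convex combinations uniform.
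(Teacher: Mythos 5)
Your right-hand inequality and your treatment of the sup-norm are essentially fine, modulo one slip: along the segment $[e,\pi_e(v)]$ parametrized as you do, one-dimensional convexity gives $\f(v)\ge\f(\pi_e(v))+(1-s)D_{\pi_e(v)}\f(e)$, not $\f(v)\ge\f(e)+sD_{\pi_e(v)}\f(e)$ (your stated inequality already fails for $\f(t)=-t$ on $\tau=[0,1]$ with $e=0$); the corrected version still yields the bound you want. The real problem is that the Lipschitz estimate, which is the entire content of the proposition, is only a plan, and the plan has a gap that is not just a matter of bookkeeping constants. Write $N(\f)$ for the middle quantity in the statement. Your induction on the minimal face $F$ containing a boundary point requires applying the proposition to $\f|_F$, hence requires a bound $N(\f|_F)\le C\,N(\f)$. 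But $N(\f|_F)$ involves derivatives $D_{\pi^F_e(w)}\f(e)$ at points $\pi^F_e(w)$ of the relative boundary of $F$, and the segment $(e,\pi^F_e(w))$ lies entirely inside $F\subset\partial\tau$; such points are never of the form $\pi_e(v)$ with $v\in\mathrm{int}(\tau)$, so nothing in the hypothesis controls them. Proving $N(\f|_F)\le CN(\f)$ directly is essentially equivalent to the proposition itself, so the induction as set up is circular. Relatedly, when $\tau$ is a simplex of dimension at least two and $p$ is a vertex, no segment $(e,p)$ with $e\in\cE(\tau)$ meets $\mathrm{int}(\tau)$, so $N(\f)$ carries no direct information about one-sided derivatives at $p$; your "transverse" step and the uniformity of the constants are likewise asserted, not proved.

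The paper's proof avoids boundary points altogether, and you should redo your argument along these lines. By Rademacher's theorem, $\lip_\tau(\f)=\sup_{v\in A}\|\nabla\f(v)\|$, where $A\subset\mathrm{int}(\tau)$ is the set of differentiability points. A lower semicontinuity/compactness argument on $\{\lambda\in V^*:\|\lambda\|=1\}\times\tau$, using that $\{e-v:\,e\in\cE(\tau)\}$ spans $V$ for every $v\in\tau$, produces a constant $C=C(\tau)$ with $\|\lambda\|\le C\sup_{e\in\cE(\tau)}|\langle\lambda,e-v\rangle|$ for all $\lambda$ and all $v\in\tau$. It therefore suffices to bound $|D_v\f(e)|=|\langle\nabla\f(v),e-v\rangle|$ for $v\in A$ and $e\in\cE(\tau)$. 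Writing $v=\pi_e(v)+t_0(e-\pi_e(v))$ and using monotonicity of the derivative of the convex function $t\mapsto\f(\pi_e(v)+t(e-\pi_e(v)))$, one squeezes $D_v\f(e)$ between $(1-t_0)D_{\pi_e(v)}\f(e)$ and $\f(e)-\f(\pi_e(v))-t_0D_{\pi_e(v)}\f(e)$, both of which have absolute value at most a fixed multiple of $N(\f)$. This uses only chords through extremal points and interior points, exactly the data that $N(\f)$ controls.
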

\begin{proof} The right-hand inequality is clear, so we focus on the left-hand one. Given $v\in\mathrm{int}(K)$ and $e\in\cE(K)$ we may write $v=\pi_e(v)+t_0\left(e-\pi_e(v)\right)$ for some $0<t_0<1$. Consider the restriction of $\f$ to the segment $[\pi_e(v),e]$, \ie set $\theta(t):=\f\left(\pi_e(v)+t(e-\pi_e(v)\right)$, $t\in[0,1]$. If we denote by $\theta'(t)$ the right-derivative of $\theta$ at $t$ then the convexity of $\theta$ yields
\begin{equation}\label{equ:min1}
\theta'(0)\le\left(\theta(t_0)-\theta(0)\right)/t_0
\end{equation}
and
\begin{equation}\label{equ:min2}
\theta'(0)\le\theta'(t_0)\le\left(\theta(1)-\theta(t_0)\right)/(1-t_0).
\end{equation}
Now, by definition,
 $\theta'(0)=D_{\pi_e(v)}\f(e)$, $t_0\theta'(0)=D_{\pi_e(v)}\f(v)$ and $(1-t_0)\theta'(t_0)=D_v\f(e)$, so that (\ref{equ:min1}) reads
$$
t_0D_{\pi_e(v)}\f(e)\le\f(v)-\f(\pi_e(v)).
$$
Since we also have $\sup_K\f=\sup_{\partial K}\f$ by convexity, this shows that
$$
\|\f\|_{C^0(K)}\le\|\f\|_{C^0(\partial K)}+\sup_{e\in\cE(K),v\in\mathrm{int}(K)}\left|D_{\pi_e(v)}\f(e)\right|.
$$
On the other hand,~\eqref{equ:min1} combined with (\ref{equ:min2}) yields
$$
(1-t_0)D_{\pi_e(v)}\f(e)\le D_v\f(e)\le\f(e)-\f\left(\pi_e(v)\right)-t_0D_{\pi_e(v)}\f(e)
$$
and we conclude by Lemma~\ref{lem:lip} below.
\end{proof}

\begin{lem}\label{lem:lip}
There exists a constant $C>0$ such that every Lipschitz continuous function $\f:K\to\R$ satisfies
$$
C^ {-1}\lip_K(\f)\le\sup_{e\in\cE(K),v\in A}|D_v\f(e)|\le C\lip_K(\f)
$$
where $A\subset\mathring{K}$ denotes the set of points at which $\f$ is differentiable. 
\end{lem}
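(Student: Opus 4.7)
The plan is to combine a dual-norm estimate coming from the support function of $\tau$ with the standard identification of the Lipschitz constant of $\f$ with the supremum of the dual norm of its gradient on the set of differentiability points. Write $\|\cdot\|_*$ for the dual norm on $V^*$. By Rademacher's theorem, $A$ has full Lebesgue measure in $\mathrm{int}(\tau)$, and at every $v\in A$ the directional derivative equals $D_v\f(w)=\nabla\f(v)\cdot(w-v)$, with $\|\nabla\f(v)\|_*\le\lip_\tau(\f)$. The right-hand inequality is then immediate since $|D_v\f(e)|\le\|\nabla\f(v)\|_*\,\|e-v\|\le\diam(\tau)\,\lip_\tau(\f)$.

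For the left-hand inequality, the key ingredient is a uniform comparison: I claim there exists $r>0$ (depending only on $\tau$) such that for every $p\in V^*$ and every $v\in\tau$,
\begin{equation*}
\max_{e\in\cE(\tau)}|p\cdot(e-v)|\ge r\,\|p\|_*.
\end{equation*}
To establish this, pick a ball $B(v_0,r)\subset\tau$ with $r>0$ (possible since $\tau$ has nonempty interior), and consider the support function $h_\tau(p):=\sup_{y\in\tau}p\cdot y$; the inclusion forces $h_\tau(p)+h_\tau(-p)\ge 2r\,\|p\|_*$. On the other hand, linear forms on $\tau$ attain their extremes at extremal points, so $\max_e p\cdot(e-v)=h_\tau(p)-p\cdot v$ and $\min_e p\cdot(e-v)=-h_\tau(-p)-p\cdot v$; the displayed estimate then follows from $\max(a,b)\ge(a+b)/2$ applied to $a=h_\tau(p)-p\cdot v$ and $b=h_\tau(-p)+p\cdot v$.

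Applying the displayed comparison with $p=\nabla\f(v)$ for $v\in A$ gives $\|\nabla\f(v)\|_*\le r^{-1}\max_e|D_v\f(e)|$, and taking the supremum in $v$ yields $\sup_{v\in A}\|\nabla\f(v)\|_*\le r^{-1}M$, where $M$ denotes the quantity on the right of the lemma. To conclude it remains to invoke the classical identity $\lip_\tau(\f)=\sup_{v\in A}\|\nabla\f(v)\|_*$, which I would prove by mollification: the convolutions $\f_\e=\f\ast\rho_\e$ are smooth on the $\e$-interior of $\tau$ with $\|\nabla\f_\e\|_*\le\sup_A\|\nabla\f\|_*$ pointwise there, and converge uniformly to $\f$ on compact subsets of $\mathrm{int}(\tau)$, so their Lipschitz constants there control $\lip_\tau(\f)$ in the limit. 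This identification is the only mildly subtle point; the support function argument is the main conceptual content.
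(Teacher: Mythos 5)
Your proof is correct and follows essentially the same route as the paper: Rademacher's theorem reduces everything to a uniform lower bound of the form $\sup_{e\in\cE(\tau)}|\langle\lambda,e-v\rangle|\ge c\,\|\lambda\|_*$ valid for all $v\in\tau$, which the paper obtains by a lower-semicontinuity and compactness argument (using that $\{v-e,\ e\in\cE(\tau)\}$ spans $V$), while you obtain it with the explicit constant $c=r$ (an inradius of $\tau$) from the support-function inequality $h_\tau(p)+h_\tau(-p)\ge 2r\|p\|_*$. Both versions are valid; yours is slightly more quantitative, and your mollification argument correctly supplies the standard identity $\lip_\tau(\f)=\sup_{v\in A}\|\nabla\f(v)\|_*$ that the paper merely cites as a consequence of Rademacher's theorem.
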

\begin{proof} It is clear that $|D_v\f(e)|\le\diam(K)\lip_K(\f)$ for all $e,v$. Conversely it is a standard consequence of Rademacher's theorem that $\lip_K(\f)=\sup_{v\in A}\|\nabla\f(v)\|$. For each $v\in A$ we also have $D_v\f(e)=\langle\nabla\f(v),e-v\rangle$. We now claim that there exists $C>0$ such that 
$$
\|\lambda\|\le C\sup_{e\in\cE(K)}|\langle\lambda,v-e\rangle|
$$
for all $\lambda\in V^*$ and all $v\in K$, which will conclude the proof. Indeed the supremum in the right-hand side is a lower semicontinuous function of $(\lambda,v)\in V^*\times K$. As a consequence it achieves its infimum on the compact set $\{\lambda\in V^*,\,\|\lambda\|=1\}\times K$, and this infimum cannot be zero since $\{v-e,\,e\in\cE(K)\}$ spans $V$ for each $v\in K$. The claim follows by homogeneity.
\end{proof}


\section{Multiplier ideals on $S$-varieties}

The purpose of this section is to define multiplier ideals on regular $S$-varieties and 
establish their basic properties. 
We are grateful to Osamu Fujino, J\'anos Koll\'ar and Mircea Musta\c{t}\v{a} 
for their helpful suggestions. 

It should be noted that most results in this appendix are  also obtained, with simpler arguments and in a slightly more general setting, in \cite{mustata-nicaise}, which appeared after a first version of the present paper was completed. We felt, however, that the alternative arguments presented here might still be of some interest to the reader. 

In this appendix, and as opposed to the main body of the article, it will be more convenient to use multiplicative notation for Picard groups. We also fix the choice of an isomorphism $R\simeq k\cro{t}$.

%
%

\subsection{Kodaira vanishing}
The usual compactification argument that reduces the relative version of Kodaira (or Kawamata-Viehweg) vanishing to its global projective version over $k$ cannot be applied for $S$-varieties. Following suggestions of J\'anos Koll\'ar and Mircea Musta\c{t}\v{a}, we rely instead on a Kodaira-type vanishing theorem on the (possibly reducible) special fiber.

\begin{thm}[Kodaira vanishing]\label{thm:kodaira} 
  Let $\cX$ be an SNC $S$-variety, denote by $\om_\cX$ its dualizing sheaf, and let $\cL\in\Pic(\cX)$ be an ample line bundle. 
  Then we have
  $$
  H^q\left(\cX,\om_\cX\otimes\cL\right)=0\quad\text{for all $q\ge1$}.
  $$ 
\end{thm}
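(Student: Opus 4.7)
The plan is to reduce the global vanishing on $\cX$ to a Kodaira-type vanishing on the special fiber, exploiting the complete local structure of $R$. Since $\cX$ is flat over $S=\spec R$ and $\unipar\in R$ is a uniformizer, the special fiber $\cX_0$ is the principal Cartier divisor cut out by $\unipar$, and multiplication by $\unipar$ produces the short exact sequence
$$
0 \to \om_\cX \otimes \cL \xrightarrow{\;\cdot\unipar\;} \om_\cX \otimes \cL \to (\om_\cX \otimes \cL)|_{\cX_0} \to 0.
$$
Adjunction on the regular scheme $\cX$ for the Cartier divisor $\cX_0$, together with the canonical trivialization of $\cO_\cX(\cX_0)$ by $\unipar$, gives $\om_\cX|_{\cX_0}\simeq\om_{\cX_0}$. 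Taking the long exact sequence in cohomology, the theorem will follow once we establish
$$
H^q(\cX_0,\, \om_{\cX_0} \otimes \cL|_{\cX_0}) = 0 \qquad \text{for all } q\ge 1, \qquad (\ast)
$$
for then multiplication by $\unipar$ is surjective on the finitely generated $R$-module $H^q(\cX,\om_\cX\otimes\cL)$ (finite generation follows from properness of $\cX\to S$), and Nakayama's lemma applied in the complete local ring $(R,(\unipar))$ forces $H^q(\cX,\om_\cX\otimes\cL)=0$.

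The content of the proof is thus $(\ast)$: a Kodaira vanishing statement on the projective $k$-scheme $\cX_0$, which has simple normal crossing support but is generally non-reduced, since $\cX_0=\sum_i b_i E_i$ may carry multiplicities. When $\cX_0$ is reduced, $(\ast)$ is a known vanishing theorem of Koll\'ar and Fujino for reduced snc varieties over a field of characteristic zero. To treat the non-reduced case, I would filter $\cO_{\cX_0}$ by the powers $\cN^j$ of its nilradical and analyze the successive quotients $\cN^j/\cN^{j+1}$. These are coherent sheaves on the reduced snc scheme $\cX_0^{\red}=\bigcup_i E_i$, locally of the form $\bigoplus_J \cO_{E_J}\bigl(-\sum_{j\in J} a_j^{(J)} E_j\bigr)$ for suitable $a_j^{(J)}\ge 0$. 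Combined with an adjunction expressing $\om_{\cX_0}$ through the reduced structure, this should reduce each graded piece to a Kodaira vanishing on a smooth projective $k$-variety $E_J$, with $\cL|_{E_J}$ twisted by an effective correction divisor; the ampleness of $\cL|_{E_J}$ is then enough to kill the higher cohomology piece by piece, and the vanishings reassemble into $(\ast)$.

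I expect the main obstacle to be precisely the bookkeeping in this last step: correctly identifying the graded pieces of $\om_{\cX_0}$ under the nilradical filtration and the associated twisting divisors so that Kodaira vanishing on each $E_J$ applies cleanly. An alternative that might sidestep part of this difficulty is to pass after a finite ramified base change $R'=R[\unipar^{1/b}]$ with $b=\mathrm{lcm}(b_i)$, together with further vertical blowups, to a strictly semistable model (which exists by Kempf--Knudsen--Mumford--Saint Donat in residue characteristic zero), apply the reduced-fiber case of $(\ast)$ there, and then descend by an equivariant trace argument exploiting the tame Galois action of $R'/R$. Either route isolates the use of characteristic zero to a standard vanishing / semistable reduction input.
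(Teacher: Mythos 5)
Your reduction to the special fiber is fine: the sequence $0\to\om_\cX\otimes\cL\xrightarrow{\cdot\unipar}\om_\cX\otimes\cL\to\om_{\cX_0}\otimes\cL|_{\cX_0}\to0$ together with Nakayama is a legitimate (and standard) substitute for the paper's use of relative duality over the affine base. The problem is entirely in your treatment of $(\ast)$, and your primary route (the nilradical filtration) has a genuine gap that I do not believe can be closed by ``bookkeeping.'' The graded pieces $\cN^j/\cN^{j+1}$ are indeed of the form $\cO_{Z_j}(-jE^{\red})$ for suitable reduced SNC strata $Z_j$, but the correction divisors are restrictions of \emph{vertical} divisors to components of $\cX_0$, and such restrictions are essentially never nef (this is exactly what the Negativity Lemma \ref{lem:neg} exploits); so $\cL|_{E_J}$ twisted by these corrections is in general neither ample nor nef and big, and Kodaira/Kawamata--Viehweg on $E_J$ does not apply. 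A second mismatch: the graded pieces are supported on strata $E_J$ of dimension $<\dim\cX_0$ once $|J|>1$, while duality on $\cX_0$ asks for vanishing of $H^p(\cL^{-1})$ in the range $p<\dim\cX_0$, which cannot be produced stratum by stratum from vanishing below $\dim E_J$. Finally, even the ``reduced snc'' case you invoke is not classical Kodaira vanishing: for a reduced but \emph{reducible} SNC scheme one needs the Kawamata--Ambro--Fujino vanishing theorem for embedded SNC pairs, which is the key external input of the paper and should be cited as such rather than treated as folklore.

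Your alternative route is, in outline, the paper's actual proof: base change to $R[\unipar^{1/d}]$ with $d$ the lcm of the multiplicities, normalize to obtain a model $\cY$ with reduced special fiber, and descend via the trace map (tame, since $\mathrm{char}\,k=0$). But be aware of two points you are skating over. First, $\cY$ is only toroidal, not regular, so $\cY_0$ is reduced but not SNC; the paper performs a further toroidal blow-up $\cY'\to\cY$, proves $\om_{\cY'}\otimes\cO_{\cY'}(\cY'_{0,\red})\simeq\pi^*(\om_\cY\otimes\cO_\cY(\cY_0))$ by a toric computation, and uses adjunction plus Artin approximation (to realize $\cY'_{0,\red}$ as an embedded SNC scheme) before Ambro--Fujino can be applied. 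Second, insisting on a \emph{strictly semistable} regular model would require blow-ups that change the dualizing sheaf, so you would still need this comparison step; the trace argument alone does not let you replace $\cX$ by an arbitrary modification. With these inputs made explicit, route (b) becomes the proof in Appendix B.
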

\begin{proof} By flat base change we may assume that $k$ is algebraically closed. The desired result is equivalent to $R^q f_*(\om_\cX\otimes\cL)=0$ for $q\ge 1$ since $S$ is affine, and all fibers of $f:\cX\to S$ are Cohen-Macaulay since $\cX$ is in particular Cohen-Macaulay. We may therefore use relative duality for $f$, which shows that the desired result is equivalent to $R^q f_*\cL^{-1}=0$ for $q<n=\dim X$. 

Let $d\in\N^*$ be a common multiple of the multiplicities of $\cX_0$, set $S_d:=\Spec\cro{t^{1/d}}$ and let $\cY$ be the normalization of $\cX\times_S S_d$, with structure map $g:\cY\to S_d$. The pull-back $\cM$ of $\cL$ to $\cY$ is still ample since $\cY\to\cX$ is finite. By~\cite[pp.200--201]{KKMS} the $S_d$-scheme $\cY$ is toroidal and its special fiber $\cY_0$ is \emph{reduced}.  

The relative trace $\tr_{\cY/\cX}$ shows that $R^q g_*\cM^{-1}$ contains $R^q f_*\cL^{-1}$ as a direct summand, and it is therefore enough to show by semicontinuity that $H^q(\cY_0,\cM^{-1})=0$ for $q<n$. Like any toroidal $S$-scheme, $\cY$ is Cohen-Macaulay. As a consequence, the Cartier divisor $\cY_0$ is Cohen-Macaulay as well. By another application of duality, this time on $\cY_0$, we are reduced to showing that $H^q(\cY_0,\om_{\cY_0}\otimes\cM)=0$ for $q\ge 1$. 

By~\cite{KKMS}  we may choose a toroidal vertical blowup $\pi:\cY'\to\cY$ such that $\cY'_0$ has simple normal crossing support (and hence $\cY'$ is regular). A toric computation (compare~\cite[Proposition 3.7]{Kol}) shows that 
$$
\om_{\cY'}\otimes\cO_{\cY'}(\cY'_{0,\red})\simeq\pi^*\left(\om_{\cY}\otimes\cO_{\cY}(\cY_0)\right). 
$$
Since $\cY_0$ and $\cY'_{0,\red}$ are Cartier divisors on $\cY$ and $\cY'$, respectively, adjunction applies (see for instance~\cite[Proposition 5.73]{KM}) and we get $\om_{\cY'_{0,\red}}\simeq\pi^*\om_{\cY_0}$.
On the other hand, the projective and reduced (but \emph{a priori} reducible) $k$-scheme $\cY'_{0,\red}$ has \emph{embedded} SNC singularities. It is indeed an SNC divisor in $\cY'$, and~\cite{Art} implies the existence of an algebraic $k$-variety containing $\cY'_0$ as a divisor. Since $\pi:\cY'_{0,\red}\to\cY_0$ is projective and $\cM$ is ample on $\cY_0$, we may therefore apply a vanishing theorem originally due to Kawamata and Ambro and corrected by Fujino (\cite[Theorem 4.4]{Kaw},~\cite[Theorem 3.2]{Amb} and~\cite[Theorem 2.39]{Fuj}) to get that $\pi_*\left(\om_{\cY'_{0,\red}}\otimes\pi^*\cM\right)$ is acyclic on $\cY'$. Now we have $\pi_*\om_{\cY'_{0,\red}}=\om_{\cY_0}$, hence
$$
\pi_*\left(\om_{\cY'_{0,\red}}\otimes\pi^*\cM\right)\simeq\om_{\cY_0}\otimes\cM
$$
by the projection formula, and we conclude as desired that $\om_{\cY_0}\otimes\cM$ is acyclic. 
\end{proof}

%
%

\subsection{Kawamata-Viehweg vanishing}
We next explain how to infer from Theorem~\ref{thm:kodaira} a version of the Kawamata-Viehweg vanishing theorem on SNC models. 
We rely as usual on the ``covering trick'' and basically follow the proof of~\cite[Theorem 2.64]{KM} but provide some details for the convenience of the reader. 

\begin{lem}[Covering trick]\label{lem:trick} Assume that $k$ is algebraically closed. Let $\cX$ be an SNC $S$-variety and denote by $(E_i)_{i\in I}$ the set of irreducible components of $\cX_0$. Let also $\cL\in\Pic(\cX)$ and $m\in\N^*$. 
Then there exists an SNC $S$-scheme $\cX'$ and a finite surjective morphism $\rho:\cX'\to\cX$ such that $\rho^*\cL$ is divisible by $m$ in $\Pic(\cX')$ and $\rho^*E_i$ is smooth over $k$ (but possibly disconnected) for all $i\in I$. 
\end{lem}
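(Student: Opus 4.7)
The plan is to apply a Bloch--Gieseker-type covering construction, followed by further blow-ups to enforce the irreducibility of intersections. Assuming $\cX$ is projective (as expected in the applications of this lemma in Appendix~B), I first reduce to the case where $\cL$ is very ample by writing $\cL\cong\cA\otimes\cB^{-1}$ with $\cA,\cB\in\Pic(\cX)$ very ample (take $\cB$ a sufficiently positive very ample line bundle and $\cA:=\cB\otimes\cL$). A cover making both $\cA$ and $\cB$ separately $m$-divisible will then make $\cL$ itself $m$-divisible, so it suffices to compose two such covers, verifying that the desired smoothness of the $\rho^{*}E_{i}$'s is preserved at each step.

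For a given very ample $\cA$, I would embed $\iota:\cX\hookrightarrow\P^{N}_{S}$ via a basis of $H^{0}(\cX,\cA)$ and form the fiber product $\cY:=\cX\times_{\P^{N}_{S},\,\nu_{m}}\P^{N}_{S}$, where $\nu_{m}([x_{0}:\cdots:x_{N}])=[x_{0}^{m}:\cdots:x_{N}^{m}]$. The identity $\nu_{m}^{*}\cO(1)=\cO(m)$ yields $\rho^{*}\cA=(\cA')^{m}$ with $\cA':=\pi^{*}\cO_{\P^{N}_{S}}(1)$ and $\pi:\cY\to\P^{N}_{S}$ the second projection, so $\cA$ becomes $m$-divisible on $\cY$. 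To ensure compatibility with the SNC structure $\cX_{0}=\sum_{i}b_{i}E_{i}$, I would first precompose $\iota$ with a sufficiently generic $\phi\in PGL_{N+1}(k)$, so that by repeated applications of Bertini (valid in characteristic zero) the divisors $\cX\cap\phi^{-1}(H_{i})$, $i=0,\ldots,N$, are smooth and form, jointly with $\cX_{0}$, an SNC divisor on $\cX$.

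The key local verification is then the following: near any point $p\in\cX$, choose a regular system of parameters $(z_{1},\ldots,z_{r},w_{1},\ldots,w_{s},u_{\bullet})$ in which the $z_{i}$ are local equations of the components of $\cX_{0}$ through $p$ and the $w_{k}$ are local equations of the hyperplanes passing through $p$. The cover locally amounts to extracting $m$-th roots $\tilde{w}_{k}^{m}=w_{k}$, hence $(z_{\bullet},\tilde{w}_{\bullet},u_{\bullet})$ remains a regular system of parameters on $\cY$. This shows that $\cY$ is regular, that its special fiber has SNC support (condition~(i) in Definition~\ref{defi:snc}), and that each $\rho^{*}E_{i}$ is locally smooth; since the cover is étale over the generic point of every $E_{i}$ by transversality, $\rho^{*}E_{i}$ is in fact globally a reduced smooth divisor with pairwise disjoint connected components. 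Iterating the construction with $\cB$ on $\cY$ produces the desired composite cover $\rho:\cX'\to\cX$ with $\rho^{*}\cL$ $m$-divisible.

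The main obstacle will be enforcing condition~(ii) of Definition~\ref{defi:snc} (irreducibility of the intersections $E'_{J}$) on $\cX'$. The standard recipe mentioned in~\S\ref{sec:mod} is to further blow up along irreducible components of reducible $E'_{J}$'s; such blow-ups preserve $m$-divisibility of $\rho^{*}\cL$ by functoriality of pullback. The delicate point will be verifying that the smoothness of each $\rho^{*}E_{i}$ is preserved through these blow-ups. Here the local analysis above is crucial: it implies that components of $\cX'_{0}$ lying over a common $E_{i}$ are globally disjoint, which strongly constrains the geometry of the reducible $E'_{J}$'s and should allow the blow-up centers to be chosen so that the strict transforms of the $\rho^{*}E_{i}$'s retain smoothness.
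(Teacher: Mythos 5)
Your main construction is exactly the paper's: reduce to $\cL$ very ample (the paper writes $\cL=(\cA\otimes\cL)\otimes\cA^{-1}$ and implicitly composes two covers, as you do explicitly), embed by the very ample bundle into $\P^N_k\times_k S$, form the fiber product with the $m$-th power map $[X_0:\cdots:X_N]\mapsto[X_0^m:\cdots:X_N^m]$ after precomposing with a generic element of $\mathrm{PGL}(N+1,k)$, and use a Bertini/Kleiman transversality argument on the strata $E_J$ to make the hyperplane pullbacks smooth and jointly SNC with $\cX_0$. Your local $m$-th-root computation is a correct (and more explicit than the paper's) justification that $\cX'$ is regular with SNC special fiber and that each $\rho^*E_i$ is smooth and reduced.

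Your last paragraph, however, is both unnecessary and would actually break the statement. It is unnecessary because the lemma asserts only that $\cX'$ is an SNC $S$-\emph{scheme} and explicitly allows $\rho^*E_i$ to be disconnected: $\cX'$ need not be integral, so condition~(ii) of Definition~\ref{defi:snc} (irreducibility of the intersections $E'_J$) is not being imposed here --- only the simple normal crossing support condition~(i) is. It is harmful because the statement requires $\rho$ to be \emph{finite}, and this is essential in the application: Step~1 of the proof of Theorem~\ref{thm:KV} uses the trace map $\tr_{\cX'/\cX}$ to realize $H^q(\cX,\cL^{-1})$ as a direct summand of $H^q(\cX',\rho^*\cL^{-1})$, which fails if $\rho$ contracts anything. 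Composing your cover with further blow-ups to enforce condition~(ii) would therefore produce a morphism that no longer satisfies the lemma. Delete that paragraph; the rest of your argument is complete and coincides with the paper's proof.
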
 
We emphasize that the generic fiber of $\cX'$ is a finite cover of the generic fiber of $\cX$.  
\begin{proof} Writing $\cL=(\cA+\cL)-\cA$ for some sufficiently ample $\cA\in\Pic(\cX)$ reduces us to the case where $\cL$ is very ample. We then get a closed embedding $i:\cX\hookrightarrow\P^N_k\times_k S$ over $S$ such that $\cL$ coincides with the restriction of $\cO(1)$. Let $\pi:\P^N_k\to\P^N_k$ be the morphism $[X_0:\dots:X_N]\mapsto[X_0^m:\dots:X_N^m]$, which satisfies $\pi^*\cO(1)=\cO(m)$. For each $\sigma\in\mathrm{PGL}(N+1,k)$ set $i_\sigma:=\sigma\circ i$ and consider $\cX':=\cX\times_{i_\sigma}\pi$ with the finite surjective morphism $\rho:\cX'\to\cX$, so that $\rho^*\cL=\cO(m)|_{\cX'}$ is divisible by $m$ in $\Pic(\cX')$. 

Applying Kleiman's Bertini-type theorem (cf.~\cite[III.10.8]{Har}) to the smooth $k$-varieties $E_J=\bigcap_{j\in J}E_j$ for all subsets $J\subset I$ shows that we may choose $\sigma\in\mathrm{PGL}(N+1,k)$ such that each $\rho^*E_i$ is smooth over $k$ and $\sum_i\rho^*E_i$ has simple normal crossings. This implies in particular that $\cX'$ is an SNC model.
\end{proof} 

\begin{thm}[Kawamata-Viehweg vanishing]\label{thm:KV} Let $\cX$ be an SNC model of $X$. Let $\cL\in\Pic(\cX)$ be a line bundle whose restriction to the generic fiber $X$ is ample and such that $\cL-D$ is nef for some $D\in\Div_0(\cX)_\Q$ with coefficients in $[0,1[$. Then we have
$$
H^q(\cX,\om_\cX\otimes\cL)=0\,\,\text{ for all } q\ge 1.
$$
\end{thm}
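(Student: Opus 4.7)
The plan is to reduce Theorem~\ref{thm:KV} to the Kodaira vanishing of Theorem~\ref{thm:kodaira} via a cyclic cover construction, following the classical Kawamata--Viehweg strategy and relying on the covering trick of Lemma~\ref{lem:trick}. First, a flat base change to $\bar k\cro{t}$ reduces us to the case $k=\bar k$. Fix an integer $m\ge1$ such that $mD=\sum_i p_i E_i\in\Div_0(\cX)$ has integer coefficients with $0\le p_i<m$. Applying Lemma~\ref{lem:trick} iteratively (first to $\cL$, then to the pullback of $\cO(mD)$ on the intermediate cover), one obtains a finite surjective morphism $\rho:\cX'\to\cX$ from an SNC $S$-variety $\cX'$ such that each $\rho^*E_i$ is smooth and there exist line bundles $\cL_1,\cM\in\Pic(\cX')$ with $\rho^*\cL=m\cL_1$ and $\rho^*\cO(mD)=m\cM$.

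Next I would form the $m$-fold cyclic cover $\pi:\cY\to\cX'$ associated to $\cM$ and the canonical section of $\cM^m=\cO(\sum_i p_i\rho^*E_i)$. The branch locus is a sum of smooth components meeting transversally, so the normalized cover has at worst quotient singularities; a resolution in characteristic zero produces a smooth SNC $S$-variety which I continue to denote $\cY$. The standard cyclic cover computation yields
\[
\pi_*\om_\cY=\bigoplus_{j=0}^{m-1}\om_{\cX'}\otimes\cM^j\otimes\cO\Bigl(-\sum_i\lfloor jp_i/m\rfloor\,\rho^*E_i\Bigr).
\]
Setting $\cN:=m\cL_1-\cM\in\Pic(\cX')$ and using the projection formula, the $j=1$ summand of $\pi_*(\om_\cY\otimes\pi^*\cN)$ reads $\om_{\cX'}\otimes\cM\otimes\cN=\om_{\cX'}\otimes m\cL_1=\om_{\cX'}\otimes\rho^*\cL$ (the floor terms all vanish since $p_i<m$).

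Assuming $\cN$ is ample on $\cX'$, $\pi^*\cN$ is ample on $\cY$ because $\pi$ is finite, so Theorem~\ref{thm:kodaira} applied to $\cY$ gives $H^q(\cY,\om_\cY\otimes\pi^*\cN)=0$ for $q\ge1$. Using $R^q\pi_*=0$ for $q>0$ (as $\pi$ is finite), this vanishing descends to $H^q\bigl(\cX',\pi_*(\om_\cY\otimes\pi^*\cN)\bigr)=0$, which via the direct-summand decomposition forces $H^q(\cX',\om_{\cX'}\otimes\rho^*\cL)=0$. Since $\rho$ is generically \'etale (the Bertini construction underlying Lemma~\ref{lem:trick} is carried out in characteristic zero), the normalized trace splits $\rho_*\om_{\cX'}\twoheadrightarrow\om_\cX$, exhibiting $\om_\cX\otimes\cL$ as a direct summand of $\rho_*(\om_{\cX'}\otimes\rho^*\cL)$; a final use of $R^q\rho_*=0$ then yields $H^q(\cX,\om_\cX\otimes\cL)=0$.

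The main obstacle is securing the ampleness of $\cN$. In $\Pic(\cX')_\Q$ one has $\cN=\rho^*(\cL-D)$, which is nef by hypothesis and restricts to the ample class $\rho^*\cL|_{X'}$ on the generic fiber, but is not obviously ample on $\cX'$. To reduce to the ample case I would invoke a relative Kodaira lemma: since $\cL|_X$ is ample, $\cX$ is projective over the affine base $S=\spec R$, and $\cO(\cX_0)$ is trivial on $\cX$ (as $\cX_0=\div(\unipar)$), generic-fiber sections of $N(\cL-D)$ can be extended to $\cX$ for $N$ large, yielding a $\Q$-linear equivalence $\cL-D\sim_\Q\cH+F$ with $\cH\in\Pic(\cX)$ ample and $F$ effective. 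After a further log resolution preserving the SNC property and absorbing a sufficiently small multiple of $F$ into $D$ (choosing $N$ large enough so that the enlarged boundary still has coefficients in $[0,1)$), one reaches the situation in which $\cL-D$ is ample on $\cX$; handling the possible horizontal part of $F$ is the most delicate point and may require an auxiliary iteration of the covering trick. Once this reduction is accomplished, the cyclic cover argument above closes the proof.
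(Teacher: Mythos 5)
Your overall strategy (flat base change, covering trick, cyclic covers, Kodaira vanishing on an SNC model, reduction from nef to ample) is the same as the paper's, but there are two genuine gaps. The first is in the cyclic cover step. You take a \emph{single} $m$-fold cover branched along the full divisor $\sum_i p_i\rho^*E_i$, which is non-reduced; the resulting cover is singular, and you propose to normalize and then resolve. But once you resolve, $\pi:\cY\to\cX'$ is no longer finite, so both of your key claims fail: $\pi^*\cN$ is no longer ample (only nef and big), so Theorem~\ref{thm:kodaira} does not apply to $\om_\cY\otimes\pi^*\cN$; and $R^q\pi_*\ne0$ cannot be asserted ``as $\pi$ is finite.'' If instead you stay on the normalized (finite) cover, it has quotient singularities and is not an SNC $S$-variety, so Theorem~\ref{thm:kodaira} again does not apply. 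You cannot repair this with local/relative vanishing, since in this paper that statement (Lemma~\ref{lem:local}) is itself deduced from Theorem~\ref{thm:KV} --- the argument would be circular. The paper avoids the problem by iterating: at each stage it applies Lemma~\ref{lem:trick} to make a \emph{single} component $E_1$ the zero divisor of a section of $\cM^m$ (a smooth \emph{reduced} branch divisor), so each cyclic cover $\spec_{\cX'}(\bigoplus_{0\le j<m}\cM^{-j})$ is again an SNC $S$-variety and the covering map stays finite; after $N$ iterations one lands on an SNC model with an ample line bundle and invokes Kodaira vanishing there.

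The second gap is the reduction to the case where $\cL-D$ is ample. Your Kodaira-lemma decomposition $\cL-D\sim_\Q\cH+F$ gives an effective $F$ that may well have a horizontal part, which cannot be absorbed into the vertical boundary $D\in\Div_0(\cX)_\Q$, and you leave this point open. The paper's Step~2 sidesteps it entirely: since $\cL|_X$ is ample, there is a vertical blow-up $\pi:\cY\to\cX$ with $\cY$ SNC and a \emph{vertical} $\pi$-exceptional effective $E\in\Div_0(\cY)_\Q$ with $\pi^*\cL-E$ ample; then $\pi^*\cL-\e E$ is ample with $\e E$ having coefficients in $[0,1[$, the ample case applies on $\cY$, and one descends using $\pi_*\om_\cY=\om_\cX$ together with a Serre-vanishing/global-generation argument showing $R^q\pi_*(\om_\cY\otimes\pi^*\cL)=0$. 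As written, your proof does not close at either of these two points.
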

\begin{proof} As in Theorem~\ref{thm:kodaira} the desired result is equivalent to $H^q(\cX,\cL^{-1})=0$ for $q<n$ by relative duality. By flat base change we may assume that $k$ is algebraically closed.\newline

\noindent{\bf Step 1}. Assume first that $\cL-D$ is ample. Let $E_1,\dots,E_N$ be the components of 
$\cX_0$ and set $a_i:=\ord_{E_i}D$. Choose $m\in\N^*$ such that $b:=m\,a_1\in\N$. By Lemma~\ref{lem:trick} there exists an SNC $S$-variety $\cX'$ with a finite surjective morphism $\rho:\cX'\to\cX$ such that $\rho^*E_i$ is smooth (possibly disconnected) for each $i$, $\sum_i\rho^*E_i$ is SNC and $\rho^*E_1$ is given as the zero divisor of a section $s\in H^0(\cX',\cM^m)$ for some $\cM\in\Pic(\cX')$. Note that $H^q(\cX,\cL^{-1})$ is a direct summand of $H^q(\cX',\rho^*\cL^{-1})$ thanks to the trace map. Now let 
$$
\cX_1:=\Spec_{\cX'}\left(\bigoplus_{0\le j<m}\cM^{-j}\right)
$$
be the cyclic cover associated with $s\in H^0(\cX',\cM^m)$, where $\bigoplus_{0\le j<m}\cM^{-j}$ is endowed with the $\cO_{\cX'}$-algebra structure induced by $s$. By definition there is a finite surjective morphism $\tau:\cX_1\to\cX'$ which satisfies
$$
\tau_*\cO_{\cX_1}=\bigoplus_{0\le j<m}\cM^{-j}. 
$$
If we set 
$$
\cL_1:=\tau^*\left(\rho^*\cL\otimes\cM^{-b}\right)
$$ 
we thus have
$$
H^q\left(\cX_1,\cL_1^{-1}\right)\simeq\bigoplus_{0\le j<m}H^q\left(\cX',\rho^*\cL^{-1}\otimes\cM^{b-j}\right)
$$ 
by Leray's spectral sequence (since the higher direct images vanish, $\tau$ being finite). 

But $b/m=a_1$ is less than $1$ by assumption, and we thus see that $H^q(\cX_1,\cL_1^{-1})$ contains $H^q(\cX',\rho^ *\cL^{-1})$, hence also $H^q(\cX,\cL^{-1})$, as a direct summand. 

Since $\rho^*{E_i}$ is smooth for each $i$ and $\sum_{i=2}^N\rho^*E_i$ has normal crossings with $\div(s)=\rho^*E_1$, one sees as in~\cite[Claim 2.65]{KM} that $E_i^{(1)}:=\tau^*\rho^ *E_i$ is smooth for each $i$ and $\cX_{1,0}$ has SNC support, so that $\cX_1$ is an SNC $S$-scheme. Finally $\cL_1-\sum_{i=2}^N a_iE_i^{(1)}$ is $\Q$-linearly equivalent to $\tau^*\rho^ *(\cL-D)$, hence is ample. 

We now use Lemma~\ref{lem:trick} to find $c_1:\cX_1'\to\cX_1$ such that $c_1^*E_i^{(1)}$ is smooth for all $i$, $\sum_ic_1^*E_i^{(1)}$ is SNC and $c_1^*E_2^{(1)}$ is divisible in $\Pic(\cX_1')$ by the denominator of $a_2$. We then perform the same cyclic cover construction as above. Iterating the whole process finally yields an SNC $S$-variety $\cX_N$ with an \emph{ample} line bundle $\cL_N$ such that $H^q(\cX,\cL^{-1})$ is a direct summand of $H^q(\cX_N,\cL_N^{-1})$, and we conclude by Theorem~\ref{thm:kodaira}.\newline

\noindent{\bf Step 2}. We now consider the general case where $\cL-D$ is merely nef. Since $\cL|_X$ is ample by assumption there exists a vertical blowup $\pi:\cY\to\cX$ with $\cY$ SNC and a vertical $\pi$-exceptional effective $\Q$-divisor $E\in\Div_0(\cY)_\Q$ such that $\pi^*\cL-E$ is ample. This condition implies in particular that $-E$ is $\pi$-ample. If we fix $0<\e\ll 1$ rational so that $\e E$ has coefficients $<1$ then $\pi^*\cL-\e E=(1-\e)\pi^*\cL+\e(\pi^*\cL-E)$ is also ample since $\pi^*\cL$ is nef, and we get 
$$
H^q(\cY,\om_{\cY}\otimes\pi^*\cL)=0\,\,\text{ for all } q\ge 1
$$ 
by Step 1. 

We are next going to show that $R^q\pi_*(\om_{\cY}\otimes\pi^*\cL)=0$ for each $q\ge 1$. Since we have $\pi_*\om_{\cY}=\om_\cX$ (the relative canonical bundle $K_{\cY/\cX}$ is $\pi$-exceptional and effective since $\cX$ is regular), the degeneration of the Leray spectral sequence of $\pi$ will then yield as desired
$$
H^q(\cX,\om_\cX\otimes\cL)\simeq H^q(\cY,\om_\cY\otimes\pi^*\cL)=0
$$
for $q\ge 1$. Let us now prove the claim. Given $q\ge 1$ choose $\cA\in\Pic(\cX)$ sufficiently ample to guarantee that $\cA\otimes R^q\pi_*(\om_{\cY}\otimes\pi^*\cL)$ is globally generated on $\cX$ and 
$$
H^p\left(\cX,\cA\otimes R^m\pi_*(\om_{\cY}\otimes\pi^*\cL)\right)=0\,\,\text{ for all } p\ge 1\text{ and }m\ge 0 
$$
(note that we are only imposing finitely many non-trivial conditions). The degeneration of the Leray spectral sequence yields
$$
H^0\left(\cX,\cA\otimes R^q\pi_*(\om_{\cY}\otimes\pi^*\cL)\right)\simeq H^q\left(\cY,\om_{\cY}\otimes\pi^*(\cL\otimes\cA)\right)=0
$$
for $q\ge 1$ by Step 1 again, since $\pi^*(\cL\otimes\cA)-\e E$ is also ample. It follows that $\cA\otimes R^q\pi_*(\om_{\cY}\otimes\pi^*\cL)=0$ by global generation, which proves the claim since $\cA$ is invertible. 
\end{proof}

%
%

\subsection{Multiplier ideals}
Let us first give the definition of multiplier ideals in our setting:
\begin{defi} Let $\cX$ be a regular model and let $\fa$ be a vertical ideal sheaf on $\cX$. For each rational number $c>0$ the \emph{multiplier ideal} of $\fa^c$ is the vertical ideal sheaf of $\cX$ defined as 
$$
\cJ(\fa^c):=\pi_*\cO_{\cX'}\left(K_{\cX'/\cX}-\lfloor c\,D\rfloor\right)
$$
where $\pi:\cX'\to\cX$ is a vertical blowup with $\cX'$ SNC such that $\pi^{-1}\fa\cdot\cO_{\cX'}$ is locally principal and $D\in\Div_0(\cX')$ is the corresponding effective Cartier divisor. 
\end{defi}
This definition only depends on the model function $c\log|\fa|$ (cf.~\cite{jonmus}), and would in fact make sense for an arbitrary non-positive model function $\f\in\cD(\Xa)$. 

If $\fa_\bullet$ is a graded sequence of vertical ideals, then $\cJ(\fa_\bullet^c)$ is defined as the largest element of the family of coherent ideals $\cJ(\fa_m^{c/m})$, $m\ge 1$, see~\cite[Definition~11.1.5]{Laz}.

As a matter of terminology, if $\cL$ is a line bundle on a model $\cX$, $\fa$ is a vertical coherent ideal sheaf and $c>0$ then we shall say that $\cL\otimes\fa^c$ is nef if $\pi^*\cL-cD$ is nef, where $\pi:\cX'\to\cX$ is the normalization of the blowup of $\cX$ along $\fa$ and $\fa\cdot\cO_{\cX'}=\cO_{\cX'}(-D)$. In other words, the model function $c\log|\fa|$ is required to be $\theta$-psh, where $\theta$ is the curvature form of the model metric on $L$ induced by $\cL$. 

Using Theorem~\ref{thm:KV} we may follow the usual line of arguments to prove the following basic vanishing property of multiplier ideals:
\begin{thm}[Nadel Vanishing]\label{thm:nadel} Let $\cX$ be a regular model of $X$ and $\cL\in\Pic(\cX)$ a line bundle whose restriction to $X$ is ample. If $\fa$ is a vertical coherent ideal sheaf on $\cX$ and $c>0$ is a rational number such that $\cL\otimes\fa^c$ is nef, then we have
$$
H^q\left(\cX,\om_\cX\otimes\cL\otimes\cJ(\fa^c)\right)=0\,\,\text{ for all } q\ge 1. 
$$
In particular, if $\fa_\bullet$ is a graded sequence of vertical coherent ideal sheaves 
on $\cX$ such that $\cL^m\otimes\fa_m$ is globally generated for all sufficiently divisible $m$, then
$$
H^q\left(\cX,\om_\cX\otimes\cL\otimes\cJ(\fa_\bullet)\right)=0\,\,\text{ for all } q\ge 1. 
$$
\end{thm}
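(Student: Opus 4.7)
The plan is to adapt the standard proof of Nadel vanishing to our setting of regular $S$-varieties, using the Kawamata-Viehweg theorem (Theorem~\ref{thm:KV}) just established.

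For Part~1, first choose a log resolution $\pi: \cX' \to \cX$: a vertical blow-up with $\cX'$ SNC, dominating the normalized blow-up of $\fa$ on $\cX$, and such that $\fa \cdot \cO_{\cX'} = \cO_{\cX'}(-D)$ for some effective $D \in \Div_0(\cX')$. Since $\om_{\cX'} = \pi^*\om_\cX \otimes \cO_{\cX'}(K_{\cX'/\cX})$, the projection formula together with the definition of $\cJ(\fa^c)$ gives
$$
\om_\cX \otimes \cL \otimes \cJ(\fa^c) = \pi_*\bigl(\om_{\cX'} \otimes \pi^*\cL \otimes \cO_{\cX'}(-\lfloor cD\rfloor)\bigr).
$$
Writing $-\lfloor cD\rfloor = -cD + \{cD\}$ with $\{cD\}$ having coefficients in $[0,1)$, one has
$$
\pi^*\cL \otimes \cO_{\cX'}(-\lfloor cD\rfloor) - \{cD\} \sim_\Q \pi^*\cL - cD,
$$
which is nef by the hypothesis that $\cL \otimes \fa^c$ is nef, and whose restriction to $X$ is the ample line bundle $\cL|_X$. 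Theorem~\ref{thm:KV} on $\cX'$ therefore yields
$$
H^q\bigl(\cX', \om_{\cX'} \otimes \pi^*\cL \otimes \cO_{\cX'}(-\lfloor cD\rfloor)\bigr) = 0 \text{ for all } q \geq 1.
$$

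To descend from $\cX'$ to $\cX$ it suffices to establish the local vanishing $R^i\pi_*\bigl(\om_{\cX'} \otimes \cO_{\cX'}(-\lfloor cD\rfloor)\bigr) = 0$ for $i \geq 1$, after which the Leray spectral sequence degenerates and yields the desired vanishing on $\cX$. I will establish this local vanishing by mimicking Step~2 of the proof of Theorem~\ref{thm:KV}. For any ample $\cA$ on $\cX$ and $n \gg 0$, the class $\pi^*(n\cA) - cD$ is nef on $\cX'$: indeed, it is pulled back via the intermediate map $\cX' \to \cY$ from the class $(\cY \to \cX)^*(n\cA) - cD_\cY$ on the normalized blow-up $\cY$ of $\fa$, which is ample for $n$ large since $-D_\cY$ is $(\cY \to \cX)$-ample. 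Thus the above vanishing applied with $\pi^*\cL$ replaced by $\pi^*(n\cA)$ gives $H^q\bigl(\cX', \om_{\cX'} \otimes \pi^*(n\cA) \otimes \cO_{\cX'}(-\lfloor cD\rfloor)\bigr) = 0$ for $q \geq 1$. Choosing $\cA$ sufficiently ample that $\cA^{\otimes n} \otimes R^q\pi_*(\om_{\cX'} \otimes \cO_{\cX'}(-\lfloor cD\rfloor))$ is globally generated on $\cX$ and has vanishing higher cohomology, the Leray spectral sequence forces $R^q\pi_*(\om_{\cX'} \otimes \cO_{\cX'}(-\lfloor cD\rfloor)) = 0$ for $q \geq 1$, completing Part~1.

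For Part~2, recall that $\cJ(\fa_\bullet) = \cJ(\fa_\bullet^1)$ is by definition the largest member of the family of coherent ideals $\cJ(\fa_m^{1/m})$, $m \geq 1$. The super-multiplicativity $\fa_m \cdot \fa_l \subset \fa_{m+l}$ implies $\fa_m^l \subset \fa_{lm}$, and hence $\cJ(\fa_{lm}^{1/(lm)}) \supset \cJ(\fa_m^{1/m})$, so the family is monotone along divisibility and stabilizes by Noetherianity. Thus $\cJ(\fa_\bullet) = \cJ(\fa_m^{1/m})$ for all sufficiently divisible $m$, and for such $m$ the hypothesis that $\cL^{\otimes m} \otimes \fa_m$ is globally generated implies that $\cL \otimes \fa_m^{1/m}$ is nef. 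Part~1 applied with $\fa = \fa_m$ and $c = 1/m$ then gives the stated vanishing. The main obstacle is deriving the local vanishing from the global form of Kawamata-Viehweg, since a genuine relative version is not directly at our disposal; but the ampleness trick from Step~2 of Theorem~\ref{thm:KV} handles this cleanly, and no further new ideas are needed.
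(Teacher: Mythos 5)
Your proposal is correct and follows essentially the same route as the paper: pass to a log resolution, apply the Kawamata--Viehweg theorem on $\cX'$ to the nef class $\pi^*\cL-cD$ perturbed by the fractional part $\{cD\}$, and descend via Leray after establishing $R^q\pi_*\bigl(\om_{\cX'}(-\lfloor cD\rfloor)\bigr)=0$ by the same "sufficiently ample twist plus global generation" trick. The only cosmetic difference is that the paper isolates this last step as a separate Local Vanishing lemma (Lemma~\ref{lem:local}), whereas you prove it inline; your treatment of the graded-sequence case, reducing to $\cJ(\fa_m^{1/m})$ for sufficiently divisible $m$, is also the intended argument.
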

\begin{proof} Let $\pi:\cX'\to\cX$ be an SNC model dominating the blowup of $\cX$ along $\fa$, so that we have $\fa\cdot\cO_{\cX'}=\cO_{\cX'}(-D)$ for some effective divisor $D\in\Div_0(\cX')$. By the projection formula we have
$$
\om_\cX\otimes\cL\otimes\cJ(\fa^c)=\pi_*\left(\om_{\cX'}\otimes\pi^*\cL(-\lfloor c\,D\rfloor)\right).
$$
Now $\pi^*\cL-c\,D$ is nef and $c\,D-\lfloor c\,D\rfloor$ has coefficients in $[0,1[$. Lemma~\ref{lem:local} below together with the projection formula yields
$$
R^q\pi_*\left(\om_{\cX'}\otimes\pi^*\cL\left(-\lfloor c\,D\rfloor\right)\right)=0\,\,\text{ for all } q\ge 1.
$$
The Leray spectral sequence is thus degenerate and we conclude using Theorem~\ref{thm:KV}.  The second point follows since $\cJ(\fa_\bullet)=\cJ(\fa_m^{1/m})$ for some $m$ by construction, while the global generation assumption implies that $\cL\otimes\fa_m^{1/m}$ is nef. 
\end{proof}

\begin{lem}[Local vanishing]\label{lem:local} Let $\cX$ be a regular model, let $\fa$ be a vertical ideal sheaf on $\cX$ and let $\pi:\cX'\to\cX$ be an SNC model such that $\fa\cdot\cO_{\cX'}=\cO_{\cX'}(-D)$ with $D\in\Div_0(\cX')$. Then we have 
$$
R^q\pi_*\om_{\cX'}\left(-\lfloor c\,D\rfloor\right)=0\,\, \text{ for all } q\ge 1.
$$
\end{lem}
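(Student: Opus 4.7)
The strategy is to derive the relative vanishing from the global Kawamata--Viehweg statement of Theorem~\ref{thm:KV} via the classical Serre-twist trick: the desired conclusion $R^q\pi_*\om_{\cX'}(-\lfloor cD\rfloor)=0$ is equivalent to vanishing of $\cA\otimes R^q\pi_*\om_{\cX'}(-\lfloor cD\rfloor)$ for any fixed line bundle $\cA$ on $\cX$, and choosing $\cA$ sufficiently ample will reduce the problem to showing that this sheaf has no global sections.

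Concretely, I would fix an ample $\cA_0\in\Pic(\cX)$ and an integer $m\ge 1$ with $mc\in\N$, then take $\cA:=N\cA_0$ with $N$ sufficiently large so that Serre vanishing and global generation ensure simultaneously: (i)~$m\cA\otimes\fa^{mc}$ is globally generated on $\cX$; (ii)~$\cA\otimes R^q\pi_*\om_{\cX'}(-\lfloor cD\rfloor)$ is globally generated on $\cX$ for every $q\ge 0$; and (iii)~$H^p(\cX,\cA\otimes R^q\pi_*\om_{\cX'}(-\lfloor cD\rfloor))=0$ for $p\ge 1$ and $q\ge 0$. Pulling~(i) back via $\pi$ and using $\fa\cdot\cO_{\cX'}=\cO_{\cX'}(-D)$ yields global generation of the line bundle $m(\pi^*\cA-cD)$ on $\cX'$, so that $\pi^*\cA-cD$ is nef in $\Pic(\cX')_\Q$. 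Condition~(iii), combined with the projection formula, causes the Leray spectral sequence for $\pi$ applied to $\pi^*\cA\otimes\om_{\cX'}(-\lfloor cD\rfloor)$ to degenerate, yielding
$$
H^0\bigl(\cX,\cA\otimes R^q\pi_*\om_{\cX'}(-\lfloor cD\rfloor)\bigr)\simeq H^q\bigl(\cX',\om_{\cX'}\otimes\cL\bigr),
$$
where $\cL:=\pi^*\cA\otimes\cO_{\cX'}(-\lfloor cD\rfloor)\in\Pic(\cX')$.

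I would then conclude by applying Theorem~\ref{thm:KV} on the SNC model $\cX'$ to $\cL$ with the $[0,1)$-coefficient divisor $E:=\{cD\}=cD-\lfloor cD\rfloor\in\Div_0(\cX')_\Q$. As $D$ is vertical, the restriction $\cL|_X=\cA|_X$ is ample, and by the previous paragraph $\cL-E=\pi^*\cA-cD$ is nef, so Theorem~\ref{thm:KV} produces $H^q(\cX',\om_{\cX'}\otimes\cL)=0$ for all $q\ge 1$. Together with the global generation in~(ii), this forces $\cA\otimes R^q\pi_*\om_{\cX'}(-\lfloor cD\rfloor)=0$ for $q\ge 1$, which yields the statement after tensoring back with $\cA^{-1}$.

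The step I expect to be most delicate is the nefness of $\pi^*\cA-cD$ on $\cX'$. This is precisely where the hypothesis that $\fa\cdot\cO_{\cX'}$ is invertible enters essentially: it is this principality that converts global generation of the ideal-sheaf product $m\cA\otimes\fa^{mc}$ on $\cX$ into global generation of the Cartier line bundle $m(\pi^*\cA-cD)$ on $\cX'$. Once this is secured, the remaining steps are routine applications of Serre vanishing, the projection formula, and Theorem~\ref{thm:KV}.
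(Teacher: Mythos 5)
Your proposal is correct and follows essentially the same route as the paper's proof: choose $\cA$ ample enough that $\pi^*\cA-cD$ is nef (the paper gets this directly from $-D$ being $\pi$-globally generated, you get it by pulling back global generation of $\cA^m\otimes\fa^{mc}$ — the same mechanism), use Serre vanishing to degenerate the Leray spectral sequence, identify $H^0(\cX,\cA\otimes R^q\pi_*\om_{\cX'}(-\lfloor cD\rfloor))$ with $H^q(\cX',\om_{\cX'}\otimes\pi^*\cA(-\lfloor cD\rfloor))$, kill the latter by Theorem~\ref{thm:KV} applied with the fractional part $cD-\lfloor cD\rfloor$, and conclude by global generation. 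No changes needed.
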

\begin{proof} We argue as in the last part of the proof of Theorem~\ref{thm:KV}. Let $\cA\in\Pic(\cX)$ be sufficiently ample to guarantee:
\begin{itemize} 
\item[(i)] $\pi^*\cA-c D$ is nef. 
\item[(ii)] $\cA\otimes R^q\pi_*\om_{\cX'}\left(-\lfloor c\,D\rfloor\right)$ is globally generated on $\cX$.
\item[(iii)] $H^p\left(\cX,\cA\otimes R^m\pi_*\om_{\cX'}\left(-\lfloor c\,D\rfloor\right)\right)=0\,\,\text{ for all } p\ge 1\text{ and }m\ge 0$. 
\end{itemize}
Note that the first condition can be achieved since $-D$ is $\pi$-globally generated. The degeneration of the Leray spectral sequence shows that
$$
H^0\left(\cX,\cA\otimes R^q\pi_*\om_{\cX'}\left(-\lfloor c\,D\rfloor\right)\right)=H^q\left(\cX',\om_{\cX'}\left(-\lfloor c\,D\rfloor\right)\otimes\pi^*\cA\right),
$$
which vanishes by Theorem~\ref{thm:KV}. It follows that $\cA\otimes R^q\pi_*\om_{\cX'}\left(-\lfloor c\,D\rfloor\right)=0$ by global generation, whence the result. 
\end{proof}

We may now deduce from the above results the following two consequences that we 
need in the proof of Theorem B. 
\begin{thm}[Subadditivity]\label{thm:subadd} 
  Let $\cX$ be a regular model, $\fa,\fb$ vertical coherent ideal sheaves on $\cX$ 
  and $c,d>0$. Then we have
  $$
  \cJ(\fa^c\cdot\fb^d)\subset\cJ(\fa^c)\cdot\cJ(\fb^d).
  $$
\end{thm}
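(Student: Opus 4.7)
The plan is to imitate the Demailly-Ein-Lazarsfeld proof of subadditivity via the fiber-product trick. I would form $\cY:=\cX\times_S\cX$ with the two projections $p_i:\cY\to\cX$ and the diagonal $\Delta:\cX\hookrightarrow\cY$, so that $\Delta^{-1}(p_i^{-1}\mathfrak{c})=\mathfrak{c}$ for any ideal sheaf $\mathfrak{c}$ on $\cX$. The statement will follow from two intermediate assertions: an \emph{external product formula}
\[
\cJ(\cY,\,p_1^{-1}\fa^c\cdot p_2^{-1}\fb^d)=p_1^{-1}\cJ(\fa^c)\cdot p_2^{-1}\cJ(\fb^d),
\]
and a \emph{restriction inequality} along the diagonal,
\[
\cJ(\cX,\fa^c\cdot\fb^d)\subset\Delta^{-1}\cJ(\cY,\,p_1^{-1}\fa^c\cdot p_2^{-1}\fb^d).
\]
Combining the two and using $\Delta^{-1}(p_1^{-1}\fa\cdot p_2^{-1}\fb)=\fa\cdot\fb$ yields the theorem.

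For the external product formula, I would pick log resolutions $\pi_1:\cX_1\to\cX$ and $\pi_2:\cX_2\to\cX$ with $\fa\cdot\cO_{\cX_1}=\cO_{\cX_1}(-A)$ and $\fb\cdot\cO_{\cX_2}=\cO_{\cX_2}(-B)$, and then a further SNC vertical blow-up $\tilde\pi:\tilde\cY\to\cY$ dominating the (possibly singular) fiber product $\cX_1\times_S\cX_2$. On $\tilde\cY$ the pullbacks $\tilde A$ and $\tilde B$ of $A$ and $B$ share no common components, being supported on preimages of the two distinct factors. Consequently $\lfloor c\tilde A+d\tilde B\rfloor=\lfloor c\tilde A\rfloor+\lfloor d\tilde B\rfloor$, and together with a decomposition of $K_{\tilde\cY/\cY}$ into contributions pulled back from each factor (up to $\tilde\pi$-exceptional terms) and the projection formula, this yields the formula after pushing forward by $\tilde\pi$.

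For the restriction inequality, the classical argument invokes Nadel vanishing on a log resolution of $\cY$ to surject sections defined in a neighborhood of the diagonal onto sections on $\Delta$. The main obstacle is that $\cY=\cX\times_S\cX$ fails to be regular along its special fiber $\cY_0=\cX_0\times_k\cX_0$, which is a product of SNC divisors; accordingly the diagonal $\Delta$ is a regular embedding only over the generic fiber. To circumvent this I would pass to a further vertical blow-up $\cY^*\to\cY$, with $\cY^*$ an SNC $S$-variety, chosen so that the strict transform $\Delta^*$ of $\Delta$ is regular, meets the special-fiber components transversely, and so that the ideals $p_i^{-1}\fa$ and $p_i^{-1}\fb$ are simultaneously principalized; such a $\cY^*$ exists by~\cite{Tem}. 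Theorem~\ref{thm:nadel} on $\cY^*$ with a suitable twist would then allow lifting sections from $\Delta^*\simeq\cX$, and a careful adjunction argument on $\Delta^*$ converts this into the restriction inequality.

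The main difficulty will lie in constructing the resolution $\cY^*$ so that the diagonal $\Delta^*$ is in sufficiently good position (regular, in the smooth locus of $\cY^*\to S$, transverse to the SNC divisors principalizing $p_i^{-1}\fa$ and $p_i^{-1}\fb$) and in carrying out the ensuing adjunction on $\Delta^*$ required to identify the twisted Nadel-vanishing statement on $\cY^*$ with $\cJ(\cX,\fa^c\cdot\fb^d)$; with these in place, the standard argument (\cf~\cite{DEL}) should adapt with essentially cosmetic changes.
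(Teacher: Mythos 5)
The paper proves Theorem~\ref{thm:subadd} in one line, by citing the argument of~\cite[Theorem 9.5.20]{Laz} (the Demailly--Ein--Lazarsfeld product/diagonal trick, exactly the strategy you propose) together with local vanishing, and by pointing to~\cite[Theorem A.2]{jonmus} for a different proof. So your overall route coincides with the one the paper intends. As written, however, your proposal contains one false step and leaves the decisive step unproved, and both problems stem from the same source: the product must be taken over $S$, not over a field.

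First, the assertion that the pullbacks $\tilde A$ and $\tilde B$ ``share no common components'' is wrong in this setting. Because $\fa$ and $\fb$ are \emph{vertical}, $A$ and $B$ are supported on the special fibers, and for any components $E\subset\supp A$ and $F\subset\supp B$ the divisors $p_1^*A$ and $p_2^*B$ on $\cX_1\times_S\cX_2$ both contain the component $E\times_k F$ of the special fiber $(\cX_1)_0\times_k(\cX_2)_0$ (in the classical case over a field, $p_1^{-1}(A)=A\times X_2$ and $p_2^{-1}(B)=X_1\times B$ never share a component, which is what makes the floors additive there). Hence $\lfloor c\tilde A+d\tilde B\rfloor\neq\lfloor c\tilde A\rfloor+\lfloor d\tilde B\rfloor$ in general, and the claimed exterior product \emph{equality} does not follow from your argument; only the inequality $\lfloor c\tilde A+d\tilde B\rfloor\ge\lfloor c\tilde A\rfloor+\lfloor d\tilde B\rfloor$ survives. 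That happens to give the inclusion actually needed for subadditivity, but the K\"unneth step must then still be justified over $S$. Second, and more seriously, the restriction inequality along the diagonal --- which is the entire content of the theorem, the exterior product formula being comparatively formal --- is only announced as ``the main difficulty''. It genuinely is: $\cY=\cX\times_S\cX$ is not a regular model (it is singular wherever both factors fail to be smooth over $S$, e.g.\ over the crossings and multiple components of $\cX_0$), the diagonal meets this singular locus, and the multiplier ideals of this appendix are only defined on regular models, so $\cJ(\cY,\cdot)$ has no meaning yet. Constructing $\cY^*$, controlling the discrepancies, and proving a codimension-$n$ restriction theorem for $\Delta^*$ is a substantial piece of work, not a cosmetic adaptation of~\cite{DEL}; this is presumably why the paper also directs the reader to the alternative proof in~\cite[Theorem A.2]{jonmus}.
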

\begin{proof} 
  This is proved exactly as in~\cite[Theorem 9.5.20]{Laz} using local vanishing.
  (See also~\cite[Theorem~A.2]{jonmus} for a different proof.)
\end{proof}
\begin{thm}[Uniform generation property]\label{thm:uniform} 
  Let $\cX$ be a regular model. Then there exists an ample line bundle 
  $\cA$ on $\cX$ such that the following holds. 
  Given $\cL\in\Pic(\cX)$, a vertical ideal sheaf $\fa$ and a rational number $c>0$ 
  such that $\cL\otimes\fa^c$ is nef, the sheaf 
  $$
  \cA\otimes\cL\otimes\cJ(\fa^c)
  $$
  is globally generated. In particular, if $\fa_\bullet$ is a graded sequence of 
  vertical coherent ideal sheaves on $\cX$ such that $\cL^m\otimes\fa_m$ 
  is globally generated for all sufficiently divisible $m$, then
  $$
  \cA\otimes\cL^m\otimes\cJ(\fa_\bullet^m)
  $$
  is globally generated for all $m$. 
\end{thm}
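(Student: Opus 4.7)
The plan is to use Castelnuovo--Mumford regularity together with the Nadel vanishing theorem just established. Let $n+1 = \dim\cX$ and fix a very ample line bundle $\cB$ on $\cX$, replacing it by a sufficiently large power if needed so that $\cA := \omega_\cX \otimes \cB^{\otimes(n+2)}$ is itself ample. Castelnuovo--Mumford regularity, applied in the setting of projective schemes over the Noetherian base $R$, reduces the global generation of a coherent sheaf $\cF$ on $\cX$ to its $0$-regularity with respect to $\cB$, that is, to the vanishings $H^i(\cX,\cF\otimes\cB^{\otimes-i})=0$ for all $1\le i\le n+1$.

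I will apply this to $\cF := \cA\otimes\cL\otimes\cJ(\fa^c)$, rewriting
\begin{equation*}
\cF\otimes\cB^{\otimes-i} \;=\; \omega_\cX\otimes\cN_i\otimes\cJ(\fa^c), \qquad \cN_i := \cL\otimes\cB^{\otimes(n+2-i)}.
\end{equation*}
Since $n+2-i\ge 1$ in the relevant range, the restriction $\cN_i|_X$ is the tensor product of the nef class $\cL|_X$ with a very ample class, hence ample on $X$. Moreover $\cN_i\otimes\fa^c$ is nef, being the tensor product of $\cL\otimes\fa^c$ (nef by hypothesis) and $\cB^{\otimes(n+2-i)}$ (ample, hence nef). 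The hypotheses of Theorem~\ref{thm:nadel} are thus satisfied, the required cohomology vanishes, and Castelnuovo--Mumford then yields global generation of $\cF$.

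For the graded-sequence assertion, I will unwind the definition $\cJ(\fa_\bullet^m) = \cJ(\fa_k^{m/k})$, which is valid for $k$ sufficiently divisible. Choose such a $k$, simultaneously so large and divisible that $\cL^k\otimes\fa_k$ is globally generated (and hence nef). Rescaling, $\cL^m\otimes\fa_k^{m/k}$ is nef. Applying the first part with $\cL^m$, ideal $\fa_k$ and coefficient $m/k$ in place of $\cL$, $\fa$ and $c$ yields the global generation of $\cA\otimes\cL^m\otimes\cJ(\fa_\bullet^m)$.

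The delicate point is the ``ample on the generic fiber'' hypothesis in Nadel's theorem: because $\cL|_X$ is only assumed nef, at least one strictly positive copy of $\cB$ must survive every twist by $\cB^{\otimes-i}$, which is precisely what forces the exponent $n+2$ (rather than $n+1$) in the definition of $\cA$. A secondary issue is that Castelnuovo--Mumford regularity is classically formulated for projective schemes over a field, but its standard proof via induction on dimension extends without essential change to projective schemes over a Noetherian base such as $R$, so no real obstacle arises there.
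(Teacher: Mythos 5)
Your proof is correct and follows essentially the same route as the paper's: Castelnuovo--Mumford regularity with respect to a very ample $\cB$, with the required vanishings supplied by Theorem~\ref{thm:nadel}. The only (harmless) difference is that you take $\cA=\om_\cX\otimes\cB^{n+2}$ and check vanishing up to $i=n+1=\dim\cX$, whereas the paper takes $\cA=\om_\cX\otimes\cB^{n+1}$ and only checks $q=1,\dots,n$, the group $H^{n+1}$ vanishing automatically since $\cX\to S$ is projective of relative dimension $n$ over an affine base.
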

\begin{rmk}\label{rmk:uniform}
  Fix any ample line bundle $\cB$ on $\cX$. Since, for any line bundle $\cA'$,
  the line bundle $\cB^{n} \otimes (\cA')^{-1}$ is globally generated for $n\gg0$,
  we may take $\cA := \cB^n$ with $n$ sufficiently large in the previous statement.
\end{rmk}
\begin{proof} 
  Let $\cB$ be a given very ample line bundle such that 
  $\cA:=\om_\cX\otimes\cB^{n+1}$ is ample. 
  By the Castelnuovo-Mumford criterion it is enough to check that
  $$
  H^q\left(\cX,\cA\otimes\cL\otimes\cB^{-q}\otimes\cJ(\fa^c)\right)=0
  $$
  for $q=1,\dots,n$, and this is a consequence of Theorem~\ref{thm:nadel}. 
\end{proof}

%
%


%
%
%
%

\end{document}